\documentclass{amsart}
\usepackage{hyperref}
\usepackage{xcolor}
\usepackage{amsfonts}
\usepackage[T1]{fontenc}
\usepackage{enumitem}
\usepackage{amsmath,amssymb,amsfonts,amsthm}
\usepackage{bm}
\usepackage{latexsym}
\usepackage{amscd}
\usepackage[all,cmtip]{xy}
\usepackage{xfrac}
\usepackage{mathrsfs}
\usepackage{upgreek}
\usepackage{graphicx}
\usepackage{enumitem} 
\usepackage{braket}
\usepackage{verbatim}
\usepackage{stmaryrd}
\usepackage{commath}
\usepackage{cancel}
\usepackage{esint}
\usepackage[mathscr]{eucal}
\usepackage[utf8]{inputenc}

\hypersetup{
    colorlinks,
    linktoc=all,
    linkcolor={blue},
    citecolor={blue},
    urlcolor={blue}
}

\setlength{\textwidth}{\paperwidth}
\addtolength{\textwidth}{-2.5in}
\setlength{\textheight}{\paperheight}
\addtolength{\textheight}{-2in}
\calclayout

\allowdisplaybreaks

\numberwithin{equation}{section}

\theoremstyle{definition}

\newtheorem{example}{Example}[section]

\theoremstyle{plain}
\newtheorem{proposition}{Proposition}[section]
\newtheorem{theorem}[proposition]{Theorem}
\newtheorem{lemma}[proposition]{Lemma}
\newtheorem{corollary}[proposition]{Corollary}

\makeatletter

\def\chaptermark#1{}

\def\chapter{%
  \if@openright\cleardoublepage\else\clearpage\fi
  \thispagestyle{plain}\global\@topnum\z@
  \@afterindenttrue \secdef\@chapter\@schapter}

\def\@chapter[#1]#2{\refstepcounter{chapter}%
  \ifnum\c@secnumdepth<\z@ \let\@secnumber\@empty
  \else \let\@secnumber\thechapter \fi
  \typeout{\chaptername\space\@secnumber}%
  \def\@toclevel{0}%
  \ifx\chaptername\appendixname \@tocwriteb\tocappendix{chapter}{#2}%
  \else \@tocwriteb\tocchapter{chapter}{#2}\fi
  \chaptermark{#1}%
  \addtocontents{lof}{\protect\addvspace{10\p@}}%
  \addtocontents{lot}{\protect\addvspace{10\p@}}%
  \@makechapterhead{#2}\@afterheading}
\def\@schapter#1{\typeout{#1}%
  \let\@secnumber\@empty
  \def\@toclevel{0}%
  \ifx\chaptername\appendixname \@tocwriteb\tocappendix{chapter}{#1}%
  \else \@tocwriteb\tocchapter{chapter}{#1}\fi
  \chaptermark{#1}%
  \addtocontents{lof}{\protect\addvspace{10\p@}}%
  \addtocontents{lot}{\protect\addvspace{10\p@}}%
  \@makeschapterhead{#1}\@afterheading}
\newcommand\chaptername{Chapter}

\def\@makechapterhead#1{\global\topskip 7.5pc\relax
  \begingroup
  \fontsize{\@xivpt}{18}\bfseries\centering
    \ifnum\c@secnumdepth>\m@ne
      \leavevmode \hskip-\leftskip
      \rlap{\vbox to\z@{\vss
          \centerline{\normalsize\mdseries
              \uppercase\@xp{\chaptername}\enspace\thechapter}
          \vskip 3pc}}\hskip\leftskip\fi
     #1\par \endgroup
  \skip@34\p@ \advance\skip@-\normalbaselineskip
  \vskip\skip@ }
\def\@makeschapterhead#1{\global\topskip 7.5pc\relax
  \begingroup
  \fontsize{\@xivpt}{18}\bfseries\centering
  #1\par \endgroup
  \skip@34\p@ \advance\skip@-\normalbaselineskip
  \vskip\skip@ }
\def\appendix{\par
  \c@chapter\z@ \c@section\z@
  \let\chaptername\appendixname
  \def\thechapter{\@Alph\c@chapter}}

\newcounter{chapter}

\newif\if@openright

\def\subsubsection{\@startsection{subsubsection}{3}%
  \z@{.5\linespacing\@plus.7\linespacing}{-.5em}%
  {\normalfont\small\bfseries}}

\makeatletter 
\def\l@subsection{\@tocline{2}{0pt}{1pc}{5pc}{}} \def\l@subsection{\@tocline{2}{0pt}{2pc}{6pc}{}} 
\makeatother





\newcommand{\R}{\mathbb{R}}
\renewcommand{\SS}{\mathbb{S}}


\newcommand{\cL}{\mathcal L}





\newcommand{\dist}{\mathrm{dist}}
\newcommand{\beq}{\begin{equation}}
\newcommand{\eeq}{\end{equation}}
\newcommand{\beqa}{\begin{eqnarray}}
\newcommand{\eeqa}{\end{eqnarray}}

\def\multiset#1#2{\ensuremath{\left(\kern-.3em\left(\genfrac{}{}{0pt}{}{#1}{#2}\right)\kern-.3em\right)}}   


\DeclareMathOperator{\spt}{spt}

\DeclareMathOperator{\row}{row}
\DeclareMathOperator{\diag}{diag}


\renewcommand{\div}{\operatorname{div}}
\renewcommand{\emptyset}{\varnothing}
\renewcommand{\slash}{\setminus}

\newcommand{\wkly}{\overset{\ast}{\rightharpoonup}}

\newcommand{\ep}{\varepsilon}

\newcommand{\Ra}{\Rightarrow}
\newcommand{\La}{\Leftarrow}

\renewcommand{\bar}{\overline}
\newcommand{\mres}{\mathbin{\vrule height 1.6ex depth 0pt width
0.16ex\vrule height 0.13ex depth 0pt width .9ex}}		

\newcommand{\rb}{\partial^*\!}
\newcommand{\eb}{\partial^M\!}

\author[Yeh]{Kuan-Ting Yeh}
\title{The Anisotropic Gaussian Isoperimetric Inequality and Ehrhard Symmetrization}
\address{Department of Mathematics, University of Washington, Seattle, WA}
\email{kty1116@uw.edu}

\begin{document}
\maketitle

\vspace{-.9cm}
\begin{abstract}
In this paper, we prove the isoperimetric inequality for the anisotropic Gaussian measure and characterize the cases of equality. We also find an example that shows Ehrhard symmetrization fails to decrease for the anisotropic Gaussian perimeter and gives a new inequality that includes an error term. This new inequality, in particular, gives us a hint to prove a uniqueness result for the anisotropic Ehrhard symmetrization.
\end{abstract}

\tableofcontents

\section{Introduction}

	The Euclidean isoperimetric problem says that the minimizers of the perimeter among sets with fixed volume are Euclidean balls. More precisely: for any (Lebesgue) measurable set $E\subset \mathbb{R}^{n}$ with $|E|<\infty$,
$$
P(E) \geq n \omega_{n}^{1 / n}|E|^{(n-1) / n} ,
$$
where $\omega_n$ is the volume of the unit ball, $P(E)$ the perimeter of $E$ and $|E|$ its $n$-dimension Lebesgue measure. Moreover, equality holds if and only if $E$ is equivalent to a ball, i.e., $|E\Delta B(x,r)|=0$ for some $x\in \R^n$, $r>0$. One is also interested in a similar problem where volume and perimeter are replaced by Gaussian measure and Gaussian perimeter, which are defined as
$$\gamma(E)=\frac{1}{(2\pi)^{n/2}}\int_Ee^{-|x|^2/2}\ dx,$$
and
$$
P_\gamma(E)=\frac{1}{(2\pi)^{\frac{n-1}{2}}}\int_{\partial^*E}\ e^{-|x|^2/2}\ d \mathcal{H}^{n-1}(x) ,
$$
respectively. The resulting inequality is called the Gaussian isoperimetric inequality. It states that for any measurable set $E\subset \R^n$,
$$P_\gamma(E)\geq e^{-[\phi^{-1}(\gamma(E))]^2/2},$$
where
$$\phi(x)=\frac{1}{\sqrt{2\pi}}\int_{-\infty}^xe^{-t^2/2}\ dt.$$
Moreover, equality holds if and only if $E$ is equivalent to a half-space. The Gaussian isoperimetric problem was first studied by Sudakov, Tsirelson and Borell via Paul Levy's spherical isoperimetric inequality (see, for example, \cite{Borell} \cite{Sudakov-Tsirelson} \cite{LedouxGaussianIso}). In addition, using the Ornstein-Uhlenbeck semigroup techniques, Carlen-Kerce \cite{Carlen-Kerce} characterized half-spaces as the unique minimizers in the Gaussian isoperimetric problem. A geometric approach using Ehrhard symmetrization has been provided by Cianchi-Fusco-Maggi-Pratelli \cite{Cianchi-Fusco-Maggi-Pratelli}. A natural generalization of the Gaussian isoperimetric problem is to study the following question:
$$\inf\left\{P_{\gamma_A}(E):\gamma_A(E)=r\right\},$$
 where $\gamma_A$ is called the {\bf $A$-anisotropic Gaussian measure (mass)} 
$$\gamma_A(E)=\fint_E e^{-\langle Ax,x\rangle/2}\ dx=\frac{\sqrt{\det A}}{(2\pi)^{\frac{n}{2}}}\int_{E} e^{-\langle Ax,x\rangle /2}\ dx,$$
and the perimeter with respect to $\gamma_A$ is called the {\bf $A$-anisotropic Gaussian perimeter}
$$P_{\gamma_A}(E)=\frac{\sqrt{\det A}}{(2\pi)^{\frac{n-1}{2}}}\int_{\eb E} e^{-\langle Ax,x\rangle /2}\ d\mathcal{H}^{n-1}(x).$$
Here $A$ is a positive definite matrix, and $\eb E$ is the $(n-1)$-dimensional essential boundary of $E$ (we will define this in Section \ref{background}). We may assume without loss of generality that our positive definite matrix $A$ is symmetric as
$$\langle Ax,x\rangle =\left\langle \frac12\left(A+A^{\mathsf{T}}\right)x,x\right\rangle$$
and $\frac12\left(A+A^{\mathsf{T}}\right)$ is a symmetric positive definite matrix. We will assume that $A$ is symmetric throughout the rest of this paper. Our first result is the following:

\begin{theorem}\label{AnisotropicGaussainIso}
Let $A$ be a symmetric positive definite matrix and $E$ be a measurable set in $\mathbb{R}^{n}$. Then
\begin{align}\label{theorem1_eq}
P_{\gamma_{A}}\left(E\right) \geq e^{-[\phi^{-1}(\gamma_A(E))]^2/2}\frac{1}{\|(\sqrt{A})^{-1}\|}.
\end{align}
Here $\|\cdot\|$ is the matrix norm induced by the Euclidean norm. Moreover, 
\begin{enumerate}
\item if $n=1$, equality holds if and only if either $\gamma_{A}(E)=0$ or $\gamma_{A}(E)=1$, or $E$ is equivalent to a half-line of the form
$$\left(-\infty,\frac{\phi^{-1}(\gamma_A(E))}{\sqrt{A}}\right)\mbox{\quad or \quad} \left(-\frac{\phi^{-1}(\gamma_A(E))}{\sqrt{A}},+\infty\right).$$
\item if $n\geq 2$, equality holds if and only if either $\gamma_{A}(E)=0$ or $\gamma_{A}(E)=1$, or $E$ is equivalent to a half-space of the form
$$H\left(\omega,\frac{\phi^{-1}(\gamma_A(E))}{d_{\min}}\right)\mbox{\qquad for some unit vector $\omega\in V_{d_{\min}}(\sqrt{A})$},$$
where $d_{\min}$ is the smallest eigenvalue of $\sqrt{A}$ and $V_{d_{\min}}(\sqrt{A})$ is the eigenspace of $\sqrt{A}$ associated with $d_{\min}$.
\end{enumerate}
\end{theorem}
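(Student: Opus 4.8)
The plan is to reduce everything to the classical Gaussian isoperimetric inequality via the linear change of variables $y=\sqrt{A}\,x$. Write $T:=\sqrt{A}$ for the unique symmetric positive definite square root of $A$, and note that $T^{-1}=(\sqrt{A})^{-1}$ is also symmetric positive definite. Since $\langle Ax,x\rangle=|Tx|^{2}$ and $|\det T|=\sqrt{\det A}$, substituting $y=Tx$ in the definition of $\gamma_{A}$ shows that for every measurable $E$,
$$\gamma_{A}(E)=\gamma(TE),\qquad\text{where }\ TE:=\{Tx:x\in E\}.$$
For the perimeter I would first discard the trivial case: if $E$ has no locally finite perimeter then $P_{\gamma_{A}}(E)=+\infty$ and \eqref{theorem1_eq} holds vacuously, so assume $E$ has locally finite perimeter. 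Then $TE$ also has locally finite perimeter, with (up to $\mathcal{H}^{n-1}$-negligible sets) $\eb(TE)=T(\eb E)$ and measure-theoretic outer unit normal $\nu_{TE}(Tx)=T^{-1}\nu_{E}(x)/|T^{-1}\nu_{E}(x)|$, and the area formula for linear maps gives
$$\int_{\eb(TE)}g\,d\mathcal{H}^{n-1}=\int_{\eb E}g(Tx)\,|\det T|\,\bigl|T^{-1}\nu_{E}(x)\bigr|\,d\mathcal{H}^{n-1}(x).$$
Taking $g(y)=e^{-|y|^{2}/2}$ and using $|Tx|^{2}=\langle Ax,x\rangle$ yields the exact identity
$$P_{\gamma}(TE)=\frac{\sqrt{\det A}}{(2\pi)^{(n-1)/2}}\int_{\eb E}e^{-\langle Ax,x\rangle/2}\,\bigl|T^{-1}\nu_{E}(x)\bigr|\,d\mathcal{H}^{n-1}(x).$$

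Since $\nu_{E}(x)$ is a unit vector, $|T^{-1}\nu_{E}(x)|\le\|(\sqrt{A})^{-1}\|$ for $\mathcal{H}^{n-1}$-a.e.\ $x\in\eb E$; comparing the last identity with the definition of $P_{\gamma_{A}}(E)$ gives $P_{\gamma}(TE)\le\|(\sqrt{A})^{-1}\|\,P_{\gamma_{A}}(E)$. On the other hand, the classical Gaussian isoperimetric inequality applied to $TE$ reads $P_{\gamma}(TE)\ge e^{-[\phi^{-1}(\gamma(TE))]^{2}/2}=e^{-[\phi^{-1}(\gamma_{A}(E))]^{2}/2}$. Combining the two bounds gives \eqref{theorem1_eq}.

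For the equality cases, the cases $\gamma_{A}(E)\in\{0,1\}$ are immediate since both sides of \eqref{theorem1_eq} then vanish (and equality holds, e.g.\ for $E$ equivalent to $\emptyset$ or $\mathbb{R}^{n}$), so assume $0<\gamma_{A}(E)<1$. Equality in \eqref{theorem1_eq} forces equality in both of the above steps. First, $P_{\gamma}(TE)=e^{-[\phi^{-1}(\gamma(TE))]^{2}/2}$, so by the rigidity of the classical Gaussian isoperimetric inequality (Carlen--Kerce \cite{Carlen-Kerce}, or Cianchi--Fusco--Maggi--Pratelli \cite{Cianchi-Fusco-Maggi-Pratelli}) $TE$ is equivalent to a half-space $\{y:\langle y,\omega\rangle<s\}$ with $|\omega|=1$ and $s=\phi^{-1}(\gamma(TE))=\phi^{-1}(\gamma_{A}(E))$; since $T$ is symmetric, $E=T^{-1}(TE)$ is then equivalent to $\{x:\langle x,T\omega\rangle<s\}=H\bigl(T\omega/|T\omega|,\,s/|T\omega|\bigr)$, whose outer unit normal is the constant vector $T\omega/|T\omega|$. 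Second, equality in the pointwise bound (together with $0<\gamma_{A}(E)<1$, so that $P_{\gamma_{A}}(E)\in(0,\infty)$) forces $|T^{-1}\nu_{E}|=\|(\sqrt{A})^{-1}\|$ along $\eb E$, i.e.\ $1/|T\omega|=\|(\sqrt{A})^{-1}\|=1/d_{\min}$, where $d_{\min}$ is the least eigenvalue of $\sqrt{A}$. Since $|T\omega|^{2}=\langle A\omega,\omega\rangle\ge d_{\min}^{2}$ for every unit vector $\omega$, with equality exactly when $\omega\in V_{d_{\min}}(\sqrt{A})$, in which case $T\omega=d_{\min}\omega$ and $H(T\omega/|T\omega|,s/|T\omega|)=H(\omega,\phi^{-1}(\gamma_{A}(E))/d_{\min})$, this yields the characterization for $n\ge 2$. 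When $n=1$ the normal is $\pm1$ and $|T^{-1}(\pm1)|=1/\sqrt{A}=\|(\sqrt{A})^{-1}\|$ automatically, so the only constraint is that $\sqrt{A}\,E$ be equivalent to a half-line of Gaussian mass $\gamma_{A}(E)$, which unwinds to the two displayed half-lines; the converse — that each listed set saturates \eqref{theorem1_eq} — is a direct computation.

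The only genuinely external ingredients are (i) the rigidity of the classical Gaussian isoperimetric inequality, which is cited, and (ii) the change-of-variables formulas for the essential boundary, the generalized outer normal, and $\mathcal{H}^{n-1}$ under an invertible linear map, which are standard geometric measure theory. Beyond importing these, I expect the main point requiring care to be bookkeeping: tracking the orientation and normalization conventions for half-spaces so that the image under $T^{-1}$ of an extremal half-space is presented in exactly the form appearing in the statement, and isolating the eigenspace condition $\omega\in V_{d_{\min}}(\sqrt{A})$ from the equality case of $|T\omega|\ge d_{\min}$.
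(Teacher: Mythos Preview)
Your proposal is correct and coincides with the route the paper actually uses for the equality analysis: the paper records the chain $P_{\gamma_A}(E)\ge \|(\sqrt{A})^{-1}\|^{-1}P_{\gamma}(\sqrt{A}E)\ge e^{-[\phi^{-1}(\gamma_A(E))]^2/2}\|(\sqrt{A})^{-1}\|^{-1}$ (its equation~(3.7), obtained from its Proposition~2.7, which is exactly your area-formula identity) and then argues equality forces both steps to be tight, invoking the classical rigidity to make $\sqrt{A}E$ a half-space and extracting the condition $|\sqrt{A}\omega|=d_{\min}$. The only differences are cosmetic: the paper also supplies a second, self-contained proof of the inequality via Ehrhard's inequality and smooth approximation (its Theorems~3.1--3.2) rather than importing the classical Gaussian isoperimetric inequality, and it verifies that $|\sqrt{A}\omega|=d_{\min}$ forces $\omega\in V_{d_{\min}}(\sqrt{A})$ by an explicit block-diagonalization of $A$ rather than your cleaner Rayleigh-quotient observation $|\sqrt{A}\omega|^2=\langle A\omega,\omega\rangle\ge d_{\min}^2$.
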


The anisotropic Gaussian isoperimetric inequality (\ref{theorem1_eq}) is a special case of the Bakry-Ledoux isoperimetric inequality for log-concave measures if we consider the log-concave measure  $e^{-\langle Ax,x\rangle/2}dx$ and use the $\ep$-enlargement definition for the perimeter (see \cite{Ledoux_LogSobolev}, Theorem 1.1 and \cite{Ledoux-Bakry}). The main contribution here is to characterize the cases of equality in (\ref{theorem1_eq}) for the anisotropic Gaussian measure.\\

One of the most important properties in the Lebesgue measure is that the (Euclidean) perimeter decreases under Steiner symmetrization (see, for example, \cite{maggi}, Theorem 14.4). A similar result in the Gaussian measure was first mentioned by Ehrhard \cite{Ehrhard1983}, where he introduced another way to symmetrize sets, now called the Ehrhard symmetrization. One of the key properties in his setting is that the Gaussian measure has a product structure, so that the problem can be reduced to the one-dimensional case. Based on this, we also want to generalize this result to the anisotropic Gaussian measure. The main difficulty in our setting is that the measure $\gamma_A$ doesn't have the product structure, i.e., it has cross terms, which means new ideas will need to be developed to address this issue. In fact, the anisotropic Gaussian perimeter does not behave monotonously under Ehrhard symmetrization (see Example \ref{counterexample_1}). Our second result shows that we are still able to find an upper bound for the perimeter of Ehrhard symmetrization set in terms of the original perimeter plus a term involving the deviation of $A$ from the identity in the direction of the symmetrization times a term involving the differences of the anisotropic Gaussian barycenters. To be more precise, we have the following:

\begin{theorem}\label{Ehrhard_Sym_Ineq_II}
Let $n \geq 2$, $A$ be a symmetric positive definite matrix, and let $E$ be a set of finite $A$-anisotropic Gaussian perimeter in $\mathbb{R}^{n}$ and $u\in \SS^{n-1}$. Then, $E_{A,u}^s$ is a set of locally finite perimeter in $\R^n$. Moreover, for every Borel set $B \subseteq \langle u \rangle^{\perp}$ with $|u|=1$ we have
\begin{align*}
P_{\gamma_A}\left(E_{A,u}^{s} ; B \oplus \langle u\rangle\right) &\leq P_{\gamma_A}\left(E ; B \oplus \langle u\rangle\right)\\
&\quad+ \sqrt{2\pi}\|Au-\langle Au,u\rangle u\|\langle b_{\gamma_A}(E^s_{A,u}\cap (B \oplus \langle u\rangle))-b_{\gamma_A}(E\cap (B \oplus \langle u\rangle)),u\rangle.
\end{align*}
Here $B \oplus \langle u\rangle:=\{z+tu:z\in B,t\in \R\}$ and $E_{A,u}^{s}$ is the {\bf Ehrhard symmetrization of $E$} with respect to the $u$-direction and matrix $A$ (see definition (\ref{new_def})) and
$$b_{\gamma_A}(E)=\int_E x\ d\gamma_A(x)$$
is called the {\bf $A$-anisotropic Gaussian barycenter} of $E$.
\end{theorem}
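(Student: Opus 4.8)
The plan is to reduce to the one-dimensional situation slice by slice, exactly as in the classical Ehrhard symmetrization argument, but keeping careful track of the cross terms that the quadratic form $\langle Ax,x\rangle$ introduces. Write $x = z + tu$ with $z \in \langle u\rangle^\perp$ and $t\in\R$, and decompose $\langle Ax,x\rangle = \langle Au,u\rangle t^2 + 2t\langle Au,z\rangle + \langle Az,z\rangle$. The term $2t\langle Au,z\rangle = 2t\langle Au - \langle Au,u\rangle u, z\rangle$ is precisely the obstruction to a product structure; set $a := \langle Au,u\rangle > 0$ and $w := Au - \langle Au,u\rangle u \in \langle u\rangle^\perp$, so that on the slice $\{z\} \oplus \langle u\rangle$ the weight factors as $e^{-\langle Az,z\rangle/2}\, e^{-at^2/2 - t\langle w,z\rangle}$. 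Completing the square in $t$, this is $e^{-\langle Az,z\rangle/2}e^{\langle w,z\rangle^2/(2a)}\, e^{-a(t + \langle w,z\rangle/a)^2/2}$, i.e. a \emph{shifted} one-dimensional Gaussian of variance $1/a$. The definition (\ref{new_def}) of $E^s_{A,u}$ should be arranged so that on each slice one replaces the slice $E_z := \{t : z+tu\in E\}$ by the half-line (with respect to this shifted, rescaled Gaussian) of the same one-dimensional anisotropic Gaussian mass; this is the content that makes $E^s_{A,u}$ well-defined and, by Theorem \ref{AnisotropicGaussainIso} applied in dimension one on each slice, makes its one-dimensional slice perimeter no larger than that of $E_z$.

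Next I would set up the coarea/slicing formula for the anisotropic Gaussian perimeter. For a set of finite $A$-anisotropic Gaussian perimeter one has, by the Vol'pert-type slicing theorem for BV functions (as in \cite{maggi}, Chapter 18), that for $\mathcal H^{n-1}$-a.e.\ $z\in\langle u\rangle^\perp$ the slice $E_z$ has finite one-dimensional perimeter, and
\[
P_{\gamma_A}(E; B\oplus\langle u\rangle) \;=\; c_n\sqrt{\det A}\int_B \Bigl(\text{slice term}(z) \;+\; \text{vertical term}(z)\Bigr)\, dz,
\]
where the "vertical term" accounts for the $\langle u\rangle^\perp$-component of the measure-theoretic outer normal. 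The key structural input is a Fubini-type identity: write $\langle \nu_E, u\rangle$ and $\nu_E - \langle\nu_E,u\rangle u$ for the two components of the normal and organize the perimeter integrand on each slice. On the slice the "horizontal-normal" contribution is governed by the one-dimensional anisotropic isoperimetric inequality (giving the first term $P_{\gamma_A}(E;B\oplus\langle u\rangle)$ on the right), while the "vertical-normal" contribution is where the barycenter enters. Establishing that $E^s_{A,u}$ has locally finite perimeter comes for free from the same slicing machinery once one knows each slice is a half-line with endpoint a measurable (indeed, as a composition of $\phi^{-1}$ with a continuous function of $\gamma_A$-slice-mass, locally Lipschitz off a null set) function of $z$, so that $E^s_{A,u}$ is the subgraph of a locally $BV$ function.

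The heart of the matter — and the step I expect to be the main obstacle — is controlling the vertical part of the perimeter of $E^s_{A,u}$, i.e. the contribution from $\nabla_z$ of the slice endpoint function. Because of the shift $t \mapsto t + \langle w,z\rangle/a$, differentiating the endpoint of the symmetrized slice in the $z$-variable produces, in addition to the "intrinsic" term that one would get in the product case (which is dominated by the corresponding term for $E$ by the standard argument, using that $\phi^{-1}$ has the right convexity and that one-dimensional rearrangement does not increase the vertical derivative — this is the Gaussian analog of the Steiner symmetrization monotonicity), an \emph{extra} term proportional to $w/a$ integrated against the vertical Gaussian density. Carefully, $\|w\| = \|Au - \langle Au,u\rangle u\|$ is exactly the prefactor appearing in the statement, and integrating the linear weight $t$ against the shifted one-dimensional anisotropic Gaussian over the symmetrized slice versus the original slice produces exactly $\langle b_{\gamma_A}(E^s_{A,u}\cap\cdot) - b_{\gamma_A}(E\cap\cdot), u\rangle$ after integrating over $B$. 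So the strategy is: (i) write both $P_{\gamma_A}(E^s_{A,u};B\oplus\langle u\rangle)$ and $P_{\gamma_A}(E;B\oplus\langle u\rangle)$ via the slicing formula; (ii) on each slice, bound the horizontal term of $E^s_{A,u}$ by that of $E$ using the one-dimensional result, and bound the intrinsic vertical term of $E^s_{A,u}$ by that of $E$ using the product-case monotonicity applied to the shifted Gaussian; (iii) collect the leftover cross term, which is linear in the vertical coordinate, identify it with $\sqrt{2\pi}\,\|w\|\,\langle b_{\gamma_A}(\cdot),u\rangle$-difference, and conclude. The delicate points are justifying the differentiation of the slice-endpoint function (regularization of $E$ by smooth sets, or working directly with the $BV$ slicing theory and an approximation argument to avoid assuming smoothness), and checking that the convexity/monotonicity step (ii) genuinely goes through with a variance-$1/a$ Gaussian rather than the standard one — this should only change universal constants, since rescaling $t \mapsto \sqrt a\, t$ converts it to the standard Ehrhard symmetrization on each slice, at the cost of a $\sqrt a$ that is absorbed correctly because $P_{\gamma_A}$ already carries the matching normalization.
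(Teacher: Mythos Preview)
Your high-level strategy---slice in the $u$-direction, use the one-dimensional isoperimetric inequality on each slice, and track the cross term $w = Au - \langle Au,u\rangle u$ as the source of the barycenter correction---matches the paper's. The completing-the-square viewpoint is a nice way to see why the defect is linear in the $t$-coordinate and hence produces a barycenter.

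However, your step (ii) has a genuine gap. You propose to bound ``the horizontal term of $E^s_{A,u}$ by that of $E$'' and separately ``the intrinsic vertical term of $E^s_{A,u}$ by that of $E$ using the product-case monotonicity.'' But the perimeter is not an additive sum of horizontal and vertical contributions: on each slice the integrand is $1/|\langle\nu_E,u\rangle| = \sqrt{1 + |\nu'|^2/|\nu_n|^2}$, which cannot be split. The paper's argument is different and more delicate: it first shows (Lemma~\ref{distributional_derivative_formula}) that the slice-mass function $v_E(z)$ lies in $BV$ with an explicit formula for $D_i v_E$, then uses the coarea formula plus \emph{Jensen's inequality} to get the lower bound
\[
P_{\gamma_A}(E;B\times\R)\ \ge\ C\int_B \sqrt{p_E(z)^2 + \Bigl|\nabla' v_E - \int_{E_z}\nabla'(e^{-|\sqrt{A}x|^2/2})\,dy\Bigr|^2}\,dz,
\]
with equality for $E^s$, and compares the two via the elementary inequality $\sqrt{a^2+b^2}-\sqrt{a^2+c^2}\le|b-c|$ together with $p_{E^s}\le p_E$. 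The cross term then emerges as $\bigl|\int_{E_z}\nabla'e^{-|\sqrt{A}x|^2/2}\,dy - \int_{E^s_z}\nabla'e^{-|\sqrt{A}x|^2/2}\,dy\bigr|$, which a direct computation (Proposition~\ref{basic_properties}(2)) identifies with $\|w\|\bigl(\int_{E_z}t\,d\mu_z - \int_{E^s_z}t\,d\mu_z\bigr)$.

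You also omit two cases the paper treats separately: the \emph{dust estimate} (Lemma~\ref{dust_estimate}) for $z$ with $v_E(z)=0$, and the \emph{cylindrical estimate} (Lemma~\ref{cylindrical_estimate}) for the $\mathcal L^{n-1}$-null complement of the Vol'pert set. These cannot be absorbed into the graphical argument because the slice endpoint goes to $-\infty$ or the derivative formula fails there. Finally, the paper reduces general $u$ to $u=-e_n$ by an orthogonal change of variables (Proposition~\ref{Gaussian_perimeter}(3) and the identity $(O^{-1}E)^s_{O^{\mathsf T}AO,-e_n}=O^{-1}E^s_{A,u}$), which cleanly transfers the $-e_n$ result; your sketch works directly in the $u$-frame, which is fine but requires redoing all the $BV$ regularity with the tilted slicing.
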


Theorem \ref{Ehrhard_Sym_Ineq_II} ensures that the anisotropic Gaussian perimeter decreases if we do the Ehrhard symmetrization with respect to any eigenvector direction of $A$. Our final result says that the converse of it is also true, i.e., the only situation in which the anisotropic Gaussian perimeter behaves monotonously is when we Ehrhard symmetrize the set with respect to the eigenvector direction of $A$.

\begin{theorem}\label{Uniqueness_Ehrhard_Sym}
Let $n \geq 2$, $A$ be a symmetric positive definite matrix, and let $u\in \SS^{n-1}$. Then,
\begin{align*}
&P_{\gamma_A}(E_{A,u}^{s} ) \leq P_{\gamma_A}\left(E \right) \mbox{ for all finite $A$-anisotropic Gaussian perimeter set $E$ in $\mathbb{R}^{n}$}\\
&\iff u\in V_\lambda(A)\cap \SS^{n-1}\mbox{ for some $\lambda >0$}
\end{align*} 
where $V_\lambda(A)$ is the eigenspace of $A$ associated with eigenvalue $\lambda$. Moreover,
$$\mbox{$\gamma_A$ is Ehrhard symmetrizable}\iff A=aI_n\mbox{ for some constant $a>0$.}$$
Here $\gamma_A$ is called {\bf Ehrhard symmetrizable} if
$$P_{\gamma_A}(E_{A,u}^s)\leq P_{\gamma_A}(E)$$
for all $u\in \SS^{n-1}$, and for all measurable set $E\subset \R^n$.
\end{theorem}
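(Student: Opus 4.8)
The plan is to prove the first equivalence and then deduce the "Ehrhard symmetrizable" characterization from it. The backward direction ($\Leftarrow$) is exactly the content of Theorem \ref{Ehrhard_Sym_Ineq_II}: if $u \in V_\lambda(A) \cap \SS^{n-1}$, then $Au = \lambda u$, so $Au - \langle Au, u\rangle u = \lambda u - \lambda u = 0$, and the error term vanishes for every Borel set $B \subseteq \langle u\rangle^\perp$; taking $B = \langle u\rangle^\perp$ gives $P_{\gamma_A}(E_{A,u}^s) \leq P_{\gamma_A}(E)$ for all finite-perimeter $E$. So the substance is the forward direction: assuming $u$ is \emph{not} an eigenvector of $A$, I must produce a single set $E$ for which $P_{\gamma_A}(E_{A,u}^s) > P_{\gamma_A}(E)$. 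This is where Example \ref{counterexample_1} (the explicit counterexample the abstract advertises) should do the work — the strategy is to take the configuration from that example, realize that it is built precisely so that $w := Au - \langle Au, u\rangle u \neq 0$, and show the error term in Theorem \ref{Ehrhard_Sym_Ineq_II} is genuinely positive there, i.e. that $\langle b_{\gamma_A}(E_{A,u}^s) - b_{\gamma_A}(E), u\rangle$ can be made to have the "wrong" sign and nonnegligible size relative to the (nonnegative) perimeter drop.

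The cleaner route, which I would actually pursue, is a perturbative/first-variation one rather than hunting for an explicit $E$. Fix $u \notin \bigcup_\lambda V_\lambda(A)$, so $w = Au - \langle Au,u\rangle u \neq 0$. Consider a one-parameter family of sets $E_t$ that are already "Ehrhard-symmetric" in the $u$-direction except for a small $t$-sized tilt, so that $E_{A,u}^s$ of the tilted set is the untilted one; then compare $P_{\gamma_A}(E_t)$ with $P_{\gamma_A}(E_0) = P_{\gamma_A}((E_t)_{A,u}^s)$ to first order in $t$. Concretely, start from a half-space $H = H(u, 0)$ — which is its own Ehrhard symmetrization — and tilt its boundary hyperplane slightly, getting $H_t$ with $\gamma_A(H_t) = \gamma_A(H)$ arranged by a compensating translation. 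Because $w \neq 0$, the anisotropic weight $e^{-\langle Ax,x\rangle/2}$ is \emph{not} constant along the tilted boundary in a way that is symmetric under reflection, so $P_{\gamma_A}(H_t)$ should pick up a term linear in $t$ with a definite sign — one can choose the sign of the tilt so this term is negative, i.e. $P_{\gamma_A}(H_t) < P_{\gamma_A}(H) = P_{\gamma_A}((H_t)^s_{A,u})$. I expect the derivative $\tfrac{d}{dt}\big|_{t=0} P_{\gamma_A}(H_t)$ to be, up to a positive constant, $\pm \|w\|$ times the relevant geometric factor, which is nonzero precisely because $u$ is not an eigenvector. (Alternatively, and perhaps more robustly: take $E$ to be the symmetrization target and run the inequality of Theorem \ref{Ehrhard_Sym_Ineq_II} in reverse on a tilted competitor, using that the barycenter term changes sign under $u \mapsto -u$ while the perimeter term does not, so one of the two tilts violates monotonicity.)

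For the last claim: $\gamma_A$ is Ehrhard symmetrizable iff monotonicity holds for \emph{every} $u \in \SS^{n-1}$, which by the equivalence just proved holds iff every $u \in \SS^{n-1}$ is an eigenvector of $A$. A symmetric matrix all of whose directions are eigenvectors is a scalar multiple of the identity: indeed if $Au = \lambda(u) u$ for all unit $u$, then for any two orthonormal $u, v$ one has $A(u+v) = \lambda(u+v)(u+v)$ while also $A(u+v) = \lambda(u) u + \lambda(v) v$, forcing $\lambda(u) = \lambda(v)$; hence $\lambda$ is constant, $A = aI_n$, and $a > 0$ by positive-definiteness. Conversely if $A = aI_n$ then $\gamma_A$ is a rescaled Gaussian and $w = 0$ for every $u$, so Theorem \ref{Ehrhard_Sym_Ineq_II} gives monotonicity in all directions.

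The main obstacle I anticipate is making the forward direction's perturbation argument rigorous: one must verify that the first-order term in $P_{\gamma_A}(H_t)$ is actually computed by differentiating under the integral sign (tilting the boundary hyperplane is smooth, so this is a genuine but routine first-variation computation), and — more delicate — that the compensating translation needed to keep $\gamma_A$ fixed does not accidentally cancel the linear term. Tracking exactly how $\langle b_{\gamma_A}(\cdot), u\rangle$ responds to the tilt, and confirming it is the $\|w\|$-proportional quantity that survives, is the crux; everything else is bookkeeping with the structures already set up for Theorems \ref{AnisotropicGaussainIso} and \ref{Ehrhard_Sym_Ineq_II}.
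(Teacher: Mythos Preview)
Your backward direction and the ``Ehrhard symmetrizable $\iff A=aI_n$'' deduction are correct and match the paper. The forward direction, however, has a genuine gap.

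The tilting-a-half-space idea does not produce a counterexample. First, your premise that $H(u,0)$ is its own Ehrhard symmetrization is false precisely when $u$ is not an eigenvector: along the fiber through $z\in\langle u\rangle^\perp$ the density $t\mapsto e^{-\langle A(z+tu),z+tu\rangle/2}$ is a Gaussian centered at $-\langle Az,u\rangle/\langle Au,u\rangle$, which is nonzero for generic $z$ exactly when $Au\not\parallel u$; hence the slice $\{t<0\}$ and the symmetrized slice $\{t>s(z)\}$ of equal mass do not match. Second, and more damagingly, if you instead start from the correctly oriented half-space $H(-u,0)=\{ \langle x,u\rangle>0\}$ (which \emph{is} its own symmetrization) and tilt it to $H(-\omega_t,0)$ with $\omega_t$ near $u$, every slice of the tilted set is still a half-line of the form $\{t>s_t(z)\}$, and Ehrhard symmetrization fixes such sets identically. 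So $P_{\gamma_A}((H_t)^s_{A,u})=P_{\gamma_A}(H_t)$ for all small $t$, and no first-variation in $t$ can separate them. Half-spaces are the wrong test objects here: any set whose $u$-slices are already correctly oriented half-lines is a fixed point of $(\cdot)^s_{A,u}$.

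The paper's argument (which is the $n$-dimensional version of Example~\ref{counterexample_1} you mention and then abandon) uses instead the thin-slab family $E_\alpha=[-\alpha,\alpha]^{n-1}\times(0,\infty)$. After reducing by rotation to $u=-e_n$, one shows via Lemma~\ref{regularity_lemma} that $E_\alpha^s$ is the subgraph of a $C^1$ function $h$ with $h(0)=0$ and $\nabla'h(0)=-2\big(\int_0^\infty y\,e^{-|\sqrt{A}(0,y)|^2/2}\,dy\big)A'e_n$, which vanishes iff $e_n$ is an eigenvector. A direct computation of the perimeters of the $2(n-1)$ lateral faces and the top face yields
\[
P_{\gamma_A}(E_\alpha)-P_{\gamma_A}(E_\alpha^s)=\frac{\sqrt{\det A}}{(2\pi)^{(n-1)/2}}\Big(1-\sqrt{1+|\nabla'h(0)|^2}\Big)(2\alpha)^{n-1}+o(\alpha^{n-1}),
\]
whose leading coefficient is strictly negative whenever $\nabla'h(0)\neq 0$. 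Thus for small $\alpha$ one gets $P_{\gamma_A}(E_\alpha)<P_{\gamma_A}(E_\alpha^s)$, contradicting monotonicity. The essential feature is that $E_\alpha$ has a genuine ``top'' boundary transverse to $u$ (the face $[-\alpha,\alpha]^{n-1}\times\{0\}$), which symmetrization replaces by the graph of $h$; the area of that graph exceeds the flat area to leading order by the familiar factor $\sqrt{1+|\nabla'h(0)|^2}$.
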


Now we describe the structure of this paper. We first collect some important definitions and theorems about sets of locally finite perimeter and finite anisotropic Gaussian perimeter in Section \ref{background}. In Section \ref{anisotropic Gaussian Isoperimetric Inequality}, we provide a proof for the anisotropic Gaussian isoperimetric inequality using an approximation argument and characterize the cases of equality (Theorem \ref{AnisotropicGaussainIso}). We introduce the Ehrhard symmetrization in Section \ref{Anisotropic Gaussian Inequality under Ehrhard Symmetrization} with other essential tools and apply those tools to prove Theorem \ref{Ehrhard_Sym_Ineq_II}. Finally, in Section \ref{Characterization of Ehrhard symmetrizable measures}, we discuss some regularity results for Ehrhard symmetrization sets and prove Theorem \ref{Uniqueness_Ehrhard_Sym}.

\addtocontents{toc}{\protect\setcounter{tocdepth}{0}}
\section*{Acknowledgement}
This paper would not be possible without the support of my advisor Prof. Tatiana Toro. I would like to thank her for introducing me to the field of Geometric Measure Theory and this problem about the anisotropic Gaussian measure. Her support and encouragement were vital in carrying me throughout the stressful pandemic. I also want to sincerely thank Dr. Sean McCurdy for the great discussions we had about the Ehrhard symmetrization and minimization problems. I would like to thank Prof. Francesco Maggi for bringing this problem to our attention.
\addtocontents{toc}{\protect\setcounter{tocdepth}{2}}

\section{Background and Notation}\label{background}
	
\subsection{Sets of locally finite perimeter}\mbox{}\vspace{-.22cm}\\

In this section, we will recall some useful definitions and theorems from Maggi's book \cite{maggi} and Evans-Gariepy's book \cite{evansgariepy}.\\

Let $U$ be an open subset in $\R^n$. A function $f\in L^1(U)$ has {\bf bounded variation} in $U$ if
$$\sup\Big\{\int_Uf\div\varphi\ dx: \varphi\in C^1_c(U;\R^n),\ |\varphi|\leq 1\Big\}<\infty.$$
We write $BV(U)$ to denote the space of such functions. A function $f \in L_{\text {loc}}^{1}(U)$ has {\bf locally bounded variation} in $U$ if for every open set $V \subset \subset U$,
$$
\sup \left\{\int_{V} f \operatorname{div} \varphi \ d x: \varphi \in C_{c}^{1}\left(V ; \mathbb{R}^{n}\right),\ |\varphi| \leq 1\right\}<\infty.
$$
We write $B V_{\mathrm{loc}}(U)$ to denote the space of such functions. A $\mathcal{L}^{n}$-measurable subset $E \subset \mathbb{R}^{n}$ has {\bf finite perimeter} in $U$ if $\chi_{E} \in B V(U)$. A $\mathcal{L}^{n}$-measurable subset $E \subset \mathbb{R}^{n}$ has {\bf locally finite perimeter} in $U$ if $\chi_{E} \in B V_{\mathrm{loc}}(U)$.\\

We recall the following theorem from \cite{evansgariepy}, Chapter 5, Theorem 1.

\begin{theorem}\label{structure_theorem}
Let $f\in BV_{loc}(U)$. Then there exists a Radon measure $\mu$ on $U$ and a $\mu$-measurable function $\sigma:U\to \R^n$ such that
\begin{enumerate}
\item $|\sigma(x)|=1$ $\mu$-a.e.;
\item For any $\varphi\in C^1_c(U;\R^n)$,
$$\int_Uf\div\varphi\ dx=-\int_U\varphi\cdot \sigma\:d\mu.$$
\end{enumerate}
We write $|Df|$ for $\mu$, $Df:= \sigma|Df|$, and $D_if:= \sigma_i|Df|$. Moreover,
\begin{align*}
|Df|(V)&=\sup \left\{\int_{V} f \operatorname{div} \varphi \ d x: \varphi \in C_{c}^{1}\left(V ; \mathbb{R}^{n}\right),\ |\varphi| \leq 1\right\}\\
&=\sup \left\{\int_{V}  \varphi \cdot d Df: \varphi \in C_{c}^{1}\left(V ; \mathbb{R}^{n}\right),\ |\varphi| \leq 1\right\}
\end{align*}
for any $V\subset \subset U$, i.e., the total variation of $Df$ is $|Df|$. Also,
$$\mbox{$E$ is a set of locally finite perimeter}\iff |D\chi_E|(K)<\infty \mbox{\quad for every compact set $K\subset \R^n$.}$$
\end{theorem}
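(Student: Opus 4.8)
The plan is to recognize the statement as the vector-valued Riesz Representation Theorem applied to the linear functional attached to $f$, and then to read off each conclusion in turn. First I would fix $f\in BV_{\mathrm{loc}}(U)$ and define $L\colon C_c^1(U;\R^n)\to\R$ by $L(\varphi)=-\int_U f\,\div\varphi\,dx$, which is visibly linear. The first substantive step is to convert the hypothesis --- that $\sup\{\int_V f\,\div\varphi:\varphi\in C_c^1(V;\R^n),\ |\varphi|\le 1\}<\infty$ for every $V\subset\subset U$ --- into a genuine local operator bound: given a compact $K\subset U$, pick an open $V$ with $K\subset V\subset\subset U$, and by applying the supremum to $\varphi/\|\varphi\|_\infty$ deduce $|L(\varphi)|\le C_V\|\varphi\|_\infty$ for all $\varphi\in C_c^1(V;\R^n)$, where $C_V$ is the finite supremum above. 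Since $C_c^1$ is dense in $C_c$ in the sup norm, $L$ then extends uniquely to a linear functional on $C_c(U;\R^n)$ that is bounded on each $C_c(V;\R^n)$.

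Next I would invoke the Riesz Representation Theorem for $\R^n$-valued functionals (Evans-Gariepy, Chapter 1): this produces a Radon measure $\mu$ on $U$ and a $\mu$-measurable field $\sigma\colon U\to\R^n$ with $|\sigma|=1$ $\mu$-a.e.\ such that $L(\varphi)=\int_U\varphi\cdot\sigma\,d\mu$ for all $\varphi\in C_c(U;\R^n)$. Writing $|Df|:=\mu$ and $Df:=\sigma|Df|$, this is precisely conclusions (1) and (2). For the two variational formulas for $|Df|(V)$, one inequality is immediate: if $|\varphi|\le 1$ then $\int_V f\,\div\varphi=-\int\varphi\cdot\sigma\,d\mu\le\int|\varphi|\,d\mu\le\mu(V)$. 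For the reverse inequality I would approximate $-\sigma$ in $L^1(\mu)$ by continuous compactly supported fields of norm at most $1$ via Lusin's theorem, mollify to land in $C_c^1$, and use inner regularity of $\mu$ to exhaust $V$, so that the resulting test fields make $\int_V f\,\div\varphi$ arbitrarily close to $\mu(V)$. The equality of the two suprema is then just the identity $\int_V\varphi\cdot dDf=\int_V\varphi\cdot\sigma\,d\mu=-\int_V f\,\div\varphi\,dx$ together with the symmetry under $\varphi\mapsto-\varphi$.

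For the final equivalence, if $E$ has locally finite perimeter then $\chi_E\in BV_{\mathrm{loc}}(\R^n)$, so $|D\chi_E|$ is a Radon measure and is therefore finite on every compact $K\subset\R^n$; conversely, if $|D\chi_E|(K)<\infty$ for all compact $K$, then the representation gives, for any $V\subset\subset\R^n$ and $\varphi\in C_c^1(V;\R^n)$ with $|\varphi|\le 1$, the bound $\int_V\chi_E\,\div\varphi=-\int\varphi\cdot\sigma\,d\mu\le|D\chi_E|(\overline{V})<\infty$, whence $\chi_E\in BV_{\mathrm{loc}}$. I expect the only genuine obstacle to be the first step --- turning the scale-invariant ``$\sup$ over $|\varphi|\le1$'' finiteness into an honest sup-norm bound and then carrying out the density extension to $C_c$ --- after which the Riesz Representation Theorem and routine regularity of Radon measures do all the remaining work; in particular the delicate $|\sigma|=1$ $\mu$-a.e.\ assertion is supplied directly by that theorem and requires no separate argument here.
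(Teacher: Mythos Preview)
The paper does not give its own proof of this statement: it is stated as a background result, explicitly recalled from Evans--Gariepy, Chapter 5, Theorem 1, with no argument supplied. Your proposal is the standard proof of that theorem (and is essentially the argument in Evans--Gariepy): build the linear functional $L(\varphi)=-\int_U f\,\div\varphi$, use the $BV_{\mathrm{loc}}$ hypothesis to get local sup-norm bounds, extend by density to $C_c$, and apply the vector-valued Riesz Representation Theorem to produce $\mu$ and $\sigma$; the variational formula for $|Df|(V)$ then follows from Lusin-type approximation of $-\sigma$, and the final equivalence is immediate from the definitions. So there is nothing to compare against in the paper itself, and your outline is correct.
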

\noindent{\bf Remark}: If $f=\chi_{E}$, and $E$ is a set of locally finite perimeter in $U$, we will write 
$$\nu^E:= \sigma,\quad \nu_E:=-\sigma,\quad  \mu_E:= \nu_E |D\chi_E|,$$
where $\nu^E(x)$ ($\nu_E(x)$) is called the {\bf generalized inner (outer) unit normal} of $E$ at $x$ and the $\mathbb{R}^{n}$-valued Radon measure $\mu_{E}$ on $\mathbb{R}^{n}$ is called the \textbf{Gauss-Green measure} of $E$. Let $E$ be a set of locally finite perimeter. The \textbf{reduced boundary} $\partial^* E$ of $E$ is the set of those $x\in \spt \mu_E$ such that 
\beq\label{outer unit vector of reduced boundary}
	\nu_E(x)=\frac{d\mu_E}{d|\mu_E|}(x):=\lim_{r\to 0^+}\frac{\mu_E(B(x,r))}{|\mu_E|(B(x,r))}\text{ exists and is in }\SS^{n-1},
\eeq
where $\spt\mu_E:=\{x:\mu_E(B(x,r))>0\mbox{ for all $r>0$}\}.$
In fact, we have
$$\rb E\subset \spt \mu_E\subset \partial E$$
and $\spt\mu_E=\{x:0<|E\cap B(x,r)|<|B(x,r)|\mbox{ for all $r>0$}\}$.
Moreover, the De Giorgi structure theorem states that $\rb E$ is $(n-1)$-rectifiable and that 
\beq\label{DeGiorgi}
\mu_E=\nu_E \mathcal{H}^{n-1}\mres\: \rb E,\qquad |\mu_E|=\mathcal{H}^{n-1}\mres\: \rb E,
\eeq
or equivalently,
\beq\label{DeGiorgi_inner_normal}
D\chi_E=\nu^E \mathcal{H}^{n-1}\mres\: \rb E,\qquad |D\chi_E|=\mathcal{H}^{n-1}\mres\: \rb E,
\eeq
where $\mathcal{H}^{n-1}$ denotes the $(n-1)$-dimensional Hausdorff measure. Hence, we have the divergence theorem in the following form:
\beq \label{DivergenceThm}
\int_{\rb E}F\cdot \nu_E \ d\mathcal{H}^{n-1}(x)=\int_{\R^n}F\cdot d\mu_E=\int_E\div F, \mbox{\qquad for any $F\in C^1_c(\R^n;\R^n)$},
\eeq
where $\cdot$ is the Euclidean dot product (see \cite{maggi}, Proposition 12.19, Theorem 15.9).\\

Let $E$ be any measurable subset in $\mathbb{R}^{n}$ and $0 \leq d \leq 1$. The set of points of {\bf density $d$ of $E$} is defined as
$$
E^{(d)}=\left\{x \in \mathbb{R}^{n}: \theta_n(E)(x):= \lim _{\rho \rightarrow 0} \frac{\mathcal{L}^{n}\left(E \cap Q_{\rho}(x)\right)}{\mathcal{L}^{n}\left(Q_{\rho}(x)\right)}=d\right\}
$$
where $Q_{\rho}(x)$ is the cube centered at $x$, whose sides are parallel to the coordinate axes with length $2 \rho$. We will use $|\cdot|$ or $\mathcal{L}^n$ for Lebesgue measure on $\R^n$. By Lebesgue points theorem,
$$\theta_n(E)=1\mbox{\quad a.e. on $E$},\qquad \theta_n(E)=0\mbox{\quad a.e. on $\R^n\slash E$}.$$
Therefore, $|E\Delta E^{(1)}|=0$, i.e., every Lebesgue measurable set $E$ is equivalent to $E^{(1)}$. Now we introduce the {\bf essential boundary} $\partial^M E$ of a measurable set $E$ defined as
$$\partial^M E:= \R^n\slash \left(E^{(0)}\cup E^{(1)}\right).$$
Then Federer's theorem tells us that for any set of locally finite perimeter $E$,
$$\partial^* E\subset E^{(1/2)}\subset \partial^M E,\qquad \mathcal{H}^{n-1}(\partial^M E\slash \rb E)=0.$$ 
Moreover,
\begin{align}\label{Federer's theorem}
\mbox{$E$ is a set of locally finite perimeter}\iff\mathcal{H}^{n-1}(\partial^M E\cap K)<\infty,\mbox{\ $\forall K$ compact in $\R^n$}.
\end{align}
We also define the {\bf (relative) perimeter of $E$ in $F$} as 
$$
P(E ; F)=\mathcal{H}^{n-1}(\partial^M E\cap F),
$$
for any Borel set $F\subset \R^n$ (see \cite{maggi}, Corollary 15.8, Theorem 16.2 and \cite{federergmt}, Theorem 4.5.11).

\subsection{Important background results}\mbox{}\vspace{-.22cm}\\

In this subsection, we collect some significant results that will be used in the later sections.

\begin{proposition}[\cite{maggi}, Proposition 4.29]\mbox{}\\
If $\mu_k$ and $\mu$ are vector-valued Radon measures with $\mu_k \wkly \mu$, then for every open set $A \subset \mathbb{R}^n$ we have
$$
|\mu|(A) \leq \liminf _{k \rightarrow \infty}\left|\mu_k\right|(A).
$$
\end{proposition}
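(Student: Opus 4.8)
The plan is to reduce the statement to the dual (variational) description of the total variation and then pass to the limit on compactly supported test fields. Recall that for any vector-valued Radon measure $\nu$ on $\R^n$ and any open set $A\subseteq\R^n$,
$$|\nu|(A)=\sup\left\{\int_{\R^n}\varphi\cdot d\nu \ :\ \varphi\in C_c(A;\R^n),\ |\varphi|\le 1\right\},$$
where the competitors are continuous, compactly supported vector fields with $\supp\varphi\subset A$; this is the integral form of the definition of the total variation (see \cite{maggi}, and by mollification one may equivalently restrict to $\varphi\in C^1_c(A;\R^n)$, matching the formulation in Theorem \ref{structure_theorem}). Granting this description, the argument is short.

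First I would fix an open set $A$ and an arbitrary admissible competitor $\varphi\in C_c(A;\R^n)$ with $|\varphi|\le 1$. Extending $\varphi$ by zero outside $A$, we view it as an element of $C_c(\R^n;\R^n)$, so the hypothesis $\mu_k\wkly\mu$ yields
$$\int_{\R^n}\varphi\cdot d\mu=\lim_{k\to\infty}\int_{\R^n}\varphi\cdot d\mu_k.$$
On the other hand, writing the polar decomposition $\mu_k=\sigma_k|\mu_k|$ with $|\sigma_k|=1$ $|\mu_k|$-almost everywhere, and using $\supp\varphi\subset A$ together with $|\varphi|\le 1$,
$$\int_{\R^n}\varphi\cdot d\mu_k=\int_{\supp\varphi}(\varphi\cdot\sigma_k)\ d|\mu_k|\le|\mu_k|(\supp\varphi)\le|\mu_k|(A).$$
Taking $\liminf_{k\to\infty}$ in the last display and comparing with the previous one gives $\int_{\R^n}\varphi\cdot d\mu\le\liminf_{k\to\infty}|\mu_k|(A)$.

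Finally, I would take the supremum of the left-hand side over all admissible $\varphi$; by the variational description this supremum is exactly $|\mu|(A)$, so $|\mu|(A)\le\liminf_{k\to\infty}|\mu_k|(A)$, which is the claim. The only step that is not completely formal is the variational description of $|\nu|(A)$ itself, and there the nontrivial direction is ``$\ge$'': given $\varepsilon>0$ one must construct $\varphi\in C_c(A;\R^n)$ with $|\varphi|\le 1$ and $\int\varphi\cdot d\nu\ge|\nu|(A)-\varepsilon$. This is obtained from the polar decomposition $\nu=\sigma|\nu|$ by first using inner regularity of the Radon measure $|\nu|$ to pass to a compact $K\subset A$ with $|\nu|(A\setminus K)$ small, and then approximating $\sigma$ in $L^1(|\nu|)$ by a continuous field of sup norm at most $1$ supported in $A$, via Lusin's theorem together with a Tietze/truncation extension. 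Since this is a standard fact recorded in \cite{maggi}, I would simply cite it and present only the three-line argument above; I expect no genuine obstacle beyond this bookkeeping.
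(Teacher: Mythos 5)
Your argument is correct: the paper states this proposition as background and cites \cite{maggi} without proof, and your duality argument (test against $\varphi\in C_c(A;\R^n)$ with $|\varphi|\le 1$, use $\mu_k\wkly\mu$ to pass to the limit, bound $\int\varphi\cdot d\mu_k\le|\mu_k|(A)$ via the polar decomposition, then take the supremum) is exactly the standard proof behind that citation. The only ingredient you defer, the variational description of $|\nu|(A)$, is precisely how the total variation is characterized in Theorem \ref{structure_theorem} and in \cite{maggi}, so citing it as you do is appropriate and leaves no gap.
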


\begin{proposition}[Diffeomorphic images of sets of finite perimeter (\cite{maggi}, Proposition 17.1)]\label{Diffeomorphic images}\mbox{}\\ 
If $E$ is a set of locally finite perimeter in $\mathbb{R}^{n}$ and $f$ is a diffeomorphism of $\mathbb{R}^{n}$ with $g=f^{-1}$, then $f(E)$ is a set of locally finite perimeter in $\mathbb{R}^{n}$ with $\mathcal{H}^{n-1}\left(f\left(\partial^{*} E\right) \Delta \partial^{*} f(E)\right)=0 $, and
$$
\int_{\partial^{*} f(E)} \varphi \nu_{f(E)} \ d\mathcal{H}^{n-1}=\int_{\partial^{*} E}(\varphi \circ f) J f(D g \circ f)^{*} \nu_{E} \ d\mathcal{H}^{n-1}
$$
for every $\varphi \in C_{\mathrm{c}}\left(\mathbb{R}^{n}\right)$, where $Jf=|\det(Df)|$ is the Jacobian of $f$ on $\R^n$.
\end{proposition}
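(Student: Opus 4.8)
The plan is to derive the boundary formula from a single change-of-variables computation for the Gauss--Green measure, and then to extract from that formula both the local finiteness of the perimeter of $f(E)$ and the identification $\mathcal{H}^{n-1}(f(\partial^*E)\,\Delta\,\partial^*f(E))=0$. The algebraic input is the Piola identity: setting $M(x):=Jf(x)\,(Dg)(f(x))=Jf(x)\,(Df(x))^{-1}$, the adjugate formula gives $M=\sgn(\det Df)\,\operatorname{cof}(Df)^{\mathsf T}$, and since $\R^n$ is connected $\det Df$ has constant sign; each column $M_{\cdot i}$ of $M$ is then (up to that fixed sign) a row of $\operatorname{cof}(Df)$, hence divergence-free: $\sum_k\partial_k M_{ki}=0$ for every $i$.

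First I would test against an arbitrary $\psi=(\psi_1,\dots,\psi_n)\in C^1_c(\R^n;\R^n)$. Changing variables $y=f(x)$, applying the chain rule $(\nabla\psi_i)\circ f=(Dg\circ f)^{\mathsf T}\nabla(\psi_i\circ f)$, and then using $\div M_{\cdot i}=0$ to rewrite each summand of the integrand as $\div\big[(\psi_i\circ f)\,M_{\cdot i}\big]$, the divergence theorem \eqref{DivergenceThm} for $E$ (applied to the compactly supported $C^1$ field $(\psi_i\circ f)\,M_{\cdot i}$; note $\psi_i\circ f$ has compact support because $f$ is a homeomorphism) yields
\[
\int_{f(E)}\div\psi\,dy=\int_{\partial^*E}(\psi\circ f)\cdot\big(M^{\mathsf T}\nu_E\big)\,d\mathcal{H}^{n-1}.
\]
Specializing to $\psi$ supported in a fixed bounded open set with $|\psi|\le1$ bounds the right-hand side by $\sup|M|$ on a compact set times $\mathcal{H}^{n-1}$ of a bounded piece of $\partial^*E$, which is finite; hence $\chi_{f(E)}\in BV_{\mathrm{loc}}(\R^n)$, i.e.\ $f(E)$ has locally finite perimeter. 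Then, taking $\psi=\varphi e_i$ and using $\mu_{f(E)}=-D\chi_{f(E)}$ (from \eqref{DeGiorgi_inner_normal} and $\nu_{f(E)}=-\nu^{f(E)}$), the same identity becomes $\int\varphi\,d\mu_{f(E)}=\int_{\partial^*E}(\varphi\circ f)\,M^{\mathsf T}\nu_E\,d\mathcal{H}^{n-1}$ for $\varphi\in C^1_c(\R^n)$; a routine mollification extends it to $\varphi\in C_c(\R^n)$. Since $M^{\mathsf T}=Jf\,(Dg\circ f)^{*}$ and $\mu_{f(E)}=\nu_{f(E)}\,\mathcal{H}^{n-1}\mres\partial^*f(E)$, this is exactly the asserted formula.

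For the boundary identification, I would note that the formula exhibits $D\chi_{f(E)}$ as the pushforward $f_{\#}\sigma$ of the vector measure $\sigma:=-M^{\mathsf T}\nu_E\,\mathcal{H}^{n-1}\mres\partial^*E$, which is carried by $\partial^*E$; since $f$ is a homeomorphism, $|D\chi_{f(E)}|=f_{\#}|\sigma|$ is carried by $f(\partial^*E)$. Because $|D\chi_{f(E)}|=\mathcal{H}^{n-1}\mres\partial^*f(E)$ by De Giorgi's structure theorem \eqref{DeGiorgi_inner_normal}, this forces $\mathcal{H}^{n-1}\big(\partial^*f(E)\setminus f(\partial^*E)\big)=0$. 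Applying the same conclusion with the pair $(g,f(E))$ in place of $(f,E)$ gives $\mathcal{H}^{n-1}\big(\partial^*E\setminus g(\partial^*f(E))\big)=0$, and pushing this forward by $f$ — which, being locally Lipschitz, sends $\mathcal{H}^{n-1}$-null sets to $\mathcal{H}^{n-1}$-null sets — yields the reverse inclusion $\mathcal{H}^{n-1}\big(f(\partial^*E)\setminus\partial^*f(E)\big)=0$.

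The main obstacle I anticipate is not the architecture of the argument but a regularity issue in the displayed computation: if $f$ is only a $C^1$-diffeomorphism then $M$ is merely continuous, so $(\psi_i\circ f)\,M_{\cdot i}$ is not a $C^1$ field and the divergence theorem \eqref{DivergenceThm} does not apply verbatim. Resolving this requires approximating $f$ by smooth diffeomorphisms (or mollifying $M$) and passing to the limit, using that the Piola identity $\div(\operatorname{cof}(Df))=0$ persists distributionally; alternatively, local finiteness of the perimeter of $f(E)$ can be obtained independently from Federer's criterion, noting that a diffeomorphism is locally bi-Lipschitz and preserves points of density $d$. A secondary point of care is the sign bookkeeping relating $\nu_E$, $\nu^E$, $\mu_E$, and $D\chi_E$, which must be tracked consistently for the final formula to come out with the stated orientation.
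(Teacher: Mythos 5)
This proposition is not proved in the paper at all: it is quoted as background directly from Maggi's book (Proposition 17.1), so there is no in-paper argument to compare yours against. Judged on its own, your reconstruction is the standard proof and its architecture is sound. The change of variables $y=f(x)$ plus the Piola identity converts $\int_{f(E)}\div\psi\,dy$ into $\sum_i\int_E\div\bigl[(\psi_i\circ f)M_{\cdot i}\bigr]dx$; the Gauss--Green formula \eqref{DivergenceThm} for $E$ then produces the boundary integral with density $M^{\mathsf T}\nu_E=Jf\,(Dg\circ f)^{*}\nu_E$ (your index bookkeeping and the chain-rule step are correct); taking the supremum over $|\psi|\le 1$ supported in a bounded open set gives $\chi_{f(E)}\in BV_{\mathrm{loc}}$; and identifying $\mu_{f(E)}$ with the push-forward $f_{\#}\sigma$ of $\sigma=-M^{\mathsf T}\nu_E\,\mathcal{H}^{n-1}\mres\partial^*E$, together with $|f_{\#}\sigma|=f_{\#}|\sigma|$ (which the paper records as Proposition \ref{total_variation_3}) and De Giorgi's theorem \eqref{DeGiorgi_inner_normal}, yields $\mathcal{H}^{n-1}\bigl(\partial^*f(E)\setminus f(\partial^*E)\bigr)=0$. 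Your symmetric application to the pair $(g,f(E))$, followed by pushing forward with the locally Lipschitz map $f$ (which sends $\mathcal{H}^{n-1}$-null sets to $\mathcal{H}^{n-1}$-null sets), correctly supplies the reverse inclusion.

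The only genuine gap is the one you flagged yourself: for a $C^1$ diffeomorphism, $M$ is merely continuous, so $(\psi_i\circ f)M_{\cdot i}$ is not an admissible field in \eqref{DivergenceThm}, and the classical computation behind $\sum_k\partial_kM_{ki}=0$ needs $C^2$. Your proposed repairs do close it: mollifying $f$ gives smooth local approximations whose differentials converge locally uniformly, so $\div(\operatorname{cof}Df)=0$ holds distributionally for $C^1$ maps; then mollify $V=(\psi_i\circ f)M_{\cdot i}$ and pass to the limit in $\int_E\div V_\varepsilon=\int_{\partial^*E}V_\varepsilon\cdot\nu_E$, using local uniform convergence on the compact support and $|\mu_E|(K)<\infty$, to legitimize the key display. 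Alternatively, as you note, local finiteness of the perimeter of $f(E)$ can be obtained up front from Federer's criterion (\ref{Federer's theorem}), since a diffeomorphism is locally bi-Lipschitz, hence maps density-$0$ and density-$1$ points of $E$ to the corresponding points of $f(E)$ and distorts $\mathcal{H}^{n-1}$ only by local Lipschitz factors. With either repair the argument is complete and delivers exactly the cited statement.
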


\begin{theorem}[Ehrhard's Inequality (\cite{Borell_EhrhardIneq}, Theorem 1.1)]\label{Ehrhard's Inequality}\mbox{}\\ 
If $A,B$ are Borel sets in $\R^n$, then
$$
\phi^{-1}(\gamma(\lambda A+(1-\lambda) B)) \geq \lambda \phi^{-1}(\gamma(A))+(1-\lambda) \phi^{-1}(\gamma(B)), \text {\qquad for } \lambda \in(0,1),
$$
where
$$\gamma(E)=\frac{1}{(2\pi)^{n/2}}\int_Ee^{-|x|^2/2}\ dx,\qquad\phi(x)=\frac{1}{\sqrt{2\pi}}\int_{-\infty}^xe^{-t^2/2}\ dt.$$
\end{theorem}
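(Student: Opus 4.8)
The plan is to prove this by Borell's heat-flow method, which recasts the inequality as a parabolic maximum principle (Ehrhard's original symmetrization argument reaches only convex $A,B$; I return to it at the end). First I would reduce to convenient sets: by inner regularity of $\gamma$ (equivalently of Lebesgue measure), and since $\lambda A'+(1-\lambda)B'\subseteq\lambda A+(1-\lambda)B$ whenever $A'\subseteq A$, $B'\subseteq B$, with the right-hand side of the asserted inequality continuous in $\gamma(A),\gamma(B)$, it suffices to treat $A,B$ compact; the cases $\gamma(A)\in\{0,1\}$ or $\gamma(B)\in\{0,1\}$ are immediate. Assume then $\gamma(A),\gamma(B)\in(0,1)$ and put $C:=\lambda A+(1-\lambda)B$. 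Let $U_t$ be the Ornstein--Uhlenbeck semigroup,
\[
U_tf(x)=\int_{\R^n}f\!\left(e^{-t}x+\sqrt{1-e^{-2t}}\,z\right)d\gamma(z),
\]
with generator $L=\Delta-x\cdot\nabla$, so $U_0=\mathrm{Id}$ and $U_tf\to\int f\,d\gamma$ as $t\to\infty$. For $t>0$ and any measurable $E$ with $0<\gamma(E)<1$, the function $u_E(t,\cdot):=U_t\chi_E$ is smooth with values in $(0,1)$; writing $\psi:=\phi^{-1}$ and using $\phi''(s)=-s\,\phi'(s)$ (equivalently $\psi''=\psi\,(\psi')^2$), a direct computation shows $v_E:=\psi(u_E)$ solves
\[
\partial_t v_E=\Delta v_E-v_E\,|\nabla v_E|^2-x\cdot\nabla v_E .
\]
Writing $v_1,v_2,v_3$ for the functions attached to $A,B,C$, the theorem will follow from the claim that for every $t>0$,
\[
v_3\!\left(t,\lambda x+(1-\lambda)y\right)\ \geq\ \lambda\,v_1(t,x)+(1-\lambda)\,v_2(t,y)\qquad\text{for all }x,y\in\R^n,
\]
since applying it at $x=y=0$ and letting $t\to\infty$ (so $v_i(t,0)\to\phi^{-1}(\gamma(E_i))$) gives exactly $\phi^{-1}(\gamma(C))\geq\lambda\phi^{-1}(\gamma(A))+(1-\lambda)\phi^{-1}(\gamma(B))$.

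To establish the claim I would argue by a parabolic maximum principle. Set $G(t;x,y):=v_3(t,\lambda x+(1-\lambda)y)-\lambda v_1(t,x)-(1-\lambda)v_2(t,y)$ and $m(t):=\inf_{x,y}G(t;x,y)$, and suppose the infimum is attained at $(x_0,y_0)$, with $z_0:=\lambda x_0+(1-\lambda)y_0$. The first-order conditions $\nabla_xG=\nabla_yG=0$ force the three gradients to coincide, $\nabla v_1(x_0)=\nabla v_2(y_0)=\nabla v_3(z_0)=:p$. Substituting into $m'(t)=\partial_tG(t;x_0,y_0)$, the drift contribution $-\big(z_0-\lambda x_0-(1-\lambda)y_0\big)\cdot p$ vanishes because $z_0=\lambda x_0+(1-\lambda)y_0$, and the cubic contribution collapses to $-|p|^2\big(v_3(z_0)-\lambda v_1(x_0)-(1-\lambda)v_2(y_0)\big)=-|p|^2 m(t)$; meanwhile positive semidefiniteness of the Hessian of $(x,y)\mapsto G(t;x,y)$ at $(x_0,y_0)$, evaluated on the directions $(e_j,e_j)$ and summed over an orthonormal basis, gives $\Delta v_3(z_0)\geq\lambda\Delta v_1(x_0)+(1-\lambda)\Delta v_2(y_0)$. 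Hence $m'(t)\geq-|p(t)|^2 m(t)$, so a Gr\"onwall-type estimate propagates $m(t)\geq0$ forward in $t$ as long as $\liminf_{t\to0^+}m(t)\geq0$.

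For the endpoint $t\to0^+$ I would use signed-distance asymptotics: for $E$ open, $v_E(t,x)=\frac{1}{\sqrt{2t}}\,\mathrm{sd}(x,E)+o(t^{-1/2})$ as $t\to0^+$, where $\mathrm{sd}(x,E)=\dist(x,\R^n\slash E)$ on $E$ and $-\dist(x,E)$ off $E$; this follows from $\chi_E*\mathcal N(0,2tI)$ concentrating near $\chi_E$ together with the tail estimate $\phi^{-1}(1-\varepsilon)\sim\sqrt{2\log(1/\varepsilon)}$. The elementary identity $\lambda B(x,r)+(1-\lambda)B(y,s)=B\!\left(\lambda x+(1-\lambda)y,\,\lambda r+(1-\lambda)s\right)$, together with $\lambda A+(1-\lambda)B\supseteq\lambda\{a\}+(1-\lambda)B(y,s)$ for $a\in\overline{A}$, shows $\mathrm{sd}(\cdot,C)$ dominates the corresponding $\lambda$-combination of $\mathrm{sd}(\cdot,A)$ and $\mathrm{sd}(\cdot,B)$ along Minkowski combinations; hence the bracket in $G(t;x,y)=\frac{1}{\sqrt{2t}}\big[\mathrm{sd}(z,C)-\lambda\,\mathrm{sd}(x,A)-(1-\lambda)\,\mathrm{sd}(y,B)\big]+o(t^{-1/2})$ is nonnegative, so $\liminf_{t\to0^+}m(t)\geq0$. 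Combining with the previous paragraph gives $m(t)\geq0$ for all $t>0$, and $t\to\infty$ finishes the proof.

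The main obstacle is making the maximum-principle step rigorous, and essentially all the difficulty sits in the attainment of the extremum. The function $G(t;\cdot)$ is not coercive — for bounded $A,B$ the Mehler kernel gives $v_E(t,x)\sim-e^{-t}|x|/\sqrt{1-e^{-2t}}$, so $G$ merely stays $\gtrsim0$ near infinity rather than diverging — so one must either analyse $G$ at infinity via the explicit Mehler asymptotics to see that a negative infimum would force an interior minimum, or introduce a coercive penalization $G+\varepsilon w$ and carry the resulting error terms through the computation before letting $\varepsilon\to0^+$. The reduction to compact (indeed bounded, open, smooth) sets is used exactly here, and also to make the $t\to0^+$ asymptotics uniform, so that the degenerate values $\phi^{-1}(0)=-\infty$ and $\phi^{-1}(1)=+\infty$ cause no problem. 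Finally, for \emph{convex} $A,B$ one can instead iterate the (standard, isotropic) Ehrhard symmetrizations $S_{e_1},\dots,S_{e_n}$ in the coordinate directions: each preserves $\gamma$ and satisfies $\lambda S_{e_j}(E)+(1-\lambda)S_{e_j}(F)\subseteq S_{e_j}(\lambda E+(1-\lambda)F)$ by the one-dimensional case, reducing the $n$-dimensional inequality to $n=1$; but this route still requires the $n=1$ base case and a convergence argument for the iteration, and does not directly reach arbitrary Borel sets.
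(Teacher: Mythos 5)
First, note that the paper does not prove this statement at all: Theorem \ref{Ehrhard's Inequality} is imported verbatim from Borell (\cite{Borell_EhrhardIneq}, Theorem 1.1) and used as a black box in the proof of Theorem \ref{Gaussian_Iso_epsilon}. So there is no internal proof to compare against; what you have written is an attempt to reprove the cited result, and your route (Ornstein--Uhlenbeck flow, the PDE $\partial_t v=\Delta v-v|\nabla v|^2-x\cdot\nabla v$ for $v=\phi^{-1}(U_t\chi_E)$, and a two-point maximum principle for $G(t;x,y)$) is indeed the recognized semigroup proof of Ehrhard's inequality for general Borel sets. The algebra you record is correct: the first-order conditions do force the three gradients to coincide, the drift term cancels, the cubic term collapses to $-|p|^2m(t)$, and the Hessian test on the diagonal directions gives the Laplacian comparison.

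However, as written this is a sketch rather than a proof, and the two places where you defer the work are exactly where the real difficulty of Borell's theorem lives. (i) The maximum-principle step: you acknowledge that $G(t;\cdot,\cdot)$ is not coercive, but you do not carry out either of the proposed fixes (Mehler asymptotics at infinity, or an $\varepsilon$-penalization with the error terms tracked through the computation), nor do you justify differentiating $m(t)=\inf_{x,y}G$ or the local integrability of the coefficient $|p(t)|^2$ needed for the Gr\"onwall argument; without attainment and these regularity facts the inequality $m'\geq-|p|^2m$ has no rigorous meaning. (ii) The initial condition: the expansion $v_E(t,x)=\tfrac{1}{\sqrt{2t}}\,\mathrm{sd}(x,E)+o(t^{-1/2})$ is only pointwise, while the conclusion $\liminf_{t\to0^+}\inf_{x,y}G\geq0$ requires uniform control of the error over all $(x,y)$ simultaneously; since the signed-distance bracket can vanish identically (e.g.\ when all three points sit on the relevant boundaries), a non-uniform $o(t^{-1/2})$ cannot be absorbed, and this is precisely why the published proofs either regularize the sets (smooth boundaries, uniform interior/exterior ball conditions), pass through a functional formulation with smooth approximants, or settle for $m(t)\geq-\delta(t)$ with $\delta(t)\to0$ and feed that into the differential inequality. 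Until these two steps are done, the argument establishes the statement only modulo the hardest part of Borell's proof; given that the paper's own treatment is simply a citation, the honest alternatives are either to complete those steps in detail or to cite \cite{Borell_EhrhardIneq} as the paper does.
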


Recall that we define $\R^m$-valued Radon measure $\mu$ on $\R^n$ as the bounded linear functional on $C_c(\R^n,\R^m)$ and set
$$\langle \mu,\varphi\rangle:=\int_{\R^n}\varphi\cdot d\mu,\qquad \varphi\in C_c(\R^n;\R^m).$$
The following three propositions are useful when we calculate the total variation. The first one can be found in \cite{maggi}, Remark 4.8, and the rest are straightforward applications of the results in \cite{maggi}, Chapter 4.

\begin{proposition}[\cite{maggi}, Remark 4.8]\label{total_variation_1}\mbox{}\\
Let $\mu$ be a Radon measure on $\R^n$ and let $f:\R^n\to \R^m$ be a $\R^m$-valued function with $f\in L^1_{loc}(\R^n,\mu;\R^m)$. Then we may define a bounded linear functional $f\mu: C_c(\R^n;\R^m)\to \R$ as
$$\langle f\mu ,\varphi\rangle:=\int_{\R^n} f\cdot \varphi\ d\mu$$
for any $\varphi \in C_c(\R^n;\R^m)$, i.e., $f\mu$ is a $\R^m$-valued Radon measure on $\R^n$. Moreover, the total variation of $f\mu$ is $|f\mu|=|f|\mu$, where
$$|f|\mu(E):=\int_E|f|\ d\mu,\qquad E\in \mathcal{B}(\R^n). $$
\end{proposition}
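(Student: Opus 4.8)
The plan is to split the argument into two independent parts: first verify that $\varphi\mapsto\int_{\R^n}f\cdot\varphi\ d\mu$ really is an $\R^m$-valued Radon measure, and then compute its total variation. For the first part, linearity is immediate, so the only thing to check is local boundedness: if $\varphi\in C_c(\R^n;\R^m)$ has $\spt\varphi\subseteq K$ with $K$ compact, then
$$\Big|\int_{\R^n}f\cdot\varphi\ d\mu\Big|\le\int_K|f|\,|\varphi|\ d\mu\le\Big(\int_K|f|\ d\mu\Big)\sup_{\R^n}|\varphi|,$$
and $\int_K|f|\ d\mu<\infty$ is exactly the hypothesis $f\in L^1_{\mathrm{loc}}(\R^n,\mu;\R^m)$. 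Hence $f\mu$ is a locally bounded linear functional on $C_c(\R^n;\R^m)$, i.e.\ an $\R^m$-valued Radon measure in the sense used here, and the same estimate applied to Borel sets $E\subseteq K$ shows $|f\mu|(K)\le\int_K|f|\ d\mu<\infty$, so its total variation is a genuine Radon measure.

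For the second part I would observe that $f\mu$ already comes presented in polar form. Set $u:=f/|f|$ on $\{f\ne0\}$ and $u:=e_1$ on $\{f=0\}$; then $u$ is $\mu$-measurable, $|f|\mu$ is a nonnegative Radon measure (again because $f\in L^1_{\mathrm{loc}}$), and for every $\varphi\in C_c(\R^n;\R^m)$
$$\int_{\R^n}\varphi\cdot d\big(u\,(|f|\mu)\big)=\int_{\R^n}(u\cdot\varphi)\,|f|\ d\mu=\int_{\R^n}f\cdot\varphi\ d\mu=\langle f\mu,\varphi\rangle,$$
so $f\mu=u\,(|f|\mu)$ as vector measures. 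Since $(|f|\mu)(\{f=0\})=\int_{\{f=0\}}|f|\ d\mu=0$, we have $|u|=1$ holding $(|f|\mu)$-a.e.; by the uniqueness of the polar decomposition of an $\R^m$-valued Radon measure (\cite{maggi}, Chapter 4) this forces $|f\mu|=|f|\mu$, which is the claim.

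If one prefers to avoid invoking uniqueness of the polar decomposition, the same conclusion follows directly from the partition definition $|f\mu|(E)=\sup\sum_h|(f\mu)(E_h)|$: the bound $|f\mu|(E)\le\int_E|f|\ d\mu$ is the triangle inequality $\sum_h|\int_{E_h}f\ d\mu|\le\sum_h\int_{E_h}|f|\ d\mu=\int_E|f|\ d\mu$, and for the reverse inequality one covers $\SS^{m-1}$ by finitely many caps of diameter $<\ep$ with centers $u_1,\dots,u_N$, pulls them back through $u$ to get a finite Borel partition $E=E_0\cup\dots\cup E_N$ with $E_0=E\cap\{f=0\}$ and $\langle u,u_j\rangle\ge1-\ep^2/2$ on $E_j$, and estimates $\sum_j|(f\mu)(E_j)|\ge\sum_j\langle\int_{E_j}f\ d\mu,u_j\rangle=\sum_j\int_{E_j}|f|\langle u,u_j\rangle\ d\mu\ge(1-\ep^2/2)\int_E|f|\ d\mu$, then lets $\ep\to0$. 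There is no real obstacle in any of this; the only step needing a moment's care is this reverse inequality, where the merely measurable polar direction $f/|f|$ must be discretized — which is exactly why the statement is quoted as Remark 4.8 of \cite{maggi} rather than reproved here in full.
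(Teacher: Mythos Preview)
Your proof is correct and complete. Note, however, that the paper does not give its own proof of this proposition: it is simply quoted as \cite{maggi}, Remark 4.8, and used as a black box. Your argument---writing $f\mu=u\,(|f|\mu)$ with $u=f/|f|$ and invoking the uniqueness of the polar decomposition, together with the direct partition argument you sketch as an alternative---is exactly the standard justification one finds in \cite{maggi}, so there is nothing substantively different to compare.
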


\begin{proposition}\label{total_variation_2}
Let $\mu$ be a $\R^m$-valued Radon measure on $\R^n$ and let $h:\R^n\to \R$ be a real-valued bounded Borel function. Then we may
define a bounded linear functional $h\mu: C_c(\R^n;\R^m)\to \R$ as
$$\langle h\mu ,\varphi\rangle:=\int_{\R^n} h\varphi\cdot d\mu$$
for any $\varphi \in C_c(\R^n;\R^m)$, i.e., $h\mu$ is a $\R^m$-valued Radon measure on $\R^n$. Moreover, the total variation of $h\mu$ is $|h\mu|=|h||\mu|$, where
$$|h||\mu|(E):=\int_E|h|\ d|\mu|,\qquad E\in \mathcal{B}(\R^n). $$
\end{proposition}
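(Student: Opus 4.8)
The plan is to reduce the statement to Proposition~\ref{total_variation_1} by passing through the polar decomposition of the vector-valued measure $\mu$. First I would recall that every $\R^m$-valued Radon measure $\mu$ on $\R^n$ admits a polar (Radon--Nikodym-type) decomposition $\mu=\sigma\,|\mu|$, where $|\mu|$ is the scalar, nonnegative total-variation measure of $\mu$ and $\sigma\in L^1_{loc}(\R^n,|\mu|;\R^m)$ satisfies $|\sigma(x)|=1$ for $|\mu|$-a.e.\ $x$ (this is exactly the content of the relevant results in \cite{maggi}, Chapter~4, on which Proposition~\ref{total_variation_1} itself relies).

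Next, I would set $f:=h\sigma$. Since $h$ is a bounded Borel function and $|\sigma|=1$ holds $|\mu|$-a.e., we get $|f|=|h|\le \sup_{\R^n}|h|<\infty$ $|\mu|$-a.e.; in particular $f$ is $|\mu|$-measurable and lies in $L^1_{loc}(\R^n,|\mu|;\R^m)$. Then for every $\varphi\in C_c(\R^n;\R^m)$, using that $h$ is scalar,
$$\langle h\mu,\varphi\rangle=\int_{\R^n}h\varphi\cdot d\mu=\int_{\R^n}h\,\varphi\cdot\sigma\ d|\mu|=\int_{\R^n}f\cdot\varphi\ d|\mu|=\langle f|\mu|,\varphi\rangle,$$
so $h\mu$ coincides with the measure $f|\mu|$ produced by Proposition~\ref{total_variation_1} (applied with the nonnegative Radon measure $|\mu|$ in place of the measure ``$\mu$'' there, and vector field $f=h\sigma$). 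That proposition then immediately yields that $h\mu$ is a well-defined $\R^m$-valued Radon measure with total variation $|h\mu|=|f|\,|\mu|=|h|\,|\mu|$, which is precisely the asserted identity $|h\mu|(E)=\int_E|h|\ d|\mu|$ for $E\in\mathcal B(\R^n)$.

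I do not expect a genuine obstacle here; the statement is essentially bookkeeping on top of Proposition~\ref{total_variation_1}. The only points that need a word of care are (i) checking $f=h\sigma\in L^1_{loc}(\R^n,|\mu|;\R^m)$, which follows from the boundedness of $h$ together with $|\sigma|=1$ $|\mu|$-a.e., and (ii) having the polar decomposition $\mu=\sigma|\mu|$ with unit density available. If one preferred to avoid invoking the polar decomposition, one could instead argue directly: the estimate $|\langle h\mu,\varphi\rangle|\le(\sup|h|)\,|\mu|(\spt\varphi)$ shows $h\mu$ defines a continuous functional on $C_c$, hence a Radon measure by Riesz representation, and the formula $|h\mu|=|h|\,|\mu|$ can then be obtained from the definition of total variation by approximating $\sigma$ by continuous $\R^m$-valued fields of norm $\le1$; but the route through Proposition~\ref{total_variation_1} is shorter and cleaner.
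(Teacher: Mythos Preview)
Your proposal is correct and is exactly the kind of ``straightforward application of the results in \cite{maggi}, Chapter~4'' that the paper has in mind: the paper does not spell out a proof, merely pointing to Chapter~4, and your reduction via the polar decomposition $\mu=\sigma|\mu|$ followed by Proposition~\ref{total_variation_1} with $f=h\sigma$ is precisely how one unpacks that reference. There is nothing to add.
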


\begin{proposition}\label{total_variation_3}
Let $\mu$ be a $\R^m$-valued Radon measure on $\R^n$ and let $f:\R^n\to \R^n$ be a homeomorphism. Then we may
define a bounded linear functional $f_{\#} \mu: C_c(\R^n;\R^m)\to \R$ as
$$\langle f_{\#} \mu ,\varphi\rangle:=\int_{\R^n} (\varphi\circ f)\cdot d\mu$$
for any $\varphi \in C_c(\R^n;\R^m)$, i.e., $f_{\#} \mu$ is a $\R^m$-valued Radon measure on $\R^n$. Moreover, the total variation of $f_{\#} \mu$ is $|f_{\#} \mu|=f_{\#} |\mu|$, where
$$f_{\#} |\mu|(E):=|\mu|(f^{-1}(E)),\qquad E\in \mathcal{B}(\R^n) $$
is called the {\bf push-forward of $|\mu|$ through $f$}.
\end{proposition}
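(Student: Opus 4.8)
The plan is to verify the statement in two stages: first that $f_{\#}\mu$ is genuinely an $\R^m$-valued Radon measure, and then that its total variation coincides with the pushforward $f_{\#}|\mu|$. The second stage is obtained by comparing the defining suprema of the two measures on open sets and then passing to arbitrary Borel sets by outer regularity.

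\textbf{Well-definedness.} First I would observe that because $f$ is a homeomorphism, $\varphi\circ f\in C_c(\R^n;\R^m)$ whenever $\varphi\in C_c(\R^n;\R^m)$: continuity is clear, and $\spt(\varphi\circ f)\subseteq f^{-1}(\spt\varphi)$ is compact since $f^{-1}$ is continuous. Linearity of $\varphi\mapsto\langle f_{\#}\mu,\varphi\rangle$ is immediate. For local boundedness, fix a compact set $K\subset\R^n$; if $\varphi\in C_c(\R^n;\R^m)$ with $\spt\varphi\subseteq K$ and $|\varphi|\le 1$, then $|\varphi\circ f|\le\chi_{f^{-1}(K)}$, hence
\[
|\langle f_{\#}\mu,\varphi\rangle|=\Big|\int_{\R^n}(\varphi\circ f)\cdot d\mu\Big|\le|\mu|\big(f^{-1}(K)\big)<\infty,
\]
because $f^{-1}(K)$ is compact and $|\mu|$ is a locally finite Radon measure. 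By the Riesz representation theorem for vector-valued Radon measures (\cite{maggi}, Chapter 4), this shows $f_{\#}\mu$ is an $\R^m$-valued Radon measure on $\R^n$.

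\textbf{Total variation on open sets.} Next I would use the characterization that, for an $\R^m$-valued Radon measure $\nu$ and an open set $A\subseteq\R^n$, one has $|\nu|(A)=\sup\{\int_{\R^n}\psi\cdot d\nu:\psi\in C_c(A;\R^m),\ |\psi|\le 1\}$ (\cite{maggi}, Chapter 4). Since $f$ is a bijection of $\R^n$ carrying $f^{-1}(A)$ onto $A$, the assignment $\psi\mapsto\psi\circ f$ is a bijection of $C_c(A;\R^m)$ onto $C_c(f^{-1}(A);\R^m)$ with $\sup_x|\psi(f(x))|=\sup_{y\in A}|\psi(y)|$. Therefore, for every open $A$,
\begin{align*}
|f_{\#}\mu|(A)&=\sup\Big\{\int_{\R^n}(\psi\circ f)\cdot d\mu:\psi\in C_c(A;\R^m),\ |\psi|\le 1\Big\}\\
&=\sup\Big\{\int_{\R^n}\eta\cdot d\mu:\eta\in C_c(f^{-1}(A);\R^m),\ |\eta|\le 1\Big\}=|\mu|\big(f^{-1}(A)\big)=f_{\#}|\mu|(A).
\end{align*}

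\textbf{Passing to Borel sets, and the one delicate point.} Both $|f_{\#}\mu|$ and $f_{\#}|\mu|$ are positive Radon measures on $\R^n$: the former by the first step, the latter because $f^{-1}(E)$ is Borel for each Borel $E$ (continuity of $f$) and $f_{\#}|\mu|(K)=|\mu|(f^{-1}(K))<\infty$ for compact $K$. Radon measures on the $\sigma$-compact space $\R^n$ are outer regular, so for any Borel set $E$,
\[
|f_{\#}\mu|(E)=\inf\{|f_{\#}\mu|(A):E\subseteq A\text{ open}\}=\inf\{f_{\#}|\mu|(A):E\subseteq A\text{ open}\}=f_{\#}|\mu|(E),
\]
using the previous step in the middle equality; this gives $|f_{\#}\mu|=f_{\#}|\mu|$. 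I do not expect a genuine obstacle here: the only points requiring care are the compact-support bookkeeping for $\varphi\circ f$ and the verification that precomposition with $f$ is a \emph{norm-preserving bijection} between the classes of admissible test fields on $A$ and on $f^{-1}(A)$, which is precisely where the homeomorphism hypothesis (rather than mere continuity of $f$) is used.
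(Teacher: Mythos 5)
Your proof is correct, and since the paper offers no argument for this proposition beyond deferring to the results of \cite{maggi}, Chapter 4, your write-up is exactly the ``straightforward application'' intended: Riesz-type local boundedness for well-definedness, the supremum characterization of total variation on open sets via the norm-preserving bijection $\psi\mapsto\psi\circ f$, and outer regularity of both (locally finite, hence Radon) measures to pass from open to Borel sets. No gaps; the only stylistic alternative would be to use the polar decomposition $\mu=\sigma|\mu|$ and Proposition \ref{total_variation_1} to write $f_{\#}\mu=(\sigma\circ f^{-1})\,f_{\#}|\mu|$ directly, but your route is equally standard.
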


\subsection{Anisotropic Gaussian Hausdorff measure and anisotropic Gaussian perimeter}\mbox{}\vspace{-.22cm}\\

Let $A\in M_n(\R)$ be a symmetric positive definite matrix. There exists a unique symmetric positive definite matrix $\sqrt{A}$ such that 
$$A=(\sqrt{A})^2$$
(see \cite{Horn-Johnson}, Theorem 7.2.6). We will use the notation $A\succ 0$ ($A \succeq 0$) to mean the matrix $A$ is symmetric positive definite (symmetric positive semi-definite). Notice that we have the following equalities:
$$e^{-\langle Ax,x\rangle/2}=e^{-\langle \sqrt{A}x,\sqrt{A}x\rangle/2}=e^{-|\sqrt{A}x|^2/2},\qquad \sqrt{\det A}=\det\sqrt{A}.$$
The {\bf matrix norm induced by the Euclidean norm} is defined as
$$\|A\|:=\max\limits_{\|x\|_2=1}\|Ax\|_2=\max\limits_{\|x\|_2=\|y\|_2=1}\langle Ax,y\rangle.$$
Notice that
$$\|A\|=\|\sqrt{A}\sqrt{A}\|=\Big\|\sqrt{A}^\mathsf{T}\sqrt{A}\Big\|=\|\sqrt{A}\|^2\implies \|A\|^{\frac12}=\|\sqrt{A}\|.$$
In addition, $\sqrt{A^{-1}}=(\sqrt{A})^{-1}$ and hence
$$\|A^{-1}\|^{\frac12}=\|(\sqrt{A})^{-1}\|.$$
Now we define the {\bf $A$-anisotropic Gaussian measure (mass)} as
$$\gamma_A(E)=\fint_E e^{-\langle Ax,x\rangle/2}\ dx=\frac{\sqrt{\det A}}{(2\pi)^{\frac{n}{2}}}\int_{E} e^{-\langle Ax,x\rangle /2}\ dx,$$
for any (Lebesgue) measurable set $E\subset \R^n$. The connection between $\gamma_A$ and $\gamma$ is
$$\gamma_A(E)=\gamma(\sqrt{A}(E)),$$
where $\gamma:= \gamma_{I_n}$ is the (standard) Gaussian measure on $\R^n$, i.e.,
$$\gamma(E)=\frac{1}{(2\pi)^{n/2}}\int_{E}e^{-|x|^2/2}\ dx.$$

Given any $k \in \mathbb{N}$ with $0 \leq k \leq n$, we define the {\bf $k$-dimensional $A$-anisotropic Gaussian Hausdorff measure} $\mathcal{H}_{\gamma_A}^{k}$ by
\begin{align}\label{Hausdorff_measure}
\mathcal{H}_{\gamma_A}^{k}(B)=\frac{\sqrt{\det A}}{(2\pi)^{\frac{k}{2}}}\int_{B}e^{-\langle Ax,x\rangle /2}\ d \mathcal{H}^{k}(x),\mbox{\quad for any Borel set $B$}.
\end{align}

Let $E$ be a measurable set in $\R^n$ and $F$ be a Borel set in $\R^n$. The  {\bf (relative) $A$-anisotropic Gaussian perimeter of $E$ in $F$} is defined by
$$
P_{\gamma_A}(E ; F)=\mathcal{H}_{\gamma_A}^{n-1}\left(\eb E \cap F\right) ,
$$
and we say $E$ is a set of {\bf locally finite $A$-anisotropic Gaussian perimeter} if $P_{\gamma_A}(E;K)=\mathcal{H}_{\gamma_A}^{n-1}(\eb E\cap K)<\infty$ for every compact set $K\subset \R^n$. In particular, $E$ is a set of {\bf finite $A$-anisotropic Gaussian perimeter} if $P_{\gamma_A}(E)<\infty$. We will omit the notation $A$ and simply say $E$ is a set of finite anisotropic Gaussian perimeter if there is no confusion.

\begin{proposition}\label{locally finite anisotropic Gaussian perimeter}
$E$ is a set of locally finite perimeter if and only if $E$ is a set of locally finite anisotropic Gaussian perimeter, that is, $P_{\gamma_A}(E;K)=\mathcal{H}_{\gamma_A}^{n-1}(\eb E\cap K)<\infty$ for every compact set $K\subset \R^n$.
\end{proposition}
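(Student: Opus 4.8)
The plan is to exploit the single structural fact that on any compact set $K\subset\R^n$ the density $e^{-\langle Ax,x\rangle/2}$ is a continuous, strictly positive function, hence bounded above and below by positive constants depending only on $A$ and $K$. This will show that the measures $\mathcal{H}^{n-1}\mres(\eb E\cap K)$ and $\mathcal{H}_{\gamma_A}^{n-1}\mres(\eb E\cap K)$ are mutually absolutely continuous with bounded densities, so one is finite iff the other is. Combined with Federer's criterion \eqref{Federer's theorem}, which characterizes sets of locally finite perimeter precisely by the finiteness of $\mathcal{H}^{n-1}(\eb E\cap K)$ for all compact $K$, this gives the equivalence.

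First I would fix a compact set $K\subset\R^n$ and set $m_K:=\min_{x\in K}e^{-\langle Ax,x\rangle/2}$ and $M_K:=\max_{x\in K}e^{-\langle Ax,x\rangle/2}$; both exist by compactness and continuity, with $0<m_K\le M_K<\infty$ because $e^{-\langle Ax,x\rangle/2}>0$ everywhere. Then from the definition \eqref{Hausdorff_measure} of $\mathcal{H}_{\gamma_A}^{n-1}$ one reads off the two-sided bound
\[
\frac{\sqrt{\det A}}{(2\pi)^{(n-1)/2}}\,m_K\,\mathcal{H}^{n-1}(\eb E\cap K)\ \le\ P_{\gamma_A}(E;K)\ \le\ \frac{\sqrt{\det A}}{(2\pi)^{(n-1)/2}}\,M_K\,\mathcal{H}^{n-1}(\eb E\cap K).
\]
The right-hand inequality shows that if $E$ has locally finite perimeter then $\mathcal{H}^{n-1}(\eb E\cap K)<\infty$ for every compact $K$ by \eqref{Federer's theorem}, whence $P_{\gamma_A}(E;K)<\infty$ for every compact $K$, i.e.\ $E$ has locally finite $A$-anisotropic Gaussian perimeter. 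The left-hand inequality gives the converse: if $P_{\gamma_A}(E;K)<\infty$ for every compact $K$, then $\mathcal{H}^{n-1}(\eb E\cap K)\le \frac{(2\pi)^{(n-1)/2}}{\sqrt{\det A}\,m_K}P_{\gamma_A}(E;K)<\infty$ for every compact $K$, and another application of Federer's criterion \eqref{Federer's theorem} shows $E$ has locally finite perimeter.

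There is no real obstacle here: the only point requiring a word of care is that the bounds $m_K,M_K$ depend on $K$ (they degenerate as $K$ exhausts $\R^n$, since the Gaussian weight is not bounded below by a positive constant globally), which is exactly why the statement is local — the equivalence holds for \emph{locally} finite perimeter but there is no analogous global statement, and one should remark that a set of finite perimeter need not have finite $A$-anisotropic Gaussian perimeter nor conversely. Invoking Federer's theorem in the form \eqref{Federer's theorem} in both directions is the cleanest route; alternatively one could phrase the argument directly in terms of the Gauss--Green measure $\mu_E$ and the identity $|\mu_E|=\mathcal{H}^{n-1}\mres\rb E$ together with $\mathcal{H}^{n-1}(\eb E\setminus\rb E)=0$, but the essential-boundary formulation is the most economical.
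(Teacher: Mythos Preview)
Your proof is correct and follows essentially the same approach as the paper: bound the Gaussian density above and below by positive constants on each compact $K$ to compare $P_{\gamma_A}(E;K)$ with $P(E;K)=\mathcal{H}^{n-1}(\eb E\cap K)$. One small inaccuracy in your closing aside: since $e^{-\langle Ax,x\rangle/2}\le 1$ globally, a set of finite (Euclidean) perimeter \emph{does} always have finite $A$-anisotropic Gaussian perimeter; only the converse implication fails, as the paper's remark and example immediately following this proposition illustrate.
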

\begin{proof} $(\Ra)$ For any compact set $K$,
\begin{align*}
P_{\gamma_A}(E;K)&=\frac{\sqrt{\det A}}{(2\pi)^{\frac{n-1}{2}}}\int_{\eb E\cap K} e^{-|\sqrt{A}x|^2/2}d \mathcal{H}^{n-1}(x)\leq\frac{\sqrt{\det A}}{(2\pi)^{\frac{n-1}{2}}}\int_{\eb E\cap K}1\ d \mathcal{H}^{n-1}(x)=\frac{\sqrt{\det A}}{(2\pi)^{\frac{n-1}{2}}} P(E;K)<\infty.
\end{align*}
$(\La)$ For any compact set $K$, let
$$m_K:=\min_{x\in K}e^{-|\sqrt{A}x|^2/2}\geq \min_{x\in K}e^{-d_{\max}^2|x|^2/2}>0,$$
where $d_{\max}$ is the largest eigenvalue of $\sqrt{A}$. Then
\begin{align*}
\infty>P_{\gamma_A}(E;K)&=\frac{\sqrt{\det A}}{(2\pi)^{\frac{n-1}{2}}}\int_{\eb E\cap K} e^{-|\sqrt{A}x|^2/2}d \mathcal{H}^{n-1}(x)\geq \frac{\sqrt{\det A}}{(2\pi)^{\frac{n-1}{2}}}m_KP(E;K).
\end{align*}
\end{proof}
\noindent{\bf Remark}: It is clear that
$$\mbox{$E$ is a set of finite perimeter $\implies$ $E$ is a set of finite anisotropic Gaussian perimeter.}$$
However, the converse is not true. For example, let $n=2$, $E_\alpha=[-\alpha,\alpha]\times(0,\infty)$, and
$$A=2\left[ \begin{array}{rr} a & b \\ b & c \end{array} \right]\succ 0$$
with $a,c>0$ and $b>0$. Then
$$e^{-ax^2-2bxy-cy^2}=e^{-\langle A(x,y),(x,y)\rangle/2}=e^{-|\sqrt{A}(x,y)|^2/2}\leq e^{-\|(\sqrt{A})^{-1}\|^{-2}(x^2+y^2)/2}\leq e^{-\|(\sqrt{A})^{-1}\|^{-2}y^2/2}$$
and
\begin{align*}
P_{\gamma_A}(E_\alpha)&=\frac{\sqrt{\det A}}{\sqrt{2\pi}}\left(\int_{0}^{\infty}e^{-a\alpha^2-2b\alpha y-cy^2}\ dy+\int_{0}^{\infty}e^{-a\alpha^2+2b\alpha y-cy^2}\ dy+\int_{-\alpha}^{\alpha}e^{-ax^2}\ dx\right)\\
&\leq \frac{\sqrt{\det A}}{\sqrt{2\pi}}\left(2\int_0^{\infty}e^{-\|(\sqrt{A})^{-1}\|^{-2}y^2/2}dy+2\int_{0}^{\alpha}e^{-ax^2}\ dx\right)<\infty.
\end{align*}
That is, $E_\alpha$ is a set of finite anisotropic Gaussian perimeter and clearly $P(E_\alpha)=\infty$, i.e., $E_\alpha$ is not a set of finite perimeter.\\

Now we establish the lower semicontinuity, locality, complementation, and subadditivity for the anisotropic Gaussian perimeter. Proposition \ref{lower_semicontinuity} is a straightforward consequence of Proposition \ref{locally finite anisotropic Gaussian perimeter}. The locality, complementation, and subadditivity can be deduced by Proposition \ref{lower_semicontinuity} with results in \cite{maggi}, Chapter 16.

\begin{proposition}[Lower semicontinuity]\label{lower_semicontinuity}\mbox{}\\
If $E$ is a set of locally finite anisotropic Gaussian perimeter and $U \subseteq \mathbb{R}^{n}$ is an open set, then
$$
P_{\gamma_A}(E ; U)=\sqrt{2\pi}\sup \left\{\int_{E} \operatorname{div} \varphi(x)-\langle \varphi(x), Ax\rangle\ d \gamma_{A}(x): \varphi \in C_{c}^{1}\left(U ; \mathbb{R}^{n}\right), \sup _{U}|\varphi| \leq 1\right\} .
$$
Moreover, for any sequence of sets of locally finite perimeter $E_{k}$ with $\chi_{E_{k}} \rightarrow \chi_{E}$ in $L_{\text {loc}}^{1}\left(\mathbb{R}^{n},\gamma_A\right)$, 
$$
P_{\gamma_A}(E ; U) \leq \liminf _{k \rightarrow \infty} P_{\gamma_A}\left(E_{k} ; U\right) .
$$
Conversely, if $E$ is a measurable set, $U$ is an open set, and for any open set $V\subset \subset U$,
$$
\sup \left\{\int_{E} \operatorname{div} \varphi(x)-\langle \varphi(x), Ax\rangle\ d \gamma_{A}(x): \varphi \in C_{c}^{1}\left(V ; \mathbb{R}^{n}\right), \sup _{V}|\varphi| \leq 1\right\} <\infty,
$$
then $E$ is a set of locally finite anisotropic Gaussian perimeter in $U$.
\end{proposition}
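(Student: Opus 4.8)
The plan is to reduce everything to the Euclidean theory by means of the smooth weight $w(x):=e^{-\langle Ax,x\rangle/2}$, which satisfies $0<w\le1$ and, because $A=A^{\mathsf T}$ (so $\nabla\langle Ax,x\rangle=2Ax$), has gradient $\nabla w=-w\,Ax$. The key computation is an integration-by-parts identity: for any set $E$ of locally finite perimeter (equivalently, by Proposition \ref{locally finite anisotropic Gaussian perimeter}, of locally finite anisotropic Gaussian perimeter) and any $\varphi\in C^1_c(U;\R^n)$, the field $w\varphi$ lies in $C^1_c(\R^n;\R^n)$ with $\operatorname{div}(w\varphi)=w\bigl(\operatorname{div}\varphi-\langle\varphi,Ax\rangle\bigr)$, so applying the divergence theorem (\ref{DivergenceThm}) to $F:=w\varphi$ and multiplying by $\sqrt{\det A}/(2\pi)^{n/2}$ gives
\[
\sqrt{2\pi}\int_{E}\bigl(\operatorname{div}\varphi-\langle\varphi,Ax\rangle\bigr)\,d\gamma_A=\int_{\partial^{*}E}\varphi\cdot\nu_E\,d\mathcal{H}_{\gamma_A}^{n-1}=\int_{U}\varphi\cdot d\mu_{\gamma_A,E},
\]
where $\mu_{\gamma_A,E}:=\nu_E\,\mathcal{H}_{\gamma_A}^{n-1}\mres\partial^{*}E$. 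Since $\mu_{\gamma_A,E}=\tfrac{\sqrt{\det A}}{(2\pi)^{(n-1)/2}}\,w\,\mu_E$ with $\mu_E=\nu_E\,\mathcal{H}^{n-1}\mres\partial^{*}E$ the Euclidean Gauss--Green measure (cf.\ (\ref{DeGiorgi})), Proposition \ref{total_variation_2} gives $|\mu_{\gamma_A,E}|=\mathcal{H}_{\gamma_A}^{n-1}\mres\partial^{*}E$.

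Taking the supremum over $\varphi\in C^1_c(U;\R^n)$ with $\sup_U|\varphi|\le1$ in the identity above proves the variational formula in the statement: the right-hand side becomes $|\mu_{\gamma_A,E}|(U)$ by the usual duality description of the total variation of a Radon measure (as in \cite{maggi}, Chapter 4; replacing $C^0_c$ by $C^1_c$ competitors costs nothing, by mollification keeping $|\varphi|\le1$), and $|\mu_{\gamma_A,E}|(U)=\mathcal{H}_{\gamma_A}^{n-1}(\partial^{*}E\cap U)=\mathcal{H}_{\gamma_A}^{n-1}(\partial^{M}E\cap U)=P_{\gamma_A}(E;U)$ because $\mathcal{H}_{\gamma_A}^{n-1}\ll\mathcal{H}^{n-1}$ and $\mathcal{H}^{n-1}(\partial^{M}E\setminus\partial^{*}E)=0$ by Federer's theorem.

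For the converse clause $E$ is only assumed measurable, so I would avoid the divergence theorem and argue algebraically with the inverse weight $v:=w^{-1}=e^{\langle Ax,x\rangle/2}$, for which $\nabla v=v\,Ax$. Given an open set $V\subset\subset U$, put $\rho_V:=\max_{\overline V}v\in[1,\infty)$; for $\psi\in C^1_c(V;\R^n)$ with $\sup_V|\psi|\le1$ the field $\varphi:=\rho_V^{-1}v\psi$ lies in $C^1_c(V;\R^n)$ with $\sup_V|\varphi|\le1$, and a one-line computation gives $\operatorname{div}\varphi-\langle\varphi,Ax\rangle=\rho_V^{-1}v\operatorname{div}\psi$, hence $\int_E(\operatorname{div}\varphi-\langle\varphi,Ax\rangle)\,d\gamma_A=\frac{\sqrt{\det A}}{\rho_V(2\pi)^{n/2}}\int_{V}\chi_E\operatorname{div}\psi\,dx$ (using $vw\equiv1$). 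Rearranging and taking the supremum over such $\psi$ bounds $\sup\{\int_V\chi_E\operatorname{div}\psi\,dx:\psi\in C^1_c(V;\R^n),\ |\psi|\le1\}$ by $\rho_V(2\pi)^{n/2}(\det A)^{-1/2}$ times the hypothesised finite supremum; as $V\subset\subset U$ is arbitrary, $\chi_E\in BV_{\mathrm{loc}}(U)$, and Proposition \ref{locally finite anisotropic Gaussian perimeter} (whose proof is purely local) gives that $E$ has locally finite anisotropic Gaussian perimeter in $U$. The lower-semicontinuity clause then follows: for a fixed $\varphi$ as in the first paragraph, $x\mapsto\operatorname{div}\varphi(x)-\langle\varphi(x),Ax\rangle$ is bounded with compact support, so $\chi_{E_k}\to\chi_E$ in $L^1_{\mathrm{loc}}(\R^n,\gamma_A)$ forces $\int_{E_k}(\operatorname{div}\varphi-\langle\varphi,Ax\rangle)\,d\gamma_A\to\int_E(\operatorname{div}\varphi-\langle\varphi,Ax\rangle)\,d\gamma_A$; combining this with the variational formula applied to each $E_k$ and then taking $\liminf_k$ followed by the supremum over $\varphi$ yields $\sqrt{2\pi}\sup\{\cdots\}\le\liminf_k P_{\gamma_A}(E_k;U)$. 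If the right-hand side is $+\infty$ we are done; otherwise the converse clause upgrades $E$ to a set of locally finite perimeter in $U$, and the variational formula identifies the left-hand side with $P_{\gamma_A}(E;U)$.

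I do not expect any genuine analytic obstacle: the whole proposition is bookkeeping around the Euclidean perimeter theory. The two points deserving care are logical rather than technical — the lower-semicontinuity clause does not presuppose that $E$ has finite perimeter, so it must be closed off using the converse clause, and the total-variation duality formula must be invoked for the vector measure $\mu_{\gamma_A,E}$ with $C^1_c$ (not merely $C^0_c$) test fields, which is the one spot where the mollification remark is used.
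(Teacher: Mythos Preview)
Your proposal is correct and follows the natural route the paper has in mind: the paper does not spell out a proof, merely stating that the proposition is ``a straightforward consequence of Proposition~\ref{locally finite anisotropic Gaussian perimeter},'' and your argument is precisely the standard way to make that consequence explicit---reducing to the Euclidean theory via the weight $w=e^{-\langle Ax,x\rangle/2}$ and the divergence identity $\operatorname{div}(w\varphi)=w(\operatorname{div}\varphi-\langle\varphi,Ax\rangle)$. Your handling of the converse clause through the inverse weight $v=w^{-1}$ (so that $\operatorname{div}(\rho_V^{-1}v\psi)-\langle\rho_V^{-1}v\psi,Ax\rangle=\rho_V^{-1}v\operatorname{div}\psi$ collapses to a pure Euclidean divergence after multiplying by $d\gamma_A$) is clean, and your care in closing the lower-semicontinuity argument by invoking the converse clause when $E$ is not assumed a priori of locally finite perimeter is appropriate; the paper's phrasing is slightly ambiguous on this point, and your treatment covers the more general reading.
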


\begin{proposition}[Properties of perimeter]\label{Properties of perimeter}\mbox{}
\begin{enumerate}
\item {\rm (Locality)} Let $E$ be a set of locally finite perimeter. If $F$ is equivalent to $E$ in some open set $U\subset \R^n$, i.e., $|(E\Delta F)\cap U|=0$, then
$$P_{\gamma_A}(E;U)=P_{\gamma_A}(F;U).$$
\item {\rm (Complementation)} Let $E$ be a set of locally finite perimeter and $U$ be an open set in $\R^n$. Then $E^c$ is also a set of locally finite perimeter and
$$P_{\gamma_A}(E;U)=P_{\gamma_A}(E^c;U).$$
\item {\rm (Subadditivity)} If $E,F$ are sets of locally finite perimeter and $U$ is an open set in $\R^n$,
$$P_{\gamma_A}(E\cup F;U)+P_{\gamma_A}(E\cap F;U)\leq P_{\gamma_A}(E;U)+P_{\gamma_A}(F;U).$$
\end{enumerate}
\end{proposition}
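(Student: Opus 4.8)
The plan is to prove \textbf{Proposition \ref{Properties of perimeter}} by transferring the corresponding statements for the Euclidean (Lebesgue) perimeter, recorded in \cite{maggi}, Chapter 16, through the lower-semicontinuity characterization of Proposition \ref{lower_semicontinuity}. The key observation is that, by Proposition \ref{locally finite anisotropic Gaussian perimeter}, being a set of locally finite perimeter and being a set of locally finite anisotropic Gaussian perimeter are equivalent notions, so the only real content is the three displayed (in)equalities, and each of these is a statement about the \emph{measure} $\mathcal H^{n-1}_{\gamma_A}\mres\eb(\cdot)$ rather than about finiteness. I would phrase everything in terms of the essential boundary $\eb E$, since $P_{\gamma_A}(E;F)=\mathcal H^{n-1}_{\gamma_A}(\eb E\cap F)$ by definition and $\mathcal H^{n-1}_{\gamma_A}$ is an absolutely continuous reweighting of $\mathcal H^{n-1}$ with a strictly positive, locally bounded density $x\mapsto \frac{\sqrt{\det A}}{(2\pi)^{(n-1)/2}}e^{-\langle Ax,x\rangle/2}$.

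First, for \emph{locality}: if $|(E\Delta F)\cap U|=0$, then $\theta_n(E)(x)=\theta_n(F)(x)$ for every $x\in U$, hence $E^{(1)}\cap U=F^{(1)}\cap U$ and $E^{(0)}\cap U=F^{(0)}\cap U$, so $\eb E\cap U=\eb F\cap U$; applying $\mathcal H^{n-1}_{\gamma_A}$ gives $P_{\gamma_A}(E;U)=P_{\gamma_A}(F;U)$ directly, with no need for the variational characterization at all. Second, for \emph{complementation}: from the definitions, $E^{(d)}=(E^c)^{(1-d)}$, so $\eb E=\eb(E^c)$ as sets, and again applying $\mathcal H^{n-1}_{\gamma_A}$ to $\eb E\cap U=\eb(E^c)\cap U$ yields the equality; that $E^c$ is of locally finite perimeter when $E$ is follows either from this identity together with Federer's criterion \eqref{Federer's theorem}, or from \cite{maggi} directly. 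Third, for \emph{subadditivity}: here I would invoke the Euclidean set-theoretic inclusion for essential boundaries, namely $\eb(E\cup F)\cup\eb(E\cap F)\subseteq \eb E\cup\eb F$ together with the almost-disjointness statement $\mathcal H^{n-1}\big((\eb(E\cup F)\cap\eb(E\cap F))\slash(\eb E\cap\eb F)\big)=0$ (this is part of the Euclidean computation behind \cite{maggi}, Chapter 16, e.g. Lemma 16.1/Theorem 16.3); reweighting all these $\mathcal H^{n-1}$-statements by the strictly positive bounded density $e^{-\langle Ax,x\rangle/2}$ preserves the set-theoretic inclusions and the null-set statement, and then a short inclusion–exclusion count on $\mathcal H^{n-1}_{\gamma_A}$-measures of the four boundary pieces intersected with $U$ gives the claimed inequality.

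Alternatively, and perhaps more in the spirit of the surrounding text, I could run subadditivity purely through Proposition \ref{lower_semicontinuity}: choose $\varphi\in C^1_c(U;\R^n)$ with $\sup_U|\varphi|\le 1$, write $\chi_{E\cup F}+\chi_{E\cap F}=\chi_E+\chi_F$ pointwise, and integrate $\div\varphi(x)-\langle\varphi(x),Ax\rangle$ against $\gamma_A$; the identity splits the left side into the sum of two functionals bounded by $P_{\gamma_A}(E;U)$ and $P_{\gamma_A}(F;U)$ respectively, and taking the supremum over $\varphi$ on the left (using that the sup of a sum is at most the sum of sups) gives $P_{\gamma_A}(E\cup F;U)+P_{\gamma_A}(E\cap F;U)\le P_{\gamma_A}(E;U)+P_{\gamma_A}(F;U)$ — this is exactly the argument used for the Euclidean perimeter in \cite{maggi}, now with the extra $-\langle\varphi,Ax\rangle$ term, which causes no trouble since it is linear in $\varphi$ and enters both sides identically.

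The main obstacle, such as it is, is bookkeeping rather than mathematics: one must be careful that the variational characterization in Proposition \ref{lower_semicontinuity} is stated for a \emph{single} function $\chi_E$, so in the subadditivity argument the decomposition $\chi_{E\cup F}+\chi_{E\cap F}=\chi_E+\chi_F$ has to be fed into the two functionals \emph{before} taking suprema, and one should double-check that the relevant Euclidean facts in \cite{maggi}, Chapter 16 are quoted for the essential boundary $\eb$ (which is what $P_{\gamma_A}$ is defined through) and not only for the reduced boundary $\rb$ — though by Federer's theorem these differ by an $\mathcal H^{n-1}$-null set and hence by a $\mathcal H^{n-1}_{\gamma_A}$-null set, so the distinction is immaterial for all three identities. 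I would present the locality and complementation parts in two lines each via the essential-boundary identities, and give subadditivity via the $\varphi$-test-function argument to keep the proof self-contained within the framework already set up.
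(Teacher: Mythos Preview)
Your approach for locality and complementation via essential-boundary identities is correct and is essentially what the paper intends (the paper gives no detailed proof at all, merely citing Proposition \ref{lower_semicontinuity} together with \cite{maggi}, Chapter 16). Your first approach to subadditivity, via the $\mathcal H^{n-1}$-a.e.\ inequality $\chi_{\eb(E\cup F)}+\chi_{\eb(E\cap F)}\le \chi_{\eb E}+\chi_{\eb F}$ from \cite{maggi} and then reweighting by the positive density $e^{-\langle Ax,x\rangle/2}$, is also correct.

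However, your \emph{alternative} variational argument for subadditivity --- the one you say you would actually present --- has a genuine gap. From the pointwise identity $\chi_{E\cup F}+\chi_{E\cap F}=\chi_E+\chi_F$ you correctly obtain, for each fixed $\varphi$,
\[
\int_{E\cup F}\bigl(\div\varphi-\langle\varphi,Ax\rangle\bigr)\,d\gamma_A
+\int_{E\cap F}\bigl(\div\varphi-\langle\varphi,Ax\rangle\bigr)\,d\gamma_A
\;\le\;\tfrac{1}{\sqrt{2\pi}}\bigl(P_{\gamma_A}(E;U)+P_{\gamma_A}(F;U)\bigr).
\]
Taking the supremum over $\varphi$ on the left yields $\sup_\varphi[A(\varphi)+B(\varphi)]\le P_{\gamma_A}(E;U)+P_{\gamma_A}(F;U)$, but this is \emph{not} $P_{\gamma_A}(E\cup F;U)+P_{\gamma_A}(E\cap F;U)$: the latter equals $\sup_\varphi A(\varphi)+\sup_\psi B(\psi)$, which in general \emph{exceeds} $\sup_\varphi[A(\varphi)+B(\varphi)]$. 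The inequality ``sup of a sum is at most sum of sups'' that you invoke points in exactly the wrong direction here. (This is why the Euclidean proof in \cite{maggi} does not proceed this way either; it goes through the essential/reduced boundary structure.) So drop the variational alternative and use your first, essential-boundary route for subadditivity.
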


Although the anisotropic Gaussian measure satisfies $\gamma_A(E)=\gamma_{I_n}(\sqrt{A}E)$, this kind of relation doesn't hold in the anisotropic Gaussian perimeter, i.e.,
$$P_{\gamma_A}(E)\not=P_{\gamma_{I_n}}(\sqrt{A}E).$$
\noindent In fact, we have the following formula:

\begin{proposition}\label{Gaussian_perimeter}
If $E$ is a set of locally finite perimeter, then
\begin{align*}
\int_{F\cap \rb (\sqrt{A}E)}\nu_{\sqrt{A}E}(y)\ d\mathcal{H}_{\gamma}^{n-1}(y)
&=\int_{\big((\sqrt{A})^{-1}F\big)\cap\rb E}\Big[(\sqrt{A})^{-1}\nu_E(x)\Big]\ d\mathcal{H}_{\gamma_A}^{n-1}(x),
\end{align*}
equivalently,
\begin{align*}
\int_{F\cap \rb (\sqrt{A}E)}\left[\sqrt{A}\nu_{\sqrt{A}E}(y)\right] d\mathcal{H}_{\gamma}^{n-1}(y)
&=\int_{\big((\sqrt{A})^{-1}F\big)\cap\rb E}\nu_E(x)\ d\mathcal{H}_{\gamma_A}^{n-1}(x),
\end{align*}
and hence
$$P_{\gamma_{I_n}}(\sqrt{A}E;F)=\int_{\big((\sqrt{A})^{-1}F\big)\cap \rb E}\Big|(\sqrt{A})^{-1}\nu_E(x)\Big|\ d\mathcal{H}_{\gamma_A}^{n-1}(x),$$
$$P_{\gamma_{A}}(E;(\sqrt{A})^{-1}F)=\int_{F\cap \rb (\sqrt{A}E)}\Big|\sqrt{A}\nu_{\sqrt{A}E}(x)\Big|\ d\mathcal{H}_{\gamma}^{n-1}(x),$$
for any Borel set $F\subset \R^n$. Moreover,
\begin{enumerate}
\item if $F=\R^n$, then
$$\|\sqrt{A}\|^{-1}P_{\gamma_A}(E)\leq P_{\gamma_{I_n}}(\sqrt{A}E)\leq \|(\sqrt{A})^{-1}\|P_{\gamma_A}(E).$$
\item if $O$ is an orthogonal matrix and $A=I_n$, then
$$P_{\gamma_{I_n}}(OE)=P_{\gamma_{I_n}}(E).$$
\item if $O$ is an orthogonal matrix, then
$$P_{\gamma_{A}}(E;OF)=P_{\gamma_{O^{\mathsf{T}}AO}}(O^{-1}E;F).$$
In particular, $E$ is a set of finite $A$-anisotropic Gaussian perimeter if and only if $O^{-1}E$ is a set of finite $O^{\mathsf{T}}AO$-anisotropic Gaussian perimeter.
\end{enumerate}
\end{proposition}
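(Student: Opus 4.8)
The plan is to derive everything from Proposition \ref{Diffeomorphic images} applied to the linear diffeomorphism $f=\sqrt{A}$ with inverse $g=(\sqrt{A})^{-1}$. Because $\sqrt{A}$ is symmetric we have $(Dg\circ f)^{*}=\big((\sqrt{A})^{-1}\big)^{\mathsf{T}}=(\sqrt{A})^{-1}$ and $Jf=\det\sqrt{A}=\sqrt{\det A}$, so that proposition reads
\[
\int_{\rb(\sqrt{A}E)}\varphi(y)\,\nu_{\sqrt{A}E}(y)\ d\mathcal{H}^{n-1}(y)=\sqrt{\det A}\int_{\rb E}\varphi(\sqrt{A}x)\,(\sqrt{A})^{-1}\nu_E(x)\ d\mathcal{H}^{n-1}(x)
\]
for every $\varphi\in C_c(\R^n)$. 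First I would specialize to test functions of the form $\varphi(y)=\psi(y)\,(2\pi)^{-(n-1)/2}e^{-|y|^{2}/2}$ with $\psi\in C_c(\R^n)$; since $e^{-|\sqrt{A}x|^{2}/2}=e^{-\langle Ax,x\rangle/2}$, the Gaussian factor on the left converts $d\mathcal{H}^{n-1}$ into $d\mathcal{H}^{n-1}_{\gamma}$, and on the right the same factor together with $\sqrt{\det A}$ converts $d\mathcal{H}^{n-1}$ into $d\mathcal{H}^{n-1}_{\gamma_A}$ (both via definition (\ref{Hausdorff_measure})). This produces
\[
\int_{\rb(\sqrt{A}E)}\psi(y)\,\nu_{\sqrt{A}E}(y)\ d\mathcal{H}^{n-1}_{\gamma}(y)=\int_{\rb E}\psi(\sqrt{A}x)\,(\sqrt{A})^{-1}\nu_E(x)\ d\mathcal{H}^{n-1}_{\gamma_A}(x)
\]
for all $\psi\in C_c(\R^n)$.

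Next I would upgrade $\psi$ from continuous to $\psi=\chi_F$ with $F$ Borel. Both sides, viewed as functions of $F$, are $\R^n$-valued set functions finite on compact sets --- on the left because $\sqrt{A}E$ is a set of locally finite perimeter (a diffeomorphic image of $E$) and hence of locally finite anisotropic Gaussian perimeter by Proposition \ref{locally finite anisotropic Gaussian perimeter}, on the right for the same reason applied to $E$ --- so each extends to an $\R^n$-valued Radon measure, and agreement on $C_c(\R^n)$ forces agreement on Borel sets. Using $\chi_F\circ\sqrt{A}=\chi_{(\sqrt{A})^{-1}F}$, this is exactly the first identity of the Proposition; applying the constant matrix $\sqrt{A}$ to both sides and pulling it through the integral gives the ``equivalently'' form, in which $\sqrt{A}(\sqrt{A})^{-1}\nu_E=\nu_E$ on the right.

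Then I would take total variations. Let $\rho$ denote the $\R^n$-valued Radon measure $G\mapsto\int_{G\cap\rb E}(\sqrt{A})^{-1}\nu_E\ d\mathcal{H}^{n-1}_{\gamma_A}$; the first identity says that $F\mapsto\int_{F\cap\rb(\sqrt{A}E)}\nu_{\sqrt{A}E}\ d\mathcal{H}^{n-1}_{\gamma}$ equals the push-forward $(\sqrt{A})_{\#}\rho$ of Proposition \ref{total_variation_3}. The left measure has total variation $\mathcal{H}^{n-1}_{\gamma}\mres\rb(\sqrt{A}E)$ since $|\nu_{\sqrt{A}E}|\equiv1$, and by Federer's theorem (\ref{Federer's theorem}) this equals $P_{\gamma_{I_n}}(\sqrt{A}E;\,\cdot\,)$; on the right, Proposition \ref{total_variation_1} gives $|\rho|=|(\sqrt{A})^{-1}\nu_E|\,\mathcal{H}^{n-1}_{\gamma_A}\mres\rb E$ and Proposition \ref{total_variation_3} gives $|(\sqrt{A})_{\#}\rho|=(\sqrt{A})_{\#}|\rho|$. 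Unwinding the push-forward yields the first perimeter formula; the second one comes out identically from the ``equivalently'' identity, where now $|\nu_E|\equiv1$ is used on the right while the weight $|\sqrt{A}\nu_{\sqrt{A}E}|$ survives on the left.

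Finally, item (1) is the case $F=\R^n$ of the first perimeter formula, using the elementary bounds $\|\sqrt{A}\|^{-1}\le|(\sqrt{A})^{-1}\nu|\le\|(\sqrt{A})^{-1}\|$ for any unit vector $\nu$ (the lower bound from $1=|\sqrt{A}(\sqrt{A})^{-1}\nu|\le\|\sqrt{A}\|\,|(\sqrt{A})^{-1}\nu|$) together with $\mathcal{H}^{n-1}_{\gamma_A}(\rb E)=P_{\gamma_A}(E)$. Item (2) is the same three steps run with $\sqrt{A}$ replaced by an orthogonal matrix $O$ and $A=I_n$: there $Jf=1$, $(Dg\circ f)^{*}=O$, the weight is unchanged because $|Ox|=|x|$, and $|O\nu_E|\equiv1$. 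Item (3) is a direct change of variables $y=Ox$ in the definition of the relative perimeter: $\eb(O^{-1}E)=O^{-1}(\eb E)$, $\langle O^{\mathsf{T}}AO\,x,x\rangle=\langle A(Ox),(Ox)\rangle$, $\det(O^{\mathsf{T}}AO)=\det A$, and $\mathcal{H}^{n-1}$ is $O$-invariant, so $\mathcal{H}^{n-1}_{\gamma_{O^{\mathsf{T}}AO}}(\eb(O^{-1}E)\cap F)=\mathcal{H}^{n-1}_{\gamma_A}(\eb E\cap OF)$, with the ``in particular'' being the case $F=\R^n$. I expect the only genuine work to be the approximation in the second step (checking the relevant set functions are honest Radon measures, so $C_c$-agreement propagates to Borel sets) and the bookkeeping with push-forwards and total variations in the third step --- in particular matching which of the two vector identities produces which perimeter formula --- since Proposition \ref{Diffeomorphic images} already carries all the geometric content.
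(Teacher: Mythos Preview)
Your proposal is correct and follows essentially the same route as the paper: start from Proposition \ref{Diffeomorphic images} with $f=\sqrt{A}$, insert the Gaussian weight, pass from $C_c$ test functions to $\chi_F$ by a Radon-measure approximation, and then take total variations via Propositions \ref{total_variation_1} and \ref{total_variation_3} to get the two perimeter formulas; items (1) and (2) are handled identically.

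The one noteworthy difference is item (3). The paper derives it by applying the perimeter formula (\ref{Gaussian_perimeter_eq2-2}) twice (once for $A$ with $F$ replaced by $\sqrt{A}OF$, once for $O^{\mathsf{T}}AO$ with $F$ replaced by $\sqrt{O^{\mathsf{T}}AO}F$) and then matching the two right-hand sides via the general diffeomorphism identity (\ref{Gaussian_perimeter_eq3}) with $f(x)=O^{\mathsf{T}}x$ --- a somewhat roundabout computation. Your direct change-of-variables argument, using $\eb(O^{-1}E)=O^{-1}(\eb E)$, $\langle O^{\mathsf{T}}AO\,x,x\rangle=\langle A(Ox),Ox\rangle$, $\det(O^{\mathsf{T}}AO)=\det A$, and the $O$-invariance of $\mathcal{H}^{n-1}$, is both shorter and more transparent; it avoids passing through the reduced boundary and the vector identities altogether.
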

\begin{proof}
Since $E$ is a set of locally finite perimeter and $x\mapsto \sqrt{A}x$ is a diffeomorphism, by Proposition \ref{Diffeomorphic images}, for any $\varphi\in C_c(\R^n)$,
\begin{align}\label{Gaussian_perimeter_eq0}
\int_{\rb (\sqrt{A}E)}\varphi(y)\nu_{\sqrt{A}E}(y)\ d\mathcal{H}^{n-1}(y)&=|\det(\sqrt{A})|\int_{\rb E}\varphi(\sqrt{A}x)\Big[[(\sqrt{A})^{-1}]^{\mathsf{T}}\nu_E(x)\Big]\ d\mathcal{H}^{n-1}(x)\notag\\
&=|\det(\sqrt{A})|\int_{\rb E}\varphi(\sqrt{A}x)\Big[(\sqrt{A})^{-1}\nu_E(x)\Big]\ d\mathcal{H}^{n-1}(x),
\end{align}
where $\sqrt{A}$ is symmetric. Let $F$ be a Borel set. We can set
$$\varphi(y)=\frac{1}{(2\pi)^{(n-1)/2}}e^{-|y|^2/2}\chi_F(y)$$
in (\ref{Gaussian_perimeter_eq0}) since we can first approximate open sets then Borel sets. Hence,
\begin{align}\label{Gaussian_perimeter_eq1}
\int_{F\cap \rb (\sqrt{A}E)}\nu_{\sqrt{A}E}(y)\ d\mathcal{H}_{\gamma}^{n-1}(y)
&=\int_{\big((\sqrt{A})^{-1}F\big)\cap\rb E}\Big[(\sqrt{A})^{-1}\nu_E(x)\Big]\ d\mathcal{H}_{\gamma_A}^{n-1}(x).
\end{align}
Since $F$ is an arbitrary Borel set, the following two measures are the same:
$$\nu_{\sqrt{A}E}\mathcal{H}^{n-1}_\gamma\mres  \rb (\sqrt{A}E)=(\sqrt{A})_{\#}\left(\Big[(\sqrt{A})^{-1}\nu_E\Big]\mathcal{H}_{\gamma_A}^{n-1}\mres \rb E\right).$$
Taking total variation on both sides, by Proposition \ref{total_variation_1} and \ref{total_variation_3}, we have
$$\mathcal{H}^{n-1}_\gamma\mres \rb (\sqrt{A}E)=(\sqrt{A})_{\#}\left(\Big|(\sqrt{A})^{-1}\nu_E\Big|\mathcal{H}_{\gamma_A}^{n-1}\mres \rb E\right).$$
That is,
\begin{align}\label{Gaussian_perimeter_eq2}
P_{\gamma}(\sqrt{A}E;F)=\int_{\big((\sqrt{A})^{-1}F\big)\cap \rb E}\Big|(\sqrt{A})^{-1}\nu_E(x)\Big|\ d\mathcal{H}_{\gamma_A}^{n-1}(x).
\end{align}
Applying $\sqrt{A}$ on both sides of equation (\ref{Gaussian_perimeter_eq1}),
\begin{align*}
\int_{F\cap \rb (\sqrt{A}E)}\left[\sqrt{A}\nu_{\sqrt{A}E}(y)\right] d\mathcal{H}_{\gamma}^{n-1}(y)
&=\int_{\big((\sqrt{A})^{-1}F\big)\cap\rb E}\nu_E(x)\ d\mathcal{H}_{\gamma_A}^{n-1}(x).
\end{align*}
Taking total variation on both sides, by Proposition \ref{total_variation_1} and \ref{total_variation_3}, we have
\begin{align}\label{Gaussian_perimeter_eq2-2}
P_{\gamma_{A}}(E;(\sqrt{A})^{-1}F)=\int_{F\cap \rb (\sqrt{A}E)}\Big|\sqrt{A}\nu_{\sqrt{A}E}(x)\Big|\ d\mathcal{H}_{\gamma}^{n-1}(x).
\end{align}
In fact, by using the same argument in (\ref{Gaussian_perimeter_eq1}) with any diffeomorphism $f$ on $\R^n$, we have
\begin{align}\label{Gaussian_perimeter_eq3}
&\int_{F\cap \rb f(E)}\nu_{f(E)}(y)\ d\mathcal{H}_{\gamma}^{n-1}(y)\\
&=\frac{1}{(2\pi)^{(n-1)/2}}\int_{f^{-1}(F)\cap\rb E}e^{-|f(x)|^2/2}\Big[Jf(\nabla g\circ f)^*\nu_E(x)\Big]\ d\mathcal{H}^{n-1}(x),\notag
\end{align}
where $g=f^{-1}$.\\
\noindent(1) Let $F=\R^n$ in (\ref{Gaussian_perimeter_eq2}). Then
$$P_{\gamma}(\sqrt{A}E)=\int_{\rb E}\Big|(\sqrt{A})^{-1}\nu_E(x)\Big|\ d\mathcal{H}_{\gamma_A}^{n-1}(x)\leq \|(\sqrt{A})^{-1}\|P_{\gamma_A}(E).$$
On the other hand,
$$1=\Big|\nu_E(x)\Big|=\Big|\sqrt{A}(\sqrt{A})^{-1}\nu_E(x)\Big|\leq \|\sqrt{A}\|\Big|(\sqrt{A})^{-1}\nu_E(x)\Big|\implies \Big|(\sqrt{A})^{-1}\nu_E(x)\Big|\geq \|\sqrt{A}\|^{-1}.$$
Thus,
$$P_{\gamma}(\sqrt{A}E)=\int_{\rb E}\Big|(\sqrt{A})^{-1}\nu_E(x)\Big|\ d\mathcal{H}_{\gamma_A}^{n-1}(x)\geq \|\sqrt{A}\|^{-1}P_{\gamma_A}(E).$$
(2) Let $O$ be a positive definite orthogonal matrix. Applying (\ref{Gaussian_perimeter_eq3}) to the map $f:x\mapsto Ox$, we have
$$P_{\gamma}(OE)=\int_{\rb E}\Big|O\nu_E(x)\Big|\ d\mathcal{H}_{\gamma}^{n-1}(x)=P_{\gamma}(E).$$
(3) Replacing $F$ as $\sqrt{A}OF$ in (\ref{Gaussian_perimeter_eq2-2}), we have
\begin{align}\label{Gaussian_perimeter_eq5}
P_{\gamma_A}(E;OF)=\int_{(\sqrt{A}OF)\cap \rb (\sqrt{A}E)}\left|\sqrt{A}\nu_{\sqrt{A}E}\right|d\mathcal{H}_{\gamma}.
\end{align}
On the other hand, by replacing $F$ as $\sqrt{O^{\mathsf{T}}AO}F$ in (\ref{Gaussian_perimeter_eq2-2}) again, we have
\begin{align}\label{Gaussian_perimeter_eq6}
P_{\gamma_{O^{\mathsf{T}}AO}}(O^{-1}E;F)&=\int_{(O^{\mathsf{T}}\sqrt{A}OF)\cap \rb (O^{\mathsf{T}}\sqrt{A}O(O^{-1}E))}\left|O^{\mathsf{T}}\sqrt{A}O\nu_{O^{\mathsf{T}}\sqrt{A}O(O^{-1}E)}\right|d\mathcal{H}_{\gamma}\\
&=\int_{(O^{\mathsf{T}}\sqrt{A}OF)\cap \rb (O^{\mathsf{T}}\sqrt{A}E)}\left|\sqrt{A}O\nu_{O^{\mathsf{T}}\sqrt{A}E}\right|d\mathcal{H}_{\gamma},\notag
\end{align}
where $\sqrt{O^{\mathsf{T}}AO}=O^{\mathsf{T}}\sqrt{A}O$. Applying $\sqrt{A}O$ on both sides of equation (\ref{Gaussian_perimeter_eq3}) with $f(x)=O^{\mathsf{T}}x$, $E$ as $\sqrt{A}E$, and $F$ as $O^{\mathsf{T}}\sqrt{A}OF$, we have
\begin{align*}
\int_{(O^{\mathsf{T}}\sqrt{A}OF)\cap \rb (O^{\mathsf{T}}\sqrt{A}E)}\sqrt{A}O\nu_{O^{\mathsf{T}}\sqrt{A}E}\ d\mathcal{H}_{\gamma}
&=\int_{(\sqrt{A}OF)\cap \rb (\sqrt{A}E)}\sqrt{A}\nu_{\sqrt{A}E}\ d\mathcal{H}_{\gamma}.
\end{align*}
Taking total variation on both sides, we have
\begin{align}\label{Gaussian_perimeter_eq7}
\int_{(O^{\mathsf{T}}\sqrt{A}OF)\cap \rb (O^{\mathsf{T}}\sqrt{A}E)}\left|\sqrt{A}O\nu_{O^{\mathsf{T}}\sqrt{A}E}\right|d\mathcal{H}_{\gamma}
&=\int_{(\sqrt{A}OF)\cap \rb (\sqrt{A}E)}\left|\sqrt{A}\nu_{\sqrt{A}E}\right|d\mathcal{H}_{\gamma}.
\end{align}
Therefore, by (\ref{Gaussian_perimeter_eq5}), (\ref{Gaussian_perimeter_eq6}), and (\ref{Gaussian_perimeter_eq7}),
\begin{align*}
P_{\gamma_A}(E;OF)=P_{\gamma_{O^{\mathsf{T}}AO}}(O^{-1}E;F).
\end{align*}
\end{proof}

\subsection{Approximation for the finite anisotropic Gaussian perimeter}

\begin{proposition}\label{approximation_gaussian_perimeter}
For any measurable set $E$ with $P_{\gamma_A}(E)<\infty$, there exists a sequence $\{E_k\}$ of bounded open sets with smooth boundary such that
$$\chi_{E_k}\to \chi_{E}\mbox{ in $L^1(\R^n,\gamma_A)$}\quad \mbox{and}\quad P_{\gamma_A}(E_k)\to P_{\gamma_A}(E).$$
\end{proposition}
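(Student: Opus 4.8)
The plan is to reduce everything to the corresponding approximation result for the \emph{Euclidean} finite-perimeter case, which is Theorem 13.8 in Maggi's book (mollification plus Sard's theorem to obtain smooth bounded open sets $E_k$ with $\chi_{E_k}\to\chi_E$ in $L^1_{\mathrm{loc}}$ and $P(E_k;U)\to P(E;U)$ for nice $U$). The obstruction, of course, is that $E$ need only have \emph{finite anisotropic Gaussian perimeter}, not finite Euclidean perimeter — the set $E_\alpha$ in the Remark after Proposition \ref{locally finite anisotropic Gaussian perimeter} shows these classes genuinely differ. So the first step is a truncation: for $R>0$ set $E^R := E\cap B(0,R)$. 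Since $B(0,R)$ is a bounded set of finite perimeter and $E$ has locally finite perimeter (Proposition \ref{locally finite anisotropic Gaussian perimeter}), $E^R$ is a set of finite Euclidean perimeter, with $P(E^R)\le P(E;B(0,R))+\mathcal H^{n-1}(\partial B(0,R))<\infty$. One checks $\chi_{E^R}\to\chi_E$ in $L^1(\R^n,\gamma_A)$ as $R\to\infty$ by dominated convergence, and that $P_{\gamma_A}(E^R)\to P_{\gamma_A}(E)$: the inequality $\liminf_R P_{\gamma_A}(E^R)\ge P_{\gamma_A}(E)$ is lower semicontinuity (Proposition \ref{lower_semicontinuity}), while the reverse needs an upper bound. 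For that, decompose $\eb E^R$ (up to $\mathcal H^{n-1}$-null sets) into the part inside $B(0,R)$, which is contained in $\eb E$, and the part on $\partial B(0,R)$, which is contained in $E^{(1)}\cap\partial B(0,R)$; integrating $e^{-\langle Ax,x\rangle/2}$ over the latter and using that $\int_{\partial B(0,R)} e^{-\langle Ax,x\rangle/2}\,d\mathcal H^{n-1}\to 0$ as $R\to\infty$ (Gaussian decay) gives $\limsup_R P_{\gamma_A}(E^R)\le P_{\gamma_A}(E)$. (A small technical point: one should choose a sequence $R_j\to\infty$ with $\mathcal H^{n-1}(\eb E\cap\partial B(0,R_j))=0$, possible since only countably many spheres carry positive $\mathcal H^{n-1}$-measure of $\eb E$; this makes the perimeter decomposition clean.)

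With $E^R$ a bounded set of finite Euclidean perimeter, the second step applies the Euclidean approximation theorem on a large ball: there exist bounded open sets $E^R_k$ with smooth boundary, all contained in a fixed ball $B(0,R+1)$, with $\chi_{E^R_k}\to\chi_{E^R}$ in $L^1(\R^n)$ and $P(E^R_k;\R^n)\to P(E^R;\R^n)$. Restricting attention to the fixed compact set $\overline{B(0,R+1)}$ converts these Euclidean convergences into anisotropic Gaussian ones: $L^1(\R^n)$ convergence of characteristic functions supported in a fixed ball implies $L^1(\R^n,\gamma_A)$ convergence since $e^{-\langle Ax,x\rangle/2}$ is bounded there, and for the perimeters we use the weighting $m\le e^{-\langle Ax,x\rangle/2}\le M$ on the ball together with the fact that $P(E^R_k;\R^n)\to P(E^R;\R^n)$ forces $\mathcal H^{n-1}\mres\partial E^R_k\rightharpoonup \mathcal H^{n-1}\mres\eb E^R$ weakly-$*$ (this is essentially how the Euclidean theorem is proved), so integrating the continuous weight $e^{-\langle Ax,x\rangle/2}$ against these measures passes to the limit: $P_{\gamma_A}(E^R_k)\to P_{\gamma_A}(E^R)$. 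Here I use that $\partial E^R_k=\eb E^R_k$ for open sets with smooth boundary, so $P_{\gamma_A}(E^R_k)=\mathcal H^{n-1}_{\gamma_A}(\partial E^R_k)$ is literally a surface integral of the Gaussian weight.

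The final step is a diagonal argument: for each $j$ pick $R_j\to\infty$ (along the good subsequence above) so that $|\gamma_A(E^{R_j}\Delta E)|<1/j$ and $|P_{\gamma_A}(E^{R_j})-P_{\gamma_A}(E)|<1/j$, then choose $k(j)$ large so that $\|\chi_{E^{R_j}_{k(j)}}-\chi_{E^{R_j}}\|_{L^1(\gamma_A)}<1/j$ and $|P_{\gamma_A}(E^{R_j}_{k(j)})-P_{\gamma_A}(E^{R_j})|<1/j$; the sets $\widetilde E_j:=E^{R_j}_{k(j)}$ are bounded open with smooth boundary and satisfy $\chi_{\widetilde E_j}\to\chi_E$ in $L^1(\R^n,\gamma_A)$ and $P_{\gamma_A}(\widetilde E_j)\to P_{\gamma_A}(E)$, as required. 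The main obstacle is the truncation step, specifically producing the \emph{sharp} (non-strict, limiting) upper bound $\limsup_R P_{\gamma_A}(E^R)\le P_{\gamma_A}(E)$ rather than just finiteness; this is where the Gaussian decay of the weight on large spheres, combined with a careful Federer-type decomposition of $\eb(E\cap B(0,R))$ into the interior piece and the spherical piece, does the real work. Everything after that is a transfer of Maggi's Euclidean theorem across a fixed compact set where the weight is comparable to a constant.
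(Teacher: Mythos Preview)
Your proposal is correct and follows essentially the same two-step strategy as the paper: truncate to $E^R=E\cap B(0,R)$ and control the spherical boundary contribution via Gaussian decay (the paper invokes Maggi's Lemma 15.12 for the a.e.-$R$ decomposition $|\mu_{E\cap B_R}|=\mathcal{H}^{n-1}\mres(E\cap\partial B_R)+|\mu_E|\mres B_R$, which is exactly your Federer-type splitting), then apply Maggi's Theorem 13.8 to each $E^R$ and pass the Gaussian weight through the weak-$*$ convergence of perimeter measures on the fixed ball $B(0,R+1)$, finishing with a diagonal argument. The only cosmetic difference is that you invoke lower semicontinuity separately for the $\liminf$ direction while the paper reads off both inequalities from the exact decomposition formula.
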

\begin{proof}
For any measurable set $E$ with $P_{\gamma_A}(E)<\infty$, by Proposition \ref{locally finite anisotropic Gaussian perimeter}, $E$ is a set of locally finite perimeter. Hence, $E\cap B_R$ is also a set of finite perimeter, where $B_R:= B(0,R)$ is an open ball (see \cite{maggi}, Lemma 15.12). \\

\noindent{\bf Step 1}: We first claim that, as $R\to \infty$,
$$\chi_{E\cap B_{R}}\to \chi_{E}\mbox{ in $L^1(\R^n,\gamma_A)$}\quad\mbox{and}\quad P_{\gamma_A}(E\cap B_{R})\to P_{\gamma_A}(E).$$
To begin, by the dominated convergence theorem and $\gamma_A(\R^n)=1<\infty$,
$$\int_{\R^n}\left|\chi_E-\chi_{E\cap B_R}\right| d\gamma_A \to 0\mbox{\quad as $R\to \infty.$}$$
For the second part, since $P_{\gamma_A}(E)<\infty$,
$$\lim_{R\to \infty}P_{\gamma_A}(E;\R^n\slash B_{R})=0.$$
Moreover, by \cite{maggi}, Lemma 15.12, for a.e. $R>0$,
$$|\mu_{E \cap B_R}|=\mathcal{H}^{n-1}\mres(E \cap \partial B_R)+|\mu_{E}|\mres B_R.$$
That is,
\begin{align}\label{approximation_gaussian_perimeter_eq1}
P_{\gamma_A}(E\cap B_R)&=\mathcal{H}^{n-1}_{\gamma_A}(\rb (E\cap B_R)) =\frac{\sqrt{\det A}}{(2\pi)^{(n-1)/2}}\int_{\rb (E\cap B_R)}e^{-|\sqrt{A}x|^2/2}\ d\mathcal{H}^{n-1}\\
&=\frac{\sqrt{\det A}}{(2\pi)^{(n-1)/2}}\int_{\R^n}e^{-|\sqrt{A}x|^2/2}\ d|\mu_{E\cap B_R}|\nonumber\\
&=\frac{\sqrt{\det A}}{(2\pi)^{(n-1)/2}}\left(\int_{B_R}e^{-|\sqrt{A}x|^2/2}\ d|\mu_{E}|+\int_{E \cap \partial B_R}e^{-|\sqrt{A}x|^2/2}d\mathcal{H}^{n-1}(x)\right)\nonumber\\
&=\frac{\sqrt{\det A}}{(2\pi)^{(n-1)/2}}\int_{(\rb E)\cap B_R}e^{-|\sqrt{A}x|^2/2}\ d\mathcal{H}^{n-1}+\mathcal{H}_{\gamma_A}^{n-1}(E\cap \partial B_R)\nonumber\\
&=\mathcal{H}_{\gamma_A}^{n-1}((\rb E)\cap B_R)+\mathcal{H}_{\gamma_A}^{n-1}(E\cap \partial B_R)=P_{\gamma_A}(E; B_R)+\mathcal{H}_{\gamma_A}^{n-1}(E\cap \partial B_R).\nonumber
\end{align}
Notice that since
$$|x|=|(\sqrt{A})^{-1}\sqrt{A}x|\leq \|(\sqrt{A})^{-1}\||\sqrt{A}x|\implies |\sqrt{A}x|\geq \frac{1}{\|(\sqrt{A})^{-1}\|}|x|,$$
we have
\begin{align}\label{approximation_gaussian_perimeter_eq2}
\mathcal{H}^{n-1}_{\gamma_A}(E\cap \partial B_{R})\leq \mathcal{H}^{n-1}_{\gamma_A}( \partial B_{R})&=\frac{\sqrt{\det A}}{(2\pi)^{(n-1)/2}}\int_{\partial B_R}e^{-|\sqrt{A}x|^2/2}d\mathcal{H}^{n-1}(x)\\
&\leq \frac{\sqrt{\det A}}{(2\pi)^{(n-1)/2}}\int_{\partial B_R}e^{-\frac{1}{\|(\sqrt{A})^{-1}\|^2}|x|^2/2}d\mathcal{H}^{n-1}(x)\nonumber\\
&= \frac{\sqrt{\det A}}{(2\pi)^{(n-1)/2}}e^{-\frac{1}{\|(\sqrt{A})^{-1}\|^2}R^2/2}\mathcal{H}^{n-1}(\partial B_R)\nonumber\\
&= \frac{\sqrt{\det A}}{(2\pi)^{(n-1)/2}}e^{-\frac{1}{\|(\sqrt{A})^{-1}\|^2}R^2/2}\alpha_n R^{n-1}\to 0\mbox{\quad as $R\to \infty$},\nonumber
\end{align}
where $\alpha _n$ is the surface area of the unit ball in $\R^n$. Combining (\ref{approximation_gaussian_perimeter_eq1}) and (\ref{approximation_gaussian_perimeter_eq2}) together, we have
\begin{align*}
P_{\gamma_A}(E)&=P_{\gamma_A}(E;\R^n\slash B_R)+P_{\gamma_A}(E;B_R)\\
&=P_{\gamma_A}(E;\R^n\slash B_R)+(P_{\gamma_A}(E\cap B_R)-\mathcal{H}^{n-1}_{\gamma_A}(E\cap \partial B_R)),
\end{align*}
and hence, as $R\to \infty$,
$$|P_{\gamma_A}(E)-P_{\gamma_A}(E\cap B_R)|\leq P_{\gamma_A}(E;\R^n\slash B_R)+\mathcal{H}^{n-1}_{\gamma_A}(E\cap \partial B_R)\to 0.$$
{\bf Step 2}: Consider
$$E^R:= E\cap B_R.$$ 
Fix $R>0$. Applying \cite{maggi}, Theorem 13.8, on $E^R$, there exists a sequence $\{E^R_k\}_{k=1}^{\infty}$ of bounded open sets  with smooth boundary such that $E_k^R\subset B_{R+1}$ for all $k$,
\begin{align}\label{approximation_gaussian_perimeter_eq3}
\chi_{E_k^R}\to \chi_{E^R}\mbox{\quad as $k\to \infty$ in $L^1(\R^n)$ (and hence in $L^1(\R^n,\gamma_A)$)},
\end{align}
and
$$|\mu_{E^R_k}|\wkly |\mu_{E^R}|.$$
Let $\eta_\ep\in C^{\infty}_c(\R^n)$ be a smooth cutoff function with $\eta_\ep= 1$ on $B_{R+1}$ and $\eta_\ep\to \chi_{B_L}$ with $L>R+1$. Applying the weak-star convergence, as $k\to \infty$,
\begin{align*}
P_{\gamma_A}(E_k^R)&=\frac{\sqrt{\det A}}{(2\pi)^{(n-1)/2}}\int_{\partial E_k^R}e^{-|\sqrt{A}x|^2/2}\ d\mathcal{H}^{n-1}=\frac{\sqrt{\det A}}{(2\pi)^{(n-1)/2}}\int_{\partial E_k^R}e^{-|\sqrt{A}x|^2/2}\eta_{\ep}(x)\ d\mathcal{H}^{n-1}\\
&=\frac{\sqrt{\det A}}{(2\pi)^{(n-1)/2}}\int_{\R^n}e^{-|\sqrt{A}x|^2/2}\eta_{\ep}(x)d|\mu_{E_k^R}|\underset{k\to \infty}{\to}  \frac{\sqrt{\det A}}{(2\pi)^{(n-1)/2}}\int_{\rb E^R}e^{-|\sqrt{A}x|^2/2}\eta_{\ep}(x)d\mathcal{H}^{n-1}.
\end{align*}
Taking  $\ep\to 0^+$ and $L\to \infty$ on both sides, we have
\begin{align}\label{approximation_gaussian_perimeter_eq4}
\lim_{k\to \infty}P_{\gamma_A}(E_k^R)&=\lim_{L\to \infty}\frac{\sqrt{\det A}}{(2\pi)^{(n-1)/2}}\int_{\rb E^R}e^{-|\sqrt{A}x|^2/2}\chi_{B_L}(x)d\mathcal{H}^{n-1}=P_{\gamma_A}(E^R).
\end{align}
By Step 1, we can let $\{R_k\}$ be a sequence with $R_k\nearrow\infty$ such that
$$\mbox{$\chi_{E^{R_k}}\to \chi_E$ in $L^1(\R^n,\gamma_A)$}\quad \mbox{and} \quad \lim_{k\to \infty}P_{\gamma_A}(E^{R_k})=P_{\gamma_A}(E).$$
Now we use a diagonal argument with (\ref{approximation_gaussian_perimeter_eq3}), (\ref{approximation_gaussian_perimeter_eq4}) to finish the proof. By a diagonal argument, there exists a sequence $\{E_{N_k}^{R_k}\}_{k=1}^{\infty}$ of bounded open sets with smooth boundary such that
$$\chi_{E^{R_k}_{N_k}}\to \chi_{E}\mbox{ in $L^1(\R^n,\gamma_A)$}\quad \mbox{and}\quad P_{\gamma_A}(E_{N_k}^{R_k})\to P_{\gamma_A}(E).$$
\end{proof}

\section{Anisotropic Gaussian Isoperimetric Inequality}\label{anisotropic Gaussian Isoperimetric Inequality}
 
Define the function $\phi : \R \to (0, 1)$ as
$$\phi(x)=\frac{1}{\sqrt{2\pi}}\int_{-\infty}^xe^{-t^2/2}\ dt,$$
and notice that $\phi$ is strictly increasing from $0$ to $1$. The inverse function $\phi^{-1}:[0,1]\to [-\infty,+\infty]$ is also strictly increasing with $\phi^{-1}(0)=-\infty$ and $\phi^{-1}(1)=+\infty$. Moreover, we have
\begin{align}\label{limit_of_GaussianBall}
\lim_{t \rightarrow \infty} \frac{\phi^{-1}(\gamma(t B(0,1)))}{t}&=1 
\end{align}
(See \cite{Livshyts}, Section 3.3 and \cite{Nayar}, Lemma 9). We define the (nonrenormalized) {\bf $A$-anisotropic Gaussian barycenter} of the set $E$ as
$$b_{\gamma_A}(E):= \int_E x\ d \gamma_A(x).$$
Let $H(\omega,t)$ be the half-space of the form $H(\omega,t)=\left\{x:\langle x,\omega\rangle <t\right\}$, where $\omega\in \SS^{n-1}$, $t\in \R$, and $\langle \cdot,\cdot\rangle $ is the Euclidean inner product. Since $\gamma=\gamma_{I_n}$ is rotation invariant, we can compute the following quantities directly:
\beq\label{quantities for gamma}
\gamma(H(\omega,t))=\phi(t),\qquad P_\gamma(H(\omega,t))=e^{-t^2/2},\qquad b_\gamma(H(\omega,t))=\frac{-1}{\sqrt{2\pi}}e^{-t^2/2}\omega.
\eeq
Moreover, we have the following for half-spaces under $\gamma_A$:

\begin{proposition}\label{mass_perimeter_barycenter}
Let $H(\omega,t)$ be the half-space with $\omega\in \SS^{n-1}$ and $t\in \R$.
\begin{enumerate}
\item If $M$ is an invertible $n\times n$ matrix, then
$$M\left(H(\omega,t)\right)=H\left(\frac{(M^{\mathsf{T}})^{-1}\omega}{|(M^{\mathsf{T}})^{-1}\omega|}, \frac{t}{|(M^{\mathsf{T}})^{-1}\omega|}\right).$$
\item The anisotropic Gaussian mass of the half-space is
$$\gamma_A(H(\omega,t))=\phi\left(\frac{t}{|(\sqrt{A})^{-1}\omega|}\right).$$
\item The anisotropic Gaussian perimeter of the half-space is
$$P_{\gamma_A}(H(\omega,t))=e^{-\frac12\frac{t^2}{|(\sqrt{A})^{-1}\omega|^2}}\frac{1}{|(\sqrt{A})^{-1}\omega|}.$$
Moreover,
\begin{align*}
&P_{\gamma_A}(H_1)=P_{\gamma_A}(H_2)\mbox{\ for any half-spaces $H_1$, $H_2$ with $\gamma_A(H_1)=\gamma_A(H_2)$}\\
&\iff A=aI_n\mbox{ for some constant $a>0$}.
\end{align*}
\item The anisotropic Gaussian barycenter of the half-space is
$$b_{\gamma_A}(H(\omega,t))=\frac{-1}{\sqrt{2\pi}}e^{-\frac12\frac{t^2}{|(\sqrt{A})^{-1}\omega|^2}}\left(\frac{A^{-1}\omega}{|(\sqrt{A})^{-1}\omega|}\right).$$
Moreover,
\begin{align*}
&\left|b_{\gamma_A}(H_1)\right|=\left|b_{\gamma_A}(H_2)\right|\mbox{\ for any half-spaces $H_1$, $H_2$ with $\gamma_A(H_1)=\gamma_A(H_2)$}\\
&\iff A=aI_n\mbox{ for some constant $a>0$}.
\end{align*}
\end{enumerate}
\end{proposition}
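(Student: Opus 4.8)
The plan is to reduce everything to the standard Gaussian computations \eqref{quantities for gamma} via the identity $\gamma_A(E) = \gamma(\sqrt A E)$ together with the perimeter-transport formulas of Proposition \ref{Gaussian_perimeter}, and then handle the two ``moreover'' equivalences by a calculus-type argument about when a ratio of a quadratic form to another is constant on the sphere.

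First I would prove (1): for an invertible matrix $M$, $x \in H(\omega,t)$ iff $\langle x,\omega\rangle < t$ iff $\langle M^{-1}(Mx),\omega\rangle = \langle Mx,(M^{\mathsf T})^{-1}\omega\rangle < t$, so $Mx$ ranges over $\{y : \langle y, (M^{\mathsf T})^{-1}\omega\rangle < t\}$; renormalizing the defining vector to have unit length divides the offset by $|(M^{\mathsf T})^{-1}\omega|$, giving the stated half-space. Applying this with $M = \sqrt A$ (symmetric, so $(M^{\mathsf T})^{-1} = (\sqrt A)^{-1}$) gives $\sqrt A(H(\omega,t)) = H\big(\tfrac{(\sqrt A)^{-1}\omega}{|(\sqrt A)^{-1}\omega|}, \tfrac{t}{|(\sqrt A)^{-1}\omega|}\big)$. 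Then (2) is immediate: $\gamma_A(H(\omega,t)) = \gamma(\sqrt A H(\omega,t)) = \phi\big(\tfrac{t}{|(\sqrt A)^{-1}\omega|}\big)$ by \eqref{quantities for gamma}. For (3), I would use the formula $P_{\gamma_A}(E;(\sqrt A)^{-1}F) = \int_{F\cap \partial^*(\sqrt A E)} |\sqrt A \nu_{\sqrt A E}|\, d\mathcal H^{n-1}_\gamma$ from Proposition \ref{Gaussian_perimeter} with $F = \R^n$, noting that the reduced boundary of the half-space $\sqrt A H(\omega,t)$ is the hyperplane with constant unit normal $\tfrac{(\sqrt A)^{-1}\omega}{|(\sqrt A)^{-1}\omega|}$, so $|\sqrt A \nu_{\sqrt A E}| = \tfrac{|\omega|}{|(\sqrt A)^{-1}\omega|} = \tfrac{1}{|(\sqrt A)^{-1}\omega|}$ is constant on that hyperplane; pulling this constant out and using $P_\gamma(\sqrt A H(\omega,t)) = e^{-t^2/(2|(\sqrt A)^{-1}\omega|^2)}$ from \eqref{quantities for gamma} gives the claimed value. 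For (4), the barycenter, I would write $b_{\gamma_A}(H(\omega,t)) = \int_{H(\omega,t)} x\, d\gamma_A(x)$ and change variables $y = \sqrt A x$ to get $(\sqrt A)^{-1}\int_{\sqrt A H(\omega,t)} y\, d\gamma(y) = (\sqrt A)^{-1} b_\gamma(\sqrt A H(\omega,t))$, then apply the half-space barycenter formula in \eqref{quantities for gamma} with the transported $(\omega,t)$ from (1), and simplify $(\sqrt A)^{-1}\cdot \tfrac{(\sqrt A)^{-1}\omega}{|(\sqrt A)^{-1}\omega|} = \tfrac{A^{-1}\omega}{|(\sqrt A)^{-1}\omega|}$.

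For the two equivalence statements in (3) and (4), the direction ``$A = aI_n \Rightarrow$ invariance'' is trivial since then $|(\sqrt A)^{-1}\omega| = a^{-1/2}$ is independent of $\omega$, so $P_{\gamma_A}$ (resp.\ $|b_{\gamma_A}|$) depends on the half-space only through $t/|(\sqrt A)^{-1}\omega| = \sqrt a\, t$, hence only through $\gamma_A(H)$. The substantive direction is the converse. Here I would argue by contradiction: if $A$ has two distinct eigenvalues, pick unit eigenvectors $\omega_1, \omega_2$ with $|(\sqrt A)^{-1}\omega_1| = \lambda_1^{-1/2} \neq \lambda_2^{-1/2} = |(\sqrt A)^{-1}\omega_2|$. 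Choose offsets $t_1, t_2$ so that $\gamma_A(H(\omega_1,t_1)) = \gamma_A(H(\omega_2,t_2))$, i.e.\ $t_1/|(\sqrt A)^{-1}\omega_1| = t_2/|(\sqrt A)^{-1}\omega_2| =: s$ for a common $s \in \R$ (possible by part (2) and surjectivity of $\phi$); then $P_{\gamma_A}(H(\omega_i,t_i)) = e^{-s^2/2}/|(\sqrt A)^{-1}\omega_i|$, and these two values differ because the prefactors differ. Picking, say, $s = 0$ (so $t_1 = t_2 = 0$, both half-spaces of mass $1/2$) makes the computation cleanest. The same choice handles (4), since $|b_{\gamma_A}(H(\omega_i,t_i))| = \tfrac{1}{\sqrt{2\pi}} e^{-s^2/2}\, \tfrac{|A^{-1}\omega_i|}{|(\sqrt A)^{-1}\omega_i|} = \tfrac{1}{\sqrt{2\pi}} e^{-s^2/2}\, \lambda_i^{-1/2} = \tfrac{1}{\sqrt{2\pi}}e^{-s^2/2}|(\sqrt A)^{-1}\omega_i|$ (using that $\omega_i$ is an eigenvector), which again takes two distinct values.

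I expect the only mildly delicate point to be bookkeeping in part (3): making sure the transport formula from Proposition \ref{Gaussian_perimeter} is applied with the correct set ($\sqrt A E$ vs.\ $E$) and the correct Borel set $F$, and correctly identifying $|\sqrt A \nu|$ on the reduced boundary of the transported half-space — the normal there is $(\sqrt A)^{-1}\omega$ up to normalization, so $\sqrt A$ applied to it returns $\omega/|(\sqrt A)^{-1}\omega|$, of length $1/|(\sqrt A)^{-1}\omega|$. Everything else is a direct substitution into \eqref{quantities for gamma}, and the equivalences reduce to the elementary observation that $\lambda \mapsto \lambda^{-1/2}$ is injective on $(0,\infty)$, so a nonconstant spectrum forces nonconstant perimeter/barycenter among equal-mass half-spaces.
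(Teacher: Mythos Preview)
Your proposal is correct. Parts (1), (2), and (4) proceed exactly as in the paper: a direct set-equality check for (1), the identity $\gamma_A(E)=\gamma(\sqrt A E)$ for (2), and the change of variables $b_{\gamma_A}(E)=(\sqrt A)^{-1}b_\gamma(\sqrt A E)$ for (4). Your treatment of the two ``moreover'' equivalences is logically the contrapositive of the paper's: the paper first deduces that $\omega\mapsto|(\sqrt A)^{-1}\omega|$ must be constant on $\SS^{n-1}$ and then tests on an eigenbasis, while you directly exhibit two equal-mass half-spaces with distinct eigenvector normals giving different perimeters (resp.\ barycenter norms). Both are equally short.

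The only genuine methodological difference is in the perimeter formula (3). The paper does \emph{not} invoke Proposition~\ref{Gaussian_perimeter}; instead it writes $P_{\gamma_A}(H(\omega,t))$ as a boundary integral of $e^{-|\sqrt A x|^2/2}\omega\cdot\omega$, applies the divergence theorem to convert it into a volume integral $-\int_{H(\omega,t)}\langle Ax,\omega\rangle e^{-|\sqrt A x|^2/2}\,dx$, changes variables $y=\sqrt A x$, and recognizes the result as $-\sqrt{2\pi}\,\langle b_\gamma(\sqrt A H(\omega,t)),\sqrt A\omega\rangle$, which it evaluates via \eqref{quantities for gamma}. Your route---pulling the constant $|\sqrt A\,\nu_{\sqrt A H}|=1/|(\sqrt A)^{-1}\omega|$ out of the transport formula $P_{\gamma_A}(E)=\int_{\partial^*(\sqrt A E)}|\sqrt A\,\nu_{\sqrt A E}|\,d\mathcal H^{n-1}_\gamma$ and multiplying by $P_\gamma(\sqrt A H(\omega,t))$---is shorter and arguably cleaner, since it bypasses the divergence theorem and the barycenter entirely. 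The paper's route, on the other hand, is self-contained in the sense that it does not rely on the somewhat technical Proposition~\ref{Gaussian_perimeter}; either argument is fine here.
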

\begin{proof} (1) Given any point $y=Mx\in M(H(\omega,t))$, we have $\langle x,\omega\rangle<t.$ Then
$$\left\langle y,\frac{(M^{\mathsf{T}})^{-1}\omega}{|(M^{\mathsf{T}})^{-1}\omega|}\right\rangle =\left\langle Mx,\frac{(M^{\mathsf{T}})^{-1}\omega}{|(M^{\mathsf{T}})^{-1}\omega|}\right\rangle=\left\langle x,\frac{\omega}{|(M^{\mathsf{T}})^{-1}\omega|}\right\rangle<\frac{t}{|(M^{\mathsf{T}})^{-1}\omega|}.$$
Conversely, for any $y\in H\left(\frac{(M^{\mathsf{T}})^{-1}\omega}{|(M^{\mathsf{T}})^{-1}\omega|}, \frac{t}{|(M^{T})^{-1}\omega|}\right)$, let $x=M^{-1}y$. Notice that
$$\left\langle x,\frac{\omega}{|(M^{\mathsf{T}})^{-1}\omega|}\right\rangle=\left\langle M^{-1}y,\frac{\omega}{|(M^{\mathsf{T}})^{-1}\omega|}\right\rangle=\left\langle y,\frac{(M^{\mathsf{T}})^{-1}\omega}{|(M^{\mathsf{T}})^{-1}\omega|}\right\rangle<\frac{t}{|(M^{\mathsf{T}})^{-1}\omega|}.$$
Thus, $\left\langle x,\omega\right\rangle<t$, i.e., $x\in H(\omega,t).$\\

\noindent (2) Observe that $\gamma_A(E)=\gamma(\sqrt{A}(E))$ for any Borel set $E$, and by (1),
$$\sqrt{A}(H(\omega,t))=H\left(\frac{((\sqrt{A})^{\mathsf{T}})^{-1}\omega}{|((\sqrt{A})^{\mathsf{T}})^{-1}\omega|}, \frac{t}{|((\sqrt{A})^{\mathsf{T}})^{-1}\omega|}\right)=H\left(\frac{(\sqrt{A})^{-1}\omega}{|(\sqrt{A})^{-1}\omega|}, \frac{t}{|(\sqrt{A})^{-1}\omega|}\right),$$
since $\sqrt{A}$ is symmetric. Applying equation (\ref{quantities for gamma}), we have
$$\gamma_A(H(\omega,t))=\phi\left(\frac{t}{|(\sqrt{A})^{-1}\omega|}\right).$$
(3) Notice that
\begin{align*}
&P_{\gamma_A}(H(\omega,t))=\frac{\sqrt{\det A}}{(2\pi)^{(n-1)/2}}\int_{\rb H(\omega,t)}e^{-|\sqrt{A}x|^2/2}\ d\mathcal{H}^{n-1}(x)=\frac{\sqrt{\det A}}{(2\pi)^{(n-1)/2}}\int_{\rb H(\omega,t)}\left(e^{-|\sqrt{A}x|^2/2}\omega\right)\cdot \omega\ d\mathcal{H}^{n-1}(x)\\
&=\frac{\sqrt{\det A}}{(2\pi)^{(n-1)/2}}\int_{H(\omega,t)}\div\left(e^{-|\sqrt{A}x|^2/2}\omega\right)\ dx=\frac{\sqrt{\det A}}{(2\pi)^{(n-1)/2}}\int_{H(\omega,t)}-\left\langle Ax,\omega\right\rangle e^{-|\sqrt{A}x|^2/2}\ dx\\
&=\frac{\sqrt{\det A}}{(2\pi)^{(n-1)/2}}\int_{H\left(\frac{(\sqrt{A})^{-1}\omega}{|(\sqrt{A})^{-1}\omega|},\frac{t}{|(\sqrt{A})^{-1}\omega|}\right)}-\left\langle y,\sqrt{A}\omega\right\rangle e^{-|y|^2/2}\ \frac{1}{|\det \sqrt{A}|}dy\\
&=-\sqrt{2\pi}\left\langle\int_{H\left(\frac{(\sqrt{A})^{-1}\omega}{|(\sqrt{A})^{-1}\omega|},\frac{t}{|(\sqrt{A})^{-1}\omega|}\right)} y\ d\gamma(y),\sqrt{A}\omega\right\rangle =-\sqrt{2\pi}\left\langle b_\gamma \left(H\left(\frac{(\sqrt{A})^{-1}\omega}{|(\sqrt{A})^{-1}\omega|},\frac{t}{|(\sqrt{A})^{-1}\omega|}\right)\right),\sqrt{A}\omega\right\rangle \\
&=-\sqrt{2\pi}\left\langle \frac{-1}{\sqrt{2\pi}}e^{-\frac12\frac{t^2}{|(\sqrt{A})^{-1}\omega|^2}}\left(\frac{(\sqrt{A})^{-1}\omega}{|(\sqrt{A})^{-1}\omega|}\right),\sqrt{A}\omega\right\rangle=e^{-\frac12\frac{t^2}{|(\sqrt{A})^{-1}\omega|^2}}\frac{1}{|(\sqrt{A})^{-1}\omega|},
\end{align*}
where we used the change of variables $y=\sqrt{A}x$ and the fact that the outer unit normal of $H(\omega,t)$ is $\nu_{H(\omega,t)}=\omega$. Next, we prove the second part. Let $H_1=H(\omega_1,t_1)$ and $H_2=H(\omega_2,t_2)$. Suppose that $A=aI_n$. Then
$$\gamma_A(H_1)=\gamma_A(H_2)\implies t_1=t_2.$$
Therefore,
$$P_{\gamma_A}(H(\omega_1,t_1))=e^{-at_1^2}\sqrt{a}=e^{-at_2^2}\sqrt{a}=P_{\gamma_A}(H(\omega_2,t_2)).$$
Conversely, for any $H_1=H(\omega_1,t_1)$ and $H_2=H(\omega_2,t_2)$ with $\gamma_A(H_1)=\gamma_A(H_2)$, i.e., $\frac{t_1}{|(\sqrt{A})^{-1}\omega_1|}=\frac{t_2}{|(\sqrt{A})^{-1}\omega_2|}$, we have
$$P_{\gamma_A}(H(\omega_1,t_1))=P_{\gamma_A}(H(\omega_2,t_2))\implies \frac{1}{|(\sqrt{A})^{-1}\omega_1|}=\frac{1}{|(\sqrt{A})^{-1}\omega_2|}.$$
That is, $|(\sqrt{A})^{-1}\omega|$ is a constant for all $\omega\in \SS^{n-1}$. Since $\sqrt{A}$ is orthogonally diagonalizable, say $\sqrt{A}=ODO^{-1}$, where the orthogonal matrix $O=(v_1\ v_2\cdots v_n)$ and $D=\diag(d_1,d_2,\cdots d_n)$, i.e., for $j=1,\cdots, n$, we have $|v_j|=1$ and
$$\sqrt{A}v_j=d_jv_j\quad (\mbox{i.e. $(\sqrt{A})^{-1}v_j=d_j^{-1}v_j$}).$$
Therefore, $d_1=d_2\cdots=d_n>0$ and hence $A=OD^2O^{-1}=O(d_1^2I_n)O^{-1}=d_1^2I_n$.\\

\noindent (4) Notice that
\begin{align*}
b_{\gamma_A}(E)&=\frac{\sqrt{\det A}}{(2\pi)^{n/2}}\int_{E}xe^{-|\sqrt{A}x|^2/2}\ dx=\frac{1}{(2\pi)^{n/2}}\int_{\sqrt{A}E}(\sqrt{A})^{-1}ye^{-|y|^2/2}\ dy=(\sqrt{A})^{-1}b_\gamma(\sqrt{A}E),
\end{align*}
where we used the change of variables $y=\sqrt{A}x$. In particular, using the calculation above and (\ref{quantities for gamma}), we have
\begin{align*}
b_{\gamma_A}(H(\omega,t))=(\sqrt{A})^{-1}b_\gamma(\sqrt{A}H(\omega,t))&=(\sqrt{A})^{-1}\frac{-1}{\sqrt{2\pi}}e^{-\frac12\frac{t^2}{|(\sqrt{A})^{-1}\omega|^2}}\left(\frac{(\sqrt{A})^{-1}\omega}{|(\sqrt{A})^{-1}\omega|}\right)\\
&=\frac{-1}{\sqrt{2\pi}}e^{-\frac12\frac{t^2}{|(\sqrt{A})^{-1}\omega|^2}}\left(\frac{A^{-1}\omega}{|(\sqrt{A})^{-1}\omega|}\right).
\end{align*}
The proof of the second part is the same as (3); hence, we omit the proof.
\end{proof}

\subsection{Two anisotropic Gaussian isoperimetric inequalities}\mbox{}\vspace{-.22cm}\\

We are ready to prove the anisotropic Gaussian isoperimetric inequality ($\ep$-enlargement version). As mentioned in the introduction, Bakry and Ledoux proved a general result about isoperimetric inequality for log-concave measures (see \cite{Ledoux_LogSobolev}, Theorem 1.1 and \cite{Ledoux-Bakry}). However, for completeness, we provide a proof for the case of anisotropic Gaussian measure using an argument inspired by Latała's paper \cite{Latala_Ehrhard}, Section 3. The proof is based on Ehrhard's inequality and the regularity of Radon measures.

\begin{theorem}[Anisotropic Gaussian Isoperimetric Inequality ($\ep$-enlargement version)]\label{Gaussian_Iso_epsilon}\mbox{}
\begin{enumerate}
\item For any measurable set $E$ in $\R^n$, 
$$\phi^{-1}(\gamma_A(E_\ep))\geq \phi^{-1}(\gamma_A(E))+\frac{\ep}{\|(\sqrt{A})^{-1}\|},$$
where $\|\cdot\|$ is the matrix norm induced by the Euclidean norm. The set
$$E_\ep:= E+\ep \bar{B}(0,1)=\{x\in \R^n:\dist(x,E)\leq\ep\}$$
is called the $\ep$-{\bf (Minkowski) enlargement} of $E$. Here $\bar{B}(0,1)$ is the closed unit ball in $\R^n$.
\item Let $E$ be a measurable set in $\mathbb{R}^{n}$ and let $H(\omega,t)$ be a half-space such that 
$$\gamma_A(E) \geq \gamma_A(H(\omega,t)) .$$ 
Then, for every $\ep >0$,
$$
\gamma_{A}\left(E_{\ep}\right) \geq \gamma_{A}\left(H\left(\omega,t+\ep \frac{|(\sqrt{A})^{-1}\omega|}{\|(\sqrt{A})^{-1}\|}\right)\right).
$$
\end{enumerate}
\end{theorem}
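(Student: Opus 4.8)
The plan is to reduce the anisotropic inequality to the $\ep$-enlargement Gaussian isoperimetric inequality for the standard measure $\gamma=\gamma_{I_n}$, and to extract that from Ehrhard's inequality (Theorem~\ref{Ehrhard's Inequality}) together with the large-ball asymptotics (\ref{limit_of_GaussianBall}). The bridge is the identity $\gamma_A(\cdot)=\gamma(\sqrt A\,\cdot)$ from Section~\ref{background}; since dilation by $\sqrt A$ turns the enlarging ball into the ellipsoid $\sqrt A\,\bar B(0,1)$, the constant $\|(\sqrt A)^{-1}\|$ enters through the elementary inclusion $\sqrt A\,\bar B(0,1)\supseteq \|(\sqrt A)^{-1}\|^{-1}\,\bar B(0,1)$ (if $|y|\le\|(\sqrt A)^{-1}\|^{-1}$ then $x:=(\sqrt A)^{-1}y$ has $|x|\le\|(\sqrt A)^{-1}\|\,|y|\le1$, so $y=\sqrt A x\in\sqrt A\,\bar B(0,1)$).

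Granting this inclusion, set $\delta:=\ep/\|(\sqrt A)^{-1}\|$; then $\sqrt A(E_\ep)=\sqrt A E+\ep\,\sqrt A\,\bar B(0,1)\supseteq\sqrt A E+\delta\,\bar B(0,1)$, hence $\gamma_A(E_\ep)=\gamma(\sqrt A(E_\ep))\ge\gamma\big((\sqrt A E)_\delta\big)$, and part~(1) follows once I show
$$\phi^{-1}\big(\gamma(G_\delta)\big)\ \ge\ \phi^{-1}\big(\gamma(G)\big)+\delta\qquad\text{for every measurable }G\subseteq\R^n,\ \delta>0,$$
applied to $G=\sqrt A E$ (using $\phi^{-1}(\gamma(\sqrt A E))=\phi^{-1}(\gamma_A(E))$ and monotonicity of $\phi$). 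One may assume $0<\gamma(G)<1$, the other cases being immediate, and — because enlargement does not respect modification by $\gamma$-null sets — first replace $G$ by an $F_\sigma$ subset $G_0$ with $\gamma(G_0)=\gamma(G)$, so that all dilates below are Borel and $\gamma((G_0)_\delta)\le\gamma(G_\delta)$.

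The core step is the Minkowski decomposition, valid for each $\lambda\in(0,1)$ by homogeneity of dilations,
$$(G_0)_\delta\ =\ (1-\lambda)\Big(\tfrac{1}{1-\lambda}G_0\Big)\ +\ \lambda\Big(\tfrac{\delta}{\lambda}\bar B(0,1)\Big),$$
to which Theorem~\ref{Ehrhard's Inequality} gives $\phi^{-1}(\gamma((G_0)_\delta))\ge(1-\lambda)\phi^{-1}(\gamma(\tfrac{1}{1-\lambda}G_0))+\lambda\phi^{-1}(\gamma(\tfrac{\delta}{\lambda}\bar B(0,1)))$. Now let $\lambda\to0^+$: the change of variables $x\mapsto cx$ and dominated convergence give $\gamma(cG_0)\to\gamma(G_0)$ as $c\to1^+$, so the first term tends to $\phi^{-1}(\gamma(G_0))=\phi^{-1}(\gamma(G))$; and (\ref{limit_of_GaussianBall}) gives $\phi^{-1}(\gamma(\tfrac{\delta}{\lambda}\bar B(0,1)))=\tfrac{\delta}{\lambda}(1+o(1))$, so $\lambda\phi^{-1}(\gamma(\tfrac{\delta}{\lambda}\bar B(0,1)))\to\delta$. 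This limit — the continuity of $\lambda\mapsto\gamma(\tfrac{1}{1-\lambda}G_0)$ at $0$ combined with the exact first-order size of the ball term, plus the minor care with measurability of Minkowski sums — is the only genuinely quantitative point, and the one I expect to require the most attention; everything else is formal.

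Finally, part~(2) is algebra on top of part~(1) and Proposition~\ref{mass_perimeter_barycenter}(2). From $\gamma_A(E)\ge\gamma_A(H(\omega,t))=\phi\big(t/|(\sqrt A)^{-1}\omega|\big)$ one gets $\phi^{-1}(\gamma_A(E))\ge t/|(\sqrt A)^{-1}\omega|$; adding $\ep/\|(\sqrt A)^{-1}\|$ and invoking part~(1),
$$\phi^{-1}\big(\gamma_A(E_\ep)\big)\ \ge\ \frac{t}{|(\sqrt A)^{-1}\omega|}+\frac{\ep}{\|(\sqrt A)^{-1}\|}\ =\ \frac{1}{|(\sqrt A)^{-1}\omega|}\left(t+\ep\,\frac{|(\sqrt A)^{-1}\omega|}{\|(\sqrt A)^{-1}\|}\right),$$
and applying $\phi$ together with Proposition~\ref{mass_perimeter_barycenter}(2) once more identifies the right-hand side as $\gamma_A\big(H(\omega,\,t+\ep|(\sqrt A)^{-1}\omega|/\|(\sqrt A)^{-1}\|)\big)$, which is the claim.
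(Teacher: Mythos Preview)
Your proof is correct and follows essentially the same approach as the paper: Ehrhard's inequality applied to the Minkowski decomposition $G_\delta=(1-\lambda)\big(\tfrac{1}{1-\lambda}G\big)+\lambda\big(\tfrac{\delta}{\lambda}\bar B\big)$, followed by the limit $\lambda\to0^+$ using~(\ref{limit_of_GaussianBall}), with the anisotropic constant entering via the inclusion $\sqrt A\,\bar B\supseteq\|(\sqrt A)^{-1}\|^{-1}\bar B$. The only cosmetic difference is that you first reduce to the standard Gaussian and then run Ehrhard, whereas the paper applies Ehrhard directly to $\gamma_A$ and inserts the ball inclusion mid-argument; you are also more explicit than the paper about measurability of the Minkowski sum and about the continuity $\gamma(cG_0)\to\gamma(G_0)$ as $c\to1$, points the paper sweeps into ``a standard regularity argument.''
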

\begin{proof} (1) Let $E$ be a Borel set. Applying Ehrhard's inequality (see Theorem \ref{Ehrhard's Inequality}) with $\gamma_A(E)=\gamma(\sqrt{A}(E))$, we have the following: if $B,C$ are Borel sets in $\R^n$, then
$$
\phi^{-1}(\gamma_A(\lambda C+(1-\lambda) B)) \geq \lambda \phi^{-1}(\gamma_A(C))+(1-\lambda) \phi^{-1}(\gamma_A(B)), \text {\qquad for } \lambda \in(0,1),
$$
where
$$\phi(x)=\frac{1}{\sqrt{2\pi}}\int_{-\infty}^xe^{-t^2/2}\ dt.$$
Let  $C=E$ and $B=\bar{B}(0,1)$. Then we have
\begin{align*}
&\phi^{-1}(\gamma_A(E_\ep))=\phi^{-1}(\gamma_A(E+\ep B))=\phi^{-1}\Big[\gamma_A\Big(\lambda \Big(\frac{E}{\lambda}\Big)+(1-\lambda)\Big(\frac{\ep}{1-\lambda} B\Big)\Big)\Big]\\
&\geq \lambda\phi^{-1}\Big[\gamma_A\Big( \frac{E}{\lambda}\Big)\Big]+(1-\lambda)\phi^{-1}\Big[\gamma_A\Big(\frac{\ep}{1-\lambda} B\Big)\Big]= \lambda\phi^{-1}\Big[\gamma_A\Big( \frac{E}{\lambda}\Big)\Big]+(1-\lambda)\phi^{-1}\Big[\gamma\Big(\frac{\ep}{1-\lambda} \sqrt{A}(B)\Big)\Big]\\
&\geq \lambda\phi^{-1}\Big[\gamma_A\Big( \frac{E}{\lambda}\Big)\Big]+(1-\lambda)\phi^{-1}\Big[\gamma\Big(\frac{\ep}{1-\lambda} \frac{1}{\|(\sqrt{A})^{-1}\|}B\Big)\Big],
\end{align*}
using that $\phi^{-1}$ is increasing and $\sqrt{A}(B)\supset \frac{1}{\|(\sqrt{A})^{-1}\|}B$.
Taking $\lambda\to 1^{-}$, by (\ref{limit_of_GaussianBall}), we have
\begin{align*}
(1-\lambda)\phi^{-1}\Big[\gamma\Big(\frac{\ep}{1-\lambda} \frac{1}{\|(\sqrt{A})^{-1}\|}B\Big)\Big]&=\left(\frac{\phi^{-1}\Big[\gamma\Big(\frac{\ep}{1-\lambda} \frac{1}{\|(\sqrt{A})^{-1}\|}B\Big)\Big]}{\frac{\ep}{1-\lambda} \frac{1}{\|(\sqrt{A})^{-1}\|}}\right) \frac{\ep}{\|(\sqrt{A})^{-1}\|}\to  \frac{\ep}{\|(\sqrt{A})^{-1}\|}.
\end{align*}
That is, for any Borel set $E$,
$$\phi^{-1}(\gamma_A(E_\ep))\geq \phi^{-1}(\gamma_A(E))+\frac{\ep}{\|(\sqrt{A})^{-1}\|}.$$
A standard regularity argument ensures the above is true for all Lebesgue measurable sets.\\

\noindent (2) Using (1) with our assumption, we have
\begin{align*}
&\phi^{-1}(\gamma_A(E_\ep))\geq \phi^{-1}(\gamma_A(E))+\frac{\ep}{\|(\sqrt{A})^{-1}\|}\geq \phi^{-1}(\gamma_A(H(\omega,t)))+\frac{\ep}{\|(\sqrt{A})^{-1}\|}=\frac{t}{|(\sqrt{A})^{-1}\omega|}+\frac{\ep}{\|(\sqrt{A})^{-1}\|},
\end{align*}
since $\phi^{-1}$ is increasing. Applying $\phi$ on both sides and using Proposition \ref{mass_perimeter_barycenter},
$$\gamma_A(E_\ep)\geq \phi\left(\frac{t}{|(\sqrt{A})^{-1}\omega|}+\frac{\ep}{\|(\sqrt{A})^{-1}\|}\right)=\phi\left(\frac{t+\ep \frac{|(\sqrt{A})^{-1}\omega|}{\|(\sqrt{A})^{-1}\|}}{|(\sqrt{A})^{-1}\omega|}\right)=\gamma_{A}\left(H\left(\omega,t+\ep \frac{|(\sqrt{A})^{-1}\omega|}{\|(\sqrt{A})^{-1}\|}\right)\right).$$
\end{proof}

Suppose the boundary of $E$ is ``nice'' enough. Intuitively, we have the following
$$\frac{\gamma_A(E_\ep)-\gamma_A(E)}{\ep}\to \frac{1}{\sqrt{2\pi}}P_{\gamma_A}(E),$$
where the extra factor $\frac{1}{\sqrt{2\pi}}$ appears in front of $P_{\gamma_A}$ since we define $P_{\gamma_A}$ with coefficient $\frac{1}{(2\pi)^{(n-1)/2}}$. In order to use this idea, we need to introduce the signed distance function. Let $E$ be a subset of $\mathbb{R}^{n}$. Define $d_{E}: \mathbb{R}^{n} \rightarrow \mathbb{R}$ to be the {\bf signed distance function} from $E$:
$$
d_{E}(x):= \dist(x, E)-\dist\left(x, E^{c}\right)= \begin{cases}-\dist(x, \partial E), & x \in E \\ \dist(x, \partial E), & x \notin E\end{cases}.
$$
Moreover, $d_{E^{c}}(x)=-d_{E}(x)$,
$$d_E(x)=0\iff x\in \partial E,$$
and
$$ \left\{x \in \mathbb{R}^{n}: x \in(\partial E)_{\ep}\right\}=\left\{x \in \mathbb{R}^{n}:\left|d_{E}(x)\right| \leq \ep\right\} ,$$ with $\left|\nabla d_{E}(x)\right|=1$ for any point $x$ where it is differentiable. In particular, $d_E$ is Lipschitz, by Rademacher's theorem, $\nabla d_E$ is differentiable a.e. and hence $|\nabla d_E|=1$ a.e. (see \cite{Ambrosio-Dancer}, Theorem 1 and Remark 3).

\begin{theorem}[Anisotropic Gaussian Isoperimetric Inequality (perimeter version)]\label{Gaussian_IsoIneq_perimeter}\mbox{}\\
Let $E$ be a measurable set in $\mathbb{R}^{n}$. Then
$$P_{\gamma_{A}}\left(E\right) \geq e^{-[\phi^{-1}(\gamma_A(E))]^2/2}\frac{1}{\|(\sqrt{A})^{-1}\|}.$$
In particular, if $A=I_n$, we have the standard Gaussian isoperimetric inequality,
$$P_{\gamma_{I_{n}}}\left(E\right) \geq e^{-[\phi^{-1}(\gamma_{I_n}(E))]^2/2}.$$
\end{theorem}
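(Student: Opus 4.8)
The plan is to derive the perimeter inequality from the $\epsilon$-enlargement inequality of Theorem~\ref{Gaussian_Iso_epsilon}(1) by differentiating at $\epsilon = 0^+$, after reducing to smooth bounded sets via Proposition~\ref{approximation_gaussian_perimeter}. First I would dispatch two trivial cases: if $P_{\gamma_A}(E) = \infty$ there is nothing to prove, and if $\gamma_A(E) \in \{0,1\}$ then $\phi^{-1}(\gamma_A(E)) = \mp\infty$, so the right-hand side vanishes. Hence I may assume $P_{\gamma_A}(E) < \infty$ and $\gamma_A(E) \in (0,1)$, and then carry out two steps.

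\emph{Step 1: smooth bounded sets.} Let $\Omega \subset \R^n$ be bounded and open with smooth boundary; write $s = \phi^{-1}(\gamma_A(\Omega)) \in \R$ and $c = \|(\sqrt A)^{-1}\|^{-1}$. Since $\Omega \subseteq \Omega_\epsilon$, Theorem~\ref{Gaussian_Iso_epsilon}(1) gives $\gamma_A(\Omega_\epsilon) - \gamma_A(\Omega) \ge \phi(s + \epsilon c) - \phi(s)$, so
$$\liminf_{\epsilon \to 0^+} \frac{\gamma_A(\Omega_\epsilon) - \gamma_A(\Omega)}{\epsilon} \ \ge\ c\,\phi'(s) = \frac{c}{\sqrt{2\pi}}\, e^{-s^2/2}.$$
On the other hand I would prove $\lim_{\epsilon\to 0^+} \epsilon^{-1}\big(\gamma_A(\Omega_\epsilon) - \gamma_A(\Omega)\big) = \tfrac{1}{\sqrt{2\pi}} P_{\gamma_A}(\Omega)$: since $\gamma_A(\Omega_\epsilon) - \gamma_A(\Omega) = \gamma_A(\Omega_\epsilon \setminus \Omega)$, $d_\Omega$ is Lipschitz with $|\nabla d_\Omega| = 1$ a.e., and $|\partial \Omega| = 0$, the coarea formula gives
$$\gamma_A(\Omega_\epsilon \setminus \Omega) = \frac{\sqrt{\det A}}{(2\pi)^{n/2}} \int_0^\epsilon \Big( \int_{\{d_\Omega = t\}} e^{-\langle Ax, x\rangle/2}\, d\mathcal{H}^{n-1} \Big)\, dt.$$
By the tubular neighborhood theorem $\{d_\Omega = t\}$ is a smooth hypersurface converging to $\partial\Omega$ as $t \to 0^+$, so the inner integral is right-continuous at $t = 0$ with value $\int_{\partial\Omega} e^{-\langle Ax,x\rangle/2}\,d\mathcal{H}^{n-1}$; dividing by $\epsilon$, letting $\epsilon \to 0^+$, and using $\partial^M \Omega = \partial\Omega$ together with $\tfrac{\sqrt{\det A}}{(2\pi)^{n/2}} = \tfrac{1}{\sqrt{2\pi}} \cdot \tfrac{\sqrt{\det A}}{(2\pi)^{(n-1)/2}}$ gives the claim. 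Combining the two displays yields $P_{\gamma_A}(\Omega) \ge c\, e^{-[\phi^{-1}(\gamma_A(\Omega))]^2/2}$.

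\emph{Step 2: general sets and conclusion.} By Proposition~\ref{approximation_gaussian_perimeter} I would choose bounded open sets $E_k$ with smooth boundary such that $\chi_{E_k} \to \chi_E$ in $L^1(\R^n, \gamma_A)$ and $P_{\gamma_A}(E_k) \to P_{\gamma_A}(E)$. Then $\gamma_A(E_k) \to \gamma_A(E) \in (0,1)$, so $\gamma_A(E_k) \in (0,1)$ for all large $k$ and $\phi^{-1}(\gamma_A(E_k)) \to \phi^{-1}(\gamma_A(E))$ by continuity of $\phi^{-1}$ on $(0,1)$. Applying Step~1 to each $E_k$ and letting $k \to \infty$ gives $P_{\gamma_A}(E) \ge \tfrac{1}{\|(\sqrt A)^{-1}\|} e^{-[\phi^{-1}(\gamma_A(E))]^2/2}$, and the case $A = I_n$ is immediate since $\|(\sqrt{I_n})^{-1}\| = 1$.

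\emph{Main obstacle.} The delicate point is the identity $\lim_{\epsilon \to 0^+} \epsilon^{-1}(\gamma_A(\Omega_\epsilon) - \gamma_A(\Omega)) = \tfrac{1}{\sqrt{2\pi}} P_{\gamma_A}(\Omega)$ for smooth $\Omega$, i.e.\ that the weighted outer Minkowski content of $\Omega$ equals its weighted perimeter. Smoothness of $\partial\Omega$ is essential here: for a general set of finite perimeter only an inequality in the unfavorable direction is available, which is precisely why Step~2 must reduce to smooth sets rather than argue directly. Everything else is soft — monotonicity and differentiability of $\phi$, continuity of $\gamma_A$ and of $\phi^{-1}$ on $(0,1)$, and the lower semicontinuity already packaged into Proposition~\ref{approximation_gaussian_perimeter}.
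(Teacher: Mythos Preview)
Your proposal is correct and follows essentially the same approach as the paper: differentiate the $\epsilon$-enlargement inequality of Theorem~\ref{Gaussian_Iso_epsilon} at $\epsilon=0^+$ for bounded smooth sets using the coarea formula applied to the signed distance function, then pass to general sets via Proposition~\ref{approximation_gaussian_perimeter}. The paper parametrizes the level sets $\{d_E=t\}$ explicitly via $\psi_t(x)=x+t\nabla d_E(x)$ rather than invoking the tubular neighborhood theorem, and does not separately dispatch the trivial cases $\gamma_A(E)\in\{0,1\}$, but these are cosmetic differences.
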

\begin{proof} {\bf Step 1}: We first assume that $E$ is a bounded open set with smooth boundary in $\R^n$ and $P_{\gamma_A}(E)<\infty$. Since $E$ is a set of finite anisotropic Gaussian perimeter, by Proposition \ref{locally finite anisotropic Gaussian perimeter}, $E$ is a set of locally finite perimeter. Moreover, since $E$ is bounded and open with smooth boundary,
$$d_E\mbox{ is smooth in a tubular neighborhood of $\partial E$ and $\nu_E=\nabla d_E$ on $\partial E$}$$
(see \cite{Ambrosio-Dancer}, Theorem 2). By co-area formula, for any Borel function $g: \mathbb{R}^{n} \rightarrow[0, \infty]$ and Lipschitz function $u:\R^n\to \R$, we have
$$
\int_{\mathbb{R}^{n}} g(x)\left|\nabla u(x)\right| \ d x=\int_{-\infty}^{+\infty}\left(\int_{\left\{u=t\right\}} g(y) \ d \mathcal{H}^{n-1}(y)\right) \ d t 
$$
(see \cite{Ambrosio_Fusco_Pallara}, Remark 2.97). Consider $u=d_E$,  
$$g = \frac{\sqrt{\det A}}{(2\pi)^{n/2}}\ e^{-|\sqrt{A}x|^2/2}\chi_{\{0\leq d_E\leq \ep\}},$$
and define the smooth function $\psi_t$ as
$$\psi_t(x)=x+t\nabla d_E(x)\mbox{\quad in the tubular neighborhood of $\partial E$}.$$
Then we have
\begin{align*}
\frac{\gamma_A(E_{\ep})-\gamma_A(E)}{\ep}
&=\frac{1}{\ep}\int_{E_{\ep}\slash E}\frac{\sqrt{\det A}}{(2\pi)^{n/2}} e^{-|\sqrt{A}x|^2/2}\ dx\\
&=\frac{1}{\ep}\int_0^{\ep}\int_{\{d_E=t\}}\frac{\sqrt{\det A}}{(2\pi)^{n/2}} e^{-|\sqrt{A}x|^2/2}\ d\mathcal{H}^{n-1}(x)\ dt\\
&=\frac{\sqrt{\det A}}{(2\pi)^{n/2}} \frac{1}{\ep}\int_0^{\ep}\int_{\psi_t(\partial E)}e^{-|\sqrt{A}x|^2/2}\ d\mathcal{H}^{n-1}(x)\ dt.
\end{align*}
Taking $\ep\to 0^+$ on both sides, by the fundamental theorem of calculus,
$$\lim_{\ep\to 0^+}\frac{\gamma_A(E_{\ep})-\gamma_A(E)}{\ep}=\frac{\sqrt{\det A}}{(2\pi)^{n/2}} \int_{\partial E}e^{-|\sqrt{A}x|^2/2}\ d\mathcal{H}^{n-1}(x)=\frac{1}{\sqrt{2\pi}}P_{\gamma_A}(E),$$
where $\psi_0(\partial E)=\partial E$. On the other hand, by Theorem \ref{Gaussian_Iso_epsilon}, we have
\begin{align*}
\frac{\gamma_A(E_{\ep})-\gamma_A(E)}{\ep}&\geq \frac{\phi\left(\phi^{-1}(\gamma_A(E))+\frac{\ep}{\|(\sqrt{A})^{-1}\|}\right)-\gamma_A(E)}{\ep}\\
&\to \phi'\left(\phi^{-1}(\gamma_A(E))\right)\frac{1}{\|(\sqrt{A})^{-1}\|}=\frac{1}{\sqrt{2\pi}}e^{-[\phi^{-1}(\gamma_A(E))]^2/2}\frac{1}{\|(\sqrt{A})^{-1}\|},
\end{align*}
as $\ep\to 0^+$. That is,
$$P_{\gamma_A}(E)\geq e^{-[\phi^{-1}(\gamma_A(E))]^2/2}\frac{1}{\|(\sqrt{A})^{-1}\|}.$$
{\bf Step 2}: Now for any measurable set $E$, we may again assume that $P_{\gamma_A}(E)<\infty$. By Proposition \ref{approximation_gaussian_perimeter}, there exists a sequence $\{E_k\}$ of bounded open sets with smooth boundary such that
$$\chi_{E_k}\to \chi_{E}\mbox{ in $L^1(\R^n,\gamma_A)$},\quad P_{\gamma_A}(E_k)\to P_{\gamma_A}(E).$$
Applying (1) on $E_k$, we have
$$P_{\gamma_A}(E_k)\geq e^{-[\phi^{-1}(\gamma_A(E_k))]^2/2}\frac{1}{\|(\sqrt{A})^{-1}\|}.$$
Taking $k\to \infty$, we have finished the proof.
\end{proof}

In the paper of Cianchi-Fusco-Maggi-Pratelli \cite{Cianchi-Fusco-Maggi-Pratelli}, Proposition 3.1 and Theorem 4.1, they have characterized the equality cases for the standard Gaussian measure $\gamma_{I_n}$. The result reads as follows: let $E$ be a measurable subset of $\mathbb{R}^n$. Then
\begin{align}\label{case_of_equality_1}
P_{\gamma_{I_n}}(E) \geq e^{-[\phi^{-1}(\gamma_{I_n}(E))]^2/2}.
\end{align}
Moreover, 
\begin{enumerate}
\item if $n=1$, equality holds if and only if either $\gamma_{1}(E)=0$ or $\gamma_{1}(E)=1$, or up to a set of measure zero and for some $\sigma \in \mathbb{R}, E=(-\infty,-\sigma)$ or $E=(\sigma, \infty)$.
\item if $n\geq 2$, equality holds if and only if either $\gamma_{I_n}(E)=0$ or $\gamma_{I_n}(E)=1$, or $E$ is equivalent to a half-space.
\end{enumerate}
Notice that we can also derive Theorem \ref{Gaussian_IsoIneq_perimeter} from Proposition \ref{Gaussian_perimeter} and equation (\ref{case_of_equality_1}),
\begin{align}\label{case_of_equality_2}
P_{\gamma_{A}}(E)\geq P_{\gamma_{I_n}}(\sqrt{A}E)\frac{1}{\|(\sqrt{A})^{-1}\|}\geq e^{-[\phi^{-1}(\gamma_{I_n}(\sqrt{A}E))]^2/2}\frac{1}{\|(\sqrt{A})^{-1}\|}=e^{-[\phi^{-1}(\gamma_A(E))]^2/2}\frac{1}{\|(\sqrt{A})^{-1}\|}.\\
\notag
\end{align}

\subsection{Proof of Theorem \ref{AnisotropicGaussainIso} (cases of equality)} Notice that (1) follows directly from Cianchi-Fusco-Maggi-Pratelli \cite{Cianchi-Fusco-Maggi-Pratelli}, Proposition 3.1. We just need to prove (2) here. Suppose the equality holds and assume that $\gamma_A(E)=\gamma_{I_n}(\sqrt{A}E)\in (0,1)$. By equation (\ref{case_of_equality_2}), we have
$$P_{\gamma_{A}}(E)= P_{\gamma_{I_n}}(\sqrt{A}E)\frac{1}{\|(\sqrt{A})^{-1}\|}= e^{-[\phi^{-1}(\gamma_{I_n}(\sqrt{A}E))]^2/2}\frac{1}{\|(\sqrt{A})^{-1}\|}=e^{-[\phi^{-1}(\gamma_A(E))]^2/2}\frac{1}{\|(\sqrt{A})^{-1}\|}.$$
That is,
$$
P_{\gamma_{I_n}}(\sqrt{A}E) =e^{-[\phi^{-1}(\gamma_{I_n}(\sqrt{A}E))]^2/2}.
$$
By \cite{Cianchi-Fusco-Maggi-Pratelli}, Theorem 4.1, $\sqrt{A}E$ is equivalent to a half-space, say $H(\omega,t)$, where $\omega\in \SS^{n-1}$. Then
$$E\mbox{ is equivalent to }(\sqrt{A})^{-1}(H(\omega,t))=H\left(\frac{\sqrt{A}\omega}{|\sqrt{A}\omega|}, \frac{t}{|\sqrt{A}\omega|}\right).$$
Moreover, by Proposition \ref{mass_perimeter_barycenter},
$$\gamma_A(E)=\gamma_A\left(H\left(\frac{\sqrt{A}\omega}{|\sqrt{A}\omega|}, \frac{t}{|\sqrt{A}\omega|}\right)\right)=\phi\left(t\right)\implies t=\phi^{-1}(\gamma_A(E))$$
and
$$P_{\gamma_A}(E)=P_{\gamma_A}\left(H\left(\frac{\sqrt{A}\omega}{|\sqrt{A}\omega|}, \frac{t}{|\sqrt{A}\omega|}\right)\right)=e^{-\frac12 t^2}|\sqrt{A}\omega|=e^{-[\phi^{-1}(\gamma_A(E))]^2/2}|\sqrt{A}\omega|.$$
By our assumption, we have
\begin{align}\label{AnisotropicGaussainIso_eq1}
e^{-[\phi^{-1}(\gamma_A(E))]^2/2}\frac{1}{\|(\sqrt{A})^{-1}\|}=P_{\gamma_A}(E)=e^{-[\phi^{-1}(\gamma_A(E))]^2/2}|\sqrt{A}\omega|\implies |\sqrt{A}\omega|=\frac{1}{\|(\sqrt{A})^{-1}\|}=d_{\min},
\end{align}
where $d_{\min}$ is the smallest eigenvalue of $\sqrt{A}$ and we have used
$$\|(\sqrt{A})^{-1}\|=\mbox{ the largest eigenvalue of $(\sqrt{A})^{-1}$}=\frac{1}{d_{\min}}.$$
Now we claim that 
$$\omega\in V_{d_{\min}}(\sqrt{A})\cap \SS^{n-1}.$$
Notice that we can decompose $A$ into
$$A=O^{\mathsf{T}}D O,$$
with an orthogonal matrix $O$ and a diagonal matrix $D$. If all eignevalues of $D$ are the same, i.e., $D=d^2_{\min}I_n$, then $\sqrt{A}=d_{\min}I_n$ and $\omega\in V_{d_{\min}}(\sqrt{A})\cap \SS^{n-1}$. Hence, we may assume that $D$ has the following form:
$$D=\left[\begin{array}{cc}   D_1&0\\  0& D_2\end{array}\right],\quad D_1=\lambda_{\min}I_{1},\mbox{\quad and \quad $D_2$ has eigenvalues strictly greater than $\lambda_{\min}$},$$
where $\lambda_{\min}=d_{\min}^2$ is the smallest eigenvalue of $A$. Let
$$O=\left[\begin{array}{cc}   O_1&O_2\\  O_3& O_4\end{array}\right],$$
then $\sqrt{A}=O^{\mathsf{T}}D^{1/2} O$ and
\begin{align}\label{AnisotropicGaussainIso_eq2}
|\sqrt{A}\omega|^2=|O^{\mathsf{T}}D^{1/2}O\omega|^2=|D^{1/2}y|^2=|D^{1/2}_1y_1|^2+|D_2^{1/2}y_2|^2=d^2_{\min}|y_1|^2+|D_2^{1/2}y_2|^2
\end{align}
where
$$\left[\begin{array}{c}   y_1\\   y_2\end{array}\right]=y:=O\omega.$$
On the other hand,
\begin{align}\label{AnisotropicGaussainIso_eq3}
d_{\min}^2=d_{\min}^2|\omega|^2=d_{\min}^2|y|^2=d^2_{\min}|y_1|^2+d^2_{\min}|y_2|^2.
\end{align}
Therefore, by (\ref{AnisotropicGaussainIso_eq1}), (\ref{AnisotropicGaussainIso_eq2}), and (\ref{AnisotropicGaussainIso_eq3}),
$$|D_2^{1/2}y_2|^2=d^2_{\min}|y_2|^2\implies y_2=0$$
since $D_2$ has eigenvalues strictly greater than $\lambda_{\min}=d^2_{\min}$. Thus,
\begin{align*}
\sqrt{A}\omega=O^{\mathsf{T}}D^{1/2}O\omega&=O^{\mathsf{T}}D^{1/2}y=\left[\begin{array}{cc}   O^{\mathsf{T}}_1&O^{\mathsf{T}}_3\\  O^{\mathsf{T}}_2& O^{\mathsf{T}}_4\end{array}\right]\left[\begin{array}{cc}   d_{\min}I_1&0\\ 0& D_2^{1/2}\end{array}\right]\left[\begin{array}{c}   y_1\\   0\end{array}\right]=\left[\begin{array}{c}   d_{\min}O^{\mathsf{T}}_1y_1\\   d_{\min}O^{\mathsf{T}}_2y_1\end{array}\right],
\end{align*}
and
$$d_{\min}\omega=d_{\min}O^{\mathsf{T}}y=d_{\min}\left[\begin{array}{cc}   O^{\mathsf{T}}_1&O^{\mathsf{T}}_3\\  O^{\mathsf{T}}_2& O^{\mathsf{T}}_4\end{array}\right]\left[\begin{array}{c}   y_1\\   0\end{array}\right]=\left[\begin{array}{c}   d_{\min}O^{\mathsf{T}}_1y_1\\   d_{\min}O^{\mathsf{T}}_2y_1\end{array}\right]=\sqrt{A}\omega.$$
Hence,
$$\sqrt{A}\omega-d_{\min}\omega=0\implies \omega\in V_{d_{\min}}(\sqrt{A}).$$
We conclude that
$$E\mbox{ is equivalent to }(\sqrt{A})^{-1}(H(\omega,t))=H\left(\frac{\sqrt{A}\omega}{|\sqrt{A}\omega|}, \frac{t}{|\sqrt{A}\omega|}\right)=H\left(\omega,\frac{\phi^{-1}(\gamma_A(E))}{d_{\min}}\right).$$
Now we prove the converse of (2) in Theorem \ref{AnisotropicGaussainIso}. It is clear that the equality holds when $\gamma_A(E)=0$ or $\gamma_A(E)=1$. Hence, we may assume that $\gamma_A(E)\in (0,1)$, i.e., $\phi^{-1}(\gamma_A(E))\in \R$. Since $\omega\in V_{d_{\min}}(\sqrt{A})\cap \SS^{n-1}$, $\sqrt{A}\omega=d_{\min}\omega$. By Proposition \ref{mass_perimeter_barycenter}, we have
\begin{align*}
P_{\gamma_A}(E)=P_{\gamma_A}\left(H\left(\omega,\frac{\phi^{-1}(\gamma_A(E))}{d_{\min}}\right)\right)&=e^{-\frac12\frac{\frac{[\phi^{-1}(\gamma_A(E))]^2}{d^2_{\min}}}{|(\sqrt{A})^{-1}\omega|^2}}\frac{1}{|(\sqrt{A})^{-1}\omega|}\\
&=e^{-[\phi^{-1}(\gamma_A(E))]^2/2}d_{\min}=e^{-[\phi^{-1}(\gamma_A(E))]^2/2}\frac{1}{\|(\sqrt{A})^{-1}\|}.
\end{align*}
\qed

\section{Anisotropic Gaussian Perimeter Inequality under Ehrhard Symmetrization}\label{Anisotropic Gaussian Inequality under Ehrhard Symmetrization}

\subsection{Ehrhard symmetrization}\label{Ehrhard_symmetrization_notation_I}\mbox{}\vspace{-.22cm}\\

In this section, we will use the following notations:
$$\mbox{$x=(z, y)$ for $x \in \mathbb{R}^{n}, z \in \mathbb{R}^{n-1}$ and $y \in \mathbb{R} .$}$$ 
Similar to (\ref{Hausdorff_measure}), we define two (outer) measures $\mu_z$ and $\mathcal{H}^{0}_{z}$ on $\R^1$ such that
\begin{align}\label{def_with_z_section}
\mu_z(F_1)=\int_{F_1}e^{-|\sqrt{A}x|^2/2}dy,\mbox{\quad$\forall F_1\in \mathcal{L}(\R^1)$,}\quad \mathcal{H}^{0}_{z}(F_2)=\int_{F_2}e^{-|\sqrt{A}x|^2/2}d\mathcal{H}^{0}(y),\quad \mbox{$\forall F_2\subset \R^1$,}
\end{align}
where $\mathcal{H}^{0}$ is the counting measure. Moreover, we define
$$P_z(F)=\mathcal{H}^{0}_{z}(\partial^M F),\quad F\subset \R^1,$$
where $\partial^M F$ is the essential boundary of $F$. We also define an auxiliary function $\phi_{z}$ as
\begin{align*}
\phi_{z}(t)&=\int_{-\infty}^{t}e^{-|\sqrt{A}x|^2/2}  dy, \quad z\in\R^{n-1}.
\end{align*}
Let $E$ be a measurable set in $\mathbb{R}^{n}$ with $n \geq 2$. The {\bf section} $E_{z} \subseteq \mathbb{R}$ of $E$ is defined as
$$
E_{z}=\{y \in \mathbb{R}:(z, y) \in E\}, \quad \mbox{where $z \in \mathbb{R}^{n-1}$.}
$$
Define $v_{E}: \mathbb{R}^{n-1} \rightarrow\R$ as
$$
v_{E}(z)=\mu_z\left(E_{z}\right),\mbox{\quad $\forall z\in \R^{n-1}$.}
$$
Notice that $e^{-|\sqrt{A}x|^2/2}\leq e^{-\|(\sqrt{A})^{-1}\|^{-2}|x|^2/2}$ (see Lemma \ref{computational_lemma}) and $x\mapsto e^{-\|(\sqrt{A})^{-1}\|^{-2}|x|^2/2}\in L^1(\R^n)$. By Fubini theorem, we have
$$v_E\in L^1(\R^{n-1}).$$
The {\bf Ehrhard symmetrization} of $E$ with respect to the $y$-direction is defined as
\begin{align}\label{old_def}
E^s:= E^{s}_{A,-e_n}:=\left\{(z, y) \in \mathbb{R}^{n}: y<\phi^{-1}_{z}\left(v_{E}(z)\right)\right\},
\end{align}
and the {\bf essential projection} of $E$ with respect to the $y$-direction is defined as
$$
\pi_{+}(E):=\pi_{+,A,-e_n}(E):=\left\{z \in \mathbb{R}^{n-1}: v_E(z)=\mu_{z}\left(E_{z}\right)>0\right\} .
$$
We now define
$$p_E(z)=\mathcal{H}_z^0\left[(\partial^M E)_z\right].$$
Roughly speaking, the set $\pi_+(E)$ captures the set in $\R^{n-1}$ over which the one-dimensional vertical slices in $E$ have positive mass. We recall the co-area formula for sets of finite perimeter (see \cite{Cianchi-Fusco-Maggi-Pratelli}, equation (4.1)): for any non-negative Borel function $g: \mathbb{R}^{n} \rightarrow[0, \infty]$, we have
\beq\label{co-area formula for sets of finite perimeter_1}
\int_{\partial^{M} E} g(x)\left|\nu_{n}^{E}(x)\right| d \mathcal{H}^{n-1}(x)=\int_{\mathbb{R}^{n-1}} \int_{\left(\partial^{M} E\right)_{z}} g(z, y)\ d \mathcal{H}^{0}(y)\ dz,
\eeq
where $\nu_n^E$ means $\langle \nu^E,e_n\rangle$. We also recall the following theorem by Vol'port  from \cite{Vol'pert} (see also \cite{Ambrosio_Fusco_Pallara}, Theorem 3.108 and \cite{Chlebik-Cianchi-Fusco}, Theorem G):

\begin{theorem}[Vol'pert Theorem]\label{Vol'pert}\mbox{}\\
 Let $E \subseteq \mathbb{R}^{n}$ be a set of locally finite perimeter with $n \geq 2$. Then there exists a Borel set $B_{E} \subseteq \pi_{+}(E)$ with $\mathcal{L}^{n-1}\left(\pi_{+}(E) \slash B_{E}\right)=0$ such that for every $z \in B_{E}$,
\begin{enumerate}
\item[(i)] $E_{z}$ is a set of locally finite perimeter in $\mathbb{R}$;
\item[(ii)] $\left(\partial^{M} E\right)_{z}=\partial^{M}\left(E_{z}\right)=\partial^{*}\left(E_{z}\right)=\left(\partial^{*} E\right)_{z}$;
\item[(iii)] $\nu_{n}^{E}(z, y) \neq 0$ for every $y$ such that $(z, y) \in \partial^{*} E$.
\end{enumerate}
We will call $B_E$ the {\bf Vol'pert set}.
\end{theorem}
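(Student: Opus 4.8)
The plan is to deduce the theorem from the one-dimensional slicing theory for $BV$ functions applied to $\chi_E\in BV_{\mathrm{loc}}(\R^n)$, with De Giorgi's structure theorem, Federer's theorem, and the co-area formula \eqref{co-area formula for sets of finite perimeter_1} serving as the bridge between slices of $\eb E$ and boundaries of slices. Throughout, write $p:\R^n\to\R^{n-1}$, $p(z,y)=z$, for the projection forgetting the last coordinate. \textbf{Step 1 (slices are $BV$ in $\R$).} By the standard sectioning theorem for $BV$ functions (\cite{Ambrosio_Fusco_Pallara}, Section~3.11), for $\mathcal{L}^{n-1}$-a.e.\ $z$ the slice $\chi_{E_z}$ lies in $BV_{\mathrm{loc}}(\R)$, so $E_z$ has locally finite perimeter in $\R$ — this is (i) — and moreover the last component $D_n\chi_E$ disintegrates over the base as
$$\langle D_n\chi_E,\varphi\rangle=\int_{\R^{n-1}}\Big(\int_\R\varphi(z,y)\,dD\chi_{E_z}(y)\Big)\,dz,\qquad\varphi\in C_c(\R^n).$$

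\textbf{Step 2 (reduce to slicing $\rb E$).} Any $F\subseteq\R$ of locally finite perimeter is equivalent to a locally finite disjoint union of open intervals, so $\eb F=\rb F$ is the discrete set of endpoints and $D\chi_F=\sum_{y}\nu^F(y)\,\delta_y$ with $\nu^F(y)=\pm1$. By Federer's theorem $\mathcal{H}^{n-1}(\eb E\setminus\rb E)=0$, and since $p$ is $1$-Lipschitz the Eilenberg (co-area) inequality gives $\int^{*}_{\R^{n-1}}\mathcal{H}^0\big((\eb E\setminus\rb E)_z\big)\,dz\le c\,\mathcal{H}^{n-1}(\eb E\setminus\rb E)=0$, hence $(\eb E)_z=(\rb E)_z$ for a.e.\ $z$. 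It therefore remains to identify $(\rb E)_z$ with $\partial^*(E_z)=\partial^M(E_z)$ for a.e.\ $z$.

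\textbf{Step 3 (matching atoms) and Step 4 (transversality).} By De Giorgi's theorem $D_n\chi_E=\nu_n^E\,\mathcal{H}^{n-1}\mres\rb E$, and \eqref{co-area formula for sets of finite perimeter_1} rewrites the disintegration of Step~1 as $\int_\R\varphi(z,y)\,dD\chi_{E_z}(y)=\sum_{y:\,(z,y)\in\rb E,\ \nu_n^E(z,y)\neq0}\sgn(\nu_n^E(z,y))\,\varphi(z,y)$ for a.e.\ $z$; comparing with Step~2, for a.e.\ $z$ the points of $\partial^*(E_z)$ are exactly $\{y:(z,y)\in\rb E,\ \nu_n^E(z,y)\neq0\}$, so $\partial^*(E_z)\subseteq(\rb E)_z$. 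Now let $Z=\{x\in\rb E:\nu_n^E(x)=0\}$: since $\rb E$ is countably $(n-1)$-rectifiable with approximate tangent plane $(\nu^E)^{\perp}$, the tangential Jacobian of $p|_{\rb E}$ at $x$ equals $|\nu_n^E(x)|$, which vanishes on $Z$, so the area formula for rectifiable sets gives $0=\int_Z|\nu_n^E|\,d\mathcal{H}^{n-1}=\int_{\R^{n-1}}\#\big(Z\cap p^{-1}(z)\big)\,dz$, whence $Z_z=\emptyset$ for a.e.\ $z$; this is (iii). Combining, $\partial^*(E_z)=(\rb E)_z$ for a.e.\ $z$, and with Step~2 the whole chain of equalities in (ii) holds. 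Taking $B_E\subseteq\pi_+(E)$ to be the intersection of the (countably many) full-measure Borel sets produced above — which can be arranged to be Borel by routine Fubini/measurable-selection considerations — completes the proof.

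The hard part will be Step~3: turning the weak, \emph{integrated} disintegration of $D_n\chi_E$ into a genuinely pointwise-in-$z$ identification of a $0$-dimensional object (the atoms of $D\chi_{E_z}$) with the slice $(\rb E)_z$, while carefully tracking the weight $|\nu_n^E|$ and the sign in \eqref{co-area formula for sets of finite perimeter_1}. It is precisely this weight that makes the transversality claim (iii) something one must establish independently (Step~4) before it can be dropped; once the Eilenberg inequality and the identification of the tangential Jacobian of $p$ with $|\nu_n^E|$ are in hand, Steps~2 and~4 are routine.
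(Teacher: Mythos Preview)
The paper does not prove this theorem: it is stated as a background result and attributed to Vol'pert, with references to \cite{Vol'pert}, \cite{Ambrosio_Fusco_Pallara} Theorem~3.108, and \cite{Chlebik-Cianchi-Fusco} Theorem~G. So there is no in-paper proof to compare against.

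Your sketch is essentially the standard argument (the one behind \cite{Ambrosio_Fusco_Pallara}, Theorem~3.108). The ingredients --- the $BV$ slicing theorem for $D_n\chi_E$, De~Giorgi's structure theorem, the Eilenberg inequality applied to the $\mathcal{H}^{n-1}$-null set $\eb E\setminus\rb E$, and the rectifiable area formula identifying the tangential Jacobian of the projection $p$ with $|\nu_n^E|$ --- are exactly the right ones, and the logical order is sound. In particular, Step~4 is the correct mechanism for (iii): the set $Z=\{x\in\rb E:\nu_n^E(x)=0\}$ has vanishing tangential Jacobian under $p$, so the area formula forces $\int_{\R^{n-1}}\#(Z_z)\,dz=0$, whence $Z_z=\emptyset$ for a.e.\ $z$.

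Two places deserve one more sentence each. In Step~3 you pass from the \emph{measure} identity
\[
D\chi_{E_z}\;=\;\sum_{\substack{y:\,(z,y)\in\rb E\\ \nu_n^E(z,y)\neq 0}}\sgn\big(\nu_n^E(z,y)\big)\,\delta_y\qquad\text{(a.e.\ }z\text{)}
\]
to an equality of \emph{supports}. This is legitimate precisely because both sides are purely atomic with weights in $\{-1,+1\}$ and distinct atoms, so no cancellation can occur; say this explicitly. Second, your signed use of \eqref{co-area formula for sets of finite perimeter_1} is obtained by plugging $g=\varphi\cdot\sgn(\nu_n^E)\,\chi_{\{\nu_n^E\neq 0\}}$ into the unsigned formula; this is fine since $\sgn(\nu_n^E)$ is Borel on $\rb E$, but it is worth spelling out so that the reader sees you have not assumed (iii) before proving it. With these two clarifications the argument is complete.
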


In order to understand the Ehrhard symmetrization set $E^s$, our first goal is to analyze the regularities of the mappings $z\mapsto v_E(z)$ and $z\mapsto \phi_z^{-1}(v_E(z))$. For the isotropic Gaussian case, the mapping $z\mapsto \phi^{-1}(\gamma_1(E_z))$ is in $BV_{loc}(\R^{n-1})$ since $z\mapsto \gamma_1(E_z)$ is in $BV(\R^{n-1})$ and $\omega\mapsto \phi^{-1}(\omega)$ is $C^1(\R)$. Here we have used a fact proven by Vol'pert \cite{Vol'pert} that the composition of a $C^1$ map with a BV function is again a BV function. In fact, Ambrosio-Dal Maso \cite{Ambrosio_DalMaso} showed that this is also true if we compose a BV function with a Lipschitz map. However, in our setting, the function $\phi_z^{-1}$ is also depending on the variable $z\in \R^{n-1}$ which required a different proof for the regularity of $z\mapsto \phi_z^{-1}(v_E(z))$.

\subsection{A regularity result for the map $z\mapsto \phi_z^{-1}(v_E(z))$}\mbox{}\vspace{-.22cm}\\

 Our first goal is to show that $v_E\in BV(\R^{n-1})$ if $E$ is a set of finite anisotropic Gaussian perimeter. The proof is similar to Chlebík-Cianchi-Fusco's paper \cite{Chlebik-Cianchi-Fusco}, Lemma 3.1 and Lemma 3.2. Before doing that, we need the following preliminary result for the integrand $e^{-|\sqrt{A}|^2/2}$. The cross term $a_{ij}x_ix_j$ in $\langle Ax,x\rangle=|\sqrt{A}x|^2$ also plays an important role in the calculation. We will need those estimates throughout this section.

\begin{lemma}[Computational lemma]\label{computational_lemma}\mbox{}\\ 
Let $n\geq 2$ and let $\sqrt{A}$ be a symmetric positive definite matrix. Then
\begin{enumerate}
\item {\rm \bf (Derivative for the integrand)}\mbox{}\\
Let $\nabla'=(\partial_1,\ldots, \partial_{n-1})$ and $x=(z,y)$. Then 
$$\partial_{z_k}e^{-|\sqrt{A}x|^2/2}=-e^{-|\sqrt{A}x|^2/2}\langle \row_k(A),x\rangle,\quad \partial^2_{yy}|\sqrt{A}x|^2=2\sum_{i=1}^na_{in}^2,$$
$$\nabla' \left(e^{-|\sqrt{A}x|^2/2}\right)=-e^{-|\sqrt{A}x|^2/2}A'x,\quad\mbox{and}\quad \nabla \left(e^{-|\sqrt{A}x|^2/2}\right)=-e^{-|\sqrt{A}x|^2/2}Ax,$$
where $\sqrt{A}=(a_{ij})$ and $A'\in M_{(n-1)\times n}(\R)$ is the first $n-1$ rows of matrix from $A$.
\item {\rm \bf (Regularity estimates)}
\begin{enumerate}
\item For any $z_0\in \R^{n-1}$ and for any measurable set $F$,
$$\lim_{z\to z_0}\int_F \left(e^{-|\sqrt{A}(z,y)|^2/2}-e^{-|\sqrt{A}(z_0,y)|^2/2}\right)dy=0.$$
In particular, the mapping
$$v:z\mapsto \int_{0}^{\infty}e^{-|\sqrt{A}(z,y)|^2/2}\ dy\mbox{\ is continuous.}$$
\item For any $z\in \R^{n-1}$ and for any measurable set $F$,
\begin{align}\label{computational_lemma_(2)(b)}
\lim_{k\to 0}\int_{F}\Bigg(\frac{e^{-|\sqrt{A}(z+k,y)|^2/2}-e^{-|\sqrt{A}(z,y)|^2/2}-\nabla'\left(e^{-|\sqrt{A}(z,y)|^2/2}\right)\cdot k}{|k|}\Bigg)dy=0.
\end{align}
In particular, the mapping $v$ in (a) is differentiable and
$$\nabla'v(z)=\int_{0}^{\infty}\nabla'\left(e^{-|\sqrt{A}(z,y)|^2/2}\right) dy.$$
\item Let $K$ be a convex compact set in $\R^{n-1}$ and $h\in C^1(K)$. Then for any $z_0,z\in K$,
\begin{align*}
&\left|e^{|\sqrt{A}(z,h(z))|^2/2}-e^{|\sqrt{A}(z_0,h(z_0))|^2/2}\right|\leq C(K,h,A)\left|z-z_0\right|
\end{align*}
for some constant $C(K,h,A)>0$.
\end{enumerate}
\item {\rm \bf (Integral bounds)}
\begin{enumerate}
\item There exists a constant $C_1(A)>0$ such that
$$\sup_{1\leq k\leq n-1}\sup_{z\in \R^{n-1}}\left(\int_{-\infty}^{\infty}\left|\frac{\partial}{\partial z_k}\left(e^{-|\sqrt{A}x|^2/2}\right)\right| dy\right)\leq C_1(A)<\infty.$$
\item There exists a constant $C_2(A)>0$ such that
$$\sup_{1\leq k\leq n-1}\int_{\R^n}\left|\frac{\partial}{\partial z_k}\left(e^{-|\sqrt{A}x|^2/2}\right)\right| dx\leq C_2(A)<\infty.$$
\end{enumerate}
\end{enumerate}
\end{lemma}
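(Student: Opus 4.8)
The plan is to reduce every part to one elementary estimate followed by a dominated-convergence argument. Since $\sqrt A$ is symmetric positive definite, $|x|=|(\sqrt A)^{-1}\sqrt Ax|\le\|(\sqrt A)^{-1}\|\,|\sqrt Ax|$, so writing $c:=\|(\sqrt A)^{-1}\|$ we have the Gaussian majorant $e^{-|\sqrt Ax|^2/2}\le e^{-|x|^2/(2c^2)}$ for every $x=(z,y)\in\R^{n-1}\times\R$; discarding the factor $e^{-|z|^2/(2c^2)}\le1$ turns this into control in $y$ that is \emph{uniform} in $z$, which is exactly what is needed to dominate all the integrands below.

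For (1) I would just differentiate $|\sqrt Ax|^2=\langle Ax,x\rangle$. By symmetry of $A$, $\partial_{x_j}\langle Ax,x\rangle=2(Ax)_j=2\langle\row_j(A),x\rangle$, so $\partial_{z_k}e^{-|\sqrt Ax|^2/2}=-\langle\row_k(A),x\rangle e^{-|\sqrt Ax|^2/2}$; collecting the first $n-1$ (respectively all $n$) of these entries gives $\nabla'(e^{-|\sqrt Ax|^2/2})=-e^{-|\sqrt Ax|^2/2}A'x$ and $\nabla(e^{-|\sqrt Ax|^2/2})=-e^{-|\sqrt Ax|^2/2}Ax$. For the second $y$-derivative I would write $|\sqrt Ax|^2=\sum_{i=1}^n\big(\sum_{j=1}^n a_{ij}x_j\big)^2$; one $\partial_y=\partial_{x_n}$ brings down $2\sum_i a_{in}\big(\sum_j a_{ij}x_j\big)$, and a second $\partial_y$ leaves $2\sum_{i=1}^n a_{in}^2$.

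For (2)(a) I would fix $z_0$ and work on $|z-z_0|\le1$: then $|e^{-|\sqrt A(z,y)|^2/2}-e^{-|\sqrt A(z_0,y)|^2/2}|\le 2e^{-y^2/(2c^2)}\in L^1(\R)$ uniformly in such $z$, and the integrand tends to $0$ pointwise in $y$ by continuity of the exponential, so dominated convergence gives $\int_F|\cdot|\,dy\to0$, a fortiori the signed integral; taking $F=(0,\infty)$ yields continuity of $v$. For (2)(b), for each fixed $y$ the mean value theorem along the segment $[z,z+k]$ produces $\theta=\theta(y,k)\in(0,1)$ with difference quotient equal to $\big(\nabla'(e^{-|\sqrt A(z+\theta k,y)|^2/2})-\nabla'(e^{-|\sqrt A(z,y)|^2/2})\big)\cdot k/|k|$, whose modulus is at most $|\nabla'(e^{-|\sqrt A(z+\theta k,y)|^2/2})-\nabla'(e^{-|\sqrt A(z,y)|^2/2})|$; by the formula from (1) and the majorant this is bounded, for $|k|\le1$, by $2\|A'\|(|z|+1+|y|)e^{-y^2/(2c^2)}\in L^1(\R)$ and tends to $0$ pointwise, so dominated convergence applies. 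The same bound shows $y\mapsto\nabla'(e^{-|\sqrt A(z,y)|^2/2})$ is integrable on $(0,\infty)$, and choosing $F=(0,\infty)$ in the vanishing difference quotient then identifies $\nabla'v(z)=\int_0^\infty\nabla'(e^{-|\sqrt A(z,y)|^2/2})\,dy$.

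For (2)(c), the map $z\mapsto e^{|\sqrt A(z,h(z))|^2/2}$ is $C^1$ on $K$, with $\partial_{z_k}$ equal to $e^{|\sqrt A(z,h(z))|^2/2}\big(\langle\row_k(A),(z,h(z))\rangle+\langle\row_n(A),(z,h(z))\rangle\partial_{z_k}h(z)\big)$; since $h$ is continuous and $K$ compact, $(z,h(z))$ ranges in a compact set, so the exponential factor, $|A(z,h(z))|$ and $|\nabla h|$ are all bounded on $K$, making this gradient bounded on $K$ by some $C(K,h,A)$, and convexity of $K$ with the mean value inequality gives the stated Lipschitz bound. Finally, for (3) I would use $|\partial_{z_k}e^{-|\sqrt Ax|^2/2}|=e^{-|\sqrt Ax|^2/2}|\langle\row_k(A),x\rangle|\le\|A\|\,|x|\,e^{-|x|^2/(2c^2)}$ (Cauchy--Schwarz and $|\row_k(A)|=|Ae_k|\le\|A\|$); integrating in $y$ with $|x|\le|z|+|y|$ and $e^{-|x|^2/(2c^2)}=e^{-|z|^2/(2c^2)}e^{-y^2/(2c^2)}$ bounds $\int_\R|\partial_{z_k}e^{-|\sqrt Ax|^2/2}|\,dy$ by $\|A\|\,e^{-|z|^2/(2c^2)}\big(\sqrt{2\pi}\,c|z|+2c^2\big)$, whose supremum over $z\in\R^{n-1}$ is a finite constant $C_1(A)$, proving (3)(a), and integrating $\|A\|\,|x|\,e^{-|x|^2/(2c^2)}$ over $\R^n$ (a Gaussian-times-polynomial integral) gives (3)(b). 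None of this is genuinely hard; the step I expect to require the most care is producing, in (2)(a)--(b), majorants that are independent of the perturbation parameter --- which is why one localizes to a unit ball in $z$ and uses $e^{-|z|^2/(2c^2)}\le1$ --- before invoking dominated convergence.
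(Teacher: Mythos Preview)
Your proof is correct and follows the same strategy as the paper: the Gaussian majorant $e^{-|\sqrt Ax|^2/2}\le e^{-|x|^2/(2c^2)}$ for domination, elementary differentiation of $\langle Ax,x\rangle$, and dominated convergence. The paper differs only tactically---it derives a quantitative Lipschitz bound via the mean value theorem already in (2)(a) and reuses it, uses a Taylor integral remainder rather than the mean value theorem in (2)(b), and the inequality $|e^x-1|\le|x|e^{|x|}$ rather than a direct gradient bound in (2)(c)---but none of these change the substance of the argument.
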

\begin{proof} 
We will denote $x=(z,y)$ in the following calculations. When we do matrix multiplication, the notation $Ax=A(z,y)$ means
$$A(z,y)^{\mathsf{T}}\in M_{n\times 1}(\R).$$
\begin{enumerate}
\item {\rm \bf (Derivative for the integrand)}\\
Let $A=(A_{ij})$, $\sqrt{A}=(a_{ij})$. Then
\begin{align*}
|\sqrt{A}x|^2=\sum_{i=1}^n\left(\sum_{j=1}^na_{ij}x_j\right)^2=\sum_{i=1}^n\left(\sum_{j=1}^{n-1}a_{ij}z_j+a_{in}y\right)^2,
\end{align*}
\begin{align*}
\partial^2_{yy}|\sqrt{A}x|^2&=\partial_{y}\left(\sum_{i=1}^n2\left(\sum_{j=1}^{n-1}a_{ij}z_j+a_{in}y\right)a_{in}\right)=2\sum_{i=1}^na_{in}^2,
\end{align*}
and
\begin{align*}
\partial_{z_k}|\sqrt{A}x|^2&=\sum_{i=1}^n\sum_{j=1}^{n-1}2a_{ik}a_{ij}z_j+\sum_{i=1}^n2a_{ik}a_{in}y
\end{align*}
for $k=1,2,\cdots, n-1$. Since $\sqrt{A}$ is symmetric, i.e., $a_{ik}=a_{ki}$, we have
\begin{align*}
\partial_{z_k}\left(e^{-|\sqrt{A}x|^2/2}\right)
&=-e^{-|\sqrt{A}x|^2/2}\left(\sum_{i=1}^n\sum_{j=1}^{n-1}a_{ik}a_{ij}z_j+\sum_{i=1}^na_{ik}a_{in}y\right)\\
&=-e^{-|\sqrt{A}x|^2/2}\left(\sum_{j=1}^{n-1}A_{kj}z_j+A_{kn}y\right)=-e^{-|\sqrt{A}x|^2/2}\langle \row_k(A),x\rangle
\end{align*}
Therefore,
\begin{align*}
\nabla' \left(e^{-|\sqrt{A}x|^2/2}\right)&=-e^{-|\sqrt{A}x|^2/2}\begin{pmatrix}
\langle \row_1(A),x\rangle\\
\vdots\\
\langle \row_{n-1}(A),x\rangle
\end{pmatrix}=-e^{-|\sqrt{A}x|^2/2}A'x
\end{align*}
and $\nabla \left(e^{-|\sqrt{A}x|^2/2}\right)=-e^{-|\sqrt{A}x|^2/2}Ax$.
\item {\rm \bf (Regularity estimates)}
\begin{enumerate}
\item Let $K$ be a compact set with $z_0,z\in K$. By using (1) and the mean value theorem,
\begin{align}\label{exp_estimate}
&\left|e^{-|\sqrt{A}(z,y)|^2/2}-e^{-|\sqrt{A}(z_0,y)|^2/2}\right|\notag\leq |z-z_0|\left|e^{-|\sqrt{A}(\zeta,y)|^2/2}A'(\zeta,y)\right|\notag\\
&\leq \left|A'(\zeta,y)\right|e^{-\|(\sqrt{A})^{-1}\|^{-2}|y|^2/2} |z-z_0|\notag\\
&\leq \sqrt{\lambda_{\max}(A^{\prime\mathsf{T}}A^{\prime})}\Big(|\zeta|+|y|\Big)e^{-\|(\sqrt{A})^{-1}\|^{-2}|y|^2/2} |z-z_0|\notag\\
&\leq \sqrt{\lambda_{\max}(A^{\prime\mathsf{T}}A^{\prime})}\Big(r(K)+|y|\Big)e^{-\|(\sqrt{A})^{-1}\|^{-2}|y|^2/2} |z-z_0|
\end{align}
where $\zeta$ lies between $z$ and $z_0$, $\lambda_{\max}(A^{\prime\mathsf{T}}A^{\prime})$ is the largest eigenvalue of $A^{\prime\mathsf{T}}A^{\prime}$, and $r(K)=\sup\limits_{\zeta\in K}|\zeta|$. We now claim that
$$\lim_{z\to z_0}\int_{F}e^{-|\sqrt{A}(z,y)|^2/2}-e^{-|\sqrt{A}(z_0,y)|^2/2}dy=0.$$
Since $z\to z_0$, we may assume that $z\in K:=\overline{B}(z_0,1)$. Thus, as $z\to z_0$,
\begin{align*}
&\int_{F}\left|e^{-|\sqrt{A}(z,y)|^2/2}-e^{-|\sqrt{A}(z_0,y)|^2/2}\right|dy\\
&\leq \int_F \sqrt{\lambda_{\max}(A^{\prime\mathsf{T}}A^{\prime})}\Big(r(K)+|y|\Big)e^{-\|(\sqrt{A})^{-1}\|^{-2}|y|^2/2} |z-z_0|\ dy\\
&=|z-z_0|\sqrt{\lambda_{\max}(A^{\prime\mathsf{T}}A^{\prime})}\int_F\Big(r(K)+|y|\Big)e^{-\|(\sqrt{A})^{-1}\|^{-2}|y|^2/2}dy\to 0.
\end{align*}
\item By Taylor expansion, if $f\in C^1(\R^{n-1})$ and $x_1,x_2\in \R^{n-1}$,
\begin{align*}
f(x_2)&=f(x_1)+\langle \nabla' f(x_1),x_2-x_1\rangle\\
&\quad+\int_0^1\langle\nabla' f(x_1+t(x_2-x_1))-\nabla' f(x_1), x_2-x_1\rangle\ d t
\end{align*}
(see, for example, \cite{Dmitriy}, Theorem 1.14). Fix $y\in \R$, $k\in \R^{n-1}$, and set
$$f(z)=e^{-|\sqrt{A}(z,y)|^2/2}.$$
Let $K$ be a convex compact set with $z,z+k\in K$. Then by a similar argument as $(\ref{exp_estimate})$,
\begin{align*}
&\left|e^{-|\sqrt{A}(z+k,y)|^2/2}-e^{-|\sqrt{A}(z,y)|^2/2}-\nabla'\left(e^{-|\sqrt{A}(z,y)|^2/2}\right)\cdot k\right|\\
&=\left|\int_0^1\left(-e^{-|\sqrt{A}(z+tk,y)|^2/2}A'(z+tk,y)+e^{-|\sqrt{A}(z,y)|^2/2}A'(z,y)\right)\cdot k\ d t \right|\\
&\leq \int_0^1\left|\left(-e^{-|\sqrt{A}(z+tk,y)|^2/2}A'(z+tk,y)+e^{-|\sqrt{A}(z+tk,y)|^2/2}A'(z,y)\right)\cdot k\right| d t\\
&\quad+\int_0^1\left|\left(-e^{-|\sqrt{A}(z+tk,y)|^2/2}A'(z,y)+e^{-|\sqrt{A}(z,y)|^2/2}A'(z,y)\right)\cdot k\right| d t \\
&\leq\frac12|k|^2e^{-\|(\sqrt{A})^{-1}\|^{-2}|y|^2/2}\sqrt{\lambda_{\max}(A^{\prime\mathsf{T}}A^{\prime})}\\
&\quad +\frac12|k|^2e^{-\|(\sqrt{A})^{-1}\|^{-2}|y|^2/2}\lambda_{\max}(A^{\prime\mathsf{T}}A^{\prime})\Big(r(K)+|y|\Big)\left(|z|+|y|\right)
\end{align*}
where $z+tk\in K$ since $K$ is convex. In particular, for any $z\in \R^{n-1}$ and for any measurable set $F$, we have the following
$$\lim_{k\to 0}\int_{F}\Bigg(\frac{e^{-|\sqrt{A}(z+k,y)|^2/2}-e^{-|\sqrt{A}(z,y)|^2/2}-\nabla'\left(e^{-|\sqrt{A}(z,y)|^2/2}\right)\cdot k}{|k|}\Bigg)dy=0.$$
\item Recall that
$$\left|e^x-1\right| \leq e^{|x|}-1 \leq|x| e^{|x|}\mbox{\quad for all $x\in \mathbb{R}$}.$$
Then by $h\in C^1(K)$, the convexity of $K$, and a similar argument as $(\ref{exp_estimate})$,
\begin{align*}
&\left|e^{|\sqrt{A}(z,h(z))|^2/2}-e^{|\sqrt{A}(z_0,h(z_0))|^2/2}\right|\\
&= e^{\frac{|\sqrt{A}(z_0,h(z_0))|^2}{2}}\left|e^{\frac{|\sqrt{A}(z,h(z))|^2-|\sqrt{A}(z_0,h(z_0))|^2}{2}}-1\right|\\
&\leq e^{\frac{|\sqrt{A}(z_0,h(z_0))|^2}{2}}\left|\frac{|\sqrt{A}(z,h(z))|^2-|\sqrt{A}(z_0,h(z_0))|^2}{2}\right|e^{\left|\frac{|\sqrt{A}(z,h(z))|^2-|\sqrt{A}(z_0,h(z_0))|^2}{2}\right|}\\
&\leq C(K,h,A)|z-z_0|
\end{align*}
for some constant $C(K,h,A)>0$.
\end{enumerate}
\item {\rm \bf (Integral bounds)}\\
We will only prove (3)(a) since the proof of (3)(b) is similar. By (1), we have
\begin{align*}
\left|\frac{\partial}{\partial z_k}\left(e^{-|\sqrt{A}x|^2/2}\right)\right|&=\left|e^{-|\sqrt{A}x|^2/2}\langle \row_k(A),x\rangle \right|=\left|e^{-|\sqrt{A}x|^2/2}\langle A^\mathsf{T}e_k,x\rangle \right|\\
&\leq e^{-|\sqrt{A}x|^2/2}\|A\||x|\leq  \|A\|e^{-\|(\sqrt{A})^{-1}\|^{-2}|x|^2/2}|x|\\
&\leq  \|A\|e^{-\|(\sqrt{A})^{-1}\|^{-2}|z|^2/2}e^{-\|(\sqrt{A})^{-1}\|^{-2}|y|^2/2}\left(|z|+|y|\right).
\end{align*}
Hence,
\begin{align*}
\int_{-\infty}^{\infty}\left|\frac{\partial}{\partial z_k}\left(e^{-|\sqrt{A}x|^2/2}\right)\right| dy&\leq \|A\|e^{-\|(\sqrt{A})^{-1}\|^{-2}|z|^2/2}|z|\int_{-\infty}^{\infty}e^{-\|(\sqrt{A})^{-1}\|^{-2}|y|^2/2}dy\\
&\quad+\|A\|e^{-\|(\sqrt{A})^{-1}\|^{-2}|z|^2/2}\int_{-\infty}^{\infty}e^{-\|(\sqrt{A})^{-1}\|^{-2}|y|^2/2}|y|dy.
\end{align*}
Therefore, there exists a constant $C_1(A)>0$ such that
$$\sup_{1\leq k\leq n-1}\sup_{z\in \R^{n-1}}\left(\int_{-\infty}^{\infty}\left|\frac{\partial}{\partial z_k}\left(e^{-|\sqrt{A}x|^2/2}\right)\right| dy\right)\leq C_1(A)<\infty.$$
\end{enumerate}
\end{proof}

We are now ready to show that $v_E$ is in $BV(\R^{n-1})$. Moreover, we prove a relation between $|Dv_E|$ and $P_{\gamma_A}(E;G\times\R)$ and a weak derivative formula for $D_iv_E$. These two ingredients play an important role in proving our main result (see Theorem \ref{Ehrhard_Sym_Ineq_I}).

\begin{lemma}[Regularity of $v_E$ and its distributional derivative formula]\label{distributional_derivative_formula}\mbox{}\\
Let $n \geq 2$ and let $E$ be a set of finite anisotropic Gaussian perimeter in $\mathbb{R}^{n}$. Then $v_{E} \in$ $BV\left(\mathbb{R}^{n-1}\right)$, i.e., $|Dv_E|(\R^{n-1})<\infty$, and
\begin{align*}
\frac{\det\sqrt{A}}{(2\pi)^{(n-1)/2}}|Dv_E|(G)&\leq P_{\gamma_A}(E ; G \times \mathbb{R})+\frac{\det\sqrt{A}}{(2\pi)^{(n-1)/2}}\int_G\left|\int_{E_z}\nabla^{\prime}\left(e^{-|\sqrt{A}x|^2/2}\right)dy\right| dz
\end{align*}
for every open set $G \subseteq \mathbb{R}^{n-1}$. Moreover, let $D_iv_E(z):=\frac{dD_iv_E{\tiny \mres} B_E}{d\mathcal{L}^{n-1} {\tiny\mres} B_E}(z)$,
$$
D_{i} v_{E}(z)=\int_{\left(\partial^{*} E\right)_{z}} \frac{\nu_{i}^{E}(z, y)}{\left|\nu_{n}^{E}(z, y)\right|} d \mathcal{H}_{z}^{0}(y) +\int_{E_z} \frac{\partial}{\partial x_i}\left(e^{-|\sqrt{A}x|^2/2}\right) dy\mbox{\qquad for $i=1,2,...,n-1$},
$$
for $\mathcal{L}^{n-1}$-a.e. $z \in B_{E}$, where $B_{E}$ is the set appearing in Vol'pert Theorem.
\end{lemma}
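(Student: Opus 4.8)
\emph{Proof idea.} That $v_E\in L^1(\R^{n-1})$ has already been recorded, via Fubini and the domination $e^{-|\sqrt{A}x|^2/2}\le e^{-\|(\sqrt{A})^{-1}\|^{-2}|x|^2/2}$ from Lemma~\ref{computational_lemma}. The plan is to identify the distributional gradient of $v_E$ by testing it against vector fields on $\R^{n-1}$, lifting those fields to $\R^n$, and combining the divergence theorem~(\ref{DivergenceThm}) — applicable since $E$ is a set of locally finite perimeter by Proposition~\ref{locally finite anisotropic Gaussian perimeter} — with the coarea formula for sets of finite perimeter~(\ref{co-area formula for sets of finite perimeter_1}). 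For the $BV$ bound, fix an open set $G\subseteq\R^{n-1}$ and $\varphi\in C^1_c(G;\R^{n-1})$ with $\sup|\varphi|\le 1$, write $x=(z,y)$, and set $\Phi(x):=e^{-|\sqrt{A}x|^2/2}(\varphi(z),0)\in\R^n$, a field with vanishing $e_n$-component, so that $\div\Phi=e^{-|\sqrt{A}x|^2/2}\div\varphi(z)+\langle\nabla'(e^{-|\sqrt{A}x|^2/2}),\varphi(z)\rangle$. Since $\Phi$ is not compactly supported in $y$, I would apply~(\ref{DivergenceThm}) to the truncations $\Phi_R(x):=\Phi(x)\chi_R(y)$ with $\chi_R\in C^\infty_c(\R)$ equal to $1$ on $[-R,R]$ (because $\varphi$ depends only on $z$ and $(\Phi_R)_n=0$, the $y$-derivative of $\chi_R$ never enters $\div\Phi_R$), then let $R\to\infty$: dominated convergence applies since $|\nabla'(e^{-|\sqrt{A}x|^2/2})|\in L^1(\R^n)$ by Lemma~\ref{computational_lemma}(3)(b) and $\int_{\rb E}e^{-|\sqrt{A}x|^2/2}\,d\mathcal{H}^{n-1}=\tfrac{(2\pi)^{(n-1)/2}}{\det\sqrt{A}}P_{\gamma_A}(E)<\infty$ by hypothesis. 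Applying Fubini on the left, this gives, for every such $\varphi$ (with $\varphi(z)$ viewed in $\R^n$ with vanishing last coordinate),
\begin{align*}
\int_{\R^{n-1}}v_E(z)\,\div\varphi(z)\,dz
&=\int_{\rb E}e^{-|\sqrt{A}x|^2/2}\langle\varphi(z),\nu_E(x)\rangle\,d\mathcal{H}^{n-1}(x)\\
&\quad-\int_{\R^{n-1}}\Big\langle\varphi(z),\int_{E_z}\nabla'\!\big(e^{-|\sqrt{A}x|^2/2}\big)\,dy\Big\rangle\,dz.
\end{align*}
Using $|\langle\varphi(z),\nu_E(x)\rangle|\le|\nu_E(x)|=1$ in the first integral, $|\varphi|\le1$ in the second, and $P_{\gamma_A}(E;G\times\R)=\tfrac{\det\sqrt{A}}{(2\pi)^{(n-1)/2}}\int_{\rb E\cap(G\times\R)}e^{-|\sqrt{A}x|^2/2}\,d\mathcal{H}^{n-1}$, the supremum over admissible $\varphi$ yields the claimed inequality for $\tfrac{\det\sqrt{A}}{(2\pi)^{(n-1)/2}}|Dv_E|(G)$; taking $G=\R^{n-1}$ and invoking Lemma~\ref{computational_lemma}(3)(b) once more shows $|Dv_E|(\R^{n-1})<\infty$, i.e.\ $v_E\in BV(\R^{n-1})$.

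For the derivative formula I would specialize the displayed identity to $\varphi=\psi\,e_i$ with $\psi\in C^1_c(\R^{n-1})$ and $1\le i\le n-1$, giving
\begin{align*}
\int_{\R^{n-1}}v_E(z)\,\partial_i\psi(z)\,dz
&=\int_{\rb E}e^{-|\sqrt{A}x|^2/2}\psi(z)\,(\nu_E)_i(x)\,d\mathcal{H}^{n-1}(x)\\
&\quad-\int_{\R^{n-1}}\psi(z)\int_{E_z}\frac{\partial}{\partial x_i}\big(e^{-|\sqrt{A}x|^2/2}\big)\,dy\,dz.
\end{align*}
To the first term I apply~(\ref{co-area formula for sets of finite perimeter_1}) with $g=e^{-|\sqrt{A}x|^2/2}\psi(z)(\nu_E)_i/|\nu_n^E|$, writing $(\nu_E)_i=\big((\nu_E)_i/|\nu_n^E|\big)\,|\nu_n^E|$. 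The degenerate set $\{x\in\rb E:\nu_n^E(x)=0\}$ is controlled by Vol'pert's Theorem~\ref{Vol'pert}: applying the coarea formula to $\chi_{\{\nu_n^E=0\}}$ shows that for $\mathcal{L}^{n-1}$-a.e.\ $z$ the slice $(\eb E)_z$ misses it, while for $z$ in the Vol'pert set $B_E$ one has $(\eb E)_z=(\rb E)_z$ with $\nu_n^E\neq0$ along the slice; applying the coarea formula to $e^{-|\sqrt{A}x|^2/2}|\nu_i^E|/|\nu_n^E|$ additionally shows $z\mapsto\int_{(\rb E)_z}|\nu_i^E|/|\nu_n^E|\,d\mathcal{H}^0_z(y)$ lies in $L^1(\R^{n-1})$. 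Using $(\nu_E)_i=-\nu_i^E$ and $e^{-|\sqrt{A}x|^2/2}\,d\mathcal{H}^0(y)=d\mathcal{H}^0_z(y)$, the first term equals $-\int_{\R^{n-1}}\psi(z)\int_{(\rb E)_z}\tfrac{\nu_i^E(z,y)}{|\nu_n^E(z,y)|}\,d\mathcal{H}^0_z(y)\,dz$, so that for every $\psi\in C^1_c(\R^{n-1})$
\begin{align*}
\int_{\R^{n-1}}v_E(z)\,\partial_i\psi(z)\,dz
=-\int_{\R^{n-1}}\psi(z)\left[\int_{(\rb E)_z}\frac{\nu_i^E(z,y)}{|\nu_n^E(z,y)|}\,d\mathcal{H}^0_z(y)+\int_{E_z}\frac{\partial}{\partial x_i}\big(e^{-|\sqrt{A}x|^2/2}\big)\,dy\right]dz.
\end{align*}
Since the bracketed integrand defines an $L^1(\R^{n-1})$ function and $\int v_E\,\partial_i\psi=-\int\psi\,dD_iv_E$ by Theorem~\ref{structure_theorem}, this identifies $D_iv_E$ with the absolutely continuous measure having that density; in particular it yields the asserted formula for $\mathcal{L}^{n-1}$-a.e.\ $z\in B_E$.

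I expect the technical core to be this last step: justifying the insertion of $|\nu_n^E|/|\nu_n^E|$, the descent to one-dimensional slices across $\{\nu_n^E=0\}$, and the $L^1$-integrability (hence a.e.\ finiteness) of $z\mapsto\int_{(\rb E)_z}\nu_i^E/|\nu_n^E|\,d\mathcal{H}^0_z$ — exactly the phenomena that Vol'pert's theorem and the coarea formula for sets of finite perimeter are designed to handle, in the spirit of Chlebík--Cianchi--Fusco. A lesser but genuine point is the $y$-truncation needed to legitimize the compactly supported divergence theorem, which is harmless given the finiteness of $P_{\gamma_A}(E)$ and the exponential-decay estimates in the Computational Lemma.
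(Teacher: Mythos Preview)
Your argument for the $BV$ bound is correct and matches the paper's: lift $\varphi$ to a horizontal field on $\R^n$, truncate in $y$, apply the divergence theorem, pass to the limit, and take the supremum. The paper organizes this componentwise and then combines, but the content is identical.

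The derivative formula, however, has a genuine gap. You claim the identity
\[
\int_{\R^{n-1}}v_E\,\partial_i\psi
=-\int_{\R^{n-1}}\psi(z)\left[\int_{(\rb E)_z}\frac{\nu_i^E}{|\nu_n^E|}\,d\mathcal{H}^0_z
+\int_{E_z}\partial_{x_i}\!\big(e^{-|\sqrt{A}x|^2/2}\big)\,dy\right]dz
\]
for all $\psi\in C^1_c(\R^{n-1})$, and hence that $D_iv_E$ is globally absolutely continuous. This is false. The coarea formula only accounts for the part of the boundary integral over $\{\nu_n^E\neq 0\}$: writing $(\nu_E)_i=\big((\nu_E)_i/|\nu_n^E|\big)|\nu_n^E|$ is invalid on $\{\nu_n^E=0\}$, and the contribution
\[
\int_{\rb E\cap\{\nu_n^E=0\}} e^{-|\sqrt{A}x|^2/2}\psi(z)\,(\nu_E)_i\,d\mathcal{H}^{n-1}
\]
need not vanish. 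Your observation that $(\eb E)_z$ misses $\{\nu_n^E=0\}$ for a.e.\ $z$ controls only the slice side, not this residual term. A concrete counterexample: take $E=B\times\R$ with $B\subset\R^{n-1}$ a ball. Then $\nu_n^E\equiv 0$ on $\rb E$, every slice $(\rb E)_z$ is empty for a.e.\ $z$, your bracket reduces to $\chi_B(z)\partial_i f(z)$ with $f(z)=\mu_z(\R)$, yet $v_E=f\chi_B$ and $D_iv_E=(\partial_i f)\chi_B\,\mathcal{L}^{n-1}+f\,D_i\chi_B$ has a nonzero singular part on $\rb B$.

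The paper avoids this by \emph{restricting to $B_E$ before} applying coarea. From the basic identity
$\int\varphi\,dD_iv_E=\int\varphi\,e^{-|\sqrt{A}x|^2/2}\,dD_i\chi_E+\int_E\varphi\,\partial_i(e^{-|\sqrt{A}x|^2/2})$
(your first displayed equation, equivalently the paper's), it sets $\varphi=g\chi_{B_E}$ and uses Besicovitch differentiation to write $D_i\chi_E\mres(B_E\times\R)=\tfrac{\nu_i^E}{|\nu_n^E|}\,|D_n\chi_E|\mres(B_E\times\R)$ --- valid precisely because Vol'pert guarantees $\nu_n^E\neq0$ on $\rb E\cap(B_E\times\R)$ --- and only then passes to slices via coarea. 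This yields the asserted formula for $D_iv_E\mres B_E$ with respect to $\mathcal{L}^{n-1}\mres B_E$, which is all the statement claims. Your route can be salvaged the same way: test only against $\psi\chi_{B_E}$ (approximated suitably) so that the boundary integral sees no contribution from $\{\nu_n^E=0\}$.
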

\begin{proof} Since $v_E \in L^1(\R^{n-1})$, our goal is to show that $v_E\in BV(\R^{n-1})$.\\
\noindent{\bf Step 1}: Let $\varphi \in C_{c}^{1}\left(\mathbb{R}^{n-1}\right)$ and $\psi_{j}\in C_{c}^{1}(\mathbb{R})$ with $0 \leq \psi_{j}(y) \leq 1$ for $y \in \mathbb{R}$, $j \in \mathbb{N}$, and such that $\lim _{j \rightarrow \infty} \psi_{j}(y)=1$ for every $y \in \mathbb{R}$. For any $i =1, \ldots, n-1$, by the dominated convergence theorem,
\begin{align}
&\int_{\mathbb{R}^{n-1}} \frac{\partial \varphi}{\partial z_{i}}\left(z\right) v_E\left(z\right) d z= \int_{\mathbb{R}^{n-1}} \frac{\partial \varphi}{\partial z_{i}}\left(z\right)\int_{E_{z}}e^{-|\sqrt{A}x|^2/2}dy\ d z\nonumber=\int_{\mathbb{R}^{n-1}} \left( \int_{\mathbb{R}} \frac{\partial \varphi}{\partial z_{i}}\left(z\right) \chi_{E}\left(z, y\right)e^{-|\sqrt{A}x|^2/2}d y\right) dz\nonumber \\ 
&=\lim _{j \rightarrow \infty} \int_{\mathbb{R}^{n}} \frac{\partial \varphi}{\partial z_{i}}\left(z\right) \psi_{j}(y) \chi_{E}\left(z, y\right)e^{-|\sqrt{A}x|^2/2} d z d y\mbox{\qquad (insert $\lim_{j}\psi_j=1$)}\nonumber \\
&=\lim _{j \rightarrow \infty}\Bigg\{\int_{\mathbb{R}^{n}} \div\left(\varphi\left(z\right) \psi_{j}(y) e^{-|\sqrt{A}x|^2/2}e_i\right) \chi_{E}\left(z, y\right) d z d y-\int_{\R^n}\frac{\partial}{\partial z_i}\left(e^{-|\sqrt{A}x|^2/2}\right)\varphi(z)\psi_j(y)\chi_{E}\left(z, y\right)\ dydz\Bigg\}\nonumber\\
&=\lim _{j \rightarrow \infty}\Bigg\{ \int_{E} \div\left(\varphi\left(z\right) \psi_{j}(y) e^{-|\sqrt{A}x|^2/2}e_i\right) d z d y-\int_{E}\frac{\partial}{\partial z_i}\left(e^{-|\sqrt{A}x|^2/2}\right)\varphi(z)\psi_j(y)\ dydz\Bigg\}\nonumber\\
&=-\lim _{j \rightarrow \infty}\Bigg\{ \int_{\rb E} \varphi\left(z\right) \psi_{j}(y) e^{-|\sqrt{A}x|^2/2}e_i \cdot  \nu^E\ d\mathcal{H}^{n-1}(z, y)+\int_{E}\frac{\partial}{\partial z_i}\left(e^{-|\sqrt{A}x|^2/2}\right)\varphi(z)\psi_j(y)\ dydz\Bigg\}\nonumber\\
&=-\lim _{j \rightarrow \infty} \Bigg\{\int_{\mathbb{R}^{n}} \varphi\left(z\right) \psi_{j}(y) e^{-|\sqrt{A}x|^2/2} d D_{i} \chi_{E}+\int_{E}\frac{\partial}{\partial z_i}\left(e^{-|\sqrt{A}x|^2/2}\right)\varphi(z)\psi_j(y)\ dydz\Bigg\}\mbox{\qquad (by (\ref{DeGiorgi_inner_normal}))}\nonumber\\
&=-\int_{\mathbb{R}^{n}} \varphi\left(z\right) e^{-|\sqrt{A}x|^2/2} d D_{i} \chi_{E}-\int_{E}\frac{\partial}{\partial z_i}\left(e^{-|\sqrt{A}x|^2/2}\right)\varphi(z)\ dydz .\notag
\end{align}
(a) Notice that for $|\varphi|\leq 1$,
\begin{align*}
\int_{\mathbb{R}^{n}} \left(-\varphi\left(z\right) \psi_{j}(y)\right) e^{-|\sqrt{A}x|^2/2} d D_{i} \chi_{E}
&\leq\int_{\rb E}e^{-|\sqrt{A}x|^2/2}\ d\mathcal{H}^{n-1}(x)=P_{\gamma_A}(E),
\end{align*}
and hence
$$-\lim_{j\to \infty}\int_{\mathbb{R}^{n}} \varphi\left(z\right) \psi_{j}(y) e^{-|\sqrt{A}x|^2/2} d D_{i} \chi_{E}\leq P_{\gamma_A}(E)<\infty.$$
(b) By Lemma \ref{computational_lemma}, there exists a constant $C>0$ such that
\begin{align*}
-\int_{E}\frac{\partial}{\partial z_i}\left(e^{-|\sqrt{A}x|^2/2}\right)\varphi(z)\psi_j(y)\ dydz&\leq  \int_{\R^n}\left|\frac{\partial}{\partial z_i}\left(e^{-|\sqrt{A}x|^2/2}\right)\right| dydz\leq C
\end{align*}
and hence
$$-\lim_{j\to \infty}\int_{E}\frac{\partial}{\partial z_i}\left(e^{-|\sqrt{A}x|^2/2}\right)\varphi(z)\psi_j(y)\ dydz\leq C<\infty.$$
Taking the sup over $\varphi\in C_{c}^{1}\left(\mathbb{R}^{n-1}\right)$ with $|\varphi| \leq 1$, (a), and (b), we conclude that $v_E \in BV\left(\mathbb{R}^{n-1}\right)$ since for any vector function $\varphi=(\varphi_1,..,\varphi_{n-1})\in C_{c}^{1}\left(\mathbb{R}^{n-1};\R^{n-1}\right)$ with $|\varphi|\leq 1$, the above argument works for each $\varphi_i$, $i=1,...,n-1$. Moreover, we have
\beq\label{distributional_derivative_formula_eq1}
\int_{\R^{n-1}}\varphi(z)\ dD_iv_E(z)=\int_{\mathbb{R}^{n}} \varphi\left(z\right) e^{-|\sqrt{A}x|^2/2} d D_{i} \chi_{E}+\int_{E}\frac{\partial}{\partial z_i}\left(e^{-|\sqrt{A}x|^2/2}\right)\varphi(z)\ dydz
\eeq
for every $\varphi\in C^1_c(\R^{n-1})$. \\

Now we consider $\eta=(\eta_1,...,\eta_{n-1})\in C^1_c(G;\R^{n-1})$ with $|\eta|\leq 1$, where $G$ is open in $\R^{n-1}$. By an approximation argument, we may set $\varphi$ in (\ref{distributional_derivative_formula_eq1}) as
$$\varphi(z)=\eta_i(z)\chi_G(z).$$
That is, for $i=1,...,n-1$,
$$
\int_{G}\eta_i(z)\ dD_iv_E\left(z\right) =\int_{G\times \R}  e^{-|\sqrt{A}x|^2/2} \eta_i(z)\ d D_i \chi_{E} (x)+\int_{E\cap (G\times\R)}\frac{\partial}{\partial z_i}\left(e^{-|\sqrt{A}x|^2/2}\right)\eta_i(z)\ dydz.
$$
Therefore, 
\begin{align}\label{distributional_derivative_formula_eq2}
\int_{G}\eta(z)\cdot  dDv_E(z)&=\int_{G\times \R}  e^{-|\sqrt{A}x|^2/2}\eta(z)\cdot  d \left(D_1 \chi_{E},D_2 \chi_{E},...,D_{n-1} \chi_{E}\right) (x)\\
&\quad+\int_{E\cap(G\times \R)}\eta(z)\cdot \nabla^{\prime}\left(e^{-|\sqrt{A}x|^2/2}\right)\ dydz\notag
\end{align}
for any $\eta\in C_c^{1}(G;\R^{n-1})$ with $|\eta|\leq 1$. Let $\psi_{j}\in C_{c}^{1}(\mathbb{R})$ with $0 \leq \psi_{j}(y) \leq 1$ for $y \in \mathbb{R}$, $j \in \mathbb{N}$, and such that $\lim _{j \rightarrow \infty} \psi_{j}(y)=1$ for every $y \in \mathbb{R}$. By the dominated convergence theorem and Proposition \ref{total_variation_2},
\begin{align*}
&\int_{G}\eta(z)\cdot  dDv_E(z)-\int_{E\cap(G\times \R)}\eta(z)\cdot \nabla^{\prime}\left(e^{-|\sqrt{A}x|^2/2}\right)\ dydz\\
&=\lim_{j\to \infty}\int_{G\times \R}  e^{-|\sqrt{A}x|^2/2}\left(\eta(z)\psi_j(y)\right)\cdot  d \left(D_1 \chi_{E},D_2 \chi_{E},...,D_{n-1} \chi_{E}\right) (x)\\
&\leq \sup\left\{\int_{G\times \R}  e^{-|\sqrt{A}x|^2/2}\tilde{\eta}(x)\cdot  d \left(D_1 \chi_{E},D_2 \chi_{E},...,D_{n-1} \chi_{E}\right) (x): \tilde{\eta}\in C_c(G\times \R;\R^{n-1}),|\tilde{\eta}|\leq 1\right\}\\
&= \int_{G\times \R}  e^{-|\sqrt{A}x|^2/2}\ d \left|\left(D_1 \chi_{E},D_2 \chi_{E},...,D_{n-1} \chi_{E}\right)\right| (x)\\
&\leq \int_{G\times \R}  e^{-|\sqrt{A}x|^2/2}\ d \left|D\chi_{E}\right| (x)=\int_{G\times \R}  e^{-|\sqrt{A}x|^2/2}\ d \mathcal{H}^{n-1}\mres \rb E (x)=\frac{(2\pi)^{(n-1)/2}}{\det\sqrt{A}} P_{\gamma_A}(E ; G \times \mathbb{R}).
\end{align*}
Thus,
\begin{align*}
\int_{G}\eta(z)\cdot  dDv_E(z)&\leq \frac{(2\pi)^{(n-1)/2}}{\det\sqrt{A}} P_{\gamma_A}(E ; G \times \mathbb{R})+\int_{E\cap(G\times \R)}\eta(z)\cdot \nabla^{\prime}\left(e^{-|\sqrt{A}x|^2/2}\right)\ dydz\\
&=\frac{(2\pi)^{(n-1)/2}}{\det\sqrt{A}} P_{\gamma_A}(E ; G \times \mathbb{R})+\int_G \eta(z)\cdot\int_{E_z} \nabla^{\prime}\left(e^{-|\sqrt{A}x|^2/2}\right)\ dy\ dz\\
&\leq  \frac{(2\pi)^{(n-1)/2}}{\det\sqrt{A}} P_{\gamma_A}(E ; G \times \mathbb{R})+\int_G\left|\int_{E_z} \nabla^{\prime}\left(e^{-|\sqrt{A}x|^2/2}\right)\ dy\right|dz.
\end{align*}
Taking the sup over $\eta$ on both sides, we have
\begin{align*}
|Dv_E|(G)&\leq\frac{(2\pi)^{(n-1)/2}}{\det\sqrt{A}} P_{\gamma_A}(E ; G \times \mathbb{R})+\int_G\left|\int_{E_z} \nabla^{\prime}\left(e^{-|\sqrt{A}x|^2/2}\right)\ dy\right|dz
\end{align*}
for every open set $G \subseteq \mathbb{R}^{n-1}$.\\

\noindent{\bf Step 2}: Let $B_{E}$ be the set given by Vol'pert (Theorem \ref{Vol'pert}). Applying equation (\ref{outer unit vector of reduced boundary}), we have
$$
\frac{\nu_{i}^{E}\left(z, y\right)}{\left|\nu_{n}^{E}\left(z, y\right)\right|}=\lim _{r \rightarrow 0} \frac{D_{i} \chi_{E}\left(B_{r}\left(z, y\right)\right)}{\left|D_{n} \chi_{E}\right|\left(B_{r}\left(z, y\right)\right)},
$$
for every $z \in B_E$ and every $y$ such that $(z, y)\in \rb E$. By the Besicovitch differentiation theorem,
$$D_{i} \chi_{E} \mres\left(B_{E} \times \mathbb{R}\right)=\frac{\nu_{i}^{E}}{\left|\nu_{n}^{E}\right|}\left|D_{n} \chi_{E}\right| \mres\left(B_{E} \times \mathbb{R}\right).$$
Now, let $g$ be any function in $C_{c}\left(\mathbb{R}^{n-1}\right)$. We can set $\varphi\left(z\right)=g\left(z\right) \chi_{B_{E}}\left(z\right)$ in (\ref{distributional_derivative_formula_eq1}) since we can first approximate opens sets then Borel sets. Therefore,
\begin{align}\label{weak eq 1}
\int_{B_{E}} g\left(z\right) d D_{i} v_E&=\int_{B_{E} \times \mathbb{R}} g\left(z\right)e^{-|\sqrt{A}x|^2/2}  d D_{i} \chi_{E}(x)+\int_{E\cap (B_E\times \R)}\frac{\partial}{\partial z_i}\left(e^{-|\sqrt{A}x|^2/2}\right)g(z)\ dydz\notag\\
&=\int_{B_{E} \times \mathbb{R}} \frac{\nu_{i}^{E}\left(z, y\right)}{\left|\nu_{n}^{E}\left(z, y\right)\right|} g\left(z\right)e^{-|\sqrt{A}x|^2/2} d\left|D_{n} \chi_{E}\right|+\int_{B_E}g(z)\left(\int_{E_z}\frac{\partial}{\partial z_i}\left(e^{-|\sqrt{A}x|^2/2}\right) dy\right)dz.
\end{align}
Moreover, by $|D_n\chi_E|=|\nu^E_n|\mathcal{H}^{n-1}\mres \partial^*E$ and co-area formula (\ref{co-area formula for sets of finite perimeter_1}),
\begin{align}\label{weak eq 2}
\int_{B_{E} \times \mathbb{R}} \frac{\nu_{i}^{E}\left(z, y\right)}{\left|\nu_{n}^{E}\left(z, y\right)\right|} g\left(z\right)e^{-|\sqrt{A}x|^2/2} d\left|D_{n} \chi_{E}\right| &=\int_{\partial^{*} E \cap\left(B_{E} \times \mathbb{R}\right)} g\left(z\right)e^{-|\sqrt{A}x|^2/2} \nu_{i}^{E}\left(z, y\right) d \mathcal{H}^{n-1}\notag \\
&=\int_{B_{E}} g\left(z\right)  \int_{\left(\partial^{*} E\right)_{z}} \frac{\nu_{i}^{E}\left(z, y\right)}{\left|\nu_{n}^{E}\left(z, y\right)\right|}e^{-|\sqrt{A}x|^2/2} d \mathcal{H}^{0}(y)\ dz\notag\\
&=\int_{B_{E}} g\left(z\right)  \int_{\left(\partial^{*} E\right)_{z}} \frac{\nu_{i}^{E}\left(z, y\right)}{\left|\nu_{n}^{E}\left(z, y\right)\right|} \ d \mathcal{H}^{0}_{z}(y)\ dz
\end{align}
(see (\ref{def_with_z_section}) for the definition of $\mathcal{H}_z^0$). Combining (\ref{weak eq 1}) and (\ref{weak eq 2}) together,
$$\int_{B_{E}} g\left(z\right) d D_{i} v_E=\int_{B_{E}} g\left(z\right)  \Bigg(\int_{\left(\partial^{*} E\right)_{z}} \frac{\nu_{i}^{E}\left(z, y\right)}{\left|\nu_{n}^{E}\left(z, y\right)\right|} \ d \mathcal{H}^{0}_{z}(y)+\int_{E_z}\frac{\partial}{\partial z_i}\left(e^{-|\sqrt{A}x|^2/2}\right)dy\Bigg) d z.$$
Since $g$ is arbitrary, we have
$$D_iv_E\mres B_E=\left(\int_{\left(\partial^{*} E\right)_z} \frac{\nu_{i}^{E}(z,y)}{\left|\nu_{n}^{E}(z,y)\right|}\ d \mathcal{H}_{z}^{0}(y)+\int_{E_z}\frac{\partial}{\partial z_i}\left(e^{-|\sqrt{A}x|^2/2}\right)dy\right)\mathcal{L}^{n-1}\mres B_E.$$
 That is,
\beq\label{weak derivative formula_2}
D_{i} v_E(z)=\int_{\left(\partial^{*} E\right)_z} \frac{\nu_{i}^{E}(z,y)}{\left|\nu_{n}^{E}(z,y)\right|}\ d \mathcal{H}_{z}^{0}(y)+\int_{E_z}\frac{\partial}{\partial z_i}\left(e^{-|\sqrt{A}x|^2/2}\right)dy,
\eeq
for $\mathcal{L}^{n-1}$-a.e. $z\in B_E$.
\end{proof}

With Lemma \ref{computational_lemma} in hand, we first show that $h:z\mapsto \phi_z^{-1}(v_E(z))$ is $C^1$ when $v_E$ is $C^1$. The key idea is to use the integral equation (\ref{integral equation}) and the lower bound estimate of $e^{-|\sqrt{A}x|^2/2}$. We will prove the general case of $v_E$ in Theorem \ref{approximation_theorem_for_BV_maps}.

\begin{lemma}[Regularity Estimates for the map $z\mapsto \phi_z^{-1}(v(z))$]\label{regularity_estimates_for_C1}\mbox{}\\
Let $x=(z,y)\in \R^{n-1}\times \R$ and
$$\phi_z(t)=\int_{-\infty}^{t} e^{-|\sqrt{A}x|^2/2}dy.$$
Let $\Omega$ be an open set in $\R^{n-1}$ and $v\in C^1(\Omega)$. Then the map $h:z\mapsto \phi_z^{-1}(v(z))$ is also $C^1(\Omega)$,
and for all $z\in \Omega$, we have
$$\nabla^{\prime} h(z)=e^{\frac{|\sqrt{A}(z,h(z))|^{2}}{2}} \left(\nabla^{\prime} v(z)-\int_{-\infty}^{h(z)}\nabla^{\prime}\left(e^{-|\sqrt{A}(z,y)|^2/2}\right)dy\right).$$
Moreover, if we assume that $\nabla'v$ is locally Lipschitz on $\Omega$. Then
$$z\mapsto \nabla' h(z)\mbox{ is also locally Lipschitz on $\Omega$.}$$
\end{lemma}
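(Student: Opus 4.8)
The plan is to work from the \emph{integral equation} that characterizes $h$, namely
$$v(z)=\phi_z(h(z))=\int_{-\infty}^{h(z)}e^{-|\sqrt{A}(z,y)|^2/2}\,dy,\qquad z\in\Omega,$$
which determines $h(z)$ uniquely because $\phi_z$ is strictly increasing (its integrand $e^{-|\sqrt{A}(z,y)|^2/2}$ is everywhere positive). First I would establish that $h$ is continuous on $\Omega$. Since $(z,t)\mapsto\phi_z(t)$ is continuous in $t$ and, locally uniformly in $t$, continuous in $z$ by Lemma~\ref{computational_lemma}(2)(a), and since $v$ is continuous, a sandwiching argument suffices: given $\epsilon>0$ and $z_0\in\Omega$ we have $\phi_{z_0}(h(z_0)-\epsilon)<v(z_0)<\phi_{z_0}(h(z_0)+\epsilon)$ by strict monotonicity, hence $\phi_{z}(h(z_0)-\epsilon)<v(z)<\phi_{z}(h(z_0)+\epsilon)$ for $z$ near $z_0$, and applying the increasing function $\phi_z^{-1}$ gives $|h(z)-h(z_0)|<\epsilon$.

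Next I would prove differentiability of $h$ at an arbitrary $z_0\in\Omega$ by a difference-quotient computation. Writing $h_0=h(z_0)$, $h_k=h(z_0+k)$ and subtracting the integral equation at $z_0+k$ and at $z_0$,
\begin{align*}
v(z_0+k)-v(z_0)&=\int_{-\infty}^{h_0}\Big(e^{-|\sqrt{A}(z_0+k,y)|^2/2}-e^{-|\sqrt{A}(z_0,y)|^2/2}\Big)\,dy\\
&\qquad+\int_{h_0}^{h_k}e^{-|\sqrt{A}(z_0+k,y)|^2/2}\,dy.
\end{align*}
By Lemma~\ref{computational_lemma}(2)(b) with $F=(-\infty,h_0]$, the first integral equals $\big(\int_{-\infty}^{h_0}\nabla'(e^{-|\sqrt{A}(z_0,y)|^2/2})\,dy\big)\cdot k+o(|k|)$; by the mean value theorem the second equals $e^{-|\sqrt{A}(z_0+k,\xi_k)|^2/2}(h_k-h_0)$ for some $\xi_k$ between $h_0$ and $h_k$, and by Step~1 this prefactor converges to $e^{-|\sqrt{A}(z_0,h_0)|^2/2}>0$ as $k\to0$. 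Combining with $v(z_0+k)-v(z_0)=\nabla'v(z_0)\cdot k+o(|k|)$, solving for $h_k-h_0$ and using that the prefactor is positive and bounded near $z_0$, one obtains $h(z_0+k)-h(z_0)=\nabla'h(z_0)\cdot k+o(|k|)$ with
$$\nabla'h(z_0)=e^{|\sqrt{A}(z_0,h(z_0))|^2/2}\Big(\nabla'v(z_0)-\int_{-\infty}^{h(z_0)}\nabla'\big(e^{-|\sqrt{A}(z_0,y)|^2/2}\big)\,dy\Big).$$
Then $h\in C^1(\Omega)$ follows because the right-hand side is a product and difference of $z$-continuous maps: $z\mapsto h(z)$ (Step~1), $z\mapsto\nabla'v(z)$, $z\mapsto e^{|\sqrt{A}(z,h(z))|^2/2}$, and $(z,t)\mapsto\int_{-\infty}^{t}\nabla'(e^{-|\sqrt{A}(z,y)|^2/2})\,dy$, the last being jointly continuous by dominated convergence using the bounds of Lemma~\ref{computational_lemma}(2)--(3).

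For the final claim, fix a convex compact $K\subset\Omega$; as $h\in C^1$, $h$ is Lipschitz on $K$ and $(z,h(z))$ stays in a fixed compact set for $z\in K$. I would check that each factor in the gradient formula is bounded and Lipschitz on $K$: the prefactor $z\mapsto e^{|\sqrt{A}(z,h(z))|^2/2}$ is Lipschitz on $K$ by Lemma~\ref{computational_lemma}(2)(c) (applied with the $C^1$ function $h$) and is bounded below away from $0$; $\nabla'v$ is Lipschitz on $K$ by hypothesis; and, writing $G(z,t):=\int_{-\infty}^{t}\nabla'(e^{-|\sqrt{A}(z,y)|^2/2})\,dy$, one has $\partial_tG(z,t)=\nabla'(e^{-|\sqrt{A}(z,t)|^2/2})$ bounded on the relevant compact set (Lipschitz in $t$), while $z$-Lipschitz continuity of $G$ follows from $\nabla'(e^{-|\sqrt{A}x|^2/2})=-e^{-|\sqrt{A}x|^2/2}A'x$ together with the Lipschitz-in-$z$ estimate for $e^{-|\sqrt{A}(z,y)|^2/2}$ from the proof of Lemma~\ref{computational_lemma}(2)(a)--(b), which yields an integrable majorant of the form $C(K,A)(1+|y|)^2e^{-\|(\sqrt{A})^{-1}\|^{-2}|y|^2/2}$; splitting $|G(z_1,h(z_1))-G(z_2,h(z_2))|\le|G(z_1,h(z_1))-G(z_1,h(z_2))|+|G(z_1,h(z_2))-G(z_2,h(z_2))|$ and using the Lipschitz continuity of $h$ on $K$ shows $z\mapsto G(z,h(z))$ is Lipschitz on $K$. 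Since sums and products of bounded Lipschitz functions are Lipschitz, $\nabla'h$ is Lipschitz on $K$, hence locally Lipschitz on $\Omega$. I expect the main obstacle to be the rigorous handling of the simultaneous limit in the second step -- differentiating the integral equation when both the integrand and the upper limit $h(z)$ depend on $z$ -- for which the essential inputs are the first-order expansion of the integrand in Lemma~\ref{computational_lemma}(2)(b), uniform in the integration variable, and the strict positivity of $e^{-|\sqrt{A}(z,h(z))|^2/2}$, which makes the coefficient of $h_k-h_0$ invertible; everything else is bookkeeping with the estimates already collected in the computational lemma.
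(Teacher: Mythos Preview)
Your proposal is correct and follows essentially the same approach as the paper's proof: subtract the integral equation at $z_0+k$ and $z_0$, handle the integrand variation via Lemma~\ref{computational_lemma}(2)(b) and the upper-limit variation via the mean value theorem, then verify continuity and local Lipschitz regularity of each factor in the resulting gradient formula using the estimates of Lemma~\ref{computational_lemma}. The only cosmetic difference is that for continuity of $h$ you use a direct sandwiching argument with strict monotonicity of $\phi_z$, whereas the paper argues by contradiction via a lower bound $e^{-|\sqrt{A}(z,y)|^2/2}\ge e^{-\|\sqrt{A}\|^2|z|^2/2}e^{-\|\sqrt{A}\|^2|y|^2/2}$; both arguments are equivalent in spirit.
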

\begin{proof}
Since $h(z)=\phi_z^{-1}(v(z))$, i.e., $\phi_z(h(z))=v(z)$, we have
\begin{align}\label{integral equation}
\int_{-\infty}^{h(z)}e^{-|\sqrt{A}(z,y)|^2/2}dy=v(z).
\end{align}
{\bf Step 1}: Assume that $v\in C^0(\Omega)$. We first show that $h\in C^0(\Omega)$. For any $z_0\in \Omega$,
\begin{align*}
v(z)-v(z_0)&=\int_{-\infty}^{h(z)}e^{-|\sqrt{A}(z,y)|^2/2}dy-\int_{-\infty}^{h(z_0)}e^{-|\sqrt{A}(z_0,y)|^2/2}dy\\
&=\int_{-\infty}^{h(z)}e^{-|\sqrt{A}(z,y)|^2/2}dy-\int_{-\infty}^{h(z_0)}e^{-|\sqrt{A}(z,y)|^2/2}dy+\int_{-\infty}^{h(z_0)}e^{-|\sqrt{A}(z,y)|^2/2}dy-\int_{-\infty}^{h(z_0)}e^{-|\sqrt{A}(z_0,y)|^2/2}dy\\
&=\int_{h(z_0)}^{h(z)}e^{-|\sqrt{A}(z,y)|^2/2}dy+\int_{-\infty}^{h(z_0)}e^{-|\sqrt{A}(z,y)|^2/2}-e^{-|\sqrt{A}(z_0,y)|^2/2}dy.
\end{align*}
By Lemma \ref{computational_lemma} (2)(a) and $v\in C^0(\Omega)$, as $z\to z_0$ in $\Omega$,
$$\lim_{z\to z_0}\int_{h(z_0)}^{h(z)}e^{-|\sqrt{A}(z,y)|^2/2}dy=0$$
Now we claim that
$$h(z)\to h(z_0)\mbox{\quad as $z\to z_0$.}$$
Suppose not, there exists $\ep_0>0$ and a sequence $z_k\to z_0$ in $\Omega$ such that 
$$|h(z_k)-h(z_0)|\geq \ep_0.$$
Then there exists a subsequence $z_{k_n}$ such that
$$\mbox{(1) $h(z_{k_n})\geq h(z_0)$ for all $n$, or  (2) $h(z_{k_n})< h(z_0)$ for all $n$}.$$
We will only prove (1) since the proof of (2) is similar.  For (1), we have
$$|h(z_{k_n})-h(z_0)|\geq \ep_0\implies h(z_{k_n})\geq h(z_0)+\ep_0.$$
Hence, by Lemma \ref{computational_lemma} (1)(a),
\begin{align*}
\int_{h(z_0)}^{h(z_{k_n})}e^{-|\sqrt{A}(z,y)|^2/2}dy&\geq e^{-\|\sqrt{A}\|^2|z_{k_n}|^2/2}\int_{h(z_0)}^{h(z_{k_n})}e^{-\|\sqrt{A}\|^2|y|^2/2}dy\geq e^{-\|\sqrt{A}\|^2|z_{k_n}|^2/2}\int_{h(z_0)}^{h(z_{0})+\ep_0}e^{-\|\sqrt{A}\|^2|y|^2/2}dy.
\end{align*}
Taking $k\to \infty$,
\begin{align*}
0=\lim_{n\to \infty}\int_{h(z_0)}^{h(z_{k_n})}e^{-|\sqrt{A}(z,y)|^2/2}dy&\geq \lim_{n\to \infty}e^{-\|\sqrt{A}\|^2|z_{k_n}|^2/2}\int_{h(z_0)}^{h(z_{0})+\ep_0}e^{-\|\sqrt{A}\|^2|y|^2/2}dy\\
&\geq e^{-\|\sqrt{A}\|^2|z_{0}|^2/2}\int_{h(z_0)}^{h(z_{0})+\ep_0}e^{-\|\sqrt{A}\|^2|y|^2/2}dy>0.
\end{align*}
This gives us a contradiction. Therefore, $h\in C^0(\Omega)$.\\

\noindent{\bf Step 2}: Now we assume that $v\in C^1(\Omega)$. Our goal is to show that $h\in C^1(\Omega)$.\\
Define
$$\ell(z)=e^{\frac{|\sqrt{A}(z,h(z))|^{2}}{2}} \left(\nabla' v(z)-\int_{-\infty}^{h(z)}\nabla'\left(e^{-|\sqrt{A}(z,y)|^2/2}\right)dy\right).$$
We show that $h$ is differentiable and
$$\nabla^{\prime} h(z)=\ell(z).$$
Using the mean value theorem, we have
\begin{align*}
&\frac{1}{|k|}\left\{v(z+k)-v(z)-\nabla^{\prime} v(z)\cdot k\right\}\\
&=\frac{1}{|k|}\Bigg\{\int_{-\infty}^{h(z+k)}e^{-|\sqrt{A}(z+k,y)|^2/2}dy-\int_{-\infty}^{h(z)}e^{-|\sqrt{A}(z,y)|^2/2}dy-\nabla^{\prime} v(z)\cdot k\Bigg\}\\
&=\frac{1}{|k|}\Bigg\{\int_{h(z)}^{h(z+k)}e^{-|\sqrt{A}(z+k,y)|^2/2}dy+\int_{-\infty}^{h(z)}\left(e^{-|\sqrt{A}(z+k,y)|^2/2}-e^{-|\sqrt{A}(z,y)|^2/2}\right)dy -\nabla^{\prime} v(z)\cdot k\Bigg\}\\
&=\frac{1}{|k|}\Bigg\{\left(h(z+k)-h(z)\right)e^{-|\sqrt{A}(z+k,y(k))|^2/2}+\int_{-\infty}^{h(z)}\left(e^{-|\sqrt{A}(z+k,y)|^2/2}-e^{-|\sqrt{A}(z,y)|^2/2}\right)dy -\nabla^{\prime} v(z)\cdot k\Bigg\}\\
&=e^{-|\sqrt{A}(z+k,y(k))|^2/2}\frac{1}{|k|}\Bigg(h(z+k)-h(z)-\ell(z)\cdot k\Bigg)+\frac{1}{|k|}e^{-|\sqrt{A}(z+k,y(k))|^2/2}\ell(z)\cdot k\\
&\quad+\frac{1}{|k|}\Bigg\{\int_{-\infty}^{h(z)}\left(e^{-|\sqrt{A}(z+k,y)|^2/2}-e^{-|\sqrt{A}(z,y)|^2/2}\right)dy -\nabla^{\prime} v(z)\cdot k\Bigg\}
\end{align*}
where $y(k)$ lies between $h(z)$ and $h(z+k)$, and the continuity of $h$ (Step 1) implies that
$$|y(k)-h(z)|\leq |h(z+k)-h(z)|\implies y(k)\to h(z)\mbox{ as $k\to 0$}.$$
By using the definition of $\ell(z)$ and (\ref{computational_lemma_(2)(b)}), we have
\begin{align*}
\frac{h(z+k)-h(z)-\ell(z)\cdot k}{|k|}&=e^{|\sqrt{A}(z+k,y(k))|^2/2}\left(\frac{v(z+k)-v(z)-\nabla^{\prime} v(z)\cdot k}{|k|}\right)-\frac{1}{|k|}\ell(z)\cdot k\\
&\quad-e^{|\sqrt{A}(z+k,y(k))|^2/2}\frac{1}{|k|}\Bigg\{\int_{-\infty}^{h(z)}\left(e^{-|\sqrt{A}(z+k,y)|^2/2}-e^{-|\sqrt{A}(z,y)|^2/2}\right)dy -\nabla^{\prime} v(z)\cdot k\Bigg\}\\
&=e^{|\sqrt{A}(z+k,y(k))|^2/2}\left(\frac{v(z+k)-v(z)-\nabla^{\prime} v(z)\cdot k}{|k|}\right)-\frac{1}{|k|}e^{|\sqrt{A}(z,h(z))|^{2}/2} \nabla^{\prime} v(z)\cdot k\\
&\quad+\frac{1}{|k|}e^{|\sqrt{A}(z,h(z))|^{2}/2}\int_{-\infty}^{h(z)}\nabla^{\prime}\left(e^{-|\sqrt{A}x|^2/2}\right)dy \cdot k\\
&\quad-e^{|\sqrt{A}(z+k,y(k))|^2/2}\frac{1}{|k|}\Bigg\{\int_{-\infty}^{h(z)}\left(e^{-|\sqrt{A}(z+k,y)|^2/2}-e^{-|\sqrt{A}(z,y)|^2/2}\right)dy -\nabla^{\prime} v(z)\cdot k\Bigg\}\\
&=e^{|\sqrt{A}(z+k,y(k))|^2/2}\left(\frac{v(z+k)-v(z)-\nabla^{\prime} v(z)\cdot k}{|k|}\right)\\
&\quad +\frac{1}{|k|}\left(e^{|\sqrt{A}(z+k,y(k))|^2/2}-e^{|\sqrt{A}(z,h(z))|^{2}/2}\right)\nabla^{\prime}v(z)\cdot k\\
&\quad -e^{|\sqrt{A}(z+k,y(k))|^{2}/2}\int_{-\infty}^{h(z)}\Bigg(\frac{e^{-|\sqrt{A}(z+k,y)|^2/2}-e^{-|\sqrt{A}(z,y)|^2/2}-\nabla^{\prime}\left(e^{-|\sqrt{A}x|^2/2}\right)\cdot k}{|k|}\Bigg)dy\\
&\quad-\left(e^{|\sqrt{A}(z+k,y(k))|^{2}/2}-e^{|\sqrt{A}(z,h(z))|^2/2}\right)\frac{1}{|k|}\int_{-\infty}^{h(z)}\nabla^{\prime}\left(e^{-|\sqrt{A}x|^2/2}\right)dy\cdot k\to 0
\end{align*}
as $k\to 0$. Therefore, $h$ is differentiable and
$$\nabla^{\prime} h(z)=e^{\frac{|\sqrt{A}(z,h(z))|^{2}}{2}} \left(\nabla^{\prime} v(z)-\int_{-\infty}^{h(z)}\nabla^{\prime}\left(e^{-|\sqrt{A}(z,y)|^2/2}\right)dy\right).$$
Now we claim that $\nabla' h\in C^0(\Omega)$. Since $\nabla'v \in C^0(\Omega)$ and $h\in C^0(\Omega)$, we just need to show that
\begin{align}\label{intermediate_term_estimate}
z\mapsto \int_{-\infty}^{h(z)}\nabla^{\prime}\left(e^{-|\sqrt{A}(z,y)|^2/2}\right)dy \mbox{\quad is in $C^0(\Omega)$}.
\end{align}
Without loss of generality,  we may assume that $|z-z_0|\leq 1$ and let $K=\bar{B}(z_0,1)$.
\begin{align}\label{regularity_estimates_for_C1_eq1}
&\int_{-\infty}^{h(z)}\nabla^{\prime}\left(e^{-|\sqrt{A}(z,y)|^2/2}\right)dy-\int_{-\infty}^{h(z_0)}\nabla^{\prime}\left(e^{-|\sqrt{A}(z,y)|^2/2}\right)\Big|_{z=z_0}dy\\
&=-\int_{-\infty}^{h(z)}e^{-|\sqrt{A}x|^2/2}A'(z,y)\ dy+\int_{-\infty}^{h(z_0)}e^{-|\sqrt{A}(z_0,y)|^2/2}A'(z_0,y)\ dy\notag\\
&=-A'\left(\int_{-\infty}^{h(z)}e^{-|\sqrt{A}(z,y)|^2/2}(z,y)\ dy-\int_{-\infty}^{h(z_0)}e^{-|\sqrt{A}(z_0,y)|^2/2}(z_0,y)\ dy\right)\notag\\
&=-A'\Bigg[\left(\int_{-\infty}^{h(z)}e^{-|\sqrt{A}(z,y)|^2/2}(z,y)\ dy-\int_{-\infty}^{h(z_0)}e^{-|\sqrt{A}(z,y)|^2/2}(z,y)\ dy\right)\notag\\
&\quad+\left(\int_{-\infty}^{h(z_0)}e^{-|\sqrt{A}(z,y)|^2/2}(z,y)\ dy-\int_{-\infty}^{h(z_0)}e^{-|\sqrt{A}(z_0,y)|^2/2}(z_0,y)\ dy\right)\Bigg]:=\text{(I)}+\text{(II)}.\notag
\end{align}
\noindent (i) We first estimate (II).
\begin{align*}
&\left|\int_{-\infty}^{h(z_0)}e^{-|\sqrt{A}(z,y)|^2/2}A'(z,y)\ dy-\int_{-\infty}^{h(z_0)}e^{-|\sqrt{A}(z_0,y)|^2/2}A'(z_0,y)\ dy\right|\\
&\leq \int_{-\infty}^{h(z_0)}\left|e^{-|\sqrt{A}(z,y)|^2/2}-e^{-|\sqrt{A}(z_0,y)|^2/2}\right| \left|A'(z,y)\right| dy+  \int_{-\infty}^{h(z_0)}e^{-|\sqrt{A}(z_0,y)|^2/2}\left|A'(z-z_0,0)\right| dy\\
&\leq  \lambda_{\max}(A^{\prime\mathsf{T}}A^{\prime})\int_{-\infty}^{\infty}\Big(r(K)+|y|\Big)^2e^{-\|(\sqrt{A})^{-1}\|^{-2}|y|^2/2} |z-z_0|
 dy+ \sqrt{\lambda_{\max}(A^{\prime\mathsf{T}}A^{\prime})}\int_{-\infty}^{\infty}e^{-\|(\sqrt{A})^{-1}\|^{-2}|y|^2/2}\left|z-z_0\right| dy\\
&= C(K,A)\left|z-z_0\right|
\end{align*}
where we have used (\ref{exp_estimate}) and recall that $r(K)=\sup_{\zeta\in K}|\zeta|$.\\
(ii) We now estimate (I). By the mean value theorem,
\begin{align}\label{locally_lipschitz_estimate}
\left|\int_{h(z_0)}^{h(z)}e^{-|\sqrt{A}(z,y)|^2/2}(z,y)\ dy\right|&=\left|\left(\int_{h(z_0)}^{h(z)}e^{-|\sqrt{A}(z,y)|^2/2}z\ dy,\int_{h(z_0)}^{h(z)}e^{-|\sqrt{A}(z,y)|^2/2}y\ dy\right)\right|\notag\\
&\leq \left|\int_{h(z_0)}^{h(z)}e^{-|\sqrt{A}(z,y)|^2/2}z\ dy\right|+\left|\int_{h(z_0)}^{h(z)}e^{-|\sqrt{A}(z,y)|^2/2}y\ dy\right|\notag\\
&=\left|e^{-|\sqrt{A}(z,y_z)|^2/2}z\right||h(z)-h(z_0)|+\left|e^{-|\sqrt{A}(z,\tilde{y}_z)|^2/2}\tilde{y}_z\right||h(z)-h(z_0)|\to 0
\end{align}
where $y_z$ and $\tilde{y}_{z}$ lies between $h(z)$ and $h(z_0)$, and by the continuity of $h$, $y_z,\tilde{y}_z\to h(z_0)$ as $z\to z_0$. Thus, $\text{(I)}\to 0$ and $\text{(II)}\to 0$. Therefore, $h\in C^1(\Omega).$\\

\noindent{\bf Step 3}: Finally, we claim that if $\nabla'v$ is locally Lipschitz on $\Omega$,
$$\nabla'h\mbox{ is also locally Lipschitz on $\Omega$.}$$
By Lemma \ref{computational_lemma} (2)(c), we just need to show that (\ref{intermediate_term_estimate}) is locally Lipschitz on $\Omega$. Let $z^*\in \Omega$. Since $\Omega$ is open, there exists $B(z^*,r)\subset \Omega$ s.t. $\bar{B}(z^*,r)\subset \Omega$. Then $K:=\bar{B}(z^*,r)$ is a convex compact set. Thanks to the estimates in Step 2, we are left to estimate (\ref{locally_lipschitz_estimate}) with $z,z_0\in B(z^*,r)\subset K$. Since $h\in C^1(\Omega)$ and $|\tilde{y}_z|\leq \|h\|_{L^{\infty}(K)}$,
\begin{align*}
(\ref{locally_lipschitz_estimate})\leq r(K)\|\nabla h\|_{L^{\infty}(K)}|z-z_0|+\|h\|_{L^{\infty}(K)}\|\nabla h\|_{L^{\infty}(K)}|z-z_0|,
\end{align*}
i.e., (\ref{intermediate_term_estimate}) is Lipschitz on $B(z^*,r)$.
\end{proof}

By Lemma \ref{distributional_derivative_formula}, we know that $v_E\in BV(\R^{n-1})$. Now we are ready to show that $h(z)=\phi_z^{-1}(v_E(z))$ is $\mathcal{L}^{n-1}$-measurable and $E^s$ is a set of locally finite perimeter in $\R^{n}$. The key idea is to approximate $v_E\in BV(\R^{n-1})$ by $C^1$ functions. Then we can apply Lemma \ref{regularity_estimates_for_C1} on each $C^1$ function.

\begin{theorem}[Approximation theorem for the map $z\mapsto \phi^{-1}_z(v_E(z))$]\label{approximation_theorem_for_BV_maps}\mbox{}\\
Let $n \geq 2$ and let $E$ be a set of finite $A$-anisotropic Gaussian perimeter in $\mathbb{R}^{n}$. Define $h:\R^{n-1}\to \R$ as $h(z)=\phi_z^{-1}(v_E(z))$. Then
\begin{enumerate}
\item for any open set $\Omega\subset \R^{n-1}$, there exists a sequence of functions $v_{k} \in C^{1}(\Omega)$, such that $v_{k} \rightarrow v_{E}$ in $L^{1}(\Omega)$ and a.e. in $\Omega$, $Dv_k\wkly Dv_E$ in $\Omega$, and
$$
\lim _{k \rightarrow \infty} \int_{\Omega}\left|\nabla' v_{k}(z)\right| d z=\left|D v_{E}\right|(\Omega).
$$
Moreover,
$$h_k \to h\mbox{\ a.e. in $\Omega$},\quad \chi_{F_k}\to \chi_{E^s}\mbox{\ a.e. in $\Omega\times\R$}$$
where $h_k(z):=\phi^{-1}_z(v_k(z))$ and
$$F_k:= \left\{(z, y) \in \Omega \times \mathbb{R}: y<h_k(z):=\phi_{z}^{-1}(v_k(z))\right\}.$$
In particular, $h$ is $\mathcal{L}^{n-1}$-measurable and the set $E^s$ is $\mathcal{L}^n$-measurable.
\item $E^s=E^s_{A,-e_n}$ is a set of locally finite perimeter in $\mathbb{R}^{n}$. 
\end{enumerate}
\end{theorem}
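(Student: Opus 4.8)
The plan is to build the approximating sequence from the standard smoothing theory for $BV$ functions and then transport regularity from $v_k$ to $h_k = \phi_z^{-1}(v_k)$ via Lemma \ref{regularity_estimates_for_C1}. First I would invoke Lemma \ref{distributional_derivative_formula} to know $v_E \in BV(\R^{n-1})$, and then apply the classical approximation theorem (e.g.\ \cite{maggi}, Theorem 13.9, or \cite{evansgariepy}, Chapter 5) on the open set $\Omega$: mollification $v_k = v_E * \rho_{\ep_k}$ with a carefully chosen sequence $\ep_k \to 0^+$ gives $v_k \in C^\infty(\Omega) \subset C^1(\Omega)$ with $v_k \to v_E$ in $L^1(\Omega)$, $Dv_k \wkly Dv_E$, and $\int_\Omega |\nabla' v_k|\,dz \to |Dv_E|(\Omega)$; passing to a subsequence we also get $v_k \to v_E$ a.e.\ in $\Omega$. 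Crucially, by the definition \eqref{def_with_z_section} of $\mu_z$ we have $0 < v_E(z) \le \phi_z(+\infty) = \int_\R e^{-|\sqrt A(z,y)|^2/2}\,dy < \infty$ on $\pi_+(E)$, and $v_E(z)=0$ off it; since $\phi_z: \R \to (0,\int_\R e^{-|\sqrt Ax|^2/2}dy)$ is a strictly increasing bijection, the formula $h(z) = \phi_z^{-1}(v_E(z))$ makes sense ($h(z) = -\infty$ exactly when $z \notin \pi_+(E)$, up to the null set where $v_E = \phi_z(+\infty)$, i.e.\ $h(z) = +\infty$).

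Next I would establish the pointwise convergence $h_k \to h$ a.e.\ in $\Omega$. Applying Lemma \ref{regularity_estimates_for_C1} with $\Omega_k := \{z : v_k(z) > 0\}$ (an open set, since $v_k$ is continuous), each $h_k = \phi_z^{-1}(v_k)$ is $C^1(\Omega_k)$, hence Borel measurable on $\Omega$ (extend by $-\infty$). For a.e.\ fixed $z$ with $v_E(z) \in (0, \phi_z(+\infty))$ we have $v_k(z) \to v_E(z)$; since $t \mapsto \phi_z(t)$ is a homeomorphism, continuity of $\phi_z^{-1}$ gives $h_k(z) = \phi_z^{-1}(v_k(z)) \to \phi_z^{-1}(v_E(z)) = h(z)$. (One handles the boundary values $v_E(z) \in \{0, \phi_z(+\infty)\}$ separately, noting these force $h(z) = \mp\infty$; such $z$ form a set on which the conclusion about $F_k$ is trivial since the slices are a.e.\ empty or all of $\R$.) Consequently $\chi_{F_k}(z,y) = \chi_{\{y < h_k(z)\}} \to \chi_{\{y < h(z)\}} = \chi_{E^s}(z,y)$ for a.e.\ $(z,y) \in \Omega \times \R$, since $y < h(z)$ iff eventually $y < h_k(z)$ (and similarly for $y > h(z)$), the exceptional set $\{y = h(z)\}$ being $\mathcal{L}^n$-null by Fubini. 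This shows $h$ is $\mathcal L^{n-1}$-measurable and $E^s = \{(z,y): y < h(z)\}$ (taking $\Omega = \R^{n-1}$) is $\mathcal L^n$-measurable, proving (1).

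For (2), I would combine (1) with the lower semicontinuity from Proposition \ref{lower_semicontinuity}. Fix a bounded open set $U \subseteq \R^n$; we must bound $P_{\gamma_A}(E^s; U)$. On the slab $\Omega \times \R$ with $\Omega \subset\subset \R^{n-1}$ open, Lemma \ref{regularity_estimates_for_C1} gives $h_k \in C^1(\Omega_k)$ with the explicit gradient formula, so $F_k$ has $C^1$ (in particular locally finite perimeter) boundary within $\Omega_k \times \R$, with $\partial F_k$ a graph over which $\nu_n^{F_k} \ne 0$; the anisotropic Gaussian perimeter of $F_k$ in any compact piece is then controlled by integrating $e^{-|\sqrt Ax|^2/2}\sqrt{1 + |\nabla' h_k|^2}$ against the graph, and using the gradient formula together with the integral bounds in Lemma \ref{computational_lemma}(3) and the relation between $|Dv_k|$ and $P_{\gamma_A}(E; \cdot)$ from Lemma \ref{distributional_derivative_formula}, one gets $\limsup_k P_{\gamma_A}(F_k; K) < \infty$ for each compact $K$. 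Since $\chi_{F_k} \to \chi_{E^s}$ a.e., hence in $L^1_{loc}(\R^n, \gamma_A)$ by dominated convergence, Proposition \ref{lower_semicontinuity} yields $P_{\gamma_A}(E^s; U) \le \liminf_k P_{\gamma_A}(F_k; U) < \infty$ for every bounded open $U$; by Proposition \ref{locally finite anisotropic Gaussian perimeter} this is exactly local finiteness of the Euclidean perimeter of $E^s$.

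The main obstacle I anticipate is the uniform (local) perimeter bound on $F_k$ in the proof of (2): one must translate the graph structure of $F_k$ into a quantitative estimate that survives $k \to \infty$. The delicate point is that $\nabla' h_k$ involves the factor $e^{|\sqrt A(z,h_k(z))|^2/2}$, which blows up as $h_k(z) \to \pm\infty$ — i.e.\ near the boundary of $\pi_+(E)$ — so one cannot bound $|\nabla' h_k|$ uniformly pointwise; instead one must argue at the level of measures, showing that the weighted quantity $e^{-|\sqrt A(z,h_k(z))|^2/2}|\nabla' h_k(z)|$ (which is what actually enters $P_{\gamma_A}(F_k)$) is controlled by $|Dv_k|$ plus the harmless error terms from Lemma \ref{computational_lemma}(3), and that these pass to the limit by the convergence $\int_\Omega |\nabla' v_k| \to |Dv_E|(\Omega)$. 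Getting this bookkeeping right — ensuring no mass escapes near $\partial \pi_+(E)$ and that the change of variables from the graph to $\Omega$ is justified on the Vol'pert set — is where the real work lies; everything else is a routine application of the cited lemmas.
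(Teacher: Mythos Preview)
Your proposal is correct and follows essentially the same approach as the paper. The only organizational difference is in Part (2): the paper estimates the variational sup defining $P_{\gamma_A}(E^s;\Omega\times\R)$ directly by passing $\int_{E^s}\to\lim_k\int_{F_k}$ inside each test integral (splitting into a tangential part controlled by $|Dv_E|(\Omega)$ via Lemma~\ref{distributional_derivative_formula} plus the Lemma~\ref{computational_lemma}(3) error, and a normal part bounded by $\mathcal L^{n-1}(\Omega)$), whereas you bound $P_{\gamma_A}(F_k)$ first and then invoke lower semicontinuity---but both routes rest on the same cancellation you correctly identified, namely that the weight $e^{-|\sqrt A(z,h_k(z))|^2/2}$ in the perimeter exactly kills the blow-up in $\nabla' h_k$.
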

\begin{proof}
(1) Since $E$ is a set of finite anisotropic Gaussian perimeter, by Lemma \ref{distributional_derivative_formula}, $v_{E} \in BV(\Omega)$. Hence there exists a sequence of functions $v_{k} \in C^{1}(\Omega)$, such that $v_{k} \rightarrow v_{E}$ in $L^{1}(\Omega)$ and a.e. in $\Omega$, $Dv_k\wkly Dv_E$ in $\Omega$, and
$$
\lim _{k \rightarrow \infty} \int_{\Omega}\left|\nabla' v_{k}(z)\right| d z=\left|D v_{E}\right|(\Omega)
$$
(see \cite{Ambrosio_Fusco_Pallara}, Theorem 3.9 and Proposition 3.13). Since $v_k\to v_E$ a.e. in $\Omega$, there exists a measure zero set $Z\subset \Omega$ such that for any $z\in \Omega\slash Z$, $v_k(z)\to v_E(z)$. Notice that
\begin{align*}
v_k(z)-v_E(z)&=\int_{-\infty}^{h_k(z)}e^{-|\sqrt{A}(z,y)|^2/2}dy-\int_{-\infty}^{h(z)}e^{-|\sqrt{A}(z,y)|^2/2}dy=\int_{h(z)}^{h_k(z)}e^{-|\sqrt{A}(z,y)|^2/2}dy.
\end{align*}
As $k\to \infty$, we have
$$\lim_{k\to \infty}\int_{h(z)}^{h_k(z)}e^{-|\sqrt{A}(z,y)|^2/2}dy=0\quad\mbox{for any $z\in \Omega\slash Z.$}$$
By using the same argument as in Lemma \ref{regularity_estimates_for_C1} Step 1, we have for any $z\in \Omega\slash Z$, $h_k(z)\to h(z)$ as $k\to \infty$. By Lemma \ref{regularity_estimates_for_C1}, $h_k\in C^1(\Omega)$ since $v_k\in C^1(\Omega)$. In particular, $h_k$ is $\cL^{n-1}\mres\Omega$ -measurable on $\Omega$ and hence the limit function $h|_{\Omega}$ is also $\cL^{n-1}\mres\Omega$ -measurable on $\Omega$. Since $\Omega$ is arbitrary, $h$ is also $\cL^{n-1}$-measurable. Next we show that
$$\mbox{$\chi_{F_{k}} \rightarrow \chi_{E^{s}}$\ a.e. in $\Omega\times \R$}.$$
Let $\Gamma(h)$ be the graph of $h$, let $Z'=(Z\times \R)\cup \Gamma(h;\Omega)\subset \Omega\times \R$, where $Z$ is the measure zero set from above and $\Gamma(h;\Omega)$ is the graph of $h$ over $\Omega$, i.e.,
$$\Gamma(h;\Omega)=\{(z,y)\in \Omega\times \R:y=h(z)\}=\Gamma(h)\cap (\Omega\times\R).$$
We first check that $\cL^{n}(Z')=0$. Notice that $\cL^{n}(Z\times \R)=0$. It is enough to show that $\cL^{n}(\Gamma(h))=0$. Since $h$ is $\cL^{n-1}$-measurable, we define
$$g(z,y):=f_2\circ f_1(z,y)=h(z)-y$$
where $f_1:(z,y)\mapsto (h(z),y)$ is $\cL^{n}$-measurable, $f_2:(x,y)\mapsto x-y$ is continuous. Therefore, $g$ is also $\cL^{n}$-measurable and hence $\Gamma(h)=g^{-1}(\{0\})$ is $\cL^{n}$-measurable. By Fubini's theorem, $\cL^{n}(\Gamma(h))=0$.
Next, for any $x=(z,y)\in \Omega\times\R\slash Z'$, we have $z\not\in Z$. If $\chi_{E^s}(x)=1$, then $y<h(z)$ and hence there exists $k$ such that $y<h_k(z)$ since $h_k(z)\to h(z)$. That is, $x=(z,y)\in F_k$, i.e., $\chi_{F_k}(x)=1$ and $\chi_{F_k}\to \chi_{E^s}$. If $\chi_{E^s}(x)=0$, then $y\geq h(z)$. However, $x\not\in \Gamma(h;\Omega)$, so $y\not=h(z)$ and there exists $k$ such that $y>h_k(z)$. That is, $x=(z,y)\not\in F_k$, i.e., $\chi_{F_k}(x)=0$ and $\chi_{F_k}\to \chi_{E^s}$. Therefore, $\chi_{F_{k}} \rightarrow \chi_{E^{s}}$  a.e. in $\Omega\times \R$. In particular, $E^s=\{g>0\}$ is $\cL^{n}$-measurable.\\

\noindent (2) Now we claim that $E^s$ is a set of locally finite perimeter in $\R^n$.
Since $E^s$ is $\cL^{n}$-measurable, we have $\chi_{E^s}\in L^1_{loc}(\R^n)$. In order to show $\chi_{E^s}\in BV_{loc}(\R^{n-1})$, by Proposition \ref{lower_semicontinuity} and \ref{locally finite anisotropic Gaussian perimeter}, we just need to prove that for any open set $V\subset \subset \R^{n}$,
$$
\sup \left\{\int_{E^s} \operatorname{div} \varphi(x)-\langle \varphi(x), Ax\rangle\ d \gamma_{A}(x): \varphi \in C_{c}^{1}\left(V ; \mathbb{R}^{n}\right), |\varphi| \leq 1\right\} <\infty.
$$
Since $\bar{V}\subset \R^n$ is a compact set, there exists an open bounded set $\Omega\subset \R^{n-1}$ such that $V\subset \Omega\times \R$. In fact, we claim that
\begin{align}\label{perimeter_inside_cylinder}
\sup \left\{\int_{E^s} \operatorname{div} \varphi(x)-\langle \varphi(x), Ax\rangle\ d \gamma_{A}(x): \varphi \in C_{c}^{1}\left(\Omega\times \R ; \mathbb{R}^{n}\right), |\varphi| \leq 1\right\} <\infty.
\end{align}
For convenience, let $\varphi_z=(\varphi_1,\varphi_2,...,\varphi_{n-1})$
and
\begin{align}\label{(I) and (II)}
\int_{E^s} \operatorname{div} \varphi-\langle\varphi , Ax\rangle\ d \gamma_{A}(x)&=\int_{E^s} \operatorname{div} \varphi+\langle\varphi , \nabla (-|\sqrt{A}x|^2/2)\rangle\ d \gamma_{A}(x)\notag\\
&=\sum_{i=1}^{n-1} \int_{E^s} \frac{\partial \varphi_{i}}{\partial z_{i}}+ \varphi_{i}\frac{\partial (-|\sqrt{A}x|^2/2)}{\partial z_i}\ d \gamma_{A}(x)+\int_{E^s} \frac{\partial \varphi_{n}}{\partial y}+\varphi_n\frac{\partial (-|\sqrt{A}x|^2/2)}{\partial y}\ d \gamma_{A}(x)\notag\\
&:= \text{(I)}+\text{(II)} .
\end{align}
{\bf Step 1}: We estimate (I) using approximation of $v_E$ by $C^1(\Omega)$ functions. Consider $\widetilde{v} \in C^{1}(\Omega)$ and let $\varphi \in C_{c}^{1}(\Omega \times \R;\R^n)$ such that $|\varphi| \leq 1$. Let
$$
F:= \{x=(z, y) \in \Omega \times \mathbb{R}: y<y(z)\}, \quad y(z)=\phi_{z}^{-1}(\widetilde{v}(z)).
$$
Since $y(z)=\phi_{z}^{-1}(\widetilde{v}(z))$, i.e., 
$$\tilde{v}(z)=\phi_z(y(z))=\int_{-\infty}^{y(z)}e^{-|\sqrt{A}x|^2/2}dy,$$
and by Lemma \ref{regularity_estimates_for_C1}, the mapping $z\mapsto y(z)$ is in $C^1(\Omega)$. Hence
$$\nabla^{\prime} \widetilde{v}(z)=e^{-\frac{|\sqrt{A}(z,y(z))|^{2}}{2}} \nabla^{\prime} y(z)+\int_{-\infty}^{y(z)}\nabla^{\prime}\left(e^{-|\sqrt{A}x|^2/2}\right)dy,$$
$$\frac{\partial \widetilde{v}}{\partial z_i}=e^{-\frac{|\sqrt{A}(z,y(z))|^{2}}{2}}\frac{\partial y(z)}{\partial z_i}+\int_{-\infty}^{y(z)}\frac{\partial}{\partial z_i}\left(e^{-|\sqrt{A}(z,y)|^2/2}\right) dy.$$
We now compute the equivalent of (I).
\begin{align}\label{locally_finite_perimeter_estimate_eq1}
&\sum_{i=1}^{n-1} \int_{F} \frac{\partial \varphi_{i}}{\partial z_{i}} +\varphi_{i}\frac{\partial (-|\sqrt{A}x|^2/2)}{\partial z_i}\ d \gamma_{A}(x)\notag \\
&=\sum_{i=1}^{n-1} \frac{\det\sqrt{A}}{(2\pi)^{n/2}}\int_{\Omega}  \int^{y(z)}_{-\infty} \left(\frac{\partial \varphi_{i}}{\partial z_{i}}+\varphi_{i}\frac{\partial (-|\sqrt{A}x|^2/2)}{\partial z_i}\right)e^{-|\sqrt{A}x|^2/2} d y\ dz\notag \\
&=\sum_{i=1}^{n-1} \frac{\det\sqrt{A}}{(2\pi)^{n/2}}\int_{\Omega} \left(\frac{\partial}{\partial z_i} \int^{y(z)}_{-\infty} \varphi_{i}e^{-|\sqrt{A}x|^2/2} d y-\frac{\partial y(z)}{\partial z_i}\varphi_i(z,y(z))e^{-|\sqrt{A}(z,y(z)|^2/2}\right) dz\notag \\
&=-\sum_{i=1}^{n-1}\frac{\det \sqrt{A}}{(2 \pi)^{n / 2}} \int_{\Omega}\frac{\partial y(z)}{\partial z_i}\varphi_i(z,y(z))e^{-|\sqrt{A}(z,y(z)|^2/2}\  d z\notag \\
&=-\sum_{i=1}^{n-1}\frac{\det \sqrt{A}}{(2 \pi)^{n / 2}} \int_{\Omega}\varphi_i(z,y(z))\left(\frac{\partial \widetilde{v}}{\partial z_i}-\int_{-\infty}^{y(z)}\frac{\partial}{\partial z_i}\left(e^{-|\sqrt{A}(z,y)|^2/2}\right) dy\right) d z\notag \\
&=-\frac{\det \sqrt{A}}{(2 \pi)^{n / 2}}\left(\int_{\Omega}\varphi_z(z,y(z))\cdot \nabla^{\prime}\widetilde{v}(z)\ d z-\int_{\Omega}\int_{-\infty}^{y(z)}\varphi_z(z,y(z))\cdot \nabla^{\prime}\left(e^{-|\sqrt{A}(z,y)|^2/2}\right)dydz\right)
\end{align}
where we have used the divergence theorem and $\varphi$ has compact support in $\Omega\times \R$.\\

\noindent Now we approximate $v_E$ by Theorem \ref{approximation_theorem_for_BV_maps} (1), i.e., there exists a sequence of functions $v_{k} \in C^{1}(\Omega)$, such that $v_{k}\rightarrow v_{E}$ in $L^{1}(\Omega)$ and a.e. in $\Omega$, $Dv_k\wkly Dv_E$ in $\Omega$, and
$$
\lim _{k \rightarrow \infty} \int_{\Omega}\left|\nabla' v_{k}(z)\right| d z=\left|D v_{E}\right|(\Omega).
$$
Moreover, $\mbox{$\chi_{F_{k}} \rightarrow \chi_{E^{s}}$\ a.e. in $\Omega\times \R$}$ and $h_k(z)\to h(z):=\phi_{z}^{-1}(v_E(z))$ a.e. in $\Omega$, where
$$
F_k:= \left\{(z, y) \in \Omega \times \mathbb{R}: y<h_k(z):=\phi_{z}^{-1}(v_k(z))\right\}.
$$
Replacing $F$ as $F_k$ and $y(z)$ as $h_k(z)$ in equation (\ref{locally_finite_perimeter_estimate_eq1}), we have the following estimate
\begin{align}
&\text{(I)}=\sum_{i=1}^{n-1} \int_{E^s} \frac{\partial \varphi_{i}}{\partial z_{i}}+\varphi_{i}\frac{\partial (-|\sqrt{A}x|^2/2)}{\partial z_i}\ d \gamma_{A}(x)\notag\\
&=\lim _{k \rightarrow \infty}\Big( \sum_{i=1}^{n-1} \int_{F_{k}} \frac{\partial \varphi_{i}}{\partial z_{i}}+\varphi_{i}\frac{\partial (-|\sqrt{A}x|^2/2)}{\partial z_i}\ d \gamma_{A}(x)\Big)\notag\\
&=\frac{\det \sqrt{A}}{(2 \pi)^{n / 2}} \lim _{k \rightarrow \infty}\Bigg(\int_{\Omega}\left(-\varphi_z(z,h_k(z))\right)\cdot \nabla^{\prime} v_k(z) d z-\int_{\Omega_j}\int_{-\infty}^{h_k(z)}\left(-\varphi_z(z,h_k(z))\right)\cdot \nabla^{\prime}\left(e^{-|\sqrt{A}x|^2/2}\right)dydz\Bigg)\notag\\
&\leq  \frac{\det \sqrt{A}}{(2 \pi)^{n / 2}}\limsup_{k\to \infty}\int_{\Omega}\left| \nabla^{\prime} v_k(z) \right| d z+ \frac{\det \sqrt{A}}{(2 \pi)^{n / 2}}\int_{\Omega}\int_{-\infty}^{h(z)}\varphi_z(z,h(z))\cdot \nabla^{\prime}\left(e^{-|\sqrt{A}x|^2/2}\right)dydz\notag\\
&\leq \frac{\det \sqrt{A}}{(2 \pi)^{n / 2}}\left|D v_{E}\right|(\Omega)+ \frac{\det \sqrt{A}}{(2 \pi)^{n / 2}}\int_{\R^{n-1}}\int_{-\infty}^{\infty}\left|e^{-|\sqrt{A}x|^2/2}A'x\right|dydz\notag\\
&\leq \frac{\det \sqrt{A}}{(2 \pi)^{n / 2}}\left|D v_{E}\right|(\Omega)+ \frac{\det \sqrt{A}}{(2 \pi)^{n / 2}}\sqrt{\lambda_{\max}(A^{\prime\mathsf{T}}A^{\prime})}\int_{\R^{n}}e^{-\|\sqrt{A}\|^{-2}|x|^2/2}|x| \ dx<\infty.\notag
\end{align}

\noindent{\bf Step 2}: Now we estimate (II).
\begin{align*}
&\text{(II)}=\int_{E^s} \frac{\partial \varphi_n}{\partial y}+\varphi_n \frac{\partial (-|\sqrt{A}x|^2/2)}{\partial y} d \gamma_A(x)=\frac{\operatorname{det} \sqrt{A}}{(2 \pi)^{n / 2}} \int_{E^s} \frac{\partial}{\partial y}\left(\varphi_n e^{-|\sqrt{A}(z,y)|^2/2}\right) d x\\
&=\frac{\operatorname{det} \sqrt{A}}{(2 \pi)^{n / 2}} \lim_{k\to \infty}\int_{F_k} \frac{\partial}{\partial y}\left(\varphi_n e^{-|\sqrt{A}(z,y)|^2/2}\right) d x=\frac{\operatorname{det} \sqrt{A}}{(2 \pi)^{n / 2}} \lim_{k\to \infty}\int_{\Omega}\int_{-\infty}^{h_k(z)} \frac{\partial}{\partial y}\left(\varphi_n e^{-|\sqrt{A}(z,y)|^2/2}\right) d ydz\\
&=\frac{\operatorname{det} \sqrt{A}}{(2 \pi)^{n / 2}} \lim_{k\to \infty}\int_{\Omega}\varphi_n(z,h_k(z)) e^{-|\sqrt{A}(z,h_k(z))|^2/2}\ dz=\frac{\operatorname{det} \sqrt{A}}{(2 \pi)^{n / 2}} \int_{\Omega}\varphi_n(z,h(z)) e^{-|\sqrt{A}(z,h(z))|^2/2}\ dz\\
&\leq \frac{\operatorname{det} \sqrt{A}}{(2 \pi)^{n / 2}}\mathcal{L}^{n-1}(\Omega)<\infty
\end{align*}
since $\varphi$ is $C^1$ with compact support, $|\varphi|\leq 1$, and recall that $h_k\to h$ a.e. in $\Omega$ by Theorem \ref{regularity_estimates_for_C1} (1).\\
\end{proof}

\subsection{Ehrhard symmetrization in 1D and an example in 2D}\mbox{}\vspace{-.22cm}\\

In this section, we show that the Ehrhard symmetrization preserves the mass under any one-dimensional slices in the $y$-direction, and hence it preserves the total mass. Moreover, the anisotropic Gaussian perimeters of the one-dimensional sections decrease under Ehrhard symmetrization. Geometrically, we will rearrange the mass in each one-dimensional section of $E$ to a half-line with the same mass. The resulting new shape $E^s$ is the Ehrhard symmetrization of $E$.

\begin{proposition}\label{basic_properties}
Let $n \geq 2$ and let $E$ be a set of finite anisotropic Gaussian perimeter in $\mathbb{R}^{n}$.
\begin{enumerate}
\item  {\rm \bf (Properties for $E^s$)}
$$v_{E^s}(z)= v_E(z),\qquad \mu_z\left((E^s)_{z}\right)=\mu_z\left(E_{z}\right),$$
for all $z\in \R^{n-1}$. Hence $\pi_+(E^s)=\pi_+(E)$
and
$$\nabla'v_E(z)=\nabla'v_{E^s}(z)$$
for a.e. $z\in B_E\cap B_{E^s}$, where 
$$\nabla'v_E(z):=\left(D_1v_E(z),\cdots,D_{n-1}v_E(z)\right),\quad D_iv_E:=\frac{dD_iv_E\mres B_E}{d\mathcal{L}^{n-1}\mres B_E}.$$
Moreover, $\gamma_A(E)=\gamma_A(E^s)$
and
$$\gamma_A(E_1\Delta E_2)\geq \gamma_A(E^s_1\Delta E^s_2).$$
In particular, for any sequence of sets of finite anisotropic Gaussian perimeter $E_{k}$ with $\chi_{E_{k}} \rightarrow \chi_{E}$ in $L^{1}\left(\mathbb{R}^{n},\gamma_A\right)$, we have
$$\mbox{$\chi_{E^s_{k}} \rightarrow \chi_{E^s}$ in $L^{1}\left(\mathbb{R}^{n},\gamma_A\right)$},$$
and
$$
P_{\gamma_A}(E^s ; U) \leq \liminf _{k \rightarrow \infty} P_{\gamma_A}\left(E^s_{k} ; U\right)\mbox{\quad for any open set $U\subset \R^{n}$.}
$$
\item {\rm \bf (Cross terms estimate)}\\
For any $z\in \R^{n-1}$,
$$\int_{E_z}y\ d\mu_z(y)\geq \int_{E^s_z}y\ d\mu_z(y),$$
and
\begin{align*}
&\Bigg|\int_{E_z}\nabla^{\prime}e^{-|\sqrt{A}x|^2/2}dy-\int_{E_z^s} \nabla^{\prime}e^{-|\sqrt{A}x|^2/2}dy\Bigg|\\
&=\left(\int_{E_z}y\ d\mu_z(y)-\int_{E^s_z}y\ d\mu_z(y)\right)\|Ae_n-\langle Ae_n,e_n\rangle e_n\|.
\end{align*}
\item {\rm \bf (Ehrhard symmetrization in 1D)}\\
The anisotropic Gaussian perimeter of almost every one-dimensional section in $y$-direction decreases under Ehrhard symmetrization, i.e.,
$$p_{E^s}(z)\leq p_E(z),$$
for all $z\in B_E\cap B_{E^s}$, where $p_{E}: \mathbb{R}^{n-1} \rightarrow[0, \infty]$ is defined as
$$p_E(z)=\mathcal{H}_z^0\left[(\partial^M E)_z\right].$$
\end{enumerate}
\end{proposition}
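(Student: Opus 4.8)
The plan is to treat the three parts in turn, the real content being in part (3). \emph{For part (1):} the identity $v_{E^s}=v_E$ is immediate from the definition (\ref{old_def}), since $(E^s)_z=(-\infty,\phi_z^{-1}(v_E(z)))$ gives $\mu_z((E^s)_z)=\phi_z(\phi_z^{-1}(v_E(z)))=v_E(z)$; hence $\pi_+(E^s)=\pi_+(E)$. As $v_E=v_{E^s}$ in $L^1(\R^{n-1})$, their distributional derivatives agree, $Dv_E=Dv_{E^s}$; restricting this identity of measures to $B_E\cap B_{E^s}$ and taking Radon--Nikodym derivatives against $\mathcal{L}^{n-1}\mres(B_E\cap B_{E^s})$ yields $\nabla'v_E=\nabla'v_{E^s}$ a.e.\ there. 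Fubini gives $\gamma_A(F)=\frac{\det\sqrt A}{(2\pi)^{n/2}}\int_{\R^{n-1}}v_F(z)\,dz$, so $\gamma_A(E)=\gamma_A(E^s)$. For the symmetric-difference bound, for each $z$ the slices $(E_i^s)_z=(-\infty,\phi_z^{-1}(v_{E_i}(z)))$ are nested half-lines, so $\mu_z\big((E_1^s)_z\Delta(E_2^s)_z\big)=|v_{E_1}(z)-v_{E_2}(z)|=|\mu_z((E_1)_z)-\mu_z((E_2)_z)|\le\mu_z\big((E_1)_z\Delta(E_2)_z\big)$, using $|\mu(S)-\mu(T)|\le\mu(S\Delta T)$; integrating in $z$ gives $\gamma_A(E_1\Delta E_2)\ge\gamma_A(E_1^s\Delta E_2^s)$. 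Taking $E_1=E_k$, $E_2=E$ shows $\chi_{E_k^s}\to\chi_{E^s}$ in $L^1(\R^n,\gamma_A)$, and the lower semicontinuity claim is then Proposition \ref{lower_semicontinuity} applied to $\{E_k^s\}$, which are sets of locally finite perimeter by Theorem \ref{approximation_theorem_for_BV_maps}(2).

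\emph{For part (2):} write $h(z)=\phi_z^{-1}(v_E(z))$, so $E^s_z=(-\infty,h(z))$ and $\mu_z(E^s_z)=\mu_z(E_z)$. The inequality $\int_{E_z}y\,d\mu_z\ge\int_{E^s_z}y\,d\mu_z$ is the bathtub principle: from $\mu_z(E_z\setminus E^s_z)=\mu_z(E^s_z\setminus E_z)$ together with $y\ge h(z)$ on $E_z\setminus E^s_z$ and $y\le h(z)$ on $E^s_z\setminus E_z$,
\[
\int_{E_z}y\,d\mu_z-\int_{E^s_z}y\,d\mu_z=\int_{E_z\setminus E^s_z}y\,d\mu_z-\int_{E^s_z\setminus E_z}y\,d\mu_z\ge h(z)\big(\mu_z(E_z\setminus E^s_z)-\mu_z(E^s_z\setminus E_z)\big)=0.
\]
For the gradient formula, Lemma \ref{computational_lemma}(1) gives $\nabla'(e^{-|\sqrt Ax|^2/2})=-e^{-|\sqrt Ax|^2/2}A'x$ with $x=(z,y)$; splitting $A'x=A'(z,0)+y\,A'e_n$ and integrating over $E_z$ and $E^s_z$ separately, the $A'(z,0)$-terms cancel because $\mu_z(E_z)=\mu_z(E^s_z)$, leaving the difference equal to $-A'e_n\big(\int_{E_z}y\,d\mu_z-\int_{E^s_z}y\,d\mu_z\big)$. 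Taking Euclidean norms (the scalar factor being $\ge0$ by the previous step) and using $|A'e_n|=\|Ae_n-\langle Ae_n,e_n\rangle e_n\|$---both equal $\big(\sum_{i=1}^{n-1}A_{in}^2\big)^{1/2}$---gives the stated identity.

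\emph{For part (3):} the key point is that the vertical slice measures are Gaussian. As a function of $y$, $|\sqrt A(z,y)|^2=\langle A(z,y),(z,y)\rangle=A_{nn}(y-y_0(z))^2+c(z)$ for suitable $y_0(z),c(z)$, so $\mu_z=\mu_z(\R)\,(T_z)_{\#}\gamma_1$, where $\gamma_1$ is the standard Gaussian on $\R$ and $T_z\colon t\mapsto y_0(z)+t/\sqrt{A_{nn}}$ is an increasing affine bijection. Put $\widehat E_z:=T_z^{-1}(E_z)$, a set of locally finite perimeter in $\R$ for $z\in B_E$ (Proposition \ref{Diffeomorphic images}), with $\gamma_1(\widehat E_z)=\mu_z(E_z)/\mu_z(\R)$; since $\mu_z(E^s_z)=\mu_z(E_z)$, the set $T_z^{-1}(E^s_z)$ is the half-line with this same $\gamma_1$-measure. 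Because affine maps preserve essential boundaries, unwinding the definition of $\mathcal{H}^0_z$ shows that $p_E(z)=\mathcal{H}^0_z((\eb E)_z)$ and $p_{E^s}(z)=\mathcal{H}^0_z((\eb E^s)_z)$ equal a common positive factor times $P_{\gamma_1}(\widehat E_z)$ and $P_{\gamma_1}(T_z^{-1}(E^s_z))$, respectively; here I apply Vol'pert's Theorem to $E$ and to $E^s$ (valid since $E^s$ is of locally finite perimeter) to identify $(\eb E)_z=\eb(E_z)$ and $(\eb E^s)_z=\eb(E^s_z)=\{h(z)\}$ for $z\in B_E\cap B_{E^s}$. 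The one-dimensional Gaussian isoperimetric inequality (the case $n=1$ of Theorem \ref{AnisotropicGaussainIso}, see also \cite{Cianchi-Fusco-Maggi-Pratelli}) says a half-line minimizes $\gamma_1$-perimeter among sets of prescribed $\gamma_1$-mass, so $P_{\gamma_1}(\widehat E_z)\ge P_{\gamma_1}(T_z^{-1}(E^s_z))$, i.e.\ $p_E(z)\ge p_{E^s}(z)$. The degenerate case $v_E(z)=\mu_z(\R)$ makes $E^s_z=\R$ and $\eb(E_z)=\emptyset$, so both sides vanish, and $v_E(z)=0$ cannot occur on $\pi_+(E)\supseteq B_E\cap B_{E^s}$.

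The main obstacle is conceptual rather than technical: recognizing that each vertical slice measure $\mu_z$ is a rescaled, translated Gaussian, so that the slice-wise perimeter comparison in part (3) collapses onto the classical one-dimensional Gaussian isoperimetric inequality. The remainder is bookkeeping together with the bathtub principle, the only subtlety being the Radon--Nikodym identifications on $B_E\cap B_{E^s}$ and the use of Vol'pert's theorem for both $E$ and $E^s$.
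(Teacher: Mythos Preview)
Your proof is correct. Parts (1) and (2) follow essentially the same route as the paper (the bathtub argument in (2) and the slice-wise symmetric-difference bound in (1) are identical in spirit to what the paper does, just stated more compactly).

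Part (3), however, is handled differently. You observe that since $\langle A(z,y),(z,y)\rangle$ is quadratic in $y$ with positive leading coefficient $A_{nn}$, the slice measure $\mu_z$ is a rescaled, translated one-dimensional \emph{Gaussian}, not merely log-concave. The affine change of variables $T_z$ then reduces the comparison $p_E(z)\ge p_{E^s}(z)$ directly to the one-dimensional Gaussian isoperimetric inequality (the $n=1$ case of Theorem~\ref{AnisotropicGaussainIso}), with half-lines as the minimizers. The paper instead treats $\widetilde\mu_z$ only as log-concave and invokes Bobkov's one-dimensional log-concave isoperimetric inequality in its $\varepsilon$-enlargement form; it then has to pass from the enlargement to the perimeter by showing that $E_z\cap(-R,R)$ is a finite disjoint union of intervals, applying the fundamental theorem of calculus on each piece, and sending $R\to\infty$. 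Your route is shorter and avoids this entire limiting computation; the paper's route is more robust in that it would apply verbatim if $e^{-|\sqrt{A}x|^2/2}$ were replaced by any log-concave weight in $y$, not necessarily Gaussian. Either way the conclusion is the same.
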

\begin{proof}
(1) Notice that
$$(E^s)_{z}=\big\{y:y<\phi^{-1}_{z}\left(v_{E}(z)\right)\big\}=\big(-\infty,\phi^{-1}_{z}\left(v_{E}(z)\right)\big)\mbox{ is a measurable set}.$$
Therefore, for any $z\in \R^{n-1}$,
\begin{align*}
v_{E^s}(z)=\mu_z\left((E^s)_{z}\right)&=\mu_z\left(-\infty, \phi^{-1}_{z}\left(v_{E}(z)\right)\right)=\int_{-\infty}^{\phi^{-1}_{z}\left(v_{E}(z)\right)}e^{-|Ax|^2/2} \ dy=\phi_{z}\left(\phi^{-1}_{z}\left(v_{E}(z)\right)\right)=v_E(z).
\end{align*}
Thus,
$$\pi_+(E)=\{z\in \R^{n-1}: v_E(z)>0\}=\{z\in \R^{n-1}: v_{E^s}(z)>0\}=\pi_+(E^s).$$
Moreover, for any measurable set $A$,
$$D_iv_E\mres B_E(A)=\int_A D_iv_E(z)\ d\mathcal{L}^{n-1}\mres B_E(z),$$
and
$$D_iv_{E^s}\mres B_{E^s}(A)=\int_A D_iv_{E^s}(z)\ d\mathcal{L}^{n-1}\mres B_{E^s}(z).$$
Therefore, for any measurable set $B\subset B_E\cap B_{E^s}$, setting $A=B$ in the above equations, we have
$$\int_B D_iv_E(z)\ d\mathcal{L}^{n-1}(z)=D_iv_E(B)=D_{i}v_{E^s}(B)=\int_B D_iv_{E^s}(z)\ d\mathcal{L}^{n-1}(z)$$
where the second equality holds since $v_E=v_{E^s}$. By the arbitrariness of $B$, for a.e. $z\in B_E\cap B_{E^s}$,
$$D_iv_E(z)=D_iv_{E^s}(z).$$
By Fubini's theorem,
\begin{align*}
&\gamma_A(E)=\frac{|\det \sqrt{A}|}{(2\pi)^{n/2}}\int_E e^{-|\sqrt{A}x|^2/2}\ dx=\frac{|\det \sqrt{A}|}{(2\pi)^{n/2}}\int_{\R^{n-1}}\int_{E_z} e^{-|\sqrt{A}x|^2/2}\ dy\ dz\\
&=\frac{|\det \sqrt{A}|}{(2\pi)^{n/2}}\int_{\R^{n-1}}v_E(z)\ dz=\frac{|\det \sqrt{A}|}{(2\pi)^{n/2}}\int_{\R^{n-1}}v_{E^s}(z)\ dz=\frac{|\det \sqrt{A}|}{(2\pi)^{n/2}}\int_{\R^{n-1}}\int_{E^s_z} e^{-|\sqrt{A}x|^2/2}\ dy\ dz=\gamma_A(E^s).
\end{align*}
Similarly, we have
\begin{align*}
&\gamma_A(E_1\Delta E_2)=\frac{|\det \sqrt{A}|}{(2\pi)^{n/2}}\int_{\R^{n-1}}\int_{(E_1\Delta E_2)_z} e^{-|\sqrt{A}x|^2/2}\ dy\ dz=\frac{|\det \sqrt{A}|}{(2\pi)^{n/2}}\int_{\R^{n-1}}\int_{(E_1)_z\Delta (E_2)_z} e^{-|\sqrt{A}x|^2/2}\ dy\ dz\\
&\geq \frac{|\det \sqrt{A}|}{(2\pi)^{n/2}}\int_{\R^{n-1}}\left|\int_{(E_1)_z} e^{-|\sqrt{A}x|^2/2}\ dy-\int_{(E_2)_z} e^{-|\sqrt{A}x|^2/2}\ dy\right|\ dz= \frac{|\det \sqrt{A}|}{(2\pi)^{n/2}}\int_{\R^{n-1}}\left|v_{E_1}(z)-v_{E_2}(z)\right|\ dz\\
&= \frac{|\det \sqrt{A}|}{(2\pi)^{n/2}}\int_{\R^{n-1}}\left|v_{(E_1)^s}(z)-v_{(E_2)^s}(z)\right|\ dz=\frac{|\det \sqrt{A}|}{(2\pi)^{n/2}}\int_{\R^{n-1}}\int_{(E_1^s)_z\Delta (E_2^s)_z} e^{-|\sqrt{A}x|^2/2}\ dy\ dz=\gamma_A(E_1^s\Delta E_2^s).
\end{align*}
For any sequence of measurable sets $E_{k}$ with $\chi_{E_{k}} \rightarrow \chi_{E}$ in $L^{1}\left(\mathbb{R}^{n},\gamma_A\right)$, we have
\begin{align*}
\int_{\R^n} \left|\chi_{E^s_k}-\chi_{E^s}\right|d\gamma_A&=\gamma_A(E_k^s\Delta E^s)\leq \gamma_A(E_k\Delta E)=\int_{\R^n} \left|\chi_{E_k}-\chi_{E}\right|d\gamma_A\to 0.
\end{align*}
By Theorem \ref{approximation_theorem_for_BV_maps} and Proposition \ref{lower_semicontinuity}, $E_k^s$ and $E^s$ are sets of locally finite perimeter and hence
$$
P_{\gamma_A}(E^s ; U) \leq \liminf _{k \rightarrow \infty} P_{\gamma_A}\left(E^s_{k} ; U\right)\mbox{\quad for any open set $U\subset \R^{n}$.}
$$
\noindent(2) Notice that
\begin{align*}
\mu_z(E^s_z\slash E_z)+\mu_z(E^s_z\cap E_z)=\mu_z(E^s_z)&=v_{E^s}(z)=v_E(z)=\mu_z(E_z)=\mu_z(E_z\slash E^s_z)+\mu_z(E_z\cap E^s_z).
\end{align*}
That is, $\mu_z(E^s_z\slash E_z)=\mu_z(E_z\slash E^s_z)$. Let $y(z)=\phi^{-1}_z(v_E(z))$, we have
$$y- y(z)< 0\mbox{\quad if $y\in E^s_z$,}\quad y- y(z)\geq 0\mbox{\quad if $y\not\in E^s_z$}.$$
Now we are ready to show that
$$\int_{E^s_z}y\ d\mu_z(y)\leq \int_{E_z}y\ d\mu_z(y).$$
Notice that
\begin{align*}
&\int_{E^s_z}y\ d\mu_z(y)-\int_{E_z}y\ d\mu_z(y)=\left(\int_{E^s_z\slash E_z}y\ d\mu_z(y)+\int_{E^s_z\cap E_z}y\ d\mu_z(y)\right)-\left(\int_{E_z\slash E_z^s}y\ d\mu_z(y)+\int_{E_z\cap E^s_z}y\ d\mu_z(y)\right)\\
&=\int_{E^s_z\slash E_z}y\ d\mu_z(y)-\int_{E_z\slash E_z^s}y\ d\mu_z(y)\\
&=\left(\int_{E^s_z\slash E_z}y-y(z)\ d\mu_z(y)+\int_{E^s_z\slash E_z}y(z)\ d\mu_z(y)\right)+\left(\int_{E_z\slash E_z^s}y(z)-y\ d\mu_z(y)-\int_{E_z\slash E^s_z}y(z)\ d\mu_z(y)\right)\\
&\leq\left(\int_{E^s_z\slash E_z}y(z)\ d\mu_z(y)-\int_{E_z\slash E^s_z}y(z)\ d\mu_z(y)\right)= y(z)\Big(\mu_z(E^s_z\slash E_z)-\mu_z(E_z\slash E^s_z)\Big)=0.
\end{align*}
On the other hand, by Lemma \ref{computational_lemma} (1), we have
$$\partial_{z_k}\left(e^{-|\sqrt{A}x|^2/2}\right)=-e^{-|\sqrt{A}x|^2/2}\langle \row_k(A),x\rangle,$$
and hence
\begin{align*}
\int_{E_z}\partial_{z_k}\left(e^{-|\sqrt{A}x|^2/2}\right)dy=-\int_{E_z}e^{-|\sqrt{A}x|^2/2}\left(\sum_{j=1}^{n-1}A_{kj}z_j+A_{kn}y\right)dy=-\sum_{j=1}^{n-1}A_{kj}z_jv_E(z)-A_{kn}\left(\int_{E_z}y\ d\mu_z(y)\right).
\end{align*}
Similarly, we have
\begin{align*}
\int_{E^s_z}\partial_{z_k}\left(e^{-|\sqrt{A}x|^2/2}\right)dy&=-\sum_{j=1}^{n-1}A_{kj}z_jv_{E^s}(z)-A_{kn}\left(\int_{E^s_z}y\ d\mu_z(y)\right).
\end{align*}
Since $v_E(z)=v_{E^s}(z)$,
$$\int_{E_z}\partial_{z_k}\left(e^{-|\sqrt{A}x|^2/2}\right)dy-\int_{E^s_z}\partial_{z_k}\left(e^{-|\sqrt{A}x|^2/2}\right)dy=\left(\int_{E^s_z}y\ d\mu_z(y)-\int_{E_z}y\ d\mu_z(y)\right)A_{kn.}$$
Therefore,
\begin{align*}
&\Bigg|\int_{E_z}\nabla^{\prime}e^{-|\sqrt{A}x|^2/2}dy-\int_{E_z^s} \nabla^{\prime}e^{-|\sqrt{A}x|^2/2}dy\Bigg|=\left|\int_{E_z}y\ d\mu_z(y)-\int_{E^s_z}y\ d\mu_z(y)\right|\|(A_{1n},A_{2n},\cdots, A_{(n-1)n})\|\\
&=\left|\int_{E_z}y\ d\mu_z(y)-\int_{E^s_z}y\ d\mu_z(y)\right|\|Ae_n-\langle Ae_n,e_n\rangle e_n\|=\left(\int_{E_z}y\ d\mu_z(y)-\int_{E^s_z}y\ d\mu_z(y)\right)\|Ae_n-\langle Ae_n,e_n\rangle e_n\|.
\end{align*}
(3) In order to work with probability measures, we first normalize our definitions of $\mu_z$ and $\phi_z$ as
$$\widetilde{\mu}_z(F):=\frac{1}{\mu_z(\R)}\mu_z(F)=\int_{F}\frac{1}{\mu_z(\R)}e^{-|\sqrt{A}(z,y)|^2/2}dy\mbox{\quad and \quad} \widetilde{\phi}_z(s):=\frac{1}{\mu_z(\R)}\int_{-\infty}^se^{-|\sqrt{A}(z,y)|^2/2}dy.$$
Notice that
$$(\widetilde{\phi}_z)^{-1}(\widetilde{\mu}_z(F))=\phi^{-1}_z(\mu_z(F)).$$
Now we claim that $\widetilde{\mu}_z$ is a log-concave measure on $\R$. We just need to show that 
$$\log \left(\frac{1}{\mu_z(\R)}e^{-|\sqrt{A}(z,y)|^2/2}\right)=-\log(\mu_z(\R))-\frac12 |\sqrt{A}(z,y)|^2\mbox{\quad is a concave function in $y$.}$$
Applying Lemma \ref{computational_lemma} (1), we have
$$\partial_{yy}^2\left(-\log(\mu_z(\R))-\frac12 |\sqrt{A}(z,y)|^2\right)\leq 0.$$
Thus, $\widetilde{\mu}_z$ is a log-concave measure on $\R$. Let $F$ be a Borel set in $\R$ with $p:=\widetilde{\mu}_z(F)\in (0,1)$.
Since $\widetilde{\mu}_z(F^s)=\widetilde{\mu}_z(F)=p$, and $F^s=(-\infty,\phi^{-1}_z(\mu_z(F)))$ is a half-line, with the help of the one-dimensional log-concave isoperimetric inequality (see {\cite{Bobkov}}, Proposition 2.1), we have
$$\inf_{\widetilde{\mu}_z(A)\geq p}\widetilde{\mu}_z(A+B_h)= \widetilde{\mu}_z(F^s+B_h)$$
for all $h>0$, where $B_h=[-h,h]$. In particular, for any Borel set $F\subset \R$,
\begin{align}\label{log-concave_isoperimetric_inequality}
\mu_z(F+B_h)\geq \mu_z(F^s+B_h)\mbox{\quad for all $h>0$.}
\end{align}
Let $z\in B_E\cap B_{E^s}$, by Vol'pert Theorem (Theorem \ref{Vol'pert}), $E_z$ is a set of locally finite perimeter in $\R$. Moreover, by \cite{maggi}, Lemma 15.12, $E_z\cap (-R,R)$ is also a set of locally finite perimeter in $\R$ for any $R>0$. Applying \cite{maggi}, Proposition 12.13 on $E_z\cap (-R,R)$, we may assume that $E_z\cap (-R,R)$ is a disjoint union of open intervals with positive distance, i.e., $a_i<b_i<a_{i+1}<b_{i+1}$ for all $i$ and
\begin{align}\label{disjoint_union_of_open_intervals}
E_z\cap (-R,R)=\bigcup_{i\in S_R}(a_i,b_i),
\end{align}
where $S_R$ is a countable set. First we claim that $S_R$ is a finite set. By using equation (\ref{disjoint_union_of_open_intervals}),
\begin{align*}
\infty&>p_E(z)+e^{-|\sqrt{A}(z,R)|^2/2}+e^{-|\sqrt{A}(z,-R)|^2/2}=P_z(E_z)+e^{-|\sqrt{A}(z,R)|^2/2}+e^{-|\sqrt{A}(z,-R)|^2/2}\\
&\geq P_z(E_z\cap (-R,R))=P_z\left(\bigcup_{i\in S_R}(a_i,b_i)\right)=\sum_{i\in S_R}P_z\left((a_i,b_i)\right)=\sum_{i\in S_R}e^{-|\sqrt{A}(z,a_i)|^2/2}+e^{-|\sqrt{A}(z,b_i)|^2/2}\\
&\geq \sum_{i\in S_R}e^{-\|\sqrt{A}\|^2(|z|^2+R^2)/2}+e^{-\|\sqrt{A}\|^2(|z|^2+R^2)/2}\geq 2e^{-\|\sqrt{A}\|^2(|z|^2+R^2)/2}|S_R|,
\end{align*}
i.e., $|S_R|<\infty$, where we have used
$$|\sqrt{A}(z,a_i)|^2\leq\|\sqrt{A}\|^2(|z|^2+a_i^2)\leq \|\sqrt{A}\|^2(|z|^2+R^2).$$
Moreover, by the definition of $\mu_z$ and the fundamental theorem of calculus, for any $-\infty\leq a<b\leq \infty$,
\begin{align}\label{fundamental_theorem_of_calculus}
\lim_{h\to 0}\frac{\mu_z((a,b)+B_h)-\mu_z((a,b))}{h}&=e^{-|\sqrt{A}(z,a)|^2/2}+e^{-|\sqrt{A}(z,b)|^2/2}.
\end{align}
Next we claim that
$$P_z(E_z\cap (-R,R))\geq e^{-|\sqrt{A}(z,y_R(z))|^2/2},$$
where 
$$y_R(z):=\phi^{-1}_z\bigg(\mu_z\Big(\big(E\cap (\R^{n-1}\times(-R,R))\big)_z\Big)\bigg).$$
Notice that
$$\big(E\cap (\R^{n-1}\times(-R,R))\big)_z=E_z\cap (-R,R),\quad \big(E\cap (\R^{n-1}\times(-R,R))\big)^s_z=(-\infty,y_R(z)),$$
$$\mu_z\Big(\big(E\cap (\R^{n-1}\times(-R,R))\big)^s_z\Big)=\mu_z\Big(\big(E\cap (\R^{n-1}\times(-R,R))\big)_z\Big),$$
$$y_R(z)=\phi^{-1}_z\bigg(\mu_z\Big(E_z\cap (-R,R)\Big)\bigg)\to y(z):=\phi^{-1}_z(\mu_z(E_z)),\mbox{\quad as $R\to \infty$,}$$
and
$$\limsup_{R\to \infty}P_z(E_z\cap (-R,R))\leq \limsup_{R\to \infty}\Big(P_z(E_z)+e^{-|\sqrt{A}(z,R)|^2/2}+e^{-|\sqrt{A}(z,-R)|^2/2}\Big)= p_E(z).$$
By using equation (\ref{fundamental_theorem_of_calculus}), and the fact that the intervals $(a_i,b_i)$ are disjoint, 
\begin{align*}
&P_z(E_z\cap (-R,R))=\sum_{i\in S_R} e^{-|\sqrt{A}(z,a_i)|^2/2}+e^{-|\sqrt{A}(z,b_i)|^2/2}\\
&=\sum_{i\in S_R} \lim_{h\to 0^+}\frac{\mu_z((a_i,b_i)+B_h)-\mu_z((a_i,b_i))}{h}\\
&=\lim_{h\to 0^+}\sum_{i\in S_R} \frac{\mu_z((a_i,b_i)+B_h)-\mu_z((a_i,b_i))}{h}\mbox{\qquad ($S_R$ is finite)}\\
&\geq\lim_{h\to 0^+} \frac{1}{h}\left(\mu_z\bigg(\bigcup_{i\in S_R}((a_i,b_i)+B_h)\bigg)-\mu_z\bigg(\bigcup_{i\in S_R}(a_i,b_i)\bigg)\right)\\
&\geq \lim_{h\to 0^+}\frac{1}{h}\left(\mu_z\Big(E_z\cap (-R,R)+B_h\Big)-\mu_z\Big(E_z\cap (-R,R)\Big)\right)\\
&= \lim_{h\to 0^+}\frac{1}{h}\left(\mu_z\Big(\big(E\cap (\R^{n-1}\times(-R,R))\big)_z+B_h\Big)-\mu_z\Big(\big(E\cap ( \R^{n-1}\times(-R,R))\big)_z\Big)\right)\\
&\geq \lim_{h\to 0^+}\frac{1}{h}\left(\mu_z\Big(\big(E\cap (\R^{n-1}\times(-R,R))\big)^s_z+B_h\Big)-\mu_z\Big(\big(E\cap (\R^{n-1}\times(-R,R))\big)^s_z\Big)\right)\\
&=\lim_{h\to 0^+}\frac{1}{h}\int_{y_R(z)}^{y_R(z)+h}e^{-|\sqrt{A}(z,y)|^2/2}dy =e^{-|\sqrt{A}(z,y_R(z))|^2/2}
\end{align*}
in which we have used (\ref{log-concave_isoperimetric_inequality}) where $F=\big(E\cap (\R^{n-1}\times(-R,R))\big)_z$ in the last inequality.
Taking $R\to \infty$, we have $p_E(z)\geq e^{-|\sqrt{A}(z,y(z))|^2/2}=p_{E^s}(z).$
\end{proof}

We have seen that the anisotropic Gaussian perimeter of the one-dimensional section decreases by Ehrhard symmetrization. Intuitively, this gives us hope that the higher dimensional anisotropic Gaussian perimeter might also decrease after doing the Ehrhard symmetrization. However, our next example shows that this is not true in general. The main idea is to understand the asymptotic behavior of the quantity $h(z)=\phi^{-1}_z(v_E(z))$ via the equation $v_{E^s}(z)= v_E(z)$.

\begin{example}\label{counterexample_1}
Let $n=2$ and
$$A=2\left[ \begin{array}{rr} a & b \\ b & c \end{array} \right]\succ 0$$
with $b\neq 0$. Consider
$$E_\alpha=[-\alpha,\alpha]\times (0,\infty).$$
Then there exists some $\delta>0$ such that for any $0<\alpha<\delta$, we have
$$P_{\gamma_A}(E_\alpha)<P_{\gamma_A}(E^s_\alpha)$$
and
$$P_{\gamma_A}(E^s_\alpha)<P_{\gamma_A}(E_\alpha)+ \sqrt{2\pi}\|Ae_2-\langle Ae_2,e_2\rangle e_2\|\langle b_{\gamma_A}(E)-b_{\gamma_A}(E^s),e_2\rangle.$$
\end{example}
\begin{proof}
Notice that both $a$ and $c$ are positive since $A$ is a positive definite matrix. Additionally, notice that
$$e^{-|\sqrt{A}(x,y)|^2/2}=e^{-\langle A(x,y),(x,y)\rangle/2}=e^{-ax^2-2bxy-cy^2}. $$
Let $K=[-1,1]$ and $\Omega$ be an open set in $\R^1$ such that $K\subset \Omega$. 
Let $E=\Omega\times (0,\infty)$. Then
$$E_x=(0,\infty)$$
for all $x\in \Omega$. By Lemma \ref{computational_lemma} (2)(b),
$$v_E(x)=\int_{E_x}e^{-|\sqrt{A}(x,y)|^2/2}\ dy=\int_{0}^{\infty}e^{-ax^2-2bxy-cy^2}\ dy\mbox{\quad is differentiable on $\Omega$.}$$
By Lemma \ref{regularity_estimates_for_C1}, $h:x\mapsto \phi^{-1}_x(v_E(x))$ is also differentiable on $\Omega$. Notice that
\begin{align}\label{counterexample_1_eq0}
\int_{0}^{\infty}e^{-ax^2-2bxy-cy^2}\ dy=v_E(x)=v_{E^s}(x)=\int_{-\infty}^{h(x)}e^{-ax^2-2bxy-cy^2}\ dy.
\end{align}
Setting $x=0$ in equation (\ref{counterexample_1_eq0}), we have
$$v_E(0)=v_{E^s}(0)\implies \int_{0}^{\infty}e^{-cy^2}\ dy=\int_{-\infty}^{h(0)}e^{-cy^2}\ dy\implies h(0)=0.$$
Taking derivative on equation (\ref{counterexample_1_eq0}), we also have
\begin{align*}
\int_{0}^{\infty}e^{-ax^2-2bxy-cy^2}(-2ax-2by) dy=e^{-ax^2-2bxh(x)-ch^2(x)}h'(x)+\int_{-\infty}^{h(x)}e^{-ax^2-2bxy-cy^2}(-2ax-2by) dy.
\end{align*}
In particular for $x=0$,
$$h'(0)=\int_{0}^{\infty}e^{-cy^2}(-2by) dy-\int_{-\infty}^0e^{-cy^2}(-2by)dy=-4b\int_0^{\infty}e^{-cy^2}ydy=-\frac{2b}{c}.$$
That is,
\begin{align}\label{counerexample_1_eq1}
h(0)=0,\qquad h'(0)=-\frac{2b}{c}.
\end{align}
Notice that the Ehrhard symmetrization of $E_\alpha$ has the form
$$E^s_\alpha=\{(x,y):x\in [-\alpha,\alpha], y< h(x)\}\mbox{\quad for $0<\alpha<1$}.$$
We now claim that
\begin{align}\label{counerexample_1_eq2}
P_{\gamma_A}(E_\alpha)-P_{\gamma_A}(E_\alpha^s)&=2\frac{\sqrt{\det A}}{\sqrt{2\pi}}\left(1-\sqrt{1+\frac{4b^2}{c^2}}\right)\alpha+o(\alpha).
\end{align}
By using the Taylor expansion, we have
\begin{align*}
\frac{\sqrt{2\pi}}{\sqrt{\det A}}P_{\gamma_A}(E_\alpha)&=\int_{0}^{\infty}e^{-a\alpha^2-2b\alpha y-cy^2}\ dy+\int_{0}^{\infty}e^{-a\alpha^2+2b\alpha y-cy^2}\ dy+\int_{-\alpha}^{\alpha}e^{-ax^2}\ dx\\
&=\left(\int_{0}^{\infty}e^{-cy^2}\ dy+\frac{d}{d\alpha}\int_{0}^{\infty}e^{-a\alpha^2-2b\alpha y-cy^2}\ dy\Bigg|_{\alpha=0}\alpha+o(\alpha)\right)\\
&\quad+\left(\int_{0}^{\infty}e^{-cy^2}\ dy+\frac{d}{d\alpha}\int_{0}^{\infty}e^{-a\alpha^2+2b\alpha y-cy^2}\ dy\Bigg|_{\alpha=0}\alpha+o(\alpha)\right)+2\int_{0}^{\alpha}e^{-ax^2}\ dx\\
&=\left(\frac{\sqrt{\pi}}{2\sqrt{c}}+\left(\frac{-b}{c}\right)\alpha+o(\alpha)\right)+\left(\frac{\sqrt{\pi}}{2\sqrt{c}}+\left(\frac{b}{c}\right)\alpha+o(\alpha)\right)+\left(2\alpha+o(\alpha)\right)\\
&=\frac{\sqrt{\pi}}{\sqrt{c}}+2\alpha+o(\alpha).
\end{align*}
Moreover, by (\ref{counerexample_1_eq1}), we have
\begin{align*}
\frac{\sqrt{2\pi}}{\sqrt{\det A}}P_{\gamma_A}(E^s_\alpha)&=\int_{-\infty}^{h(\alpha)}e^{-a\alpha^2-2b\alpha y-cy^2}\ dy+\int_{-\infty}^{h(-\alpha)}e^{-a\alpha^2+2b\alpha y-cy^2}\ dy+\int_{-\alpha}^{\alpha}e^{-at^2-2bt h(t)-ch^2(t)}\sqrt{1+h'(t)^2}\ dt\\
&=\left(\int_{-\infty}^{h(0)}e^{-cy^2}\ dy+\frac{d}{d\alpha}\int_{-\infty}^{h(\alpha)}e^{-a\alpha^2-2b\alpha y-cy^2}\ dy\Bigg|_{\alpha=0}\alpha+o(\alpha)\right)\\
&\quad+\left(\int_{-\infty}^{h(0)}e^{-cy^2}\ dy+\frac{d}{d\alpha}\int_{-\infty}^{h(-\alpha)}e^{-a\alpha^2+2b\alpha y-cy^2}\ dy\Bigg|_{\alpha=0}\alpha+o(\alpha)\right)\\
&\quad+\left(\frac{d}{d\alpha}\int_{-\alpha}^{\alpha}e^{-at^2-2bt h(t)-ch^2(t)}\sqrt{1+h'(t)^2}\ dt\Bigg|_{\alpha=0}\alpha+o(\alpha)\right)\\
&=\left(\frac{\sqrt{\pi}}{2\sqrt{c}}+\alpha\left[-\frac{2b}{c}+\frac{b}{c}\right]+o(\alpha)\right)+\left(\frac{\sqrt{\pi}}{2\sqrt{c}}+\alpha\left[\frac{2b}{c}-\frac{b}{c}\right]+o(\alpha)\right)+\left(2\alpha\sqrt{1+\frac{4b^2}{c^2}}+o(\alpha)\right)\\
&=\frac{\sqrt{\pi}}{\sqrt{c}}+2\alpha\sqrt{1+\frac{4b^2}{c^2}}+o(\alpha),
\end{align*}
where the third term is the anisotropic Gaussian perimeter of the graph of $h$. Therefore,
\begin{align*}
P_{\gamma_A}(E_\alpha)-P_{\gamma_A}(E_\alpha^s)&=2\frac{\sqrt{\det A}}{\sqrt{2\pi}}\left(1-\sqrt{1+\frac{4b^2}{c^2}}\right)\alpha+o(\alpha).
\end{align*}
Next we show that
\begin{align}\label{counerexample_1_eq3}
\sqrt{2\pi}\|Ae_2-\langle Ae_2,e_2\rangle e_n\|\langle b_{\gamma_A}(E)-b_{\gamma_A}(E^s),e_2\rangle=\frac{\sqrt{\det A}}{\sqrt{2\pi}}2\left(\frac{2b}{c}\right)\alpha+o(\alpha).
\end{align}
Notice that
$$\|Ae_2-\langle Ae_2,e_2\rangle e_2\|=2|b|.$$
Now we compute the barycenter of $E_\alpha$ and $E_\alpha^s$, i.e., 
\begin{align*}
b_{\gamma_A}(E_\alpha)&=\frac{\sqrt{\det A}}{2\pi}\int_{-\alpha}^{\alpha}\int_0^{\infty}(x,y)e^{-|\sqrt{A}(x,y)|^2/2}dy dx,
\end{align*}
and
\begin{align*}
b_{\gamma_A}(E^s_\alpha)&=\frac{\sqrt{\det A}}{2\pi}\int_{-\alpha}^{\alpha}\int_{-\infty}^{h(x)}(x,y)e^{-|\sqrt{A}(x,y)|^2/2}dy dx.
\end{align*}
Then
\begin{align*}
b_{\gamma_A}(E_\alpha)-b_{\gamma_A}(E^s_\alpha)&=\frac{\sqrt{\det A}}{2\pi}\int_{-\alpha}^{\alpha}\left(\int_{0}^{\infty}(x,y)e^{-|\sqrt{A}(x,y)|^2/2}dy-\int_{-\infty}^{h(x)}(x,y)e^{-|\sqrt{A}(x,y)|^2/2}dy\right) dx
\end{align*}
and hence
\begin{align*}
\langle b_{\gamma_A}(E_\alpha)-b_{\gamma_A}(E^s_\alpha),e_2\rangle&=\frac{\sqrt{\det A}}{2\pi}\int_{-\alpha}^{\alpha}\left(\int_{0}^{\infty}ye^{-|\sqrt{A}(x,y)|^2/2}dy-\int_{-\infty}^{h(x)}ye^{-|\sqrt{A}(x,y)|^2/2}dy\right) dx\\
&=\frac{\sqrt{\det A}}{2\pi}\left[\frac{d}{d\alpha}\int_{-\alpha}^{\alpha}\left(\int_{0}^{\infty}ye^{-|\sqrt{A}(x,y)|^2/2}dy-\int_{-\infty}^{h(x)}ye^{-|\sqrt{A}(x,y)|^2/2}dy\right) dx\Bigg|_{\alpha=0}\alpha+o(\alpha)\right]\\
&=\frac{\sqrt{\det A}}{2\pi}\left[2\left(\int_{0}^{\infty}ye^{-|\sqrt{A}(0,y)|^2/2}dy-\int_{-\infty}^{h(0)}ye^{-|\sqrt{A}(0,y)|^2/2}dy\right)\alpha+o(\alpha)\right]\\
&=\frac{\sqrt{\det A}}{2\pi}\left[4\left(\int_{0}^{\infty}ye^{-cy^2}dy\right)\alpha+o(\alpha)\right]=\frac{\sqrt{\det A}}{2\pi}\left(\frac{2}{c}\right)\alpha+o(\alpha).\\
\end{align*}
Thus,
$$\sqrt{2\pi}\|Ae_2-\langle Ae_2,e_2\rangle e_2\|\langle b_{\gamma_A}(E)-b_{\gamma_A}(E^s),e_2\rangle=2\frac{\sqrt{\det A}}{\sqrt{2\pi}}\left(\frac{2|b|}{c}\right)\alpha+o(\alpha).$$
Thanks to (\ref{counerexample_1_eq2}) and (\ref{counerexample_1_eq3}), we have
$$
\lim_{\alpha\to 0^+}\frac{P_{\gamma_A}(E_\alpha)-P_{\gamma_A}(E_\alpha^s)}{\alpha}=2\frac{\sqrt{\det A}}{\sqrt{2\pi}}\left(1-\sqrt{1+\frac{4b^2}{c^2}}\right)<0
$$
and
\begin{align*}
&\lim_{\alpha\to 0^+}\frac{\sqrt{2\pi}\|Ae_2-\langle Ae_2,e_2\rangle e_2\|\langle b_{\gamma_A}(E)-b_{\gamma_A}(E^s),e_2\rangle +P_{\gamma_A}(E_\alpha)-P_{\gamma_A}(E_\alpha^s)}{\alpha}\\
&=2\frac{\sqrt{\det A}}{\sqrt{2\pi}}\left(\left(1+\frac{2|b|}{c}\right)-\sqrt{1+\frac{4b^2}{c^2}}\right)>0.
\end{align*}
Since $b\not=0$, let
$$\ep=\min\left\{\frac{\sqrt{\det A}}{\sqrt{2\pi}}\left(\sqrt{1+\frac{4b^2}{c^2}}-1\right),\frac{\sqrt{\det A}}{\sqrt{2\pi}}\left(\left(1+\frac{2|b|}{c}\right)-\sqrt{1+\frac{4b^2}{c^2}}\right)\right\}>0.$$
There exists $\delta>0$ such that for all $0<\alpha<\delta$,
$$\left|\frac{P_{\gamma_A}(E_\alpha)-P_{\gamma_A}(E_\alpha^s)}{\alpha}-2\frac{\sqrt{\det A}}{\sqrt{2\pi}}\left(1-\sqrt{1+\frac{4b^2}{c^2}}\right)\right|<\ep\leq \frac{\sqrt{\det A}}{\sqrt{2\pi}}\left(\sqrt{1+\frac{4b^2}{c^2}}-1\right)$$
and
\begin{align*}
&\left|\frac{\sqrt{2\pi}\|Ae_2-\langle Ae_2,e_2\rangle e_2\|\langle b_{\gamma_A}(E)-b_{\gamma_A}(E^s),e_2\rangle +P_{\gamma_A}(E_\alpha)-P_{\gamma_A}(E_\alpha^s)}{\alpha}-2\frac{\sqrt{\det A}}{\sqrt{2\pi}}\left(\left(1+\frac{2|b|}{c}\right)-\sqrt{1+\frac{4b^2}{c^2}}\right)\right|\\
&<\ep\leq \frac{\sqrt{\det A}}{\sqrt{2\pi}}\left(\left(1+\frac{2|b|}{c}\right)-\sqrt{1+\frac{4b^2}{c^2}}\right).
\end{align*}
Therefore, for any $0<\alpha<\delta$,
$$
P_{\gamma_A}(E_\alpha)-P_{\gamma_A}(E_\alpha^s)<\frac{\sqrt{\det A}}{\sqrt{2\pi}}\left(1-\sqrt{1+\frac{4b^2}{c^2}}\right)\alpha<0
$$
and
$$
\sqrt{2\pi}\|Ae_2-\langle Ae_2,e_2\rangle e_2\|\langle b_{\gamma_A}(E)-b_{\gamma_A}(E^s),e_2\rangle +P_{\gamma_A}(E_\alpha)-P_{\gamma_A}(E_\alpha^s)>\frac{\sqrt{\det A}}{\sqrt{2\pi}}\left(\left(1+\frac{2|b|}{c}\right)-\sqrt{1+\frac{4b^2}{c^2}}\right)\alpha>0,
$$
i.e.,
$$P_{\gamma_A}(E^s_\alpha)<P_{\gamma_A}(E_\alpha)+\sqrt{2\pi}\|Ae_2-\langle Ae_2,e_2\rangle e_2\|\langle b_{\gamma_A}(E)-b_{\gamma_A}(E^s),e_2\rangle.$$
\end{proof}

\noindent{\bf Remark}: From Example \ref{counterexample_1}, we see that there exists some $E$ such that
$$P_{\gamma_A}(E)<P_{\gamma_A}(E^s).$$
Although the anisotropic Gaussian perimeter does not always decrease under Ehrhard symmetrization, a natural question to ask here is whether there still exists an upper bound for $P_{\gamma_A}(E^s)$ in terms of $P_{\gamma_A}(E)$. In our next subsection, we will show that there exists an upper bound and
\begin{align*}
P_{\gamma_A}\left(E^{s} \right) &\leq P_{\gamma_A}(E)+ \sqrt{2\pi}\|Ae_n-\langle Ae_n,e_n\rangle e_n\|\langle b_{\gamma_A}(E)-b_{\gamma_A}(E^s),e_n\rangle,
\end{align*}
for any set of finite anisotropic Gaussian perimeter $E$ in $\mathbb{R}^{n}$ (see Theorem \ref{Ehrhard_Sym_Ineq_I}).

\subsection{Ehrhard symmetrization on anisotropic Gaussian measures}\mbox{}\vspace{-.22cm}\\

Our next goal is to show that the perimeter of a Ehrhard symmetrization set $E^s$ can still be controlled by the perimeter of $E$ plus an error term with a form like $A-\lambda I_n$. In particular, the Ehrhard symmetrization along the eigendirections decreases the anisotropic Gaussian perimeter. We will break this into several lemmas. Our next three lemmas are modifications of Cianchi-Fusco-Maggi-Pratelli's paper \cite{Cianchi-Fusco-Maggi-Pratelli} Lemma 4.5 and Lemma 4.6. We will prove the ``dust estimate'', ``cylindrical estimate'', and ``graphical estimate''. Starting from this section, the notation $C$ means a constant that depends only on $n$ and $A$, which may change from line to line.

\begin{lemma}[Dust estimate for $E$]\label{dust_estimate}\mbox{}\\
Let $E$ be a set of finite anisotropic Gaussian perimeter in $\mathbb{R}^{n}$ and $B$ be a measurable set such that
$$v_E(z)=0\mbox{\quad for all $z\in B$.}$$
Then
\begin{align*}
P_{\gamma_A}(E^s;B\times \R)&\leq P_{\gamma_A}\left(E; B \times \mathbb{R}\right)\\
&\quad+\frac{\det\sqrt{A}}{(2\pi)^{(n-1)/2}}\int_{B}\left|\int_{E_z} \nabla^{\prime}\left(e^{-|\sqrt{A}x|^2/2}\right)dy-\int_{E^s_z} \nabla^{\prime}\left(e^{-|\sqrt{A}x|^2/2}\right)dy\right| dz.
\end{align*}
In particular, if we assume that $B$ is open, then
$$P_{\gamma_A}(E^s;B\times \R)=0.$$
\end{lemma}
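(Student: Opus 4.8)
The starting observation is that $v_E(z)=0$ kills the slice: since $e^{-|\sqrt{A}(z,y)|^2/2}>0$ for every $y$, the equality $\mu_z(E_z)=v_E(z)=0$ forces $\mathcal{L}^1(E_z)=0$, and $\phi_z:\R\to(0,\mu_z(\R))$ is strictly increasing with $\phi_z(-\infty)=0$, so $\phi_z^{-1}(0)=-\infty$ and $E^s_z=\{y:y<\phi_z^{-1}(v_E(z))\}=\emptyset$ for every $z\in B$. Two consequences follow. First, $\int_{E_z}\nabla'(e^{-|\sqrt{A}x|^2/2})\,dy=0=\int_{E^s_z}\nabla'(e^{-|\sqrt{A}x|^2/2})\,dy$ for $z\in B$ (integrals over $\mathcal{L}^1$-null sets), so the last term in the statement is \emph{identically zero} under the hypothesis, and it suffices to prove $P_{\gamma_A}(E^s;B\times\R)\le P_{\gamma_A}(E;B\times\R)$. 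Second, when $B$ is open one has $E^s\cap(B\times\R)=\emptyset$, so every point of the open set $B\times\R$ has $E^s$-density $0$, whence $\partial^M E^s\cap(B\times\R)=\emptyset$ and $P_{\gamma_A}(E^s;B\times\R)=0$; the same reasoning (using $\mathcal{L}^1(E_z)=0$) gives $P_{\gamma_A}(E;B\times\R)=0$, settling the ``in particular'' clause.

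For general measurable $B$ I would first shed the bulk of $B$. If $z_0$ is a point of density $1$ of $\{v_E=0\}$ then for every $y\in\R$
$$|E^s\cap B_r(z_0,y)|\le 2r\,\mathcal{L}^{n-1}\bigl(B_r(z_0)\cap\{v_E>0\}\bigr)=o(r^n)\qquad(r\to0^+),$$
because empty slices contribute nothing and each other slice meets $B_r(z_0,y)$ in length $\le2r$; hence $(z_0,y)$ has $E^s$-density $0$ and $(z_0,y)\notin\partial^M E^s$, and the identical estimate for $E$ shows $\partial^M E$ also misses $\{v_E=0\}^{(1)}\times\R$. Writing $B_0:=B\setminus\{v_E=0\}^{(1)}$, an $\mathcal{L}^{n-1}$-null set on which $v_E\equiv0$, Federer's theorem gives $P_{\gamma_A}(E^s;B\times\R)=\mathcal{H}_{\gamma_A}^{n-1}(\partial^* E^s\cap(B_0\times\R))$, and likewise for $E$. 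Feeding $g=\chi_{B_0\times\R}$ into the coarea formula (\ref{co-area formula for sets of finite perimeter_1}) forces $\nu_n^{E^s}=0$ $\mathcal{H}^{n-1}$-a.e.\ on $\partial^* E^s\cap(B_0\times\R)$ (and similarly for $E$), so both $P_{\gamma_A}(E^s;B\times\R)$ and $P_{\gamma_A}(E;B\times\R)$ sit on the ``vertical'' parts of the reduced boundaries over the null base $B_0$, where $|\nu'^{E^s}|=1$ and hence $\mathcal{H}^{n-1}\mres(\partial^* E^s\cap(B_0\times\R))=|D'\chi_{E^s}|\mres(B_0\times\R)$ with $D'\chi_{E^s}:=(D_1\chi_{E^s},\dots,D_{n-1}\chi_{E^s})$; thus
$$P_{\gamma_A}(E^s;B\times\R)=\frac{\det\sqrt{A}}{(2\pi)^{(n-1)/2}}\int_{B_0\times\R}e^{-|\sqrt{A}x|^2/2}\,d|D'\chi_{E^s}|,$$
and the analogue for $E$.

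To finish I would identify $\int_{B_0\times\R}e^{-|\sqrt{A}x|^2/2}\,d|D'\chi_{E^s}|$ with $|Dv_E|(B_0)$ and then bound $|Dv_E|(B_0)$ by the perimeter of $E$. For the identification: $E^s$ is the subgraph of $h=\phi_{\,\cdot}^{-1}(v_E)$, so over each base point its vertical reduced boundary is an interval carrying a single normal direction, i.e.\ there is no cancellation along the fibres; combined with the pushforward form of the distributional formula (\ref{distributional_derivative_formula_eq1}), namely $\pi_{\#}\bigl(e^{-|\sqrt{A}x|^2/2}D'\chi_{E^s}\bigr)=Dv_{E^s}-\bigl(\int_{E^s_z}\nabla'(e^{-|\sqrt{A}x|^2/2})\,dy\bigr)\mathcal{L}^{n-1}$, together with $E^s_z=\emptyset$ over $B_0\subseteq B$, this yields $\int_{B_0\times\R}e^{-|\sqrt{A}x|^2/2}\,d|D'\chi_{E^s}|=|Dv_{E^s}|(B_0)=|Dv_E|(B_0)$ (using $v_{E^s}=v_E$, Proposition \ref{basic_properties}(1)). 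For the bound: the estimate of Lemma \ref{distributional_derivative_formula}, proven for open sets, extends to all Borel sets, since the three set functions it compares — $\tfrac{\det\sqrt{A}}{(2\pi)^{(n-1)/2}}|Dv_E|$, $G\mapsto P_{\gamma_A}(E;G\times\R)$ and $G\mapsto\tfrac{\det\sqrt{A}}{(2\pi)^{(n-1)/2}}\int_G|\int_{E_z}\nabla'(e^{-|\sqrt{A}x|^2/2})\,dy|\,dz$ — are all finite Radon measures on $\R^{n-1}$ (the first two because $E$ has finite $A$-anisotropic Gaussian perimeter, the third by Lemma \ref{computational_lemma}(3)(b)), so an inequality valid on open sets passes to Borel sets by outer regularity; applied to $G=B_0$, where the cross-term vanishes, it gives $\tfrac{\det\sqrt{A}}{(2\pi)^{(n-1)/2}}|Dv_E|(B_0)\le P_{\gamma_A}(E;B_0\times\R)=P_{\gamma_A}(E;B\times\R)$. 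Chaining the three displays gives $P_{\gamma_A}(E^s;B\times\R)\le P_{\gamma_A}(E;B\times\R)$.

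The main obstacle is precisely the passage through the $\mathcal{L}^{n-1}$-null base $B_0$: a Lebesgue-null set $N\subseteq\R^{n-1}$ need not have $N\times\R$ meeting a reduced boundary in an $\mathcal{H}_{\gamma_A}^{n-1}$-null set — a vertical ``wall'' of $E^s$ can live exactly there with positive $\gamma_A$-perimeter — so this contribution must be computed, not discarded, and it is the combination of the coarea formula (verticality over $B_0$), the subgraph structure of $E^s$ (no fibre cancellation and projected variation equal to $|Dv_E|$), and the Borel upgrade of Lemma \ref{distributional_derivative_formula} that controls it. One should also record the routine checks that formula (\ref{distributional_derivative_formula_eq1}) applies to $E^s$ — legitimate because $E^s$ has locally finite perimeter (Theorem \ref{approximation_theorem_for_BV_maps}) and $v_{E^s}=v_E\in BV(\R^{n-1})$, which are exactly the inputs used in the proof of Lemma \ref{distributional_derivative_formula}.
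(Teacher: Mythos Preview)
Your approach differs substantially from the paper's. The paper proceeds entirely by approximation: it encloses $B$ in an open $\Omega$ with $\mathcal{L}^{n-1}(\Omega\setminus B)<\ep$, exhausts $\Omega$ by bounded smooth open $\Omega_j$, approximates $v_E$ on each $\Omega_j$ by $C^1$ functions $v_k^j$ via Theorem~\ref{approximation_theorem_for_BV_maps}, computes the integrals (I) and (II) of~(\ref{(I) and (II)}) explicitly on the $C^1$ subgraphs $F_k^j$, and passes to the limit in $k$, $j$, $\ep$; the open-$B$ case is handled by the same machinery, using that $y_k(z)\to-\infty$ on $B$ forces $\varphi_z(z,y_k(z))=0$ eventually since $\varphi$ has compact support. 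Several parts of your argument are correct and cleaner than this: the observation that the extra integral term vanishes identically on $B$, the direct density argument for open $B$, the Lebesgue-density reduction of general $B$ to an $\mathcal{L}^{n-1}$-null set $B_0$, and the outer-regularity extension of Lemma~\ref{distributional_derivative_formula} to Borel sets.

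The gap is the identification $\int_{B_0\times\R}e^{-|\sqrt{A}x|^2/2}\,d|D'\chi_{E^s}|=|Dv_E|(B_0)$. From the pushforward identity $Dv_E\mres B_0=\pi_\#\bigl(e^{-|\sqrt{A}x|^2/2}\,D'\chi_{E^s}\mres(B_0\times\R)\bigr)$ the general bound $|\pi_\#\mu|\le\pi_\#|\mu|$ gives only
\[
|Dv_E|(B_0)\;\le\;\int_{B_0\times\R}e^{-|\sqrt{A}x|^2/2}\,d|D'\chi_{E^s}|\;=\;\frac{(2\pi)^{(n-1)/2}}{\det\sqrt{A}}\,P_{\gamma_A}(E^s;B_0\times\R),
\]
which is the \emph{wrong direction} for your chain. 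Equality requires that $\nu'^{E^s}/|\nu'^{E^s}|$ be constant along each vertical fibre of $\partial^*E^s$ over $B_0$, and your one-line justification (``subgraph, hence an interval carrying a single normal direction'') is a heuristic, not a proof: one must show that the reduced boundary over $B_0$ actually has this fibre structure and invoke a disintegration argument, or appeal to the structure theory for subgraphs of extended-real-valued functions, neither of which you supply. The paper's approximation argument sidesteps this entirely. Note also that your reduction to a null $B_0$ effectively reduces the dust estimate to the cylindrical estimate (Lemma~\ref{cylindrical_estimate}), which the paper proves independently by the same approximation technique --- so even granting your reduction, some form of the paper's machinery is still needed.

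A minor point: applying~(\ref{distributional_derivative_formula_eq1}) to $E^s$ requires $P_{\gamma_A}(E^s;K\times\R)<\infty$ for compact $K\subset\R^{n-1}$, since the dominated-convergence step in Lemma~\ref{distributional_derivative_formula} integrates over an unbounded cylinder; ``locally finite perimeter'' alone does not give this. It \emph{is} available --- it is precisely~(\ref{perimeter_inside_cylinder}) in Theorem~\ref{approximation_theorem_for_BV_maps} --- but you should cite it rather than the weaker hypothesis.
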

\begin{proof}
Without loss of generality, we may assume that $B$ is bounded since we can consider $B\cap B(0,R)$. Given $\ep>0$ and let $\Omega$ be an open set with $\Omega\supset B$ such that
$$\mathcal{L}^{n-1}(\Omega\slash B)<\ep.$$
By \cite{Daners}, Proposition 8.2.1, there exists a sequence of bounded open smooth sets $\Omega_j\nearrow\Omega$, i.e., $\Omega_{j}\subset\subset  \Omega_{j+1}\subset \subset  \Omega$ and $\bigcup_{j}\Omega_j=\Omega$. Then
$$P_{\gamma_A}(E^s;\Omega_j\times \R)\nearrow P_{\gamma_A}(E^s;\Omega\times \R)$$
and
$$P_{\gamma_A}(E^s;B\times \R)\leq P_{\gamma_A}(E^s;\Omega\times \R)$$
since $B\subset \Omega$. Recall from Proposition \ref{lower_semicontinuity} that in order to compute $P_{\gamma_A}(E^s,\Omega_j\times \R)$ it is enough to look at (\ref{perimeter_inside_cylinder}) and equation (\ref{(I) and (II)}) as we have seen in Theorem \ref{approximation_theorem_for_BV_maps}, i.e., let $\varphi\in C^1_c(\Omega_j\times\R;\R^n)$ with $|\varphi|\leq 1$ and let $\varphi_z=(\varphi_1,\varphi_2,...,\varphi_{n-1})$, we estimate integrals (I) and (II) from (\ref{(I) and (II)}).

Applying Theorem \ref{approximation_theorem_for_BV_maps} on $\Omega_j$ and $y(z):=\phi_{z}^{-1}(v_E(z))$, there exists a sequence of functions $v_{k}^j \in C^{1}(\Omega_j)$, such that $v_{k}^j \rightarrow v_{E}$ in $L^{1}(\Omega_j)$ and a.e. in $\Omega_j$, $Dv_k^j\wkly Dv_E$ in $\Omega_j$, and
$$
\lim _{k \rightarrow \infty} \int_{\Omega_j}\left|\nabla' v^j_{k}(z)\right| d z=\left|D v_{E}\right|(\Omega_j).
$$
Moreover,
$$\mbox{$\mbox{$\chi_{F_{k}^j} \rightarrow \chi_{E^{s}}$\ a.e. in $\Omega_j\times \R$}$ and $y_k^j(z)\to y(z)=\phi_{z}^{-1}(v_E(z))$ a.e. in $\Omega_j$}$$
where $y_k^j(z):=\phi_{z}^{-1}(v^j_k(z))$ and
$$
F_k^j:= \left\{(z, y) \in \Omega_j \times \mathbb{R}: y<y_k^j(z):=\phi_{z}^{-1}(v^j_k(z))\right\}.
$$
Since $v_E(z)=0$ for all $z\in B$, we have
$$v_k^j(z)\to v_E(z)=0\mbox{\ a.e. in $\Omega_j\cap B$,}$$
$$y_k^j(z)\to y(z)=\phi_{z}^{-1}(v_E(z))=-\infty\mbox{\ a.e. in $\Omega_j\cap B$,}$$
and
$$p_{E^s}(z)=e^{-[\phi_z^{-1}(v_E(z))]^2/2}=0\mbox{\ on $B$.}$$
Therefore, by equation (\ref{locally_finite_perimeter_estimate_eq1}) where $F$ as $F_k^j$, $y(z)$ as $y_k^j(z)$, and $\Omega$ as $\Omega_j$,
\begin{align}
&\text{(I)}=\sum_{i=1}^{n-1} \int_{E^s} \frac{\partial \varphi_{i}}{\partial z_{i}}+\varphi_{i}\frac{\partial (-|\sqrt{A}x|^2/2)}{\partial z_i}\ d \gamma_{A}(x)=\lim _{k \rightarrow \infty}\Big( \sum_{i=1}^{n-1} \int_{F^j_{k}} \frac{\partial \varphi_{i}}{\partial z_{i}}+\varphi_{i}\frac{\partial (-|\sqrt{A}x|^2/2)}{\partial z_i}\ d \gamma_{A}(x)\Big)\notag\\
&=\frac{\det \sqrt{A}}{(2 \pi)^{n / 2}} \lim _{k \rightarrow \infty}\Bigg(\int_{\Omega_j}\left(-\varphi_z(z,y_k^j(z))\right)\cdot \nabla^{\prime} v_k^j(z) d z-\int_{\Omega_j}\int_{-\infty}^{y_k^j(z)}\left(-\varphi_z(z,y_k^j(z))\right)\cdot \nabla^{\prime}\left(e^{-|\sqrt{A}x|^2/2}\right)dydz\Bigg)\notag\\
&\leq  \frac{\det \sqrt{A}}{(2 \pi)^{n / 2}}\limsup_{k\to \infty}\int_{\Omega_j}\left| \nabla^{\prime} v_k^j(z) \right| d z\notag+ \frac{\det \sqrt{A}}{(2 \pi)^{n / 2}}\lim _{k \rightarrow \infty}\int_{\Omega_j\slash B}\int_{-\infty}^{y_k^j(z)}\varphi_z(z,y_k^j(z))\cdot \nabla^{\prime}\left(e^{-|\sqrt{A}x|^2/2}\right)dydz\notag\\
&\quad +\frac{\det \sqrt{A}}{(2 \pi)^{n / 2}}\lim _{k \rightarrow \infty}\int_{\Omega_j\cap B}\int_{-\infty}^{y_k^j(z)}\varphi_z(z,y_k^j(z))\cdot \nabla^{\prime}\left(e^{-|\sqrt{A}x|^2/2}\right)dydz\notag\\
&= \frac{\det \sqrt{A}}{(2 \pi)^{n / 2}}\left|D v_{E}\right|(\Omega_j)+ \frac{\det \sqrt{A}}{(2 \pi)^{n / 2}}\int_{\Omega_j\slash B}\int_{-\infty}^{y(z)}\varphi_z(z,y(z))\cdot \nabla^{\prime}\left(e^{-|\sqrt{A}x|^2/2}\right)dydz\notag\\
&\leq \frac{\det \sqrt{A}}{(2 \pi)^{n / 2}}\left|D v_{E}\right|(\Omega_j)+ \frac{\det \sqrt{A}}{(2 \pi)^{n / 2}}\int_{\Omega_j\slash B}\int_{-\infty}^{\infty}\left| \nabla^{\prime}\left(e^{-|\sqrt{A}x|^2/2}\right)\right|dydz\leq \frac{\det \sqrt{A}}{(2 \pi)^{n / 2}}\left|D v_{E}\right|(\Omega_j)+C\ep\notag
\end{align}
where we have used $\mathcal{L}^{n-1}(\Omega_j\slash B)<\ep$ and Lemma \ref{computational_lemma} (3), i.e.,
\begin{align*}
\sup_{z\in \R^{n-1}}\int_{-\infty}^{\infty}\left| \nabla^{\prime}\left(e^{-|\sqrt{A}x|^2/2}\right)\right| dy&\leq (n-1)C.
\end{align*}
Therefore,
$$\text{(I)}\leq \frac{\det \sqrt{A}}{(2 \pi)^{n / 2}}\left|D v_{E}\right|(\Omega_j)+C\ep.$$
Now we estimate (II). Recalling that $\mathcal{H}^{n-1}(\partial^M E^s\slash \rb E^s)=0$ and $(\partial^M E^s)_z=\partial^M((E^s)_z)$ for $\mathcal{L}^{n-1}$-a.e. $z\in \R^{n-1}$, we get
\begin{align}
&\text{(II)}=\int_{E^s} \frac{\partial \varphi_{n}}{\partial y}+\varphi_{n}\frac{\partial (-|\sqrt{A}x|^2/2)}{\partial y} \ d \gamma_{A}(x)=\frac{\det \sqrt{A}}{(2\pi)^{n/2}}\int_{E^{s}} \frac{\partial}{\partial y}\left(\varphi_{n} e^{-|\sqrt{A}x|^2/2}\right) d z\notag \\
&=-\frac{\det \sqrt{A}}{(2\pi)^{n/2}}\int_{\partial^{M} E^{s}} \varphi_{n}e^{-|\sqrt{A}x|^2/2} \nu_{n}^{E^s}\ d \mathcal{H}^{n-1}(x)\leq \frac{\det \sqrt{A}}{(2\pi)^{n/2}}\int_{\partial^{M} E^{s} \cap(\Omega_j \times \mathbb{R})}\left|\nu_{n}^{E^s}\right|e^{-|\sqrt{A}x|^2/2}\ d \mathcal{H}^{n-1}(x) \notag\\
&=\frac{\det \sqrt{A}}{(2\pi)^{n/2}}\int_{\Omega_j}  \int_{\left(\partial^{M} E^s\right)_{z}} e^{-|\sqrt{A}x|^2/2}\ d \mathcal{H}^{0}(y) \ d z=\frac{\det \sqrt{A}}{(2\pi)^{n/2}}\int_{\Omega_j}p_{E^s}(z)\ d z=\frac{\det \sqrt{A}}{(2\pi)^{n/2}}\int_{\Omega_j\slash B}p_{E^s}(z)\ d z\leq C\ep\notag
\end{align}
where we have used the co-area formula (\ref{co-area formula for sets of finite perimeter_1}), the definition of $p_{E^s}(z)$, $p_{E^s}(z)=0$ if $z\in B$, and
$$p_{E^s}(z)=e^{-[\phi_z^{-1}(v_E(z))]^2/2}\leq 1.$$
Combining (I) and (II) together,
\begin{align*}
\int_{E^{s}} \operatorname{div} \varphi-\langle\varphi, Ax\rangle\ d \gamma_{A}(x) &= \text{(I)}+\text{(II)}\leq \frac{\det \sqrt{A}}{(2 \pi)^{n / 2}}\left|D v_{E}\right|(\Omega_j)+C\ep.
\end{align*}
Taking the sup over $\varphi$ gives us
\begin{align*}
\frac{1}{\sqrt{2\pi}}P_{\gamma_A}\left(E^{s} ; \Omega_j \times \mathbb{R}\right)  &\leq \frac{\det \sqrt{A}}{(2 \pi)^{n / 2}}\left|D v_{E}\right|(\Omega_j)+C\ep.
\end{align*}
Applying Lemma \ref{distributional_derivative_formula} on $\Omega_j$,
\begin{align*}
P_{\gamma_A}\left(E^{s} ; \Omega_j \times \mathbb{R}\right) &\leq \frac{\det \sqrt{A}}{(2 \pi)^{(n-1) / 2}}\left|D v_{E}\right|(\Omega_j)+C\ep\\ 
&\leq P_{\gamma_A}\left(E ; \Omega_j \times \mathbb{R}\right)+\frac{\det\sqrt{A}}{(2\pi)^{(n-1)/2}}\int_{\Omega_j}\left|\int_{E_z}\nabla^{\prime}\left(e^{-|\sqrt{A}x|^2/2}\right)dy\right| dz+C\ep\\
&\leq P_{\gamma_A}\left(E ; \Omega_j \times \mathbb{R}\right)+\frac{\det\sqrt{A}}{(2\pi)^{(n-1)/2}}\int_{B}\left|\int_{E_z}\nabla^{\prime}\left(e^{-|\sqrt{A}x|^2/2}\right)dy\right| dz\\
&\quad +\frac{\det\sqrt{A}}{(2\pi)^{(n-1)/2}}\int_{\Omega_j\slash B}\left|\int_{E_z}\nabla^{\prime}\left(e^{-|\sqrt{A}x|^2/2}\right)dy\right| dz+C\ep\\
&\leq P_{\gamma_A}\left(E ; \Omega_j \times \mathbb{R}\right)+\frac{\det\sqrt{A}}{(2\pi)^{(n-1)/2}}\int_{B}\left|\int_{E_z}\nabla^{\prime}\left(e^{-|\sqrt{A}x|^2/2}\right)dy\right| dz+C\ep\\
&= P_{\gamma_A}\left(E ; \Omega_j \times \mathbb{R}\right)+\frac{\det\sqrt{A}}{(2\pi)^{(n-1)/2}}\int_{B}\left|\int_{E_z}\nabla^{\prime}\left(e^{-|\sqrt{A}x|^2/2}\right)dy-\int_{E^s_z}\nabla^{\prime}\left(e^{-|\sqrt{A}x|^2/2}\right)dy\right| dz+C\ep
\end{align*}
where we have used $E^s_z=\left(-\infty,\phi_z^{-1}(v_E(z))\right)=\emptyset$ if $z\in B$, and Lemma \ref{computational_lemma} (3)(a), i.e.,
\begin{align*}
\int_{\Omega_j\slash B}\left|\int_{E_z}\nabla^{\prime}\left(e^{-|\sqrt{A}x|^2/2}\right)dy\right| dz\leq \int_{\Omega_j\slash B}\int_{-\infty}^{\infty}\left|\nabla^{\prime}\left(e^{-|\sqrt{A}x|^2/2}\right)\right|dy dz\leq (n-1)C\ep.
\end{align*}
Taking $j\to \infty$ on both sides,
\begin{align*}
P_{\gamma_A}\left(E^{s} ; B \times \mathbb{R}\right)&\leq P_{\gamma_A}\left(E^{s} ; \Omega \times \mathbb{R}\right) \\
&\leq P_{\gamma_A}\left(E; \Omega \times \mathbb{R}\right)+\frac{\det\sqrt{A}}{(2\pi)^{(n-1)/2}}\int_{B}\left|\int_{E_z} \nabla^{\prime}\left(e^{-|\sqrt{A}x|^2/2}\right)dy-\int_{E^s_z} \nabla^{\prime}\left(e^{-|\sqrt{A}x|^2/2}\right)dy\right| dz+C\ep.
\end{align*}
Taking the inf over $\Omega\supset B$ with $\mathcal{L}^{n-1}(\Omega\slash B)<\ep$,
\begin{align*}
P_{\gamma_A}\left(E^{s} ; B \times \mathbb{R}\right)
&\leq P_{\gamma_A}\left(E; B \times \mathbb{R}\right) +\frac{\det\sqrt{A}}{(2\pi)^{(n-1)/2}}\int_{B}\left|\int_{E_z} \nabla^{\prime}\left(e^{-|\sqrt{A}x|^2/2}\right)dy-\int_{E^s_z} \nabla^{\prime}\left(e^{-|\sqrt{A}x|^2/2}\right)dy\right| dz+C\ep.
\end{align*}
Taking $\ep\to 0$,
\begin{align*}
P_{\gamma_A}\left(E^{s} ; B \times \mathbb{R}\right)\leq P_{\gamma_A}\left(E; B \times \mathbb{R}\right)+\frac{\det\sqrt{A}}{(2\pi)^{(n-1)/2}}\int_{B}\left|\int_{E_z} \nabla^{\prime}\left(e^{-|\sqrt{A}x|^2/2}\right)dy-\int_{E^s_z} \nabla^{\prime}\left(e^{-|\sqrt{A}x|^2/2}\right)dy\right| dz.
\end{align*}
Now we claim that
$$P_{\gamma_A}\left(E^{s} ; B \times \mathbb{R}\right)=0,\quad\mbox{if $B$ is open and $v_E(z)=0$ for all $z\in B$.}$$
In this situation, we can just apply all the previous estimates on $B$ instead of $\Omega_j$, which means that we can replace $v_{k}^j$ as $v_k$, $y_k^j$ as $y_k$, and $F_k^j$ as $F_k$ in the previous calculation and notice that
$$\mbox{$y_k(z) \rightarrow y(z)=\phi_z^{-1}\left(v_E(z)\right)=-\infty$ a.e. in $B$,}$$
and
$$\mbox{$\varphi_z\left(z, y_k(z)\right)=(0, \ldots, 0)$ for a.e. $z \in B$ and large enough $k$,}$$ 
since $\varphi$ has compact support and $y_k(z) \rightarrow-\infty$. Now (I) and (II) become
\begin{align*}
&(\mathrm{I})=\sum_{i=1}^{n-1} \int_{E^s} \frac{\partial \varphi_i}{\partial z_i}+\varphi_i \frac{\partial ((-|\sqrt{A}x|^2/2))}{\partial z_i} d \gamma_A(x)= \lim _{k \rightarrow \infty}\left(\sum_{i=1}^{n-1} \int_{F_k} \frac{\partial \varphi_i}{\partial z_i}+\varphi_i \frac{\partial ((-|\sqrt{A}x|^2/2))}{\partial z_i} d \gamma_A(x)\right) \\
&= \frac{\operatorname{det} \sqrt{A}}{(2 \pi)^{n / 2}} \lim _{k \rightarrow \infty}\left(\int_{B}\left(-\varphi_z\left(z, y_k(z)\right)\right) \cdot \nabla^{\prime} v_k(z) d z-\int_{B} \int_{-\infty}^{y_k(z)}\left(-\varphi_z\left(z, y_k(z)\right)\right) \cdot \left(\nabla^{\prime} e^{-|\sqrt{A}x|^2/2} \right)d y d z\right) =0
\end{align*}
and
$$(\mathrm{II})\leq\frac{\det \sqrt{A}}{(2\pi)^{n/2}}\int_{B}p_{E^s}(z)\ d z=\frac{\det \sqrt{A}}{(2\pi)^{n/2}}\int_{B}e^{-[\phi_z^{-1}(v_E(z))]^2/2}\ d z =0.$$
Thus, we conclude that
$$P_{\gamma_A}(E^s;B\times \R)=0.$$
\end{proof}

\begin{lemma}[Cylindrical estimate for $E$]\label{cylindrical_estimate}\mbox{}\\
Let $E$ be a set of finite anisotropic Gaussian perimeter in $\mathbb{R}^{n}$ and $B$ be a measurable set in $\R^{n-1}$ with $\mathcal{L}^{n-1}(B)=0$. Then
$$P_{\gamma_A}(E^s;B\times \R)\leq P_{\gamma_A}(E;B\times\R).$$
\end{lemma}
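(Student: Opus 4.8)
The plan is to compare both perimeters to a common intermediate object, the total variation measure $|Dv_E|$ evaluated on $B$; note $v_E$ enters the very definition of $E^s$ (via $E^s=\{y<\phi_z^{-1}(v_E(z))\}$) and in fact $v_{E^s}=v_E$ by Proposition \ref{basic_properties}. Concretely I will establish the two one-sided bounds
\[
P_{\gamma_A}(E^s;B\times\R)\ \le\ \tfrac{\det\sqrt A}{(2\pi)^{(n-1)/2}}\,|Dv_E|(B)\ \le\ P_{\gamma_A}(E;B\times\R),
\]
each of which holds precisely because $\mathcal{L}^{n-1}(B)=0$: this turns every ``absolutely continuous'' cross-term appearing in the computations into the integral over $B$ of a density bounded by Lemma \ref{computational_lemma}(3), hence into $0$.

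For the first inequality, use that $|Dv_E|$ is a finite (Lemma \ref{distributional_derivative_formula}), hence outer regular, measure: for $\ep>0$ pick an open $\Omega\supseteq B$ with $\mathcal{L}^{n-1}(\Omega)<\ep$ \emph{and} $|Dv_E|(\Omega)<|Dv_E|(B)+\ep$ (intersect the open sets produced by outer regularity of $\mathcal{L}^{n-1}$ and of $|Dv_E|$). By Proposition \ref{lower_semicontinuity}, $P_{\gamma_A}(E^s;\Omega\times\R)=\sqrt{2\pi}\sup\{\int_{E^s}\div\varphi-\langle\varphi,Ax\rangle\,d\gamma_A:\varphi\in C^1_c(\Omega\times\R;\R^n),\ |\varphi|\le1\}$. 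For a fixed such $\varphi$ I run the computation from the proof of Theorem \ref{approximation_theorem_for_BV_maps}: split the integrand into (I) and (II) as in \eqref{(I) and (II)}, approximate $v_E$ on $\Omega$ by $v_k\in C^1(\Omega)$ with $v_k\to v_E$ in $L^1(\Omega)$, $\int_\Omega|\nabla' v_k|\to|Dv_E|(\Omega)$, and (Theorem \ref{approximation_theorem_for_BV_maps}) $\chi_{F_k}\to\chi_{E^s}$, $h_k\to h$ a.e., then invoke \eqref{locally_finite_perimeter_estimate_eq1} with $F_k=\{y<h_k(z)\}$. This gives $(\mathrm{I})\le\tfrac{\det\sqrt A}{(2\pi)^{n/2}}\big(|Dv_E|(\Omega)+C\,\mathcal{L}^{n-1}(\Omega)\big)$ and $(\mathrm{II})\le\tfrac{\det\sqrt A}{(2\pi)^{n/2}}\mathcal{L}^{n-1}(\Omega)$. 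Taking the supremum over $\varphi$, using $P_{\gamma_A}(E^s;B\times\R)\le P_{\gamma_A}(E^s;\Omega\times\R)$, and letting $\ep\to0$ yields the first inequality.

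For the second inequality, start from \eqref{distributional_derivative_formula_eq1} and, approximating open sets by Borel sets exactly as in Step 2 of Lemma \ref{distributional_derivative_formula}, obtain for every bounded Borel $\eta=(\eta_1,\dots,\eta_{n-1})$ with $|\eta|\le1$
\begin{align*}
\int_B\eta\cdot dDv_E&=\int_{B\times\R}e^{-|\sqrt Ax|^2/2}\,\eta(z)\cdot d\big((\nu^E)'\,\mathcal{H}^{n-1}\mres\rb E\big)(x)\\
&\quad+\int_{E\cap(B\times\R)}\eta(z)\cdot\nabla'\big(e^{-|\sqrt Ax|^2/2}\big)\,dy\,dz,
\end{align*}
where $(\nu^E)'=(\nu^E_1,\dots,\nu^E_{n-1})$. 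The last term equals $\int_B\big(\int_{E_z}\eta(z)\cdot\nabla'(e^{-|\sqrt Ax|^2/2})\,dy\big)dz=0$, since the inner integral is bounded by Lemma \ref{computational_lemma}(3)(a) and $\mathcal{L}^{n-1}(B)=0$. Taking the supremum over $\eta$, using $|(\nu^E)'|\le|\nu^E|=1$ on $\rb E$ and \eqref{DeGiorgi_inner_normal}, gives $|Dv_E|(B)\le\int_{\rb E\cap(B\times\R)}e^{-|\sqrt Ax|^2/2}\,d\mathcal{H}^{n-1}=\tfrac{(2\pi)^{(n-1)/2}}{\det\sqrt A}P_{\gamma_A}(E;B\times\R)$, the second inequality. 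Chaining the two bounds finishes the proof.

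The step I expect to be the main obstacle is the first inequality: one must re-run the approximation-plus-divergence-theorem argument of Theorem \ref{approximation_theorem_for_BV_maps} for a general open (possibly unbounded, non-smooth) $\Omega$ and check that every error term is genuinely $O(\mathcal{L}^{n-1}(\Omega))$ — in particular that $\limsup_k\int_\Omega|\nabla' v_k|=|Dv_E|(\Omega)$ for a suitably chosen strictly converging sequence, and that the boundary integral $\int_\Omega\varphi_n(z,h_k(z))e^{-|\sqrt A(z,h_k(z))|^2/2}\,dz$ passes to the limit correctly even on the set where $h(z)\in\{-\infty,+\infty\}$. The second inequality is routine once the Borel-set version of \eqref{distributional_derivative_formula_eq1} is recorded.
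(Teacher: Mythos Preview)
Your proposal is correct and follows essentially the same approach as the paper: both arguments sandwich the Ehrhard perimeter and the original perimeter around $|Dv_E|$, using the approximation of $v_E$ by $C^1$ functions together with the (I)+(II) decomposition \eqref{(I) and (II)} for the upper bound on $P_{\gamma_A}(E^s;\cdot)$, and Lemma \ref{distributional_derivative_formula} for the lower bound on $P_{\gamma_A}(E;\cdot)$, with all cross terms killed by $\mathcal{L}^{n-1}(B)=0$. The only organizational difference is that the paper keeps the open neighbourhood $\Omega$ (exhausted by bounded $\Omega_j$) throughout and applies Lemma \ref{distributional_derivative_formula} on $\Omega_j$ directly, whereas you first pass to the limit to isolate $|Dv_E|(B)$ explicitly and then invoke a Borel-set version of \eqref{distributional_derivative_formula_eq1}; both routes are equivalent and the obstacle you flag (running the approximation on a general open $\Omega$) is handled in the paper precisely by the $\Omega_j$ exhaustion.
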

\begin{proof} Given $\ep>0$ and let $\Omega$ be an open set with $\Omega\supset B$ such that$$\mathcal{L}^{n-1}(\Omega)<\ep.$$
The proof of this lemma is similar to Lemma \ref{dust_estimate}, i.e., we approximate our set $\Omega$ by a sequence of bounded open subsets $\Omega_j$. Hence, we keep the same notation as we have seen in Lemma \ref{dust_estimate}, $\Omega_j$, $v_k^j$, and $y_k^j$ etc. The only difference here is the estimates of (I) and (II) from Lemma \ref{dust_estimate}.
\begin{align}
&\text{(I)}=\sum_{i=1}^{n-1} \int_{E^s} \frac{\partial \varphi_{i}}{\partial z_{i}}+\varphi_{i}\frac{\partial (-|\sqrt{A}x|^2/2)}{\partial z_i}\ d \gamma_{A}(x)=\lim _{k \rightarrow \infty}\Big( \sum_{i=1}^{n-1} \int_{F_{k}} \frac{\partial \varphi_{i}}{\partial z_{i}}+\varphi_{i}\frac{\partial (-|\sqrt{A}x|^2/2)}{\partial z_i}\ d \gamma_{A}(x)\Big)\notag\\
&=-\frac{\det \sqrt{A}}{(2 \pi)^{n / 2}} \lim _{k \rightarrow \infty}\Bigg(\int_{\Omega_j}\varphi_z(z,y_k^j(z))\cdot \nabla^{\prime} v_k^j(z) d z-\int_{\Omega_j}\int_{-\infty}^{y_k^j(z)}\varphi_z(z,y_k^j(z))\cdot \nabla^{\prime}\left(e^{-|\sqrt{A}x|^2/2}\right)dydz\Bigg)\notag\\
&\leq \frac{\det \sqrt{A}}{(2 \pi)^{n / 2}}\lim_{k\to \infty}\int_{\Omega_j}\left|\nabla' v^j_{k}(z)\right| d z + \frac{\det \sqrt{A}}{(2 \pi)^{n / 2}}\int_{\Omega_j}\int_{-\infty}^{y(z)}\varphi_z(z,y(z))\cdot \nabla^{\prime}\left(e^{-|\sqrt{A}x|^2/2}\right)dydz\notag\\
&\leq \frac{\det \sqrt{A}}{(2 \pi)^{n / 2}}\left|D v_{E}\right|(\Omega_j)+\frac{\det \sqrt{A}}{(2 \pi)^{n / 2}}\int_{\Omega_j}\int_{-\infty}^{\infty}\left| \nabla^{\prime}\left(e^{-|\sqrt{A}x|^2/2}\right)\right| dydz\leq \frac{\det \sqrt{A}}{(2 \pi)^{n / 2}}\left|D v_{E}\right|(\Omega_j)+C\mathcal{L}^{n-1}(\Omega_j)\notag\\
&\leq \frac{\det \sqrt{A}}{(2 \pi)^{n / 2}}\left|D v_{E}\right|(\Omega_j)+C\ep\notag
\end{align}
where we have used $\mathcal{L}^{n-1}(\Omega_j)<\ep$ and Lemma \ref{computational_lemma} (3)(a). Moreover, applying Lemma \ref{distributional_derivative_formula} on $\Omega_j$ and Lemma \ref{computational_lemma} (3)(a),
$$\frac{\det \sqrt{A}}{(2 \pi)^{(n-1)/ 2}}|Dv_E|(\Omega_j)\leq P_{\gamma_A}(E;\Omega_j\times \R)+C\ep.$$
Therefore,
$$\text{(I)}\leq \frac{1}{\sqrt{2\pi}}P_{\gamma_A}(E;\Omega_j\times \R)+C\ep.$$
The estimate of (II) is the same as Lemma \ref{dust_estimate}, i.e.,
\begin{align}
&\text{(II)}=\frac{\det \sqrt{A}}{(2\pi)^{n/2}}\int_{\Omega_j}p_{E^s}(z)\ d z\leq C\ep\notag
\end{align}
where we have used the definition of $p_{E^s}(z)$, and $\mathcal{L}^{n-1}(\Omega_j)<\ep$. Combining (I) and (II) together,
\begin{align*}
\int_{E^{s}} \operatorname{div} \varphi-\langle\varphi, Ax\rangle\ d \gamma_{A}(x) &= \text{(I)}+\text{(II)}\leq \frac{1}{\sqrt{2\pi}}P_{\gamma_A}(E;\Omega_j\times \R)+C\ep.
\end{align*}
Taking the sup over $\varphi$, $j\to \infty$, inf over $\Omega\supset B$ with $\mathcal{L}^{n-1}(\Omega)<\ep$, and $\ep\to 0$, we get
\begin{align*}
P_{\gamma_A}\left(E^{s} ; B \times \mathbb{R}\right)\leq P_{\gamma_A}(E;B\times \R).
\end{align*}
\end{proof}

\begin{lemma}[Graphical estimate for $E$]\label{graphical_estimate}\mbox{}\\
Let $E$ be a set of finite anisotropic Gaussian perimeter in $\mathbb{R}^{n}, n \geq 2$. Then
$$
P_{\gamma_A}(E ; B \times \mathbb{R}) \geq \frac{\det\sqrt{A}}{(2\pi)^{(n-1)/2}}\int_{B} \sqrt{p_{E}(z)^{2}+\left|\nabla^{\prime} v_{E}(z)-\int_{E_z}\nabla^{\prime}\left(e^{-|\sqrt{A}x|^2/2}\right)dy\right|^{2}}\ dz,
$$
for every Borel set $B\subset B_E$,
$$
P_{\gamma_A}(E^s ; B \times \mathbb{R}) = \frac{\det\sqrt{A}}{(2\pi)^{(n-1)/2}}\int_{B} \sqrt{p_{E^s}(z)^{2}+\left|\nabla^{\prime} v_{E^s}(z)-\int_{E^s_z}\nabla^{\prime}\left(e^{-|\sqrt{A}x|^2/2}\right)dy\right|^{2}}\ dz,
$$
for every Borel set $B\subset B_{E^s}$, and
$$
\begin{aligned}
P_{\gamma_A}(E^s;B\times \R)&\leq P_{\gamma_A}(E;B\times \R)+\frac{\det \sqrt{A}}{(2 \pi)^{(n-1) / 2}}\int_{B}\Bigg|\int_{E_z}\nabla^{\prime}e^{-|\sqrt{A}x|^2/2}dy-\int_{E_z^s} \nabla^{\prime}e^{-|\sqrt{A}x|^2/2}dy\Bigg|\ dz,
\end{aligned}
$$
for every Borel set $B \subseteq B_{E}\cap B_{E^s} .$ 
\end{lemma}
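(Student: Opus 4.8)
The plan is to slice $E$ and $E^s$ into the vertical lines $\{z\}\times\mathbb R$ and reduce all three claims to one elementary triangle inequality in $\mathbb R^{n-1}\times\mathbb R$, carried out on almost every slice. The first step is a representation formula for the perimeter. Since $E$ is a set of locally finite perimeter with $P_{\gamma_A}(E)<\infty$, I would apply the co-area formula $(\ref{co-area formula for sets of finite perimeter_1})$ to the non-negative Borel integrand $g(x)=e^{-|\sqrt A x|^2/2}\,\chi_{\partial^* E}(x)\,\chi_{B\times\mathbb R}(x)\,/\,|\nu_n^E(x)|$. This choice is admissible precisely because for $z\in B\subseteq B_E$ Vol'pert's Theorem \ref{Vol'pert} gives $(\partial^M E)_z=(\partial^* E)_z$ together with $\nu_n^E(z,y)\neq0$ for every $(z,y)\in\partial^* E$, so $1/|\nu_n^E|$ is a.e.\ finite on the set that matters; and since $\mathcal H^{n-1}(\partial^M E\setminus\partial^* E)=0$, the co-area formula yields
\begin{equation*}
P_{\gamma_A}(E;B\times\mathbb R)=\frac{\det\sqrt A}{(2\pi)^{(n-1)/2}}\int_B\int_{(\partial^M E)_z}\frac{e^{-|\sqrt A(z,y)|^2/2}}{|\nu_n^E(z,y)|}\,d\mathcal H^0(y)\,dz .
\end{equation*}
Because $E^s$ is also a set of locally finite perimeter (Theorem \ref{approximation_theorem_for_BV_maps}), the same identity with $E^s$, $B_{E^s}$ in place of $E$, $B_E$ holds for Borel $B\subseteq B_{E^s}$, and finiteness of the anisotropic Gaussian perimeter guarantees the inner integrals are finite for a.e.\ $z$.

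For the second step, fix a good $z\in B_E$, enumerate $(\partial^M E)_z=\{y_j\}_j$, and at each $y_j$ introduce
\begin{equation*}
\xi_j:=e^{-|\sqrt A(z,y_j)|^2/2}\Big(\tfrac{\nu_1^E(z,y_j)}{|\nu_n^E(z,y_j)|},\dots,\tfrac{\nu_{n-1}^E(z,y_j)}{|\nu_n^E(z,y_j)|},\,1\Big)\in\mathbb R^{n-1}\times\mathbb R .
\end{equation*}
Using $\sum_{i=1}^n\nu_i^E(z,y_j)^2=1$ one gets $|\xi_j|=e^{-|\sqrt A(z,y_j)|^2/2}/|\nu_n^E(z,y_j)|$, so $\sum_j|\xi_j|$ is exactly the inner integrand in the representation of $P_{\gamma_A}(E;B\times\mathbb R)$; on the other hand, the last coordinate of $\sum_j\xi_j$ is $\mathcal H^0_z\big[(\partial^M E)_z\big]=p_E(z)$, and, by the distributional derivative formula of Lemma \ref{distributional_derivative_formula}, the vector of its first $n-1$ coordinates equals $\nabla'v_E(z)-\int_{E_z}\nabla'\big(e^{-|\sqrt A x|^2/2}\big)\,dy$ for a.e.\ $z\in B_E$. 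Thus the triangle inequality $\sum_j|\xi_j|\ge\big|\sum_j\xi_j\big|$ in $\mathbb R^n$, multiplied by $\tfrac{\det\sqrt A}{(2\pi)^{(n-1)/2}}$ and integrated over $B$, gives the first claim. For $E^s$ with $z\in B_{E^s}$, the slice $(E^s)_z=(-\infty,\phi_z^{-1}(v_E(z)))$ has a single essential-boundary point (and $v_E(z)\in\{0,\mu_z(\mathbb R)\}$ gives the empty slice), so the sum has at most one term and the triangle inequality is an equality; this is the second claim.

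Finally, on $B\subseteq B_E\cap B_{E^s}$, Proposition \ref{basic_properties}(1) gives $v_{E^s}=v_E$ and $\nabla'v_{E^s}=\nabla'v_E$ a.e., while Proposition \ref{basic_properties}(3) gives $p_{E^s}(z)\le p_E(z)$. Writing $w_E(z)=\int_{E_z}\nabla'(e^{-|\sqrt A x|^2/2})\,dy$ and $w_{E^s}(z)=\int_{E^s_z}\nabla'(e^{-|\sqrt A x|^2/2})\,dy$, and using that $(t,u)\mapsto\sqrt{t^2+|u|^2}$ is the Euclidean norm on $\mathbb R\times\mathbb R^{n-1}$, I would estimate pointwise
\begin{equation*}
\sqrt{p_{E^s}(z)^2+|\nabla'v_{E^s}(z)-w_{E^s}(z)|^2}\ \le\ \sqrt{p_E(z)^2+|\nabla'v_E(z)-w_E(z)|^2}+|w_E(z)-w_{E^s}(z)| ,
\end{equation*}
where one first replaces $p_{E^s}$ by the larger $p_E$ and then applies the triangle inequality for the above norm to the splitting $(p_E,\nabla'v_E-w_{E^s})=(p_E,\nabla'v_E-w_E)+(0,w_E-w_{E^s})$. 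Integrating over $B$ against $\tfrac{\det\sqrt A}{(2\pi)^{(n-1)/2}}\,dz$: the left-hand side becomes $P_{\gamma_A}(E^s;B\times\mathbb R)$ by the second claim, while the first term on the right is $\le P_{\gamma_A}(E;B\times\mathbb R)$ by the first claim, which is the third inequality. The one genuinely delicate point in the whole argument is the admissibility of $1/|\nu_n^E|$ as an integrand on $\partial^* E\cap(B_E\times\mathbb R)$ — exactly what Vol'pert's theorem provides — after which everything reduces to the triangle inequality together with the slicing identities already established in Lemma \ref{distributional_derivative_formula} and Proposition \ref{basic_properties}.
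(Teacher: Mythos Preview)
Your proof is correct and follows essentially the same route as the paper: co-area formula on $B_E$-slices via Vol'pert, then a convexity inequality on each slice, then the identification from Lemma~\ref{distributional_derivative_formula}, and finally the comparison via Proposition~\ref{basic_properties}. The only cosmetic difference is that the paper invokes Jensen's inequality for $\varphi(x)=\sqrt{1+|x|^2}$ on the slice measure $\mathcal H_z^0$, while you package the same inequality as the triangle inequality $\sum_j|\xi_j|\ge\big|\sum_j\xi_j\big|$ for the vectors $\xi_j\in\mathbb R^n$; these are literally the same estimate, and your formulation makes the single-point equality case for $E^s$ immediate.
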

\begin{proof}
By the co-area formula (\ref{co-area formula for sets of finite perimeter_1}) and Vol'pert Theorem (Theorem \ref{Vol'pert}),
$$
P_{\gamma_A}(E ; B \times \mathbb{R})=\frac{\det\sqrt{A}}{(2\pi)^{(n-1)/2}}\int_{\partial^{*} E \cap(B \times \mathbb{R})}\ \frac{\left|\nu_{n}^{E}\right|}{\left|\nu_{n}^{E}\right|}e^{-|\sqrt{A}x|^2/2}\ d \mathcal{H}^{n-1}=\frac{\det\sqrt{A}}{(2\pi)^{(n-1)/2}}\int_{B} \int_{\partial^{*} E_{z}} \frac{1}{\left|\nu_{n}^{E}\right|}\ d \mathcal{H}_{z}^{0}\ d z.
$$
Applying Jensen's inequality to the convex function $\varphi:(x_1,...,x_{n-1})=x\mapsto\sqrt{1+|x|^2}$, we have
\begin{align}\label{Jensen}
&\sqrt{1+\sum_{i=1}^{n-1}\left(\fint_{\partial^{*} E_{z}} \frac{\nu_{i}^{E}}{\left|\nu_{n}^{E}\right|}\ d \mathcal{H}_{z}^{0}\right)^{2}}=\varphi\left(\fint_{\partial^{*} E_{z}} \frac{\nu_{1}^{E}}{\left|\nu_{n}^{E}\right|}\ d \mathcal{H}_{z}^{0},\cdots,\fint_{\partial^{*} E_{z}} \frac{\nu_{n-1}^{E}}{\left|\nu_{n}^{E}\right|}\ d \mathcal{H}_{z}^{0}\right)\\
&=\varphi\left(\fint_{\partial^{*} E_{z}} \left(\frac{\nu_{1}^{E}}{\left|\nu_{n}^{E}\right|},\cdots,\frac{\nu_{n-1}^{E}}{\left|\nu_{n}^{E}\right|} \right) d \mathcal{H}_{z}^{0}\right)\nonumber\leq \fint_{\partial^{*} E_{z}} \varphi \left(\frac{\nu_{1}^{E}}{\left|\nu_{n}^{E}\right|},\cdots,\frac{\nu_{n-1}^{E}}{\left|\nu_{n}^{E}\right|} \right)d \mathcal{H}_{z}^{0}\nonumber\\
&= \fint_{\partial^{*} E_{z}} \sqrt{1+\sum_{i=1}^{n-1}\frac{\left|\nu_{i}^{E}\right|^{2}}{\left|\nu_{n}^{E}\right|^{2}}}\ d \mathcal{H}_{z}^{0}.\nonumber
\end{align}
By Lemma \ref{distributional_derivative_formula}, (\ref{Jensen}), and the definition of $p_E(z)$,  
\begin{align*}
P_{\gamma_A}(E ; B \times \mathbb{R}) &=\frac{\det\sqrt{A}}{(2\pi)^{(n-1)/2}}\int_{B} \int_{\partial^{*} E_{z}}  \sqrt{1+\frac{\sum_{i=1}^{n-1}\left|\nu_{i}^{E}\right|^{2}}{\left|\nu_{n}^{E}\right|^{2}}}\ d \mathcal{H}_{z}^{0}\ d z\\
&=\frac{\det\sqrt{A}}{(2\pi)^{(n-1)/2}}\int_{B} p_{E}(z)  \fint_{\partial^{*} E_{z}} \sqrt{1+\frac{\sum_{i=1}^{n-1}\left|\nu_{i}^{E}\right|^{2}}{\left|\nu_{n}^{E}\right|^{2}}}\ d \mathcal{H}_{z}^{0} \ d z\\
& \geq \frac{\det\sqrt{A}}{(2\pi)^{(n-1)/2}}\int_{B} p_{E}(z) \sqrt{1+\sum_{i=1}^{n-1}\left(\fint_{\partial^{*} E_{z}} \frac{\nu_{i}^{E}}{\left|\nu_{n}^{E}\right|}\ d \mathcal{H}_{z}^{0}\right)^{2}}\ d z\\
& = \frac{\det\sqrt{A}}{(2\pi)^{(n-1)/2}}\int_{B}  \sqrt{p_E(z)^2+\sum_{i=1}^{n-1}\left(\fint_{\partial^{*} E_{z}} \frac{\nu_{i}^{E}}{\left|\nu_{n}^{E}\right|}\ d \mathcal{H}_{z}^{0}\right)^{2}p_E(z)^2}\ d z \\
& = \frac{\det\sqrt{A}}{(2\pi)^{(n-1)/2}}\int_{B}  \sqrt{p_E(z)^2+\sum_{i=1}^{n-1}\left(\int_{\partial^{*} E_{z}} \frac{\nu_{i}^{E}}{\left|\nu_{n}^{E}\right|}\ d \mathcal{H}_{z}^{0}\right)^{2}}\ d z \\
&=\frac{\det\sqrt{A}}{(2\pi)^{(n-1)/2}}\int_{B} \sqrt{p_{E}(z)^{2}+\left|\nabla^{\prime} v_{E}(z)-\int_{E_z}\nabla^{\prime}\left(e^{-|\sqrt{A}x|^2/2}\right)dy\right|^{2}}\ dz,
\end{align*}
where $\nabla'v_E(z)=(D_1v_E(z),\cdots, D_{n-1}v_E(z))$, $B\subset B_E$ and 
$$
D_{i} v_{E}(z)=\int_{\left(\partial^{*} E\right)_{z}} \frac{\nu_{i}^{E}(z, y)}{\left|\nu_{n}^{E}(z, y)\right|} d \mathcal{H}_{z}^{0}(y) +\int_{E_z} \frac{\partial}{\partial x_i}\left(e^{-|\sqrt{A}x|^2/2}\right) dy\mbox{\qquad for $i=1,2,...,n-1$}.
$$
Applying the same calculation on $E^s$ with $E^s_z=(-\infty, y(z))$, $y(z)=\phi_z^{-1}(v_E(z))$, and 
$$p_{E^s}(z)=e^{-|\sqrt{A}(z,y(z))|^2/2},$$
we have for any $B\subset B_{E^s}$,
\begin{align*}
P_{\gamma_A}(E^s ; B \times \mathbb{R})
&=\frac{\det\sqrt{A}}{(2\pi)^{(n-1)/2}}\int_{B} \sqrt{p_{E^s}(z)^{2}+\left|\nabla^{\prime} v_{E^s}(z)-\int_{E^s_z}\nabla^{\prime}\left(e^{-|\sqrt{A}x|^2/2}\right)dy\right|^{2}}\ dz.
\end{align*}
Notice that we have the following inequality
\begin{align}\label{algebraic inequality}
\sqrt{a^2+b^2}-\sqrt{a^2+c^2}\leq |b-c|, \mbox{\quad if $b,c\geq 0$}.
\end{align}
Plugging
$$a=p_E(z),\  b=\left|\nabla^{\prime} v_{E}(z)-\int_{E^s_z}\nabla^{\prime}\left(e^{-|\sqrt{A}x|^2/2}\right)dy\right|,\ \mbox{and}\ c=\left|\nabla^{\prime} v_{E}(z)-\int_{E_z}\nabla^{\prime}\left(e^{-|\sqrt{A}x|^2/2}\right)dy\right|$$
into (\ref{algebraic inequality}), we have for any $B\subset B_E\cap B_{E^s}$, 
\begin{align*}
P_{\gamma_A}(E^s;B\times \R)-P_{\gamma_A}(E;B\times \R)&\leq \frac{\det\sqrt{A}}{(2\pi)^{(n-1)/2}}\int_{B} \sqrt{p_{E^s}(z)^{2}+\left|\nabla^{\prime} v_{E^s}(z)-\int_{E^s_z}\nabla^{\prime}\left(e^{-|\sqrt{A}x|^2/2}\right)dy\right|^{2}}\ dz\\
&\quad-\frac{\det\sqrt{A}}{(2\pi)^{(n-1)/2}}\int_{B} \sqrt{p_{E}(z)^{2}+\left|\nabla^{\prime} v_{E}(z)-\int_{E_z}\nabla^{\prime}\left(e^{-|\sqrt{A}x|^2/2}\right)dy\right|^{2}}\ dz\\
&\leq \frac{\det\sqrt{A}}{(2\pi)^{(n-1)/2}}\int_{B} \sqrt{p_{E}(z)^{2}+\left|\nabla^{\prime} v_{E}(z)-\int_{E^s_z}\nabla^{\prime}\left(e^{-|\sqrt{A}x|^2/2}\right)dy\right|^{2}}\ dz\\
&\quad-\frac{\det\sqrt{A}}{(2\pi)^{(n-1)/2}}\int_{B} \sqrt{p_{E}(z)^{2}+\left|\nabla^{\prime} v_{E}(z)-\int_{E_z}\nabla^{\prime}\left(e^{-|\sqrt{A}x|^2/2}\right)dy\right|^{2}}\ dz\\
&\leq \frac{\det\sqrt{A}}{(2\pi)^{(n-1)/2}}\int_{B} \left| \left|\nabla^{\prime} v_{E}(z)-\int_{E_z}\nabla^{\prime}\left(e^{-|\sqrt{A}x|^2/2}\right)dy\right|\right.\\
&\quad -\left.\left|\nabla^{\prime} v_{E}(z)-\int_{E^s_z}\nabla^{\prime}\left(e^{-|\sqrt{A}x|^2/2}\right)dy\right|\right|\ dz\\
&\leq \frac{\det\sqrt{A}}{(2\pi)^{(n-1)/2}}\int_{B} \left| \int_{E_z}\nabla^{\prime}\left(e^{-|\sqrt{A}x|^2/2}\right) -\int_{E^s_z}\nabla^{\prime}\left(e^{-|\sqrt{A}x|^2/2}\right)dy\right|dz,
\end{align*}
where we have used Proposition \ref{basic_properties} (1) and (3), i.e., $\nabla'v_E(z)=\nabla'v_{E^z}(z)$ and $p_E(z)\geq p_{E^s}(z)$ for a.e. $z\in B\subset B_E\cap B_{E^s}$.
\end{proof}

Although the perimeter might not decrease in every direction when we do the Ehrhard Symmetrization, we are still able to give an upper bound for the perimeter of the Ehrhard Symmetrization with an error term involving $A-\lambda I_n$ and barycenters. Combining Lemma \ref{dust_estimate}, Lemma \ref{cylindrical_estimate}, and Lemma \ref{graphical_estimate}, we have the following estimate which tells us how the direction of Ehrhard Symmetrization affects the anisotropic Gaussian perimeter.

\begin{theorem}[Anisotropic Gaussian Perimeter Inequality under Ehrhard Symmetrization]\label{Ehrhard_Sym_Ineq_I}\mbox{}\\
Let $n \geq 2$ and let $E$ be a set of finite $A$-anisotropic Gaussian perimeter in $\mathbb{R}^{n}$. Then, for every Borel set $B \subseteq \mathbb{R}^{n-1}$ we have
\begin{align*}
P_{\gamma_A}\left(E^{s} ; B \times \mathbb{R}\right) &\leq P_{\gamma_A}(E ; B \times \mathbb{R})\\
&\quad+\frac{\det \sqrt{A}}{(2 \pi)^{(n-1) / 2}}\int_{B}\Bigg|\int_{E_z}\nabla^{\prime}e^{-|\sqrt{A}x|^2/2}dy-\int_{E_z^s} \nabla^{\prime}e^{-|\sqrt{A}x|^2/2}dy\Bigg|\ dz.
\end{align*}
Moreover,
\begin{align*}
P_{\gamma_A}\left(E^{s} ; B \times \mathbb{R}\right) &\leq P_{\gamma_A}(E ; B \times \mathbb{R})\\
&\quad+ \sqrt{2\pi}\|Ae_n-\langle Ae_n,e_n\rangle e_n\|\langle b_{\gamma_A}(E\cap (B\times \R))-b_{\gamma_A}(E^s\cap (B\times \R)),e_n\rangle,
\end{align*}
where
$$b_{\gamma_A}(E):=\int_Ex\ d\gamma_A(x).$$
\end{theorem}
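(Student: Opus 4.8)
The plan is to obtain the first inequality by splitting the base set $B$ into three Borel pieces, each tailored to one of the three estimates already established (the dust, graphical, and cylindrical estimates), and then to convert the resulting error term into the barycenter form using the cross terms identity. First I would fix a Borel representative of $v_E\in BV(\R^{n-1})$ (legitimate by Lemma \ref{distributional_derivative_formula}) and set $B_0:=\{z\in\R^{n-1}:v_E(z)=0\}$, so that $\R^{n-1}\setminus B_0=\pi_+(E)$. Let $B_E$ and $B_{E^s}$ denote the Vol'pert sets of $E$ and $E^s$ given by Theorem \ref{Vol'pert}; these make sense because $E^s$ is a set of locally finite perimeter by Theorem \ref{approximation_theorem_for_BV_maps}. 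By Proposition \ref{basic_properties}(1) one has $\pi_+(E^s)=\pi_+(E)$, hence
\[
\mathcal{L}^{n-1}\bigl(\pi_+(E)\setminus(B_E\cap B_{E^s})\bigr)=0 .
\]

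Next I would decompose $B=B^1\sqcup B^2\sqcup B^3$ with
\[
B^1:=B\cap B_0,\qquad B^2:=B\cap B_E\cap B_{E^s},\qquad B^3:=B\setminus(B^1\cup B^2),
\]
noting that $B^1\cap B^2=\emptyset$ because $B_E\cap B_{E^s}\subseteq\pi_+(E)$, and that $\mathcal{L}^{n-1}(B^3)=0$ by the displayed identity. On $B^1$ (where $v_E\equiv0$) apply Lemma \ref{dust_estimate}, on $B^2\subseteq B_E\cap B_{E^s}$ apply the graphical estimate of Lemma \ref{graphical_estimate}, and on the null set $B^3$ apply Lemma \ref{cylindrical_estimate}. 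Since $E$ and $E^s$ are sets of locally finite perimeter, $G\mapsto P_{\gamma_A}(E;G\times\R)$ and $G\mapsto P_{\gamma_A}(E^s;G\times\R)$ are (restrictions of) measures on the Borel subsets of $\R^{n-1}$, hence additive along the partition $B=B^1\sqcup B^2\sqcup B^3$. Summing the three inequalities and using that the error integrand is nonnegative — so that its integrals over $B^1$ and $B^2$ together are bounded by its integral over $B$ — yields the first inequality.

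To pass to the second inequality, I would invoke the cross terms estimate of Proposition \ref{basic_properties}(2), which gives, for every $z$,
\[
\Bigl|\int_{E_z}\nabla' e^{-|\sqrt{A}x|^2/2}\,dy-\int_{E^s_z}\nabla' e^{-|\sqrt{A}x|^2/2}\,dy\Bigr|=\Bigl(\int_{E_z}y\,d\mu_z(y)-\int_{E^s_z}y\,d\mu_z(y)\Bigr)\|Ae_n-\langle Ae_n,e_n\rangle e_n\| ,
\]
with the parenthetical difference nonnegative (also by Proposition \ref{basic_properties}(2)). Integrating over $B$ against $\frac{\det\sqrt{A}}{(2\pi)^{(n-1)/2}}\,dz$, pulling the constant $\|Ae_n-\langle Ae_n,e_n\rangle e_n\|$ out, and applying Fubini's theorem together with $b_{\gamma_A}(F)=\frac{\det\sqrt{A}}{(2\pi)^{n/2}}\int_F x\,e^{-|\sqrt{A}x|^2/2}\,dx$, one identifies
\[
\frac{\det\sqrt{A}}{(2\pi)^{(n-1)/2}}\int_B\int_{E_z}y\,d\mu_z(y)\,dz=\sqrt{2\pi}\,\langle b_{\gamma_A}(E\cap(B\times\R)),e_n\rangle ,
\]
and likewise with $E^s$ in place of $E$ (using that the vertical slices of $E^s\cap(B\times\R)$ over $z\in B$ are exactly $(E^s)_z$); subtracting produces the stated error term.

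The bulk of the technical work is already contained in Lemmas \ref{dust_estimate}, \ref{cylindrical_estimate}, and \ref{graphical_estimate}, so I do not expect a serious obstacle at this stage. The two points that genuinely require care are: first, verifying that the leftover piece $B^3$ is $\mathcal{L}^{n-1}$-null, which rests precisely on the identity $\pi_+(E^s)=\pi_+(E)$ from Proposition \ref{basic_properties}(1) together with the Vol'pert sets having full $\mathcal{L}^{n-1}$-measure in $\pi_+$; and second, the Fubini identification of the barycenter term, where one must track which slices of $E$ and of $E^s$ survive intersection with $B\times\R$. Everything else is bookkeeping with the additivity of the perimeter measure in the base variable.
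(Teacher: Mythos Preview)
Your proposal is correct and follows essentially the same approach as the paper: you partition $B$ into three Borel pieces and apply the dust, graphical, and cylindrical estimates respectively, then convert the error term via the cross terms identity of Proposition~\ref{basic_properties}(2). The only difference is a relabeling of indices (the paper takes $B_1=B\cap B_E\cap B_{E^s}$, $B_2=B\cap\pi_+(E)\setminus(B_E\cap B_{E^s})$, $B_3=B\setminus\pi_+(E)$, applying the graphical, cylindrical, and dust estimates to these in that order), but the decomposition and the logic are identical.
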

\begin{proof} 
{\bf Step 1}: For any Borel set $B$,
$$
\begin{aligned}
B&=(B\cap (B_E\cap B_{E^s}))\cup (B\cap \pi_+(E)\slash (B_E\cap B_{E^s}))\cup(B\slash \pi_+(E))\\
&:= B_1\cup B_2\cup B_3.
\end{aligned}
$$
Recall that: $\pi_+(E)=\pi_+(E^s)$ and
$$B_E\subset \pi_+(E),\quad B_{E^s}\subset \pi_+(E^s)=\pi_+(E),\quad \mathcal{L}^{n-1}(\pi_+(E)\slash B_E)=0, \quad \mathcal{L}^{n-1}(\pi_+(E)\slash B_{E^s})=0.$$
Then
$$B_2\subset  \pi_+(E)\slash (B_E\cap B_{E^s})= (\pi_+(E)\slash B_E)\cup (\pi_+(E)\slash B_{E^s})\implies \mathcal{L}^{n-1}(B_2)=0.$$
Moreover, for any $z\in B_3$, $v_E(z)=0$. Thus, applying the dust estimate (Lemma \ref{dust_estimate}) on $B_3$, the cylindrical estimate (Lemma \ref{cylindrical_estimate}) on $B_2$, the graphical estimate (Lemma \ref{graphical_estimate}) on $B_1\subset B_E\cap B_{E^s}$, we have
$$
\begin{aligned}
P_{\gamma_A}(E^s;B\times \R)&=P_{\gamma_A}(E^s;B_1\times \R)+P_{\gamma_A}(E^s;B_2\times \R)+P_{\gamma_A}(E;B_3\times \R)\\
&\leq P_{\gamma_A}(E;B_1\times \R)+P_{\gamma_A}(E;B_2\times \R)+P_{\gamma_A}(E;B_3\times \R)\\
&\quad+\frac{\det \sqrt{A}}{(2 \pi)^{(n-1) / 2}}\int_{B_1\cup B_2\cup B_3}\Bigg|\int_{E_z}\nabla^{\prime}e^{-|\sqrt{A}x|^2/2}dy-\int_{E_z^s} \nabla^{\prime}e^{-|\sqrt{A}x|^2/2}dy\Bigg|\ dz\\
&=P_{\gamma_A}(E;B\times \R)+\frac{\det \sqrt{A}}{(2 \pi)^{(n-1) / 2}}\int_{B}\Bigg|\int_{E_z}\nabla^{\prime}e^{-|\sqrt{A}x|^2/2}dy-\int_{E_z^s} \nabla^{\prime}e^{-|\sqrt{A}x|^2/2}dy\Bigg|\ dz.
\end{aligned}
$$
{\bf Step 2}: Now we claim that
\begin{align*}
&\frac{\det \sqrt{A}}{(2 \pi)^{(n-1) / 2}}\int_{B}\Bigg|\int_{E_z}\nabla^{\prime}e^{-|\sqrt{A}x|^2/2}dy-\int_{E_z^s} \nabla^{\prime}e^{-|\sqrt{A}x|^2/2}dy\Bigg|\ dz\\
&= \sqrt{2\pi} \|Ae_n-\langle Ae_n,e_n\rangle e_n\|\langle b_{\gamma_A}(E\cap (B\times \R))-b_{\gamma_A}(E^s\cap (B\times \R)),e_n\rangle.
\end{align*}
By Proposition \ref{basic_properties} (2) and recall that $x=(z,y)$, i.e., $y=\langle x,e_n\rangle$, we have
\begin{align*}
&\frac{\det \sqrt{A}}{(2 \pi)^{(n-1) / 2}}\int_{B}\Bigg|\int_{E_z}\nabla^{\prime}e^{-|\sqrt{A}x|^2/2}dy-\int_{E_z^s} \nabla^{\prime}e^{-|\sqrt{A}x|^2/2}dy\Bigg|\ dz\\
&= \frac{\det \sqrt{A}}{(2 \pi)^{(n-1) / 2}}\int_B \left(\int_{E_z}y\ d\mu_z(y)-\int_{E^s_z}y\ d\mu_z(y)\right)\|Ae_n-\langle Ae_n,e_n\rangle e_n\|\ dz\\
&= \|Ae_n-\langle Ae_n,e_n\rangle e_n\|\frac{\det \sqrt{A}}{(2 \pi)^{(n-1) / 2}}\left(\int_B \int_{E_z}y\ d\mu_z(y)dz -\int_B\int_{E^s_z}y\ d\mu_z(y)dz\right)\\
&= \|Ae_n-\langle Ae_n,e_n\rangle e_n\|\frac{\det \sqrt{A}}{(2 \pi)^{(n-1) / 2}}\left(\int_{E\cap (E\times \R)} ye^{-|\sqrt{A}x|^2/2}\ dx -\int_{E^s\cap (B\times\R)}ye^{-|\sqrt{A}x|^2/2}\ dx\right)\\
&= \sqrt{2\pi}\|Ae_n-\langle Ae_n,e_n\rangle e_n\|\left\langle \left(\int_{E\cap (B\times \R)} x\ d\gamma_A(x) -\int_{E^s\cap (B\times\R)}x\ d\gamma_A(x)\right),e_n\right\rangle\\
&= \sqrt{2\pi}\|Ae_n-\langle Ae_n,e_n\rangle e_n\|\langle b_{\gamma_A}(E\cap (B\times \R))-b_{\gamma_A}(E^s\cap (B\times \R)),e_n\rangle.
\end{align*}
\end{proof}

Our main goal here is to define the Ehrhard symmetrization to any direction $u\in \SS^{n-1}$ and then extend the result of Theorem \ref{Ehrhard_Sym_Ineq_I} to this new definition. We will start by recalling that Vol'pert theorem (Theorem \ref{Vol'pert}) actually holds for every direction (see \cite{Ambrosio_Fusco_Pallara}, Theorem 3.108 and \cite{Fusco_Iso}, Theorem 3.21). That is, if $E$ is a set of locally finite perimeter, the {\bf one-dimensional slice of $E$ through $z$ in direction $u$} defined as
$$E_{z,u}:= \{x=z+tu\in E:\  t\in \R\}$$
is also a set of locally finite perimeter. Moreover,  $\left(\partial^{M} E\right)_{z,u}=\partial^{M}\left(E_{z,u}\right)=\partial^{*}\left(E_{z,u}\right)=\left(\partial^{*} E\right)_{z,u}$ and $\nu_{u}^{E}(x):= \langle \nu^E(x),u\rangle \neq 0$ for every $t$ such that $x=z+tu \in \partial^{*} E$ where
$$x=z+tu\in \langle u\rangle^{\perp}\oplus \langle u\rangle.$$
The {\bf Ehrhard symmetrization $E_{A,u}^{s}$ of $E$ with respect to the $u$-direction and matrix $A$} is defined as
\begin{align}\label{new_def}
E^{s}_{A,u}:= \left\{x=z+tu \in \mathbb{R}^{n}: t>\Phi^{-1}_{A,z,u}\left(\mu_{A}(E_{z,u})\right)\right\},
\end{align}
and the {\bf essential projection of $E$ with respect to the $u$-direction and matrix $A$} is defined as 
$$\pi_{+,A,u}(E):=\left\{z\in \langle u\rangle^{\perp} : \mu_{A}\left(E_{z,u}\right)>0\right\},$$
where
$$\mu_{A}(F):=\int_F e^{-|\sqrt{A}x|^2/2}d\mathcal{H}^{1}(x),\quad \Phi_{A,z,u}(s):=\int_{s}^{\infty} e^{-|\sqrt{A}(z+tu)|^2/2}dt.$$
It is not hard to see that $\mu_A(E_{z,u})=\mu_A((E^s_{A,u})_{z,u})$ and $\pi_{+,A,u}(E)=\pi_{+,A,u}(E^s_{A,u})$.
Notice that the definition (\ref{new_def}) agrees with the definition (\ref{old_def}) in Section \ref{Ehrhard_symmetrization_notation_I}, i.e., if $u=-e_n$, we have
$$ \pi_{+,A,-e_n}(E)=\pi_{+}(E),\quad E_{A,-e_n}^s=E^s.$$
Moreover, Theorem \ref{Ehrhard_Sym_Ineq_I} says that
\begin{align*}
P_{\gamma_A}\left(E^{s}_{A,-e_n} ; B \times \mathbb{R}\right) &\leq P_{\gamma_A}(E ; B \times \mathbb{R})\\
&\quad+ \sqrt{2\pi}\|A(-e_n)-\langle A(-e_n),(-e_n)\rangle (-e_n)\|\langle b_{\gamma_A}(E^s_{A,-e_n}\cap (B\times \R))-b_{\gamma_A}(E\cap (B\times \R)),-e_n\rangle.
\end{align*}
Our next goal is to extend this result to the Ehrhard symmetrization $E_{A,u}^{s}$. Before doing that, we need a lemma that helps us handle the rotation of the Ehrhard symmetrization $E_{A,u}^{s}$. The proof of it can be easily deduced by the change of variables and set theory and hence we omit the verification.

\begin{lemma}
Let $O$ be an orthogonal matrix such that $u=O(-e_n)$. Then
\begin{align}\label{rotation_of_sets}
(O^{-1}E)^s_{O^{\mathsf{T}}AO,-e_n}=O^{-1}E^s_{A,u}.
\end{align}
\end{lemma}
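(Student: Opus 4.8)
The plan is to unwind both sides of \eqref{rotation_of_sets} directly from the definition \eqref{new_def} and to verify that, after the linear change of variables $x\mapsto Ox$, the one‑dimensional slices together with the auxiliary quantities $\mu_\bullet$ and $\Phi_\bullet$ defining the two Ehrhard symmetrizations coincide. I would write $B:=O^{\mathsf{T}}AO$, which is again symmetric positive definite, and recall (as used in the proof of Proposition \ref{Gaussian_perimeter}) that $\sqrt{B}=O^{\mathsf{T}}\sqrt{A}O$. The only analytic input is then the orthogonal invariance of the Euclidean norm:
$$|\sqrt{B}(O^{-1}x)|=|O^{\mathsf{T}}\sqrt{A}OO^{-1}x|=|O^{\mathsf{T}}\sqrt{A}x|=|\sqrt{A}x|,\qquad x\in\R^n;$$
everything else is bookkeeping with orthogonal complements and slice parameters.

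First I would set up the correspondence of slices. Fix $w\in\langle e_n\rangle^{\perp}$ and put $z:=Ow$. Since $u=O(-e_n)$ we have $O^{-1}u=-e_n$, $Oe_n=-u$, and $\langle z,u\rangle=-\langle Ow,Oe_n\rangle=-\langle w,e_n\rangle=0$, so $z\in\langle u\rangle^{\perp}$; moreover $w\mapsto Ow$ is a bijection of $\langle e_n\rangle^{\perp}=\langle -e_n\rangle^{\perp}$ onto $\langle u\rangle^{\perp}$. For $t\in\R$ one has $O(w+t(-e_n))=Ow-tOe_n=z+tu$, hence $w-te_n\in O^{-1}E$ iff $z+tu\in E$; thus $x\mapsto Ox$ carries $(O^{-1}E)_{w,-e_n}$ onto $E_{z,u}$ preserving the slice parameter $t$ and, being an isometry, the measure $\mathcal H^1$. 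Combined with the norm identity this yields $\mu_B\big((O^{-1}E)_{w,-e_n}\big)=\mu_A(E_{z,u})$ and, for every $s$,
$$\Phi_{B,w,-e_n}(s)=\int_s^{\infty}e^{-|\sqrt{B}(w-te_n)|^2/2}\,dt=\int_s^{\infty}e^{-|\sqrt{A}(z+tu)|^2/2}\,dt=\Phi_{A,z,u}(s),$$
so $\Phi_{B,w,-e_n}^{-1}=\Phi_{A,z,u}^{-1}$ and the thresholds agree: $\Phi_{B,w,-e_n}^{-1}\big(\mu_B((O^{-1}E)_{w,-e_n})\big)=\Phi_{A,z,u}^{-1}\big(\mu_A(E_{z,u})\big)$.

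It then remains to assemble the sets. By \eqref{new_def},
$$(O^{-1}E)^s_{B,-e_n}=\Big\{w-se_n:\ w\in\langle e_n\rangle^{\perp},\ s>\Phi_{B,w,-e_n}^{-1}\big(\mu_B((O^{-1}E)_{w,-e_n})\big)\Big\},$$
while applying $O^{-1}$ to $E^s_{A,u}=\{z+tu: z\in\langle u\rangle^{\perp},\ t>\Phi_{A,z,u}^{-1}(\mu_A(E_{z,u}))\}$ and using $O^{-1}(z+tu)=O^{-1}z-te_n$ together with $O^{-1}\langle u\rangle^{\perp}=\langle e_n\rangle^{\perp}$ gives
$$O^{-1}E^s_{A,u}=\Big\{w-te_n:\ w\in\langle e_n\rangle^{\perp},\ t>\Phi_{A,Ow,u}^{-1}\big(\mu_A(E_{Ow,u})\big)\Big\}.$$
By the previous step the two describing conditions coincide (set $z=Ow$), so the two sets are equal, which is \eqref{rotation_of_sets}.

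I do not expect a real obstacle here; the only care needed is with the parametrization conventions — keeping straight that $u=O(-e_n)$ forces $O^{-1}u=-e_n$, so that slices in the $u$‑direction for $E$ match slices in the $(-e_n)$‑direction for $O^{-1}E$ — and with identifying $\langle u\rangle^{\perp}$ with $\langle e_n\rangle^{\perp}$ via $O^{-1}$ before comparing the integrals $\mu_\bullet$ and $\Phi_\bullet$. If one prefers to regard the Ehrhard symmetrization only up to $\mathcal L^n$‑negligible sets, the same computation applies verbatim, since $O$ preserves Lebesgue‑null sets.
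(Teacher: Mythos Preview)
Your argument is correct and is precisely the ``change of variables and set theory'' verification that the paper alludes to but omits: you unfold definition~\eqref{new_def} on both sides, use the orthogonal invariance $|\sqrt{O^{\mathsf{T}}AO}\,(O^{-1}x)|=|\sqrt{A}x|$ to match $\mu_\bullet$ and $\Phi_\bullet$, and identify the slice decompositions via the bijection $w\mapsto Ow$ between $\langle e_n\rangle^{\perp}$ and $\langle u\rangle^{\perp}$. There is nothing to add.
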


\subsection{Proof of Theorem \ref{Ehrhard_Sym_Ineq_II}} We can always find an orthogonal matrix $O$ such that $u=O(-e_n)$. By equation (\ref{rotation_of_sets}), we have
\begin{align*}
 (O^{-1}E)^s_{O^{\mathsf{T}}AO,-e_n}=O^{-1}E^s_{A,u}.
\end{align*}
Now we claim that  
\begin{align}\label{Eharhard_symmetrization_for_general_sets}
E^s_{A,u}=O[(O^{-1}E)^s_{O^{\mathsf{T}}AO,-e_n}]\mbox{\quad is a set of locally finite perimeter in $\R^n.$}
\end{align}
By Proposition \ref{locally finite anisotropic Gaussian perimeter}, $E$ is a set of locally finite perimeter. Applying Proposition \ref{Gaussian_perimeter} (3), 
$$P_{\gamma_{O^{\mathsf{T}}AO}}(O^{-1}E)=P_{\gamma_A}(E)<\infty,$$
i.e., $O^{-1}E$ is a set of finite $O^{\mathsf{T}}AO$-anisotropic Gaussian perimeter. Then Theorem \ref{approximation_theorem_for_BV_maps} tells us that $(O^{-1}E)^s_{O^{\mathsf{T}}AO,-e_n}$ is also a set of locally finite perimeter. By \cite{maggi}, Exercise 15.10, $E^s_{A,u}=O[(O^{-1}E)^s_{O^{\mathsf{T}}AO,-e_n}]$ is a set of locally finite perimeter. \\

\noindent Next we prove the second part of the theorem. By equation (\ref{rotation_of_sets}), Proposition \ref{Gaussian_perimeter} (3), and $O^{-1}B\subset \R^{n-1}$, we have
\begin{align*}
&P_{\gamma_A}\left(E^s_{A,u} ; B \oplus \langle u\rangle\right)=P_{\gamma_A}\left(E^s_{A,u} ; O(O^{-1}B) \oplus O\langle -e_n\rangle\right)=P_{\gamma_A}\left(E^s_{A,u}  ; O(O^{-1}B \times \R)\right)\\
&=P_{\gamma_{O^{\mathsf{T}}AO}}\left(O^{-1}E^{s}_{A,u} ; O^{-1}B \times \mathbb{R}\right)=P_{\gamma_{O^{\mathsf{T}}AO}}\left((O^{-1}E)^{s}_{O^{\mathsf{T}}AO,-e_n} ; O^{-1}B \times \mathbb{R}\right).
\end{align*}
Since $O^{-1}E$ is a set of finite $O^{\mathsf{T}}AO$-anisotropic Gaussian perimeter, we can apply Theorem \ref{Ehrhard_Sym_Ineq_I} with $E$ as $O^{-1}E$, $B$ as $O^{-1}B$, and $A$ as $O^{\mathsf{T}}AO$. Hence,
\begin{align*}
&P_{\gamma_A}\left(E^s_{A,u} ; B \oplus \langle u\rangle\right)= P_{\gamma_{O^{\mathsf{T}}AO}}\left((O^{-1}E)^{s}_{O^{\mathsf{T}}AO,-e_n} ; O^{-1}B \times \mathbb{R}\right) \\
&\leq P_{\gamma_{O^{\mathsf{T}}AO}}(O^{-1}E ; O^{-1}B \times \mathbb{R})+ \sqrt{2\pi}\|O^{\mathsf{T}}AOe_n-\langle O^{\mathsf{T}}AOe_n,e_n\rangle e_n\|\left\langle b_{\gamma_{O^{\mathsf{T}}AO}}((O^{-1}E)^s_{O^{\mathsf{T}}AO,-e_n}\cap (O^{-1}B\times \R))\right.\\
&\quad-\left.b_{\gamma_{O^{\mathsf{T}}AO}}(O^{-1}E\cap (O^{-1}B\times \R)),-e_n\right\rangle\\
&=P_{\gamma_A}\left(E ; B \oplus \langle u\rangle\right)+ \sqrt{2\pi}\|Au-\langle Au,u\rangle u\|\langle b_{\gamma_{A}}(E^s_{A,u}\cap (B \oplus \langle u\rangle ))-b_{\gamma_{A}}(E\cap (B \oplus \langle u\rangle)),u\rangle
\end{align*}
where we have used $(O^{-1}E)^s_{O^{\mathsf{T}}AO,-e_n}=O^{-1}E^s_{A,u}$ and $b_{\gamma_{O^{\mathsf{T}}AO}}(E)=O^{-1}b_{\gamma_A}(OE)$. 
\qed \\

Our next result tells us that the anisotropic Gaussian perimeter decreases when the Ehrhard symmetrization is done along an eigenvector direction of $A$.

\begin{corollary}\label{Eigen_Ehrhard_Sym}
Let $n \geq 2$ and let $E$ be a set of finite $A$-anisotropic Gaussian perimeter in $\mathbb{R}^{n}$. Assume that
$$u\in V_{\lambda}(A)\cap \SS^{n-1}$$
where $V_{\lambda}(A)$ is the eigenspace of $A$ associated with eigenvalue $\lambda$. Then, for every Borel set $B \subseteq \langle u \rangle^{\perp}$, we have
\begin{align}\label{decreasing_ineq}
P_{\gamma_A}\left(E_{A,u}^{s} ; B \oplus \langle u\rangle\right) \leq P_{\gamma_A}\left(E ; B \oplus \langle u\rangle\right),
\end{align}
and in particular $P_{\gamma_A}(E_{A,u}^{s}) \leq P_{\gamma_A}(E)$. Moreover, if $P_{\gamma_A}(E)=P_{\gamma_A}(E_{A,u}^{s})$, then 
$$\mbox{for $\mathcal{H}^{n-1}$-a.e. $z \in \langle u \rangle^{\perp}$, the slice $E_{z,u}$ is $\mathcal{H}^{1}$-equivalent to either $\emptyset$ or $\langle u\rangle$ or a half-line.}$$
\end{corollary}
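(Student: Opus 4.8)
The plan is to obtain the inequality (\ref{decreasing_ineq}) directly from Theorem \ref{Ehrhard_Sym_Ineq_II}, and then to extract the equality case by examining when the chain of estimates behind it is saturated, ultimately reducing matters to the rigidity of the one-dimensional log-concave isoperimetric inequality. For the inequality itself: since $u\in V_\lambda(A)$ we have $Au=\lambda u$, hence $Au-\langle Au,u\rangle u=0$ and $\|Au-\langle Au,u\rangle u\|=0$, so the error term in Theorem \ref{Ehrhard_Sym_Ineq_II} vanishes and (\ref{decreasing_ineq}) holds for every Borel $B\subseteq\langle u\rangle^{\perp}$; taking $B=\langle u\rangle^{\perp}$ gives $P_{\gamma_A}(E_{A,u}^s)\le P_{\gamma_A}(E)$.

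For the equality statement I would first reduce to $u=-e_n$. Pick $O$ orthogonal with $O(-e_n)=u$; then $O^{\mathsf T}AO(-e_n)=\lambda(-e_n)$, so $-e_n$ is an eigenvector of $\widetilde A:=O^{\mathsf T}AO$, while Proposition \ref{Gaussian_perimeter}(3) and (\ref{rotation_of_sets}) give $P_{\gamma_{\widetilde A}}(O^{-1}E)=P_{\gamma_A}(E)$ and $P_{\gamma_{\widetilde A}}\big((O^{-1}E)^s_{\widetilde A,-e_n}\big)=P_{\gamma_A}(E_{A,u}^s)$. Since $O$ carries the slice $(O^{-1}E)_{w,-e_n}$ onto $E_{Ow,u}$ and preserves $\mathcal{H}^1$ as well as the three classes $\{\emptyset,\text{line},\text{half-line}\}$, it suffices to prove the statement for $u=-e_n$ with $e_n$ an eigenvector of $A$ (I keep the name $A$). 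In that situation $Ae_n=\lambda e_n$, so $Ae_n-\langle Ae_n,e_n\rangle e_n=0$; by Proposition \ref{basic_properties}(2) this forces $\int_{E_z}\nabla'e^{-|\sqrt A x|^2/2}\,dy=\int_{E^s_z}\nabla'e^{-|\sqrt A x|^2/2}\,dy$ for every $z$, and by Proposition \ref{basic_properties}(1), $\nabla'v_E(z)=\nabla'v_{E^s}(z)$ for a.e.\ $z\in B_E\cap B_{E^s}$.

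Next I would upgrade the scalar hypothesis to a localized one. Both $B\mapsto P_{\gamma_A}(E;B\times\R)$ and $B\mapsto P_{\gamma_A}(E^s;B\times\R)$ are finite Borel measures on $\R^{n-1}$, and the $u=-e_n$ form of Theorem \ref{Ehrhard_Sym_Ineq_II} (error term zero) says their difference is a nonnegative measure of total mass $P_{\gamma_A}(E)-P_{\gamma_A}(E^s)=0$; hence $P_{\gamma_A}(E^s;B\times\R)=P_{\gamma_A}(E;B\times\R)$ for all Borel $B$. For $B\subseteq B_E\cap B_{E^s}$ I would apply the graphical estimate (Lemma \ref{graphical_estimate}) to both sides; since the ``gradient'' terms of $E$ and $E^s$ agree by the previous paragraph, writing $G(z):=\big|\nabla'v_E(z)-\int_{E_z}\nabla'e^{-|\sqrt A x|^2/2}\,dy\big|$ one obtains
\[
\frac{\det\sqrt A}{(2\pi)^{(n-1)/2}}\int_B\sqrt{p_{E^s}(z)^2+G(z)^2}\,dz=P_{\gamma_A}(E;B\times\R)\ge\frac{\det\sqrt A}{(2\pi)^{(n-1)/2}}\int_B\sqrt{p_{E}(z)^2+G(z)^2}\,dz .
\]
Because $0\le p_{E^s}(z)\le p_E(z)$ on $B_E\cap B_{E^s}$ by Proposition \ref{basic_properties}(3), the integrand on the left is pointwise $\le$ that on the right, so the inequality forces $p_{E^s}(z)=p_E(z)$ for a.e.\ $z\in B_E\cap B_{E^s}$, hence for a.e.\ $z\in\pi_+(E)$ since $\pi_+(E)\slash(B_E\cap B_{E^s})$ is $\mathcal L^{n-1}$-null.

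It remains to read off the slice structure from $p_E(z)=p_{E^s}(z)$. Here $p_{E^s}(z)=e^{-|\sqrt A(z,y(z))|^2/2}$ with $y(z)=\phi_z^{-1}(v_E(z))$ is exactly the $\mu_z$-perimeter of the half-line $E^s_z$ of $\mu_z$-mass $v_E(z)$, whereas $p_E(z)=P_z(E_z)$; moreover the normalized measure $\widetilde\mu_z$ (density proportional to $e^{-|\sqrt A(z,y)|^2/2}$) is \emph{strictly} log-concave in $y$, because $\partial_{yy}^2|\sqrt A x|^2=2\sum_{i=1}^n a_{in}^2>0$ as the $n$-th column of $\sqrt A$ is nonzero. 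For a strictly log-concave probability measure on $\R$ the half-lines are the unique perimeter minimizers at each mass level in $(0,1)$, so for a.e.\ $z$ with $0<v_E(z)<\mu_z(\R)$ the equality $p_E(z)=p_{E^s}(z)$ forces $E_z$ to be $\mathcal L^1$-equivalent to a half-line; for a.e.\ $z$ with $v_E(z)=0$ one has $\mathcal L^1(E_z)=0$, and for a.e.\ $z$ with $v_E(z)=\mu_z(\R)$ one has $E_z\sim\R=\langle e_n\rangle$. Undoing $O$ then yields the stated trichotomy for $E_{z,u}$. The step I expect to be the main obstacle is the rigidity (uniqueness of extremals) in the one-dimensional log-concave isoperimetric inequality of Proposition \ref{basic_properties}(3) and checking that it applies to $\widetilde\mu_z$ for a.e.\ $z$, together with the careful tracking of the $\mathcal L^{n-1}$-null exceptional sets $B_E$, $B_{E^s}$, $\pi_+(E)$ when passing from the scalar equality $P_{\gamma_A}(E)=P_{\gamma_A}(E^s_{A,u})$ to the pointwise identity $p_E=p_{E^s}$.
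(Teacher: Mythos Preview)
Your proposal follows the same overall architecture as the paper's proof: kill the error term in Theorem \ref{Ehrhard_Sym_Ineq_II} via $Au-\langle Au,u\rangle u=0$; reduce to $u=-e_n$ by rotation; upgrade the global equality to $P_{\gamma_A}(E;B\times\R)=P_{\gamma_A}(E^s;B\times\R)$ for all Borel $B$; feed $B_E\cap B_{E^s}$ into Lemma \ref{graphical_estimate} together with $p_{E^s}\le p_E$ and the equality of the gradient terms to conclude $p_E(z)=p_{E^s}(z)$ a.e.; and finally read off the slice structure from one-dimensional rigidity. This is exactly the paper's route.

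The one substantive divergence is in the last step. You invoke uniqueness of extremals for a general strictly log-concave one-dimensional measure $\widetilde\mu_z$, and you correctly flag this as the point most in need of justification. The paper avoids this entirely by exploiting the eigenvector hypothesis more heavily: since $Ae_n=\lambda e_n$ (write $\lambda=d^2$), one has the \emph{product} decomposition
\[
|\sqrt A(z,y)|^2=\Big|\sum_{i=1}^{n-1}z_i\sqrt A e_i\Big|^2+d^2y^2,
\]
so that $\mu_z$ is, up to a $z$-dependent constant, exactly the one-dimensional Gaussian density $e^{-d^2y^2/2}$. The identity $p_E(z)=p_{E^s}(z)$ then becomes literally the equality case of the one-dimensional anisotropic Gaussian isoperimetric inequality for $\gamma_{d^2}$, already stated (with its characterization of equality) in Theorem \ref{AnisotropicGaussainIso}(1). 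This also gives the equality $\int_{E_z}\nabla'e^{-|\sqrt A x|^2/2}\,dy=\int_{E^s_z}\nabla'e^{-|\sqrt A x|^2/2}\,dy$ by a direct computation rather than via Proposition \ref{basic_properties}(2). Your approach via general log-concave rigidity is valid in principle, but the paper's product decomposition is cleaner and entirely self-contained given what has already been proved.
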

\begin{proof}
Since for $u\in V_\lambda(A)\cap \SS^{n-1}$,
$$Au=\lambda u\implies \langle Au,u\rangle=\lambda\implies Au-\langle Au,u\rangle u=0.$$
Then Theorem \ref{Ehrhard_Sym_Ineq_II} shows that $P_{\gamma_A}\left(E_{A,u}^{s} ; B \oplus \langle u\rangle\right) \leq P_{\gamma_A}\left(E ; B \oplus \langle u\rangle\right)$.
For the second part, we may assume that $u=-e_n$ since we can always rotate the coordinate system. Thus, by assumption,
$$P_{\gamma_A}(E)=P_{\gamma_A}\left(E^{s}\right),\qquad u=-e_n\in V_\lambda(A)\cap \SS^{n-1}.$$
By (\ref{decreasing_ineq}), we have
$$P_{\gamma_A}\left(E^{s};B\times \R\right)\leq P_{\gamma_A}(E; B\times\R)$$
for every Borel set $B\subset \R^{n-1}$. We claim that  
$$P_{\gamma_A}(E ; B \times \mathbb{R})=P_{\gamma_A}\left(E^{s} ; B \times \mathbb{R}\right)\mbox{\quad for every Borel set $B \subseteq \mathbb{R}^{n-1}$.}$$
Suppose not, $P_{\gamma_A}(E ; B \times \mathbb{R})>P_{\gamma_A}\left(E^{s} ; B \times \mathbb{R}\right)$ for some Borel set $B$. Then
$$P_{\gamma_A}(E)=P_{\gamma_A}(E ; B \times \mathbb{R})+P_{\gamma_A}(E ; B^c \times \mathbb{R})>P_{\gamma_A}(E^s ; B \times \mathbb{R})+P_{\gamma_A}(E^s ; B^c \times \mathbb{R})=P_{\gamma_A}(E^s),$$
which contradicts our assumption. Now we plug in the Borel set $B_{E} \cap B_{E^{s}}$ from Vol'pert Theorem (Theorem \ref{Vol'pert}), i.e., we have 
\begin{align}\label{Eigen_Ehrhard_Sym_eq1}
P_{\gamma_A}\left(E ;\left(B_{E} \cap B_{E^{s}}\right) \times \mathbb{R}\right)=P_{\gamma_A}\left(E^{s} ;\left(B_{E} \cap B_{E^{s}}\right) \times \mathbb{R}\right).
\end{align}
Notice that $Ae_n=\lambda e_n$ and let $\lambda=d^2$ with $d>0$, $x=(z,y)$, and $z=(z_1,\ldots, z_{n-1})$. Then
\begin{align}\label{Eigen_Ehrhard_Sym_eq2}
|\sqrt{A}x|^2=\left\langle Ax,x\right\rangle&=\left\langle \sum_{i=1}^{n-1}z_iAe_i+y\lambda e_n,\sum_{j=1}^{n-1}z_je_j+ye_n\right\rangle\notag\\
&=\left\langle \sum_{i=1}^{n-1}z_iAe_i,\sum_{j=1}^{n-1}z_je_j\right\rangle+\left\langle \sum_{i=1}^{n-1}z_iAe_i,y e_n\right\rangle+\left\langle y\lambda e_n,\sum_{j=1}^{n-1}z_je_j\right\rangle+\left\langle y\lambda e_n,ye_n\right\rangle\notag\\
&=\left\langle A\sum_{i=1}^{n-1}z_ie_i,\sum_{i=1}^{n-1}z_ie_i\right\rangle+\left\langle \sum_{i=1}^{n-1}z_ie_i,yA e_n\right\rangle+0+\lambda y^2\notag\\
&=\left|\sqrt{A}\sum_{i=1}^{n-1}z_ie_i\right|^2+\left\langle \sum_{i=1}^{n-1}z_ie_i,y\lambda e_n\right\rangle+\lambda y^2=\left|\sum_{i=1}^{n-1}z_i\sqrt{A}e_i\right|^2+d^2y^2
\end{align}
since $e_n$ is an eigenvector of $A$ and $A$ is symmetric. Therefore,
\begin{align*}
v_E(z)&=\int_{E_z}e^{-|\sqrt{A}x|^2/2}dy=e^{-\left|\sum_{i=1}^{n-1}z_i\sqrt{A}e_i\right|^2/2}\int_{E_z}e^{-d^2y^2/2}\ dy,\\
v_{E^s}(z)&=\int_{E^s_z}e^{-|\sqrt{A}x|^2/2}dy=e^{-\left|\sum_{i=1}^{n-1}z_i\sqrt{A}e_i\right|^2/2}\int_{E^s_z}e^{-d^2y^2/2}\ dy,
\end{align*}
and hence
$$\int_{E_z}e^{-d^2y^2/2}\ dy=\int_{E^s_z}e^{-d^2y^2/2}\ dy$$
since $v_E(z)=v_{E^s}(z)$. Moreover,
\begin{align}\label{Eigen_Ehrhard_Sym_eq3}
\int_{E_z}\nabla^{\prime}\left(e^{-|\sqrt{A}x|^2/2}\right)dy&=\int_{E_z}\nabla^{\prime}\left(e^{-\left|\sum_{i=1}^{n-1}z_i\sqrt{A}e_i\right|^2/2}\right)e^{-d^2y^2/2}dy\notag\\
&=\nabla^{\prime}\left(e^{-\left|\sum_{i=1}^{n-1}z_i\sqrt{A}e_i\right|^2/2}\right)\int_{E_z}e^{-d^2y^2/2}dy\notag\\
&=\nabla^{\prime}\left(e^{-\left|\sum_{i=1}^{n-1}z_i\sqrt{A}e_i\right|^2/2}\right)\int_{E^s_z}e^{-d^2y^2/2}dy=\int_{E^s_z}\nabla^{\prime}\left(e^{-|\sqrt{A}x|^2/2}\right)dy.
\end{align}
By Lemma \ref{graphical_estimate}, Proposition \ref{basic_properties} (1)(3), (\ref{Eigen_Ehrhard_Sym_eq1}), and (\ref{Eigen_Ehrhard_Sym_eq3}),
\begin{align}\label{Eigen_Ehrhard_Sym_eq4}
P_{\gamma_A}(E ; (B_{E} \cap B_{E^{s}}) \times \mathbb{R}) & \geq \frac{\det\sqrt{A}}{(2\pi)^{(n-1)/2}}\int_{B_{E} \cap B_{E^{s}}} \sqrt{p_{E}(z)^{2}+\left|\nabla^{\prime} v_{E}(z)-\int_{E_z}\nabla^{\prime}\left(e^{-|\sqrt{A}x|^2/2}\right)dy\right|^{2}}\ dz\notag\\
& \geq \frac{\det\sqrt{A}}{(2\pi)^{(n-1)/2}}\int_{B_{E} \cap B_{E^{s}}} \sqrt{p_{E^s}(z)^{2}+\left|\nabla^{\prime} v_{E^s}(z)-\int_{E^s_z}\nabla^{\prime}\left(e^{-|\sqrt{A}x|^2/2}\right)dy\right|^{2}}\ dz\\
&=P_{\gamma_A}\left(E^{s} ; (B_{E} \cap B_{E^{s}}) \times \mathbb{R}\right)=P_{\gamma_A}(E ; (B_{E} \cap B_{E^{s}})\times \mathbb{R})\notag.
\end{align}
Therefore, (\ref{Eigen_Ehrhard_Sym_eq4}) implies that 
\begin{align}\label{Eigen_Ehrhard_Sym_eq5}
\mbox{$p_{E}(z)=p_{E^{s}}(z)$ for $\mathcal{H}^{n-1}$-a.e. $z \in B_E\cap B_{E^s}$}.
\end{align}
Moreover, let $z\in B_E\cap B_{E^s}$,
\begin{align}\label{p_E p_{E^s} I}
p_E(z)&=\int_{\rb E_z}e^{-|\sqrt{A}x|^2/2}\ d\mathcal{H}^0(y)=e^{-\left|\sum_{i=1}^{n-1}z_i\sqrt{A}e_i\right|^2/2}\int_{\rb E_z}e^{-d^2y^2/2}\ d\mathcal{H}^0(y),
\end{align}
\begin{align*}
\phi_z(t)=\int_{-\infty}^{t}e^{-|\sqrt{A}x|^2/2}\ dy&=e^{-\left|\sum_{i=1}^{n-1}z_i\sqrt{A}e_i\right|^2/2}\int_{-\infty}^{t}e^{-d^2y^2/2}\ dy=\frac{\sqrt{2\pi}}{d}e^{-\left|\sum_{i=1}^{n-1}z_i\sqrt{A}e_i\right|^2/2}\phi(dt),
\end{align*}
and hence
\begin{align}\label{relation_to_1D_Iso}
\phi_z^{-1}(v_E(z))=\frac{1}{d}\phi^{-1}\left(\frac{d}{\sqrt{2\pi}}\int_{E_z}e^{-d^2y^2/2}dy\right)=\frac{1}{d}\phi^{-1}(\gamma_{d^2}(E_z)),
\end{align}
where $\gamma_{d^2}$ is the $d^2$-anisotropic Gaussian measure, i.e.,
$$\gamma_{d^2}(F)=\frac{d}{\sqrt{2\pi}}\int_{F}e^{-d^2y^2/2}dy.$$
By equation (\ref{relation_to_1D_Iso}),
\begin{align}\label{p_E p_{E^s} II}
p_{E^s}(z)&=e^{-\left|\sum_{i=1}^{n-1}z_i\sqrt{A}e_i\right|^2/2}\int_{\rb E^s_z}e^{-d^2y^2/2}\ d\mathcal{H}^0(y)=e^{-\left|\sum_{i=1}^{n-1}z_i\sqrt{A}e_i\right|^2/2}\ e^{-d^2[\phi_z^{-1}(v_E(z))]^2/2}\notag\\
&=e^{-\left|\sum_{i=1}^{n-1}z_i\sqrt{A}e_i\right|^2/2}\ e^{-[\phi^{-1}(\gamma_{d^2}(E_z))]^2/2}
\end{align}
where $E^s_z=\left(-\infty,\phi_z^{-1}(v_E(z))\right)$.
Therefore, (\ref{Eigen_Ehrhard_Sym_eq5}), (\ref{p_E p_{E^s} I}), and (\ref{p_E p_{E^s} II}) implies that
$$\int_{\rb E_z}e^{-d^2y^2/2}\ d\mathcal{H}^0(y)=e^{-[\phi^{-1}(\gamma_{d^2}(E_z))]^2/2}$$
for $\mathcal{H}^{n-1}$-a.e. $z \in B_E\cap B_{E^s}$. Since $\mathcal{H}^{n-1}(\pi_+(E)\slash B_{E} \cap B_{E^{s}})=0$, we have
$$P_{\gamma_{d^2}}(E_z)=d\int_{\rb E_z}e^{-d^2y^2/2}d\mathcal{H}^0(y)=e^{-[\phi^{-1}(\gamma_{d^2}(E_z))]^2/2}d$$
for $\mathcal{H}^{n-1}$-a.e. $z \in \pi_+(E)$. Thanks to the equality case of the one-dimensional anisotropic Gaussian isoperimetric inequality (see Theorem \ref{AnisotropicGaussainIso}), for $\mathcal{H}^{n-1}$-a.e. $z \in \pi_+(E)$, $E_z$ is either $\mathcal{H}^{1}$-equivalent to $\emptyset$ or $\R$ or a half-line. Notice that for any $z\in \pi_+(E)^c$,
$$v_E(z)=0\implies \mathcal{H}^{1}(E_z)=0\implies \mbox{$E_z$ is $\mathcal{H}^{1}$-equivalent to $\emptyset$.}$$
In other words,
$$\mbox{for $\mathcal{H}^{n-1}$-a.e. $z \in \langle -e_n\rangle^\perp$, the slice $E_{z,-e_n}$ is $\mathcal{H}^{1}$-equivalent to either $\emptyset$ or $\R$ or a half-line.}$$
\end{proof}

From Theorem \ref{Ehrhard_Sym_Ineq_II}, we see that
\begin{align*}
P_{\gamma_A}(E_{A,u}^{s} ) \leq P_{\gamma_A}\left(E \right)+\sqrt{2\pi}\|Au-\langle Au,u\rangle u\|\langle b_{\gamma_A}(E^s_{A,u})-b_{\gamma_A}(E),u\rangle,
\end{align*}
for any set of finite anisotropic Gaussian perimeter in $\R^n$. A natural question here is whether
\begin{align*}
\left|P_{\gamma_A}(E_{A,u}^{s} ) - P_{\gamma_A}\left(E \right)\right|\leq M\|Au-\langle Au,u\rangle u\|\langle b_{\gamma_A}(E^s_{A,u})-b_{\gamma_A}(E),u\rangle
\end{align*}
for some constant $M$. Our final example shows that this is not the case.

\begin{example}
We give an example to show that the following statement is not true: for any $0<\lambda_1<\lambda_2$, there exists $M>0$ such that for any $\lambda(A)\subset [\lambda_1,\lambda_2]$, for any $u\in \SS^{n-1}$, and for any set of finite anisotropic Gaussian perimeter $E$ in $\R^n$,
\begin{align*}
\left|P_{\gamma_A}(E_{A,u}^{s} ) - P_{\gamma_A}\left(E \right)\right|\leq M\|Au-\langle Au,u\rangle u\|\langle b_{\gamma_A}(E^s_{A,u})-b_{\gamma_A}(E),u\rangle,
\end{align*}
where $\lambda(A)$ is the set of all eigenvalues of $A$, i.e., the {\bf spectrum} of $A$.
\end{example}
\begin{proof}
Consider $\lambda_1=\frac12<\frac32=\lambda_2$. Suppose there exists $M>0$ such that for any $\lambda(A)\subset [\lambda_1,\lambda_2]$, for any $u\in \SS^{n-1}$, and for any set of finite anisotropic Gaussian perimeter $E$ in $\R^n$,
\begin{align*}
\left|P_{\gamma_A}(E_{A,u}^{s}) - P_{\gamma_A}\left(E \right)\right|\leq M\|Au-\langle Au,u\rangle u\|\langle b_{\gamma_A}(E^s_{A,u})-b_{\gamma_A}(E),u\rangle.
\end{align*}
Take
$$n=2,\quad A=\begin{pmatrix}
1&0\\
0 &\frac12
\end{pmatrix},\quad u=-e_2,\quad  E=[-1,1]^2.$$
Then clearly, $\lambda(A)\subset [\lambda_1,\lambda_2]$ and $Au-\langle Au,u\rangle u=0$. That is,
$$P_{\gamma_A}\left(E^{s} \right) = P_{\gamma_A}\left(E \right),$$
where $E^s=E^s_{A,-e_n}$. By Corollary \ref{Eigen_Ehrhard_Sym}, for $\mathcal{H}^{1}$-a.e. $z\in \R^{1}$, the slice $E_{z}$ is $\mathcal{H}^{1}$-equivalent to either $\emptyset$ or $\R$ or a half-line. However, for all $z\in [-1,1]$, the slice $E_z$ is an interval $[-1,1]$. This gives us a contradiction.
\end{proof}

\section{Characterization of Ehrhard symmetrizable measures}\label{Characterization of Ehrhard symmetrizable measures}

\subsection{A regularity lemma for Ehrhard symmetrization sets}\mbox{}\vspace{-.22cm}\\

The anisotropic Gaussian perimeter always decreases if $u\in V_{\lambda}(A)\cap \SS^{n-1}$, where $V_\lambda(A)$ is the eigenspace of $A$ associated with eigenvalue $\lambda$ (see Corollary \ref{Eigen_Ehrhard_Sym}). In fact, this is a necessary and sufficient condition for the anisotropic Gaussian perimeter to be decreasing. We say that the measure $\gamma_A$ is {\bf Ehrhard symmetrizable} if
$$P_{\gamma_A}(E_{A,u}^s)\leq P_{\gamma_A}(E)$$
for all $u\in \SS^{n-1}$, and for all measurable set $E\subset \R^n$. We show that $\gamma_A$ is Ehrhard symmetrizable if and only if $A$ is a multiple of the identity matrix (see Theorem \ref{Uniqueness_Ehrhard_Sym}).

\begin{lemma}[A regularity lemma for $E^s$]\label{regularity_lemma}\mbox{}\\
Let $A=(A_{ij})\in M_n(\R)$ be a symmetric positive definite matrix. Suppose $E=\Omega\times (0,\infty)$ with an open set $\Omega\subset \R^{n-1}$ that contains the origin. Then
$$E^s=E^s_{A,-e_n}=\{x=(z,y)\in \R^{n-1}\times \R:z\in \Omega,y<h(z)\},$$
where $h(z):=\phi_z^{-1}(v_E(z))$ is $C^1(\Omega)$ and $\nabla'h$ is locally Lipschitz on $\Omega$. In particular, $h(0)=0$,
$$\nabla'h(0)=-2\left(\int_0^{\infty}ye^{-|\sqrt{A}(0,y)|^2/2}\ dy\right) A'e_{n},$$
where $\nabla'=(\partial_1,\ldots, \partial_{n-1})$ and $A'\in M_{(n-1)\times n}(\R)$ is the first $n-1$ rows of matrix from $A$. Also,
\begin{align*}
\nabla' h(0)=0&\iff A_{1n}=A_{2n}=\ldots =A_{n-1,n}=0\iff e_n\in V_{A_{nn}}(A).
\end{align*}
Moreover,
$$h(z)=\ell(z)+\int_0^1\langle\nabla' h(tz)-\nabla' h(0),z \rangle\ dt\mbox{\quad for all $z\in \Omega$}$$
where
$$\ell(z):=h(0)+\langle \nabla'h(0),z\rangle=-2\left(\int_0^{\infty}ye^{-|\sqrt{A}(0,y)|^2/2}\ dy\right) A'e_{n}\cdot z$$
\end{lemma}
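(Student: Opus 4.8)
The statement is essentially a consolidation of earlier results applied to the specific set $E = \Omega \times (0,\infty)$, so the plan is to assemble the pieces rather than prove something genuinely new. First I would verify that $E$ is a set of finite $A$-anisotropic Gaussian perimeter: since $\Omega \times (0,\infty)$ has locally finite (Euclidean) perimeter, Proposition~\ref{locally finite anisotropic Gaussian perimeter} gives locally finite anisotropic perimeter, and the Gaussian-type decay of $e^{-|\sqrt{A}x|^2/2}$ (via the bound $e^{-|\sqrt{A}x|^2/2}\le e^{-\|(\sqrt{A})^{-1}\|^{-2}|x|^2/2}$ from Lemma~\ref{computational_lemma}) makes $P_{\gamma_A}(E)<\infty$, exactly as in the Remark following Proposition~\ref{locally finite anisotropic Gaussian perimeter}. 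Then for every $z\in\Omega$ the section is $E_z = (0,\infty)$, so $v_E(z) = \int_0^\infty e^{-|\sqrt{A}(z,y)|^2/2}\,dy$, which by Lemma~\ref{computational_lemma}(2)(b) is $C^1$ on $\Omega$ and in fact has locally Lipschitz gradient by Lemma~\ref{computational_lemma}(2)(c) together with the integral bounds in part (3). Feeding this into Lemma~\ref{regularity_estimates_for_C1} with $v = v_E$ gives that $h = \phi_z^{-1}(v_E(z))$ is $C^1(\Omega)$ with locally Lipschitz gradient, and Theorem~\ref{approximation_theorem_for_BV_maps} (or directly the structure of $E$) identifies $E^s$ with the subgraph $\{y < h(z)\}$.

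\textbf{Computing $h(0)$ and $\nabla' h(0)$.} This is the one genuinely computational step, modeled on Example~\ref{counterexample_1}. At $z=0$ the identity $v_{E^s}(0) = v_E(0)$ reads $\int_{-\infty}^{h(0)} e^{-|\sqrt{A}(0,y)|^2/2}\,dy = \int_0^\infty e^{-|\sqrt{A}(0,y)|^2/2}\,dy$; since $y\mapsto e^{-|\sqrt{A}(0,y)|^2/2} = e^{-A_{nn}y^2/2}$ is even, this forces $h(0)=0$. Differentiating the defining relation $\int_{-\infty}^{h(z)} e^{-|\sqrt{A}(z,y)|^2/2}\,dy = v_E(z)$ and using the formula from Lemma~\ref{regularity_estimates_for_C1},
\[
\nabla' h(z) = e^{|\sqrt{A}(z,h(z))|^2/2}\left(\nabla' v_E(z) - \int_{-\infty}^{h(z)} \nabla'\!\left(e^{-|\sqrt{A}(z,y)|^2/2}\right) dy\right),
\]
I evaluate at $z=0$. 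Using $\nabla' v_E(0) = \int_0^\infty \nabla'(e^{-|\sqrt{A}(z,y)|^2/2})\big|_{z=0}\,dy$ and $\nabla'(e^{-|\sqrt{A}x|^2/2}) = -e^{-|\sqrt{A}x|^2/2}A'x$ from Lemma~\ref{computational_lemma}(1), the two integrals combine over $(-\infty,\infty)$, and the even/odd structure of $e^{-A_{nn}y^2/2}$ kills the $A'(z,0)$-piece at $z=0$, leaving
\[
\nabla' h(0) = \int_{-\infty}^{\infty} e^{-A_{nn}y^2/2}\, y\, A'e_n\, \operatorname{sgn}(y\in E_z\text{ vs }E^s_z)\,dy = -2\left(\int_0^\infty y e^{-|\sqrt{A}(0,y)|^2/2}\,dy\right)A'e_n,
\]
after careful bookkeeping of signs (the $E_z$ integral contributes $+\int_0^\infty$ and the $E^s_z$ integral $-\int_{-\infty}^0$, and $\int_{-\infty}^0 y e^{-A_{nn}y^2/2}\,dy = -\int_0^\infty$). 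The equivalences in the statement then follow: since $\int_0^\infty y e^{-A_{nn}y^2/2}\,dy > 0$, we get $\nabla' h(0) = 0 \iff A'e_n = 0 \iff (A_{1n},\ldots,A_{n-1,n}) = 0$, and the latter together with symmetry of $A$ says the $n$-th column of $A$ is $A_{nn}e_n$, i.e.\ $Ae_n = A_{nn}e_n$, which is precisely $e_n \in V_{A_{nn}}(A)$.

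\textbf{The Taylor identity.} For the last display, I apply the first-order Taylor expansion with integral remainder to $h$ along the segment from $0$ to $z$ (valid since $h\in C^1(\Omega)$ and $\Omega$ is open; if $\Omega$ is not convex one restricts to a ball around $0$, or notes the claim is pointwise on the segment which lies in $\Omega$ when $z$ is close enough—but since $\Omega\ni 0$ is open and the formula is asserted "for all $z\in\Omega$" one should assume or arrange $\Omega$ star-shaped about $0$, which is harmless for the intended application in Theorem~\ref{Uniqueness_Ehrhard_Sym}). Concretely,
\[
h(z) = h(0) + \int_0^1 \langle \nabla' h(tz), z\rangle\,dt = h(0) + \langle \nabla' h(0), z\rangle + \int_0^1 \langle \nabla' h(tz) - \nabla' h(0), z\rangle\,dt,
\]
and recognizing $h(0) + \langle \nabla' h(0), z\rangle = \ell(z)$ with $\ell(z) = -2\left(\int_0^\infty y e^{-|\sqrt{A}(0,y)|^2/2}\,dy\right)A'e_n\cdot z$ (using $h(0)=0$ and the formula for $\nabla' h(0)$) finishes the proof. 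The only mild obstacle is the sign bookkeeping in the $\nabla' h(0)$ computation; everything else is a direct citation of Lemma~\ref{computational_lemma}, Lemma~\ref{regularity_estimates_for_C1}, and Theorem~\ref{approximation_theorem_for_BV_maps}.
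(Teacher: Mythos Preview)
Your approach is essentially identical to the paper's: verify $v_E\in C^1(\Omega)$ with locally Lipschitz gradient (the paper does this by a direct mean-value estimate on $\nabla'v_E(z_1)-\nabla'v_E(z_2)$ rather than citing part~(2)(c) of Lemma~\ref{computational_lemma}, which concerns a different quantity, but the substance is the same), invoke Lemma~\ref{regularity_estimates_for_C1} for the regularity of $h$, compute $h(0)=0$ and $\nabla'h(0)$ from the identity $v_E=v_{E^s}$, and finish with first-order Taylor with integral remainder. Your caveat about star-shapedness of $\Omega$ for the Taylor identity is a valid point the paper glosses over; in the application (Theorem~\ref{Uniqueness_Ehrhard_Sym}) $\Omega$ is taken convex.
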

\begin{proof}
Recall that
$$
E^s=\left\{(z, y) \in \mathbb{R}^{n-1}\times \R: z\in \Omega, y<h(z)=\phi^{-1}_{z}\left(v_{E}(z)\right)\right\}
$$
where
$$v_E(z)=\int_{E_z}e^{-|\sqrt{A}x|^2/2}dy,\qquad \phi_z(t)=\int_{-\infty}^{t} e^{-|\sqrt{A}x|^2/2}dy.$$
Since $E=\Omega\times (0,\infty)$, $E_z=(0,\infty)$ for all $z\in \Omega$. By Lemma \ref{computational_lemma} (2)(a)(b),
$$v_E(z)=\int_{E_z}e^{-|\sqrt{A}x|^2/2}dy=\int_{0}^{\infty}e^{-|\sqrt{A}x|^2/2}dy\mbox{\quad is differentiable on $\Omega$}$$
and
$$\nabla'v_E(z)=-\int_{0}^{\infty}e^{-|\sqrt{A}x|^2/2}A'x\ dy.$$
Now we claim that $\nabla'v_E$ is locally Lipschitz on $\Omega$ and hence, by Lemma \ref{regularity_estimates_for_C1}, 
$$h:z\mapsto \phi^{-1}_z(v_E(z))\mbox{ is in $C^1(\Omega)$}$$
and $\nabla'h$ is locally Lipschitz on $\Omega$. Let $K$ be any compact set in $\Omega$ and let $z_1,z_2\in K$. Then
\begin{align*}
|\nabla'v_E(z_1)-\nabla'v_E(z_2)|&=\left|\int_{0}^{\infty}e^{-|\sqrt{A}(z_1,y)|^2/2}A'(z_1,y)dy-\int_{0}^{\infty}e^{-|\sqrt{A}(z_2,y)|^2/2}A'(z_2,y)dy\right|\\
&\leq\left|\int_{0}^{\infty}e^{-|\sqrt{A}(z_1,y)|^2/2}A'(z_1,y)dy-\int_{0}^{\infty}e^{-|\sqrt{A}(z_1,y)|^2/2}A'(z_2,y)dy\right|\\
&\quad+\left|\int_{0}^{\infty}e^{-|\sqrt{A}(z_1,y)|^2/2}A'(z_2,y)dy-\int_{0}^{\infty}e^{-|\sqrt{A}(z_2,y)|^2/2}A'(z_2,y)dy\right|\\
&\leq\left|\int_{0}^{\infty}e^{-|\sqrt{A}(z_1,y)|^2/2}A'(z_1-z_2,0)dy\right|\\
&\quad+\int_{0}^{\infty}\left|e^{-|\sqrt{A}(z_1,y)|^2/2}-e^{-|\sqrt{A}(z_2,y)|^2/2}\right||A'(z_2,y)|dy\\
&\leq \int_{0}^{\infty}e^{-\|(\sqrt{A})^{-1}\|^{-2}|y|^2/2}\sqrt{\lambda_{\max}(A^{\prime\mathsf{T}}A^{\prime})}|z_1-z_2|dy\\
&\quad+ \int_0^{\infty}\lambda_{\max}(A^{\prime\mathsf{T}}A^{\prime})\Big(r(K)+|y|\Big)^2e^{-\|(\sqrt{A})^{-1}\|^{-2}|y|^2/2} |z_1-z_2|\ dy\\
&= C(K,A')|z_1-z_2|,
\end{align*}
 where we have used the estimate (\ref{exp_estimate}), $r(K)=\sup_{\zeta\in K}|\zeta|$, and
$$C(K,A'):=\sqrt{\lambda_{\max}(A^{\prime\mathsf{T}}A^{\prime})}\left(\int_0^{\infty} \left(1+\sqrt{\lambda_{\max}(A^{\prime\mathsf{T}}A^{\prime})}\Big(r(K)+|y|\Big)^2\right)e^{-\|(\sqrt{A})^{-1}\|^{-2}|y|^2/2} \ dy\right).$$
Next, notice that
\begin{align}\label{regularity_lemma_eq1}
\int_0^{\infty}e^{-|\sqrt{A}x|^2/2}dy=v_E(z)=v_{E^s}(z)=\int_{-\infty}^{h(z)}e^{-|\sqrt{A}x|^2/2}dy.
\end{align}
Setting $z=0$, we have
\begin{align*}
\int^0_{-\infty}e^{-|\sqrt{A}(0,y)|^2/2}dy=\int_0^{\infty}e^{-|\sqrt{A}(0,y)|^2/2}dy=\int_{-\infty}^{h(0)}e^{-|\sqrt{A}(0,y)|^2/2}dy.
\end{align*}
Therefore, 
\begin{align}\label{regularity_lemma_eq2}
h(0)=0.
\end{align}
Taking the derivative on both sides with respect to $z$ of equation (\ref{regularity_lemma_eq1}), by Lemma \ref{computational_lemma} (1),
$$-\int_0^{\infty}e^{-|\sqrt{A}x|^2/2}A'x\ dy=(\nabla' h(z))e^{-|\sqrt{A}(z,h(z))|^2/2}-\int_{-\infty}^{h(z)}e^{-|\sqrt{A}x|^2/2}A'x\ dy.$$
Setting $z=0$ again,
\begin{align}\label{regularity_lemma_eq3}
\nabla' h(0)&=-\int_0^{\infty}e^{-|\sqrt{A}(0,y)|^2/2}A'(0,y)\ dy+\int_{-\infty}^{0}e^{-|\sqrt{A}(0,y)|^2/2}A'(0,y)\ dy\notag\\
&=-2\int_0^{\infty}e^{-|\sqrt{A}(0,y)|^2/2}A'(0,y)\ dy=-2\left(\int_0^{\infty}ye^{-|\sqrt{A}(0,y)|^2/2}\ dy\right) A'e_{n}.
\end{align}
Thus,
\begin{align*}
\nabla' h(0)=0&\iff A'e_n=0\iff A_{1n}=A_{2n}=\ldots =A_{n-1,n}=0\iff e_n\in V_{A_{nn}}(A)
\end{align*}
since $A$ is symmetric and 
$$\int_0^{\infty}ye^{-|\sqrt{A}(0,y)|^2/2}\ dy\geq \int_0^{\infty}ye^{-\|\sqrt{A}\|^2|y|^2/2}dy=\frac{1}{\|\sqrt{A}\|^2}>0.$$
Applying \cite{Dmitriy}, Theorem 1.14 with (\ref{regularity_lemma_eq2}) and (\ref{regularity_lemma_eq3}), we have
$$h(z)=\ell(z)+\int_0^1\langle\nabla' h(tz)-\nabla' h(0),z \rangle\ dt$$
where
$$\ell(z):=h(0)+\langle \nabla'h(0),z\rangle=-2\left(\int_0^{\infty}ye^{-|\sqrt{A}(0,y)|^2/2}\ dy\right) A'e_{n}\cdot z.$$
\end{proof}

\subsection{Proof of Theorem \ref{Uniqueness_Ehrhard_Sym}} For the first part, we just need to show that 
\begin{align}\label{Uniqueness_Ehrhard_Sym_Part1}
&P_{\gamma_A}(E_{A,u}^{s}) \leq P_{\gamma_A}\left(E \right) \mbox{ for all finite $A$-anisotropic Gaussian perimeter set $E$ in $\mathbb{R}^{n}$}\notag\\
&\implies u\in V_\lambda(A)\cap \SS^{n-1}\mbox{ for some $\lambda >0$}
\end{align} 
since Corollary \ref{Eigen_Ehrhard_Sym} gives us the converse of the statement.\\

\noindent{\bf Step 1}: Assume that $u=-e_n$ and we have $P_{\gamma_A}\left(E^{s}  \right) \leq P_{\gamma_A}\left(E \right)$ for all finite $A$-anisotropic Gaussian perimeter set $E$ in $\mathbb{R}^{n}$, where $E^s=E^s_{A,-e_n}$.
Our goal is to show that 
$$e_n\in V_\lambda(A)\cap \SS^{n-1}$$
for some $\lambda>0$. Let $K=[-1,1]^{n-1}$ and $\Omega$ be an open convex set that contains $K$. Consider
$$E=\Omega\times (0,\infty),\quad E_{\alpha}=[-\alpha,\alpha]^{n-1}\times (0,\infty)$$
for $\alpha\in (0,1)$. By Lemma \ref{regularity_lemma}, the Ehrhard symmetrization of $E$ has the form
$$E^s=\{x=(z,y)\in \R^{n-1}\times \R:z\in \Omega,y<h(z)\}$$
where $h(z):=\phi_z^{-1}(v_E(z))$ is $C^1(\Omega)$ and $\nabla'h$ is locally Lipschitz on $\Omega$.  Hence $\nabla'h$ is Lipschitz on $K$. Also, the Ehrhard symmetrization of $E_\alpha$ has the form
$$E^s_{\alpha}=\{x=(z,y)\in \R^{n-1}\times \R:z\in [-\alpha,\alpha]^{n-1},y<h(z)\}.$$
We claim that
$$P_{\gamma_A}(E_\alpha)-P_{\gamma_A}(E^s_\alpha)=\frac{\sqrt{\det A}}{(2\pi)^{(n-1)/2}}\left(1-\sqrt{1+[\nabla'h(0)]^2}\right)\alpha+o(\alpha).$$
Let $S_k=C_k\times (0,\infty)$ ($k\geq 1$) be hypersurfaces in $\R^n$, where $\{C_k\}_{k=1}^{2(n-1)}$ are faces of the $(n-1)$ dimension cube $[-\alpha,\alpha]^{n-1}\subset \R^{n-1}$, and  
$$S_0=[-\alpha,\alpha]^{n-1}\times \{0\}.$$
For example, 
\begin{align*}
S_1&=\left([-\alpha,\alpha]^{n-2}\times\{-\alpha\}\right)\times(0,\infty), \\
S_2&=\left([-\alpha,\alpha]^{n-2}\times\{\alpha\}\right)\times(0,\infty), \\
S_3&=\left([-\alpha,\alpha]^{n-3}\times\{-\alpha\}\times[-\alpha,\alpha]\right)\times(0,\infty), \\
S_4&=\left([-\alpha,\alpha]^{n-3}\times\{\alpha\}\times[-\alpha,\alpha]\right)\times(0,\infty),\\ 
S_5&=\left([-\alpha,\alpha]^{n-4}\times\{-\alpha\}\times[-\alpha,\alpha]^2\right)\times(0,\infty),\\
S_6&=\left([-\alpha,\alpha]^{n-4}\times\{\alpha\}\times[-\alpha,\alpha]^2\right)\times(0,\infty),\\
&\hspace{.19cm}\vdots
\end{align*}
and
$$\rb E_\alpha=\bigcup_{k=1}^{2(n-1)}S_k\cup S_0\implies P_{\gamma_A}(E_\alpha)=\sum_{k=1}^{2(n-1)}\mathcal{H}_{\gamma_A}^{n-1}(S_k)+\mathcal{H}^{n-1}_{\gamma_A}(S_0).$$
For $E^s_\alpha$, we also have
\begin{align*}
S^s_0&=\{x=(z,y)\in [-\alpha,\alpha]^{n-1}\times \R:y=h(z)\}, \\
S^s_1&=\left([-\alpha,\alpha]^{n-2}\times\{-\alpha\}\right)\times\left(-\infty,h\left([-\alpha,\alpha]^{n-2}\times\{-\alpha\}\right)\right), \\
S^s_2&=\left([-\alpha,\alpha]^{n-2}\times\{\alpha\}\right)\times \left(-\infty,h\left([-\alpha,\alpha]^{n-2}\times\{\alpha\}\right)\right), \\
S^s_3&=\left([-\alpha,\alpha]^{n-3}\times\{-\alpha\}\times[-\alpha,\alpha]\right)\times \left(-\infty,h\left([-\alpha,\alpha]^{n-3}\times\{-\alpha\}\times[-\alpha,\alpha]\right)\right), \\
S^s_4&=\left([-\alpha,\alpha]^{n-3}\times\{\alpha\}\times[-\alpha,\alpha]\right)\times \left(-\infty,h\left([-\alpha,\alpha]^{n-3}\times\{\alpha\}\times[-\alpha,\alpha]\right)\right),\\ 
S^s_5&=\left([-\alpha,\alpha]^{n-4}\times\{-\alpha\}\times[-\alpha,\alpha]^2\right)\times \left(-\infty,h\left([-\alpha,\alpha]^{n-4}\times\{-\alpha\}\times[-\alpha,\alpha]^2\right)\right),\\
S^s_6&=\left([-\alpha,\alpha]^{n-4}\times\{\alpha\}\times[-\alpha,\alpha]^2\right)\times \left(-\infty,h\left([-\alpha,\alpha]^{n-4}\times\{\alpha\}\times[-\alpha,\alpha]^2\right)\right),\\
&\hspace{.19cm}\vdots
\end{align*}
and
$$\rb E^s_\alpha=\bigcup_{k=1}^{2(n-1)}S^s_k\cup S^s_0\implies P_{\gamma_A}(E^s_\alpha)=\sum_{k=1}^{2(n-1)}\mathcal{H}_{\gamma_A}^{n-1}(S_k^s)+\mathcal{H}^{n-1}_{\gamma_A}(S_0^s).$$
Therefore,
\begin{align}\label{square_sides}
P_{\gamma_A}(E_\alpha)-P_{\gamma_A}(E^s_\alpha)&=\sum_{k=1}^{2(n-1)}\left(\mathcal{H}_{\gamma_A}^{n-1}(S_k)-\mathcal{H}_{\gamma_A}^{n-1}(S_k^s)\right)+\left(\mathcal{H}^{n-1}_{\gamma_A}(S_0)-\mathcal{H}_{\gamma_A}^{n-1}(S^s_0)\right).
\end{align}

\noindent (a) First we claim that
$$\left|\mathcal{H}_{\gamma_A}^{n-1}(S_{k})-\mathcal{H}_{\gamma_A}^{n-1}(S_{k}^s)\right|=o(\alpha^{n-1})\mbox{\quad for all $k\geq 1$}.$$
For $k=1$, we have $S_1=[-\alpha,\alpha]^{n-2}\times\{-\alpha\}\times(0,\infty)$
and
$$S^s_1=\left([-\alpha,\alpha]^{n-2}\times\{-\alpha\}\right)\times\left(-\infty,h\left([-\alpha,\alpha]^{n-2}\times\{-\alpha\}\right)\right).$$
Let $r(u,v)=(u,-\alpha,v)\in [-\alpha,\alpha]^{n-2}\times\{-\alpha\}\times(0,\infty)$.
Then $J(r)=\sqrt{\det (Dr)^{\mathsf{T}}(Dr)}=1$ and
\begin{align*}
\mathcal{H}^{n-1}_{\gamma_A}(S_1)&=\frac{\sqrt{\det A}}{(2\pi)^{(n-1)/2}}\int_{S_1}e^{-|\sqrt{A}x|^2/2}d\mathcal{H}^{n-1}=\frac{\sqrt{\det A}}{(2\pi)^{(n-1)/2}}\int_{[-\alpha,\alpha]^{n-2}}\int_{0}^\infty e^{-|\sqrt{A}( u,-\alpha,v)|^2/2}\ dvdu.
\end{align*}
Similarly, we have
\begin{align*}
\mathcal{H}^{n-1}_{\gamma_A}(S^s_1)&=\frac{\sqrt{\det A}}{(2\pi)^{(n-1)/2}}\int_{S_1^s}e^{-|\sqrt{A}x|^2/2}d\mathcal{H}^{n-1}=\frac{\sqrt{\det A}}{(2\pi)^{(n-1)/2}}\int_{[-\alpha,\alpha]^{n-2}}\int_{-\infty}^{h(u,-\alpha)} e^{-|\sqrt{A}( u,-\alpha,v)|^2/2}\ dvdu.
\end{align*}
Let $z=(u,-\alpha)$. We now estimate the following two quantities:
$$\text{(i)}\int_{0}^\infty e^{-|\sqrt{A}(z,v)|^2/2}dv,\qquad \text{(ii)}\int_{-\infty}^{h(z)} e^{-|\sqrt{A}(z,v)|^2/2}dv.$$
For (i), using the Taylor expansion on the map 
$$z\mapsto \int_{0}^\infty e^{-|\sqrt{A}(z,v)|^2/2}dv,$$ 
we have 
$$\nabla'  \int_{0}^\infty e^{-|\sqrt{A}(z,v)|^2/2}dv= \int_{0}^\infty e^{-|\sqrt{A}(z,v)|^2/2}\left(-A'(z,v)\right)dv$$
and
\begin{align*}
\int_{0}^\infty e^{-|\sqrt{A}(z,v)|^2/2}dv&=\int_{0}^\infty e^{-|\sqrt{A}(0,v)|^2/2}dv+\int_{0}^\infty e^{-|\sqrt{A}(0,v)|^2/2}(-A'(0,v))\ dv\cdot z+o(|z|).
\end{align*}
For (ii), using the Taylor expansion on the map 
$$z\mapsto \int_{-\infty}^{h(z)} e^{-|\sqrt{A}(z,v)|^2/2}dv,$$ 
we have 
$$\nabla'  \int_{-\infty}^{h(z)} e^{-|\sqrt{A}(z,v)|^2/2}dv= \nabla'h(z)e^{-|\sqrt{A}(z,h(z))|^2/2}+\int_{-\infty}^{h(z)} e^{-|\sqrt{A}(z,v)|^2/2}\left(-A'(z,v)\right)dv$$
and
\begin{align*}
 \int_{-\infty}^{h(z)} e^{-|\sqrt{A}(z,v)|^2/2}dv&=\int_{-\infty}^0 e^{-|\sqrt{A}(0,v)|^2/2}dv+\left(\nabla'h(0)+\int_{-\infty}^0 e^{-|\sqrt{A}(0,v)|^2/2}(-A'(0,v))\ dv\right)\cdot z+o(|z|)\\
&=\int_{0}^{\infty} e^{-|\sqrt{A}(0,v)|^2/2}dv+\left(\nabla'h(0)+\int_{0}^{\infty} e^{-|\sqrt{A}(0,v)|^2/2}A'(0,v)\ dv\right)\cdot z+o(|z|).
\end{align*}
since $h(0)=0$. Therefore,
\begin{align}\label{two_Taylor_estimates}
\text{(i)}-\text{(ii)}&=-\nabla'h(0)\cdot z+\left(-2\int_{0}^{\infty} e^{-|\sqrt{A}(0,v)|^2/2}A'(0,v)\ dv\right)\cdot z+o(|z|)\notag\\
&=-\nabla'h(0)\cdot z+\left(-2\int_{0}^{\infty} ve^{-|\sqrt{A}(0,v)|^2/2}\ dv\right)A'e_n\cdot z+o(|z|)\notag\\
&=o(|z|)
\end{align}
where we have used Lemma \ref{regularity_lemma}, i.e.,
$$\nabla' h(0)=-2\left(\int_0^{\infty}ye^{-|\sqrt{A}(0,y)|^2/2}\ dy\right) A'e_{n}.$$
Since $u\in [-\alpha,\alpha]^{n-2}$, $|z|=|(u,-\alpha)|\leq \sqrt{n-1} \alpha$, by equation (\ref{two_Taylor_estimates}),
\begin{align*}
\int_{0}^\infty e^{-|\sqrt{A}( u,-\alpha,v)|^2/2}dv-\int_{-\infty}^{h(u,-\alpha)} e^{-|\sqrt{A}( u,-\alpha,v)|^2/2}dv=o(\alpha)
\end{align*}
and hence
\begin{align*}
\mathcal{H}^{n-1}_{\gamma_A}(S_1)-\mathcal{H}^{n-1}_{\gamma_A}(S^s_1)&=\frac{\sqrt{\det A}}{(2\pi)^{(n-1)/2}}\int_{[-\alpha,\alpha]^{n-2}}\left(\int_{0}^\infty e^{-|\sqrt{A}( u,-\alpha,v)|^2/2}dv-\int_{-\infty}^{h(u,-\alpha)} e^{-|\sqrt{A}( u,-\alpha,v)|^2/2}dv\right)du\\
&=\frac{\sqrt{\det A}}{(2\pi)^{(n-1)/2}}\int_{[-\alpha,\alpha]^{n-2}}o(\alpha)\ du =o(\alpha^{n-1}).
\end{align*}
That is,
$$\left|\mathcal{H}_{\gamma_A}^{n-1}(S_{1})-\mathcal{H}_{\gamma_A}^{n-1}(S_{1}^s)\right|=o(\alpha^{n-1}).$$
Similarly for all $k\geq 1$, we have
$$\left|\mathcal{H}_{\gamma_A}^{n-1}(S_{k})-\mathcal{H}_{\gamma_A}^{n-1}(S_{k}^s)\right|=o(\alpha^{n-1}).$$
\noindent (b) Next we claim that
\begin{align*}
P_{\gamma_A}(E_\alpha)-P_{\gamma_A}(E^s_\alpha)&=\frac{\sqrt{\det A}}{(2\pi)^{(n-1)/2}}\left(\int_{S_0}e^{-|\sqrt{A}x|^2/2}d\mathcal{H}^{n-1}-\int_{S^s_0}e^{-|\sqrt{A}x|^2/2}d\mathcal{H}^{n-1}\right)+o(\alpha^{n-1})\\
&=\frac{\sqrt{\det A}}{(2\pi)^{(n-1)/2}}\left(1-\sqrt{1+[\nabla'h(0)]^2}\right)(2\alpha)^{n-1}+o(\alpha^{n-1}).
\end{align*}
Let $r(z)=(z,h(z))$. Then $J(r)=\sqrt{\det (Dr)^{\mathsf{T}}(Dr)}=\sqrt{1+|\nabla' h(z)|^2}$ and
\begin{align*}
\mathcal{H}^{n-1}_{\gamma_A}(S_0^s)&=\frac{\sqrt{\det A}}{(2\pi)^{(n-1)/2}}\int_{S_0^s}e^{-|\sqrt{A}x|^2/2}d\mathcal{H}^{n-1}=\frac{\sqrt{\det A}}{(2\pi)^{(n-1)/2}}\int_{[-\alpha,\alpha]^{n-1}} e^{-|\sqrt{A}(z,h(z))|^2/2}\sqrt{1+|\nabla' h(z)|^2}\  dz.
\end{align*}
Define $f:\Omega\to \R$ as
$$f(z)=e^{-|\sqrt{A}(z,0)|^2/2}-e^{-|\sqrt{A}(z,h(z))|^2/2}\sqrt{1+[\nabla'h(z)]^2}.$$
Since $h\in C^{1}(\Omega)$, $f$ is continuous on $\Omega$ and hence
$$\lim_{\alpha\to 0^+}\frac{1}{(2\alpha)^{n-1}}\int_{[-\alpha,\alpha]^{n-1}}f(z) dz= f(0).$$
Therefore,
\begin{align}\label{S_0_estimate}
&\int_{S_0}e^{-|\sqrt{A}x|^2/2}d\mathcal{H}^{n-1}-\int_{S^s_0}e^{-|\sqrt{A}x|^2/2}d\mathcal{H}^{n-1}\notag\\
&=\int_{[-\alpha,\alpha]^{n-1}}e^{-|\sqrt{A}(z,0)|^2/2}dz-\int_{[-\alpha,\alpha]^{n-1}}e^{-|\sqrt{A}(z,h(z))|^2/2}\sqrt{1+[\nabla'h(z)]^2}\ dz\notag\\
&=\int_{[-\alpha,\alpha]^{n-1}}\left(e^{-|\sqrt{A}(z,0)|^2/2}-e^{-|\sqrt{A}(z,h(z))|^2/2}\sqrt{1+[\nabla'h(z)]^2}\right)\ dz\notag\\
&=\int_{[-\alpha,\alpha]^{n-1}}f(0)+\left(f(z)-f(0)\right)\ dz\notag\\
&=\int_{[-\alpha,\alpha]^{n-1}}\left(1-\sqrt{1+[\nabla'h(0)]^2}\right) dz+\int_{[-\alpha,\alpha]^{n-1}}\left(f(z)-f(0)\right) dz\notag\\
&=\left(1-\sqrt{1+[\nabla'h(0)]^2}\right)(2\alpha)^{n-1}+o(\alpha^{n-1}).
\end{align}
By equation (\ref{square_sides}) and (a),
\begin{align}\label{all_faces_estimates}
&\left|P_{\gamma_A}(E_\alpha)-P_{\gamma_A}(E^s_\alpha)-\frac{\sqrt{\det A}}{(2\pi)^{(n-1)/2}}\left(\int_{S_0}e^{-|\sqrt{A}x|^2/2}d\mathcal{H}^{n-1}-\int_{S^s_0}e^{-|\sqrt{A}x|^2/2}d\mathcal{H}^{n-1}\right)\right|\notag\\
&\leq \sum_{k=1}^{2(n-1)}\left|\mathcal{H}_{\gamma_A}^{n-1}(S_{k})-\mathcal{H}_{\gamma_A}^{n-1}(S_{k}^s)\right|= o(\alpha^{n-1}).
\end{align}
Combining (\ref{S_0_estimate}) and (\ref{all_faces_estimates}), we have 
\begin{align*}
P_{\gamma_A}(E_\alpha)-P_{\gamma_A}(E^s_\alpha)&=\frac{\sqrt{\det A}}{(2\pi)^{(n-1)/2}}\left(1-\sqrt{1+[\nabla'h(0)]^2}\right)(2\alpha)^{n-1}+o(\alpha^{n-1}).
\end{align*}
(c) We claim that
$$A_{1n}=A_{2n}=\ldots =A_{n-1,n}=0.$$
By our assumption and (b),
\begin{align*}
0\leq P_{\gamma_A}(E_\alpha)-P_{\gamma_A}(E^s_\alpha)&=\frac{\sqrt{\det A}}{(2\pi)^{(n-1)/2}}\left(1-\sqrt{1+[\nabla' h(0)]^2}\right)(2\alpha)^{n-1}+o(\alpha^{n-1}).
\end{align*}
Dividing $(2\alpha)^{n-1}$ on both sides and taking $\alpha\to 0^+$, by Lemma \ref{regularity_lemma},
\begin{align*}
0&\leq \left(1-\sqrt{1+[\nabla' h(0)]^2}\right)\implies \nabla' h(0)=0\implies A_{1n}=A_{2n}=\ldots =A_{n-1,n}=0.
\end{align*}
Hence, $e_n\in V_{\lambda}(A)\cap \SS^{n-1}$ for some $\lambda>0$.\\

\noindent{\bf Step 2}: For general $u\in \SS^{n-1}$, there exists an orthogonal matrix $O$ such that $O(-e_n)=u$. Let $B=O^{\mathsf{T}}AO$. Given any finite $B$-anisotropic Gaussian perimeter set $\widetilde{E}$ and let $E=O\widetilde{E}$. By Proposition \ref{locally finite anisotropic Gaussian perimeter}, $\widetilde{E}$ is a set of locally finite perimeter. Applying Proposition \ref{Gaussian_perimeter} (3) with $E$ as $\widetilde{E},$ $A$ as $B$, and $O$ as $O^{-1}$, 
$$P_{\gamma_A}(E)=P_{\gamma_{OBO^{\mathsf{T}}}}(O\widetilde{E})=P_{\gamma_B}(\widetilde{E})<\infty,$$
i.e., $E$ is a set of finite $A$-anisotropic Gaussian perimeter. Then Theorem \ref{Ehrhard_Sym_Ineq_II} tells us that $E_{A,u}^s$ is also a set of locally finite perimeter. Since $\gamma_A$ is Ehrhard symmetrizable and $E$ is a set of finite $A$-anisotropic Gaussian perimeter, Proposition \ref{Gaussian_perimeter} (3) and equation (\ref{rotation_of_sets}) give us
\begin{align*}
P_{\gamma_B}(\widetilde{E}^s_{B,-e_n})=P_{\gamma_{O^{\mathsf{T}}AO}}\left((O^{-1}E)^s_{O^{\mathsf{T}}AO,-e_n}\right)&=P_{\gamma_{O^{\mathsf{T}}AO}}(O^{-1}E_{A,u}^s)\\
&=P_{\gamma_A}(E_{A,u}^s)\leq P_{\gamma_A}(E)=P_{\gamma_{O^{\mathsf{T}}AO}}(O^{-1}E)=P_{\gamma_B}(\widetilde{E}).
\end{align*}
Applying Step 1 on $\gamma_B$ and $\widetilde{E}$, we conclude that $e_n\in V_{\lambda}(B)\cap \SS^{n-1}$ for some eigenvalue $\lambda$, i.e., $Be_n=\lambda e_n$ and hence
$$Au=AO(-e_n)=OB(-e_n)=-O\lambda e_n=\lambda O(-e_n)=\lambda u.$$
Thus, if $P_{\gamma_A}(E_{A,u}^{s}) \leq P_{\gamma_A}\left(E \right)$ for all finite $A$-anisotropic Gaussian perimeter set $E$ in $\mathbb{R}^{n}$, we have
\begin{equation}\label{eigenvalue_of_A}
u\in V_{\lambda}(A)\mbox{\quad for some $\lambda> 0$.}
\end{equation}
This finishes the first part of the theorem.\\

For the second part, it is enough to prove that
$$\mbox{$\gamma_A$ is Ehrhard symmetrizable}\implies A=aI_n\mbox{ for some constant $a>0$}$$
since we can apply Corollary \ref{Eigen_Ehrhard_Sym} again, and conclude the converse of the statement. Suppose now we have two distinct eigenvalues $\lambda_1,\lambda_2$ of $A$ with eigenvectors $u_1,u_2$ in $\SS^{n-1}$. Notice that $\langle u_1,u_2\rangle =0$ since $A$ is symmetric and $\lambda_1\not=\lambda_2$. Consider
$$u:=\frac{u_1+u_2}{\sqrt{2}}\in \SS^{n-1}.$$
Since $\gamma_A$ is Ehrhard symmetrizable, by (\ref{Uniqueness_Ehrhard_Sym_Part1}), we have $u\in V_\lambda(A)\cap \SS^{n-1}$ for some $\lambda>0$. However,
$$\lambda\left(\frac{u_1+u_2}{\sqrt{2}}\right)=\lambda u=Au=\frac{\lambda_1u_1+\lambda_2u_2}{\sqrt{2}}\implies (\lambda_1-\lambda)u_1+(\lambda_2-\lambda)u_2=0\implies \lambda_1=\lambda=\lambda_2.$$
This contradicts the assumption that $\lambda_1\not=\lambda_2$. Therefore, all the eigenvalues of $A$ are the same and hence $A=aI_n$ for some $a>0$. \qed

\bibliographystyle{amsalpha}
\bibliography{References}

\end{document}